\documentclass[a4paper, 11pt, twoside]{amsart}
\usepackage[latin1]{inputenc} 
\usepackage[T1]{fontenc}      
\usepackage[english]{babel}  
\usepackage[a4paper,margin=1.25in]{geometry}
\usepackage{xcolor}
\usepackage{hyperref}
\usepackage{enumerate}
\usepackage{amsmath}                            
\usepackage{amsfonts}
\usepackage{amsthm} 
\usepackage{amscd}
\usepackage{amssymb}
\usepackage{verbatim}
\usepackage[all]{xy}
\newtheorem{theo}{Theorem}[section]    
\newtheorem{defi}[theo]{Definition}   
\newtheorem{lem}[theo]{Lemma}
\newtheorem{prop}[theo]{Proposition}
\newtheorem{rem}[theo]{Remark}   

\newtheorem{theoremintro}{Theorem}
\newtheorem{propintro}{Proposition}
\newtheorem{corollaryintro}{Corollary}
\newtheorem{remarkintro}{Remark}
\newtheorem*{theorem*}{Theorem}
   
\newcommand{\R}{\mathbb{R}}  
 
\newcommand{\N}{\mathbb{N}} 
\newcommand{\Z}{\mathbb{Z}} 
\newcommand{\C}{\mathbb{C}} 
 
\newcommand{\D}{\mathbb{D}}

\newcommand{\rs}{\mathbb{P}^1}

\newcommand{\limn}{\lim_{n \rightarrow \infty}}
\newcommand{\id}{\mathrm{Id}}

\newcommand{\mcal}{\mathcal{M}}
\newcommand{\ocal}{\mathcal{O}}

\newcommand{\bcal}{\mathcal{B}}

\newcommand{\jac}{\mathrm{Jac}}

\newcommand{\eps}{\varepsilon}
\newcommand{\lcal}{\mathcal{L}}

\newcommand{\re}{\mathrm{Re}\,}
\newcommand{\im}{\mathrm{Im}}
\renewcommand{\H}{\mathbb{H}}
\newcommand{\ptwo}{\mathbb{P}^2}
\newcommand{\rcal}{\mathcal{R}}

\title{Parabolic implosion in dimension 2}                         
\author{Matthieu Astorg}
\address{Matthieu Astorg\\ Institut Denis Poisson, Universit\'e d'Orl\'eans}
\email{matthieu.astorg@univ-orleans.fr}
\author{Lorena L\'opez-Hernanz}
\address{Lorena L\'opez-Hernanz\\ Departamento de F\'isica y Matem\'aticas, Universidad de Alcal\'a}
\email{lorena.lopezh@uah.es}
\author{Jasmin Raissy}
\address{Jasmin Raissy, Univ. Bordeaux, CNRS, Bordeaux INP, IMB, UMR 5251, F-33400 Talence, France \& Institut Universitaire de France (IUF)}
\email{jasmin.raissy@math.u-bordeaux.fr}

\begin{document}

\begin{abstract}
In this paper, we extend the theory of parabolic implosion in complex dimension 2 to the case of holomorphic maps tangent to the identity at order 2. We investigate the bifurcation phenomena that occur when a fully parabolic fixed point is perturbed. Under the assumption of a non-degenerate characteristic direction with a formal invariant curve and director $\alpha$ satisfying $\re\alpha> 2$, we establish the existence of Lavaurs maps as limits of iterates $f_{\epsilon_n}^n$ for specific sequences of the perturbation parameter $\epsilon_n$. Finally, we apply these results to prove the discontinuity of the Julia sets $J_1$ and $J_2$ for holomorphic endomorphisms of $\mathbb{P}^2$, generalizing classical one-dimensional results to this higher-dimensional setting.
\end{abstract}

\maketitle

\section{Introduction}

Parabolic implosion is the study of the bifurcation phenomena which occur when 
a multiple (i.e., parabolic) fixed point is perturbed and splits into several fixed points or periodic cycles.
It was first developed by Lavaurs in his PhD thesis (\cite{lavaurs1989systemes}).
A first consequence of this theory is a precise description of the discontinuity (\emph{enrichment}) of Julia sets with respect to the parameter, in the presence of a non-persistent parabolic cycle. 
This was used by Shishikura (\cite{shishikura1998hausdorff}) to prove that the boundary of the Mandelbrot set has Hausdorff dimension 2.
A refinement of parabolic implosion (\emph{near-parabolic renormalization}), developed by Inou and Shishikura (\cite{inou2006renormalization}), has
led to remarkable results, such as the construction of quadratic Julia sets with positive area by Buff and Ch\'eritat (\cite{buff2012quadratic})
or progress towards the hyperbolicity conjecture (\cite{cheraghi2015satellite}).

More recently, the theory of parabolic implosion has started to develop and to find successful applications in higher dimension.
In \cite{BSU17semi}, Bedford, Smillie and Ueda develop a parabolic implosion theory in the setting of semi-parabolic diffeomorphisms in dimension 2, i.e., in the case of a fixed point
with one attracting direction and the other one with multiplier equal to 1.
 In particular, in the important case of a dissipative H\'enon map, they were able to deduce the discontinuity of 
several dynamically defined sets (including the forward Julia set $J^+$ and the closure $J^*$ of the saddle periodic points) with respect to the parameter.
Building on their result, Bianchi and the first named author proved in \cite{AB3} the existence of perturbations of such H\'enon maps 
whose forward Julia set $J^+$ has large Hausdorff dimension.

In \cite{DL15stability},  Dujardin and Lyubich adapted the results of \cite{BSU17semi} to construct homoclinic tangencies for perturbations of dissipative H\'enon maps with a semi-parabolic periodic cycle, with applications 
to bifurcation theory.
In \cite{ABDPR16}, the authors used parabolic implosion techniques to construct the first examples of polynomial maps (in dimension 2) with a wandering Fatou component (see also \cite{astorg2023wandering}, \cite{astorg2024dynamics}). The dynamical systems under consideration are polynomial skew-products, hence the techniques employed
can be seen as a non-autonomous version of one-dimensional parabolic implosion.
Finally, in \cite{bianchi19parabolic}, Bianchi obtained results analogous to those of \cite{BSU17semi} but for the more difficult case of maps with a fully parabolic fixed point, i.e., in the case where the differential at the fixed point is the identity. The purpose of this article is to extend the results from \cite{bianchi19parabolic}.

\medskip

Let us now provide a quick overview of classical parabolic implosion in dimension one, in the simplest case of a parabolic fixed point with just one attracting and one repelling petal.
Let $f: \C \to \C$ be a holomorphic map of the form $f(z)=z+z^2+\ocal(z^3)$. 
By the classical Leau-Fatou theorem, there exists $r>0$ and univalent maps $\phi^\iota : \D(-r,r) \to \C$ and $\phi^o: \D(r,r) \to \C$ such that 
\begin{enumerate}
\item $P^\iota:=\D(-r,r)$ is forward invariant under $f$, and $P^o:=\D(r,r)$ is invariant under the branch of $f^{-1}$ fixing the origin;
\item $\phi^\iota \circ f = \phi^\iota + 1$, and $\phi^o \circ f = \phi^o +1$.
\end{enumerate}
The domains $P^\iota$ and $P^o$ are respectively called incoming and outgoing petals and the maps $\phi^\iota$ and $\phi^o$ are called incoming and outgoing Fatou coordinates.
Let 
$$\bcal:=\{z \in \C: f^n(z) \to 0 \text{ and } f^n(z) \neq 0 \quad \forall n \geq 0  \}$$ be the \emph{parabolic basin}. 
Then $P^\iota \subset \bcal$, and moreover $\bcal = \bigcup_{n \geq 0} f^{-n}(P^\iota)$. The incoming Fatou coordinate extends to a holomorphic map $\phi^\iota: \bcal \to \C$ and the inverse $(\phi^o)^{-1}$ of the outgoing Fatou coordinate extends to a holomorphic map $\psi^o : \C \to \C$, called the outgoing Fatou parametrization. We refer the reader to \cite{milnor2011dynamics}  for details.
In particular, for any $\sigma \in \C$, the change of coordinate $\lcal_\sigma:=\psi^o \circ T_\sigma \circ \phi^\iota$ is well-defined on $\bcal$, 
where $T_\sigma(z):=z+\sigma$ is the translation of vector $\sigma$. It is called the \emph{Lavaurs map} of phase $\sigma$.\footnote{Fatou coordinates are in fact only unique up to addition by a constant; thus $\lcal_\sigma$ may be thought of as $(\phi^o)^{-1} \circ \phi^\iota$, for a different choice of $\phi^\iota$. In practice, we will work with some unique normalizations of Fatou coordinates based on their formal expansion at $0$.}

Consider now the family of perturbations $f_\eps(z) = f(z) + \eps^2$, $\eps\in \C$.
For $\eps$ small but non-zero, the double fixed point at the origin for $f$ splits into 2 simple fixed points for $f_\eps$, of the form $z^\pm(\eps) = \pm i \eps + \ocal(\eps^2)$. 
If we take {say} $\eps>0$, then Lavaurs proved that orbits under $f_\eps$ starting from a point in $P^\iota$ will approach the origin, then cross the "gate" given by the vertical segment $[z^-(\eps), z^+(\eps)]$ between $z^-(\eps)$ and $z^+(\eps)$ and then move away from the origin inside $P^o$. Since $f_\eps$ is close to the identity near $0$, as $\eps \to 0$ it takes more and more iterations to do this. At the limit, we obtain in this way a "transit map" from $P^\iota$ to $P^o$, which is useful for studying the dynamics of $f_\eps$.
It turns out that this transit map is exactly the Lavaurs map defined above.
More precisely:

\begin{theorem*}[Lavaurs, \cite{lavaurs1989systemes}]
Let $(\eps_n)_{n \in \N}$ be a sequence of complex numbers, and $\sigma \in \C$. Assume that $\limn \left(n-\pi/\eps_n\right)= \sigma$. Then $f_{\eps_n}^n \to \lcal_\sigma$ locally uniformly on $\bcal$.
\end{theorem*}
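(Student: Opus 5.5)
We follow the classical strategy via perturbed Fatou coordinates. For $\eps$ small with $\re \eps>0$ and $|\arg\eps|$ bounded away from $\pi/2$ (which holds along the sequence, since $n-\pi/\eps_n\to\sigma$ forces $\eps_n\sim\pi/n$), we construct on a domain $\Omega_\eps$ near the origin an \emph{approximate Fatou coordinate} conjugating $f_\eps$ almost to the translation by $1$. Concretely, the vector field $\dot z=z^2+\eps^2$ has time function
\[
\tau_\eps(z)=\frac{1}{2i\eps}\log\frac{z-i\eps}{z+i\eps},
\]
with a fixed branch on the complement of the gate. We then correct it, for instance by taking
\[
\Phi_\eps=\tau_\eps+c\log(z^2+\eps^2)+\text{const}
\]
where $c$ absorbs the $\ocal(z^3)$ term (the résidu itératif), so that $\Phi_\eps\circ f_\eps=\Phi_\eps+1+\ocal(|z|)$ uniformly on a Lavaurs-type "eggbeater" domain. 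Going around the gate changes $\tau_\eps$ by $\pi/\eps$, which is the origin of the $\pi/\eps_n$ in the statement.

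Next, we show that on the incoming side $\Phi_\eps-\phi^\iota\to$ const and on the outgoing side $\Phi_\eps-\bigl(\phi^o+\pi/\eps\bigr)\to$ const, as $\eps\to0$, uniformly on compacts of $P^\iota$, respectively $P^o$. With suitable normalizations both constants are $0$. The route is to construct genuine Fatou coordinates $\phi^\iota_\eps$ and $\phi^o_\eps$ of $f_\eps$ on $\Omega_\eps$, by conjugating and solving with a fixed-point or quasiconformal argument, and to prove that $\phi^\iota_\eps\to\phi^\iota$ and $\phi^o_\eps-\pi/\eps\to\phi^o$ locally uniformly. These two satisfy the exact relation $\phi^o_\eps=\phi^\iota_\eps+\text{const}(\eps)$ on $\Omega_\eps$, with the constant equal to $-\pi/\eps+o(1)$ after normalization.

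Given these, the theorem follows from a counting argument. Take $z$ in a compact subset $K\subset P^\iota$ and let $m=m(\eps_n)$ be fixed but large. Points $f_{\eps_n}^k(z)$ stay in $\Omega_{\eps_n}$, and in Fatou coordinates each step is exactly $+1$, so
\[
\phi^o_{\eps_n}\bigl(f^{n-m}_{\eps_n}(z)\bigr)=\phi^\iota_{\eps_n}(z)+n-m-\pi/\eps_n+o(1)\to\phi^\iota(z)+\sigma-m.
\]
Hence $f^{n-m}_{\eps_n}(z)\to\psi^o\bigl(\phi^\iota(z)+\sigma-m\bigr)$, and $m$ further iterates of $f_{\eps_n}\to f$ give $\psi^o(\phi^\iota(z)+\sigma)=\lcal_\sigma(z)$. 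We choose $m$ large so that the point lands in a compact part of $P^o$ where the convergence $\phi^o_\eps-\pi/\eps\to\phi^o$ is uniform. For a general compact $K\subset\bcal$, we use that $f^N(K)\subset P^\iota$ for some fixed $N$, that $f_{\eps_n}^N\to f^N$, and the functional equation $\lcal_\sigma\circ f=\lcal_\sigma$ (which holds since $\phi^\iota\circ f=\phi^\iota+1$ and $\psi^o(w+1)=f(\psi^o(w))$). This extends the convergence to all of $\bcal$.

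The main obstacle is the second step: the uniform control of the perturbed Fatou coordinates all the way through the gate. One must show that orbits actually remain in $\Omega_\eps$ for about $\pi/\eps$ steps, and that the errors $\ocal(|z|)$ of the approximate coordinate sum to $o(1)$. The plan is to bound $\sum_k |f^k_\eps(z)|$ along the orbit, comparing with the model flow where $|z(t)|\approx|\eps|/|\sin(\eps t)|$-type profiles give $\sum_k|z_k|^2=\ocal(1)$. Since an $\ocal(|z|)$ error sums only to a logarithmic divergence, we expect to need the $\log(z^2+\eps^2)$ correction term, which improves the error to $\ocal(|z|^2+|\eps|^2)$-type bounds. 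The total is then $\ocal\bigl(|\eps|\cdot(\pi/|\eps|)\cdot|\eps|\bigr)+\ocal(1/R)$, with $R$ the distance cut-off of the domain, and letting $R\to\infty$ makes it $o(1)$.
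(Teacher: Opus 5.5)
The paper does not prove this statement; it quotes it from Lavaurs. Your strategy is nonetheless the one the paper carries out for Theorem~\ref{th:explicit}: an approximate coordinate $\frac1\eps\arctan\frac z\eps+\frac{1-a}2\log(z^2+\eps^2)$ (your $\tau_\eps$ plus correction, up to an additive constant), a per-step error summed over the $\approx\pi/\eps$ iterates in the eggbeater (Proposition~\ref{prop:A-estimate}), and matching with $\phi^\iota,\phi^o$ at both ends. As you set it up, however, the central error estimate is false.

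Write $f(z)=z+z^2+az^3+\ocal(z^4)$. The fixed points of $f_\eps=f+\eps^2$ are $\pm i\eps+\frac a2\eps^2+\ocal(\eps^3)$, not $\pm i\eps$. Expanding, for $\Phi_\eps=\tau_\eps+\frac{1-a}2\log(z^2+\eps^2)$, gives on the region swept by the orbits
\[
\Phi_\eps\circ f_\eps-\Phi_\eps-1=-\frac{a\eps^2 z}{z^2+\eps^2}+\ocal\bigl(|z|^2+|\eps|^2\bigr).
\]
Along the model orbit $z_k\approx-\eps\cot(\eps k)$, the first term is $\approx\frac{a\eps}{2}\sin(2\eps k)$. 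The sum of the absolute errors over $\approx\pi/\eps$ iterates is therefore of order $|a|$, not $o(1)$, which is exactly the precision at which $\sigma$ is determined. The signed sum is in fact $o(1)$ by cancellation, but your plan uses only absolute bounds.

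The fix is the normalization the paper performs before estimating. Conjugate by the affine map sending the two fixed points to $\pm i\eps$ (the map $N_\eps$ in the proof of Lemma~\ref{lem:affinechange+d_m+3}), so that $f_\eps(z)=z+(z^2+\eps^2)\bigl(1+az+p\eps+\ocal(z^2,z\eps,\eps^2)\bigr)$, and make sure $p=0$. Here $p=0$ is automatic: $\eps\mapsto-\eps$ exchanges the fixed points, so the normalizing map and the conjugated family depend only on $\eps^2$. In general a term $p\eps$ adds $p\pi$ to the total, and the paper removes it by reparametrizing $\eps$ at the start of Section~\ref{sec:approximate}. Only after this is the per-step error $\ocal(|z|^2+|z\eps|+|\eps|^2)$, which sums to $\ocal(1/R)+o(1)$ on the part of the orbit where $|z|\lesssim1/R$.

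The matching step also needs correcting. $\Phi_\eps-\phi^\iota$ does not tend to a constant on compacts of $P^\iota$. As $\eps\to0$, $\Phi_\eps$ tends (up to a constant) to $-1/z+(1-a)\log(-z)$, which agrees with $\phi^\iota$ only up to $o(1)$ as $z\to0$. So the switch between coordinates has to happen at a scale $|z|\to0$. The paper switches at $|z|\asymp1/k_n$ with $k_n=\lfloor n^\gamma\rfloor$ and $\gamma\in(1/2,2/3)$. Shadowing $f$ during the first $k_n$ steps then costs $\ocal(k_n/n+k_n^3/n^2)$ (Lemma~\ref{lem:firstkn}), and inside the eggbeater the per-step error is uniformly $o(1/n)$ because $|z|\lesssim1/k_n$ and $k_n^{-2}=o(1/n)$. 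Your fixed cut-off with $R\to\infty$ at the end can also work, but it needs the position-dependent bound $|z_k|\lesssim1/\min(k,n-k)$ rather than a uniform one. Once this is done, the genuine perturbed coordinates $\phi^{\iota/o}_\eps$ on $\Omega_\eps$ are unnecessary. As written, their construction and convergence are only asserted, and the claim that the constant is $-\pi/\eps+o(1)$ is the theorem itself.

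There are also two bookkeeping errors:
\begin{enumerate}
\item Combining ``$\phi^o_\eps-\pi/\eps\to\phi^o$'' with ``$\phi^o_\eps=\phi^\iota_\eps-\pi/\eps+o(1)$'' subtracts $\pi/\eps$ twice.
\item $\lcal_\sigma\circ f=\lcal_\sigma$ is false: your own two relations give $\lcal_\sigma\circ f=f\circ\lcal_\sigma=\lcal_{\sigma+1}$. For a compact $K\subset\bcal$, write $f_{\eps_n}^n=f_{\eps_n}^{n-N}\circ f_{\eps_n}^N$, note that $(n-N)-\pi/\eps_n\to\sigma-N$, and conclude with $\lcal_{\sigma-N}\circ f^N=\lcal_\sigma$.
\end{enumerate}
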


\medskip

We now move on to the setting of complex dimension 2. 
Let $f: U{\subset\C^2} \to \C^2$  be a holomorphic map on a neighborhood $U$ of the origin, with a power series expansion of the form
$$f = \mathrm{Id} + P_2 + P_{3} + \ldots$$ 
where the $P_j$ are homogeneous degree $j$ polynomial maps from $\C^2$ to $\C^2$, and $P_2\not\equiv 0$. Such a map is called \emph{tangent to the identity at order 2}.
Following Hakim~\cite{hakim1998analytic} and \'Ecalle~\cite{ecalleresurgentes}, we say that $v \in \mathbb{C}^2 \setminus \{(0,0)\}$ is a \emph{characteristic direction} for $f$ if there exists  $\lambda \in \mathbb{C}$ so that
$P_k(v) = \lambda v$. 
If $\lambda \neq 0$ then $v$ is said to be \emph{non-degenerate}.  We shall denote by $v\mapsto [v]$ the canonical projection of
$\mathbb{C}^{2}\backslash \{(0,0)\}$ onto $\mathbb{P}^{1}$.  The \emph{director} of a non-degenerate characteristic direction $v$ is the eigenvalue of the linear operator 
$$
d(P_2)_{[v]}-\id:T_{[v]}\mathbb{P}^1\rightarrow T_{[v]}\mathbb{P}^1.
$$
If the real part of the director of a non-degenerate characteristic direction $v$ is strictly positive, Hakim proved in \cite{hakim1997transformations} that for any $C>0$ there exist incoming and outgoing petals $P_C^\iota$ and $P_C^o$ and incoming/outgoing Fatou coordinates $\Phi^{\iota/o}: P_C^{\iota/o} \to \C^2$ conjugating $f$ to a translation of vector $(1,0)$
(see Propositions \ref{prop:incomfatou} and \ref{prop:outgofatou} for details).
Let
$$\bcal_{U,v}:=\{(x,y)\in U: f^n(x,y)\to(0,0) \text{ tangentially to } v\}$$
denote the \emph{parabolic basin associated to $v$}.
Similarly to  the one-dimensional case, we have that 
$\bcal_{U,v} = \bigcup_{C>0}\bigcup_{n \geq 0} f^{-n}(P_C^\iota)$ (see \cite{lopez2025flower}).
 
We say that a formal non-singular curve $\mathcal{C}$ is invariant for $f$ if given a parametrization $\gamma(t)$ of $\mathcal{C}$ (i.e., $\gamma(t) \in \C[[t]]^2$ with $\gamma(0)=(0,0)$ and  $\gamma'(0) \neq (0,0)$) there exists $h \in t\C[[t]]$ with $h'(0)\ne0$ such that 
$$f \circ \gamma = \gamma \circ h.$$
The tangent of $\mathcal{C}$ is, by definition, $\C \cdot \gamma'(0)$.

From now on, we will assume that $f: U{\subset\C^2} \to \C^2$ is a holomorphic map defined on a neighborhood $U$ of the origin and which satisfies the following assumption: 

\begin{enumerate}
\item[$(H_1)$] The map $f$ is tangent to the identity at order $2$ and has a non-degenerate characteristic direction $v$, with a formal {non-singular} invariant curve $\mathcal{C}$ tangent to $v$, and a director $\alpha$ such that $\re\alpha>2$.
\end{enumerate}

It is worth mentioning that the existence of a non-singular formal invariant curve tangent to a non-degenerate characteristic direction $v$ is a generic hypothesis. It is equivalent to the existence of an analytic curve $C$ tangent to $v$ which is preserved up to order $k=\lfloor \re\alpha\rfloor+2$ in the following sense: if $\gamma(t)$ is a parametrization of $C$ then there exists $h \in t\C\{t\}$ such that
$$f \circ \gamma - \gamma \circ h  \in \langle t^{k+1} \rangle$$
and this condition is always satisfied when $\alpha\not\in\N$ (see [\cite{hakim1998analytic}, Section 3]). 

We now state a first, non-technical version of our main result:

\begin{theoremintro}[Non-technical version]\label{th:v1}
Let $f:U\subset\C^2\to\C^2$ be a holomorphic map satisfying $(H_1)$.  Then for {any} $q \in \C$ there exists a holomorphic family of holomorphic maps $(f_\eps: U \to \C^2)_{\eps \in \D}$ with $f_0=f$ such that for any $\sigma \in \C$ and for any compact set $K \subset \bcal_{U,v}$ there exist $N \in \N$ and a sequence 
$\eps_n \to 0$ such that 
$f_{\eps_n}^{n-N} \to (\Phi^o)^{-1} \circ A_{\sigma-N,q} \circ \Phi^\iota$
uniformly on $K$, where 
$A_{\sigma-N,q}(X,Y):=(X+\sigma-N, e^{\pi q} Y)$. 
\end{theoremintro}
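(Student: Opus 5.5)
We will reduce to a normal form, build an explicit family, and then run Lavaurs' one-dimensional argument on the $x$-coordinate while controlling the transverse coordinate $y$.

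\textbf{Normal form.} Using $(H_1)$ and the result recalled above (Hakim, Section 3), choose local coordinates in which the invariant curve $\mathcal{C}$ is the $x$-axis up to order $k=\lfloor\re\alpha\rfloor+2$. After scaling, $f$ takes the form
$$f(x,y)=\bigl(x+x^2+O(\|(x,y)\|^3),\; y(1-\alpha x)+O(x^{k+1}, \ldots)\bigr),$$
so that $y$ is contracted along the incoming direction like $x^{\alpha}$. Propositions \ref{prop:incomfatou} and \ref{prop:outgofatou} then give the Fatou coordinates $\Phi^\iota=(\phi^\iota,\psi^\iota)$ and $\Phi^o$. Their first component behaves like $-1/x$ plus logarithmic corrections. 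Their second component behaves like $y\,x^{-\alpha}$ times a unit, normalized by the formal expansion.

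\textbf{The family.} Given $q$, we set
$$f_\eps(x,y)=f(x,y)+\bigl(\eps^2,\; \beta(\eps)\,y\bigr),$$
with $\beta(\eps)$ chosen so that the fixed points $x^\pm\approx\pm i\eps$ have transverse multipliers whose ratio over the transit produces the factor $e^{\pi q}$. A natural first choice is $\beta(\eps)=c\,\eps^2$ with $c$ tuned to $q$. Along the transit, the $y$-component is multiplied by approximately
$$\prod(1-\alpha x_j+c\eps^2)\approx \exp\Bigl(\sum(-\alpha x_j+c\eps^2)\Bigr).$$
The sum $\sum -\alpha x_j$ near the gate is accounted for by the change between $x^{-\alpha}$ in the incoming and outgoing coordinates. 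The extra term $c\eps^2\cdot n\approx c\eps^2\pi/\eps=c\pi\eps$ vanishes, however, so the perturbation must be of order $\eps$ instead: we take $\beta(\eps)=q\eps$. Then $n\beta\approx \pi q$, which gives $e^{\pi q}$. Since $q\in\C$ and $\eps_n$ may be complex, the choice of square-root branch also adjusts the sign.

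\textbf{Approximate Fatou coordinates for $f_\eps$.} Following Lavaurs and Bianchi \cite{bianchi19parabolic}, we construct for $\eps\neq0$ coordinates
$$\phi_\eps(x)=\frac{1}{2i\eps}\log\frac{x-x^-}{x-x^+}+\ldots$$
together with a corresponding transverse coordinate $\psi_\eps(x,y)=y\,u_\eps(x)$. Here $u_\eps$ solves, up to a summable error, the cohomological equation
$$u_\eps(f_\eps)\,(1-\alpha x+q\eps)=u_\eps(x).$$
These coordinates should conjugate $f_\eps$ to $(X,Y)\mapsto(X+1+\text{error},\,Y+\text{error})$ on a large domain that covers both petals and the gate. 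Moreover, $(\phi_\eps,\psi_\eps)$ should converge to $\Phi^\iota$ on $P^\iota$ and to $A_{\sigma,q}^{-1}$-shifted $\Phi^o$ on $P^o$ whenever $n-\pi/\eps_n\to\sigma$. The hypothesis $\re\alpha>2$, together with invariance of $\mathcal{C}$ to order $k$, is what makes the errors summable. The $O(x^{k+1})$ terms, integrated along an orbit of length about $1/|\eps|$ near the gate, are dominated by the contraction $|x|^{\re\alpha}$. This estimate, showing that orbits stay close to the invariant curve during the transit and that the accumulated error in $Y$ tends to $0$, is the main obstacle. I would control it by a discrete Gronwall-type argument on the quantity $|y|/|x|^{\,\re\alpha-\delta}$.

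\textbf{Conclusion.} For $K\subset\bcal_{U,v}$, pick $N$ such that $f^N(K)\subset P^\iota_C$. Then choose $\eps_n$ with $n-2N-\pi/\eps_n\to\sigma-N$; equivalently, adjust so that the composite phase is correct. By continuity in $\eps$, $f_{\eps_n}^N\to f^N$ uniformly on $K$. The transit of $n-2N$ steps then converges to $(\Phi^o)^{-1}\circ A\circ\Phi^\iota$ by the previous step, and the final bookkeeping of the $N$ shifts gives $f_{\eps_n}^{n-N}\to(\Phi^o)^{-1}\circ A_{\sigma-N,q}\circ\Phi^\iota$ on $K$.
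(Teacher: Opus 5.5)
Your normal form and perturbation are essentially the paper's: straighten $\mathcal C$ to high order, then add $\eps^2$ to the $x$-component and $q\eps\,y$ to the $y$-component. Once the sign issue below is fixed, your transverse coordinate $\psi_\eps=y\,u_\eps(x)$ is the paper's $t_\eps=y/(x^2+\eps^2)^{\eta/2}$ times the factor $e^{-q\arctan(x/\eps)}$ produced by the $q\eps$ term.

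\textbf{The gap: approximate coordinates do not converge on a fixed petal.} Explicit approximate Fatou coordinates cannot converge to $\Phi^\iota$ (or to a shifted $\Phi^o$) on a fixed petal. Their one-step defect there is $\ocal(|x|^2)$. Summed along the orbit of a fixed $x_0\in P^\iota$, this gives $\ocal(|x_0|)$, not $o(1)$. For instance, the paper's $w_\eps(x)$ tends to $-1/x+(1-a)\log(-x)$, which in general differs from the first component of $\Phi^\iota$ by a non-constant function. So there is no domain ``covering both petals and the gate'' on which such coordinates do the job. You also give no construction of genuine Fatou coordinates for $f_\eps$ to replace them.

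The missing idea is the paper's two-scale matching:
\begin{enumerate}
\item For the first $k_n=\lfloor n^\gamma\rfloor$ iterates, compare $f_{\eps_n}$-orbits with $f$-orbits inside the \emph{genuine} coordinate $\Phi^\iota$. The one-step error is $\ocal(1/n+|X|^2/n^2)$ with $|X|\le 10k_n$, so the total error is $\ocal(k_n/n+k_n^3/n^2)$, which forces $\gamma<2/3$.
\item Only once $|x|\sim 1/k_n$, where $\Phi^\iota$ and the approximate coordinates agree up to $o(1)$, switch to the latter for the $n-2k_n$ iterates in the eggbeater. There the error must be $o(1/n)$ per step.
\item Match symmetrically with $\Phi^o$, stopping a fixed number $N_1$ of steps early so that the orbit is still in $P^o$, where $(\Phi^o)^{-1}$ is defined.
\end{enumerate}
This last role of $N$ is distinct from pushing $K$ into $P^\iota$, and your bookkeeping conflates the two.

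\textbf{Where $\re\alpha>2$ is actually used.} In the paper's argument, the hypothesis is consumed at this matching, not where you put it.
\begin{enumerate}
\item Your normal form has the wrong sign and exponent. Since $\eta=\alpha+1$ and $x\mapsto x+x^2$ has its incoming direction along $x<0$, the second component is $y(1+(\alpha+1)x+\cdots)$. So $y$ is contracted like $|x|^{\re\alpha+1}$; with $y(1-\alpha x)$ it would be repelled along $P^\iota$.
\item The non-invariance terms are harmless once $\mathcal C$ is straightened to a high enough jet. The paper uses the jet of order $m+2$, which gives $d_0=\ocal(x^{m+3})$.
\item What needs $\re\alpha>2$ is the feedback of $y$ into the slow passage of $x$ through the term $y\,b_\eps(x,y)$. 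In the paper's estimate of $w_{\eps_n}$ this term is multiplied by $1/(\eps_n(x\pm i\eps_n))=\ocal(n^2)$. One therefore needs $y\,b_{\eps_n}(x,y)-bxy=o(n^{-3})$ in the eggbeater, whereas there $|y|$ is only $\ocal(k_n^{-(\re\alpha+1)})$. This requires $\gamma(\re\alpha+1)>2$, which is compatible with $\gamma<2/3$ exactly when $\re\alpha>2$.
\item The Gronwall control you propose on $|y|/|x|^{\re\alpha-\delta}$ cannot hold through the gate. There $x\approx-\eps\cot(\eps X)$ comes within $\ocal(\eps^2)$ of $0$ while $|y|\asymp\eps^{\re\alpha+1}$, so the ratio is unbounded. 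The quantity that stays bounded above and below is $y/(x^2+\eps^2)^{\eta/2}$: the transverse coordinate must be normalized by $x^2+\eps^2$, not by a power of $x$.
\end{enumerate}
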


Since the maps $f$ and $f_\eps$ are only defined on $U$, in general the iterates $f_{\eps_n}^n$ may not be well-defined; however Theorem~\ref{th:v1} implies that there is a constant integer $N$ such that $f_{\eps_n}^{n-N}$ is well-defined on $K$ for all $n$ large enough. Moreover, since the map
$$\lcal_{\sigma-N, q}=(\Phi^o)^{-1} \circ A_{\sigma-N,q} \circ \Phi^\iota$$ satisfies 
$$f \circ \lcal_{\sigma-N, q} = \lcal_{\sigma-N, q} \circ f = \lcal_{\sigma-N+1, q},$$
it is not difficult to see that we also have 
$f_{\eps_n}^{n-N'} \to \lcal_{\sigma-N',q}$ 
for any $N' \in \Z$ such that  $\lcal_{\sigma-N',q}(K) \subset U$. In particular, in the case where the maps $f_\eps$ are global endomorphisms of a complex manifold we may simply take $N=0$.

We can be more precise on the requirements for the family of perturbations $(f_\eps)_{\eps \in \D}$, and interpret  the constants $\sigma$ and $q$ in terms of $f_\eps$. We will make the following assumptions:

\begin{enumerate}
\item[$(H_2)$] The family $(f_\eps)_{\eps \in \D}$ preserves the formal curve $\mathcal{C}$ to order $m+1$, where $m:=\lfloor\re\alpha\rfloor+1$, in the following sense: {if $\gamma(t)$ is a parametrization of $\mathcal{C}$,} there exists $h_\eps \in \ocal(\D)[[t]]$ such that
$$f_\eps \circ \gamma - \gamma \circ h_\eps \in \langle t^{m+2}, \eps t^{m+1}, \ldots, \eps^{m+2} \rangle.$$
\item[$(H_3)$] For all $\eps$ in a neighborhood of $0$, $f_\eps$ has exactly 4 fixed points $z_i(\eps)$ near the origin ($1 \leq i \leq 4$) and counted with multiplicity, which depend holomorphically on $\eps$ and such that $z_i'(0):=\frac{d}{d\eps}_{|\eps=0} z_i(\eps)$ are non-zero and pairwise distinct.
\end{enumerate}

Note that the existence of 4 fixed points depending holomorphically on $\eps$ is always satisfied up to passing to a branched cover in parameter space (although doing so affects the derivatives $z_i'(0)$). 

{With a slight abuse of notation we will say that $(f_\eps)_{\eps \in \D}$  \emph{satisfies $(H_1)-(H_3)$} when $f_0$ satisfies $(H_1)$ and $(f_\eps)_{\eps \in \D^*}$ satisfies $(H_2)$ and $(H_3)$.}

We can interpret the constants $\sigma$ and $q$ in terms of the multipliers of the fixed points of $f_{\eps}$. To make this precise, we first need the following Proposition, whose proof is deferred until the next section:

\begin{propintro}\label{prop:2in1intro}
Assume that $(f_\eps: U \to \C^2)_{\eps \in \D}$ satisfies $(H_1)-(H_3)$. Then there are exactly two fixed points of $f_\eps$, say $z_1(\eps),z_2(\eps)$, which are asymptotically tangent to $v$ for $\eps$ small, i.e.,
$$z_i'(0) \in \C^* v,  \quad 1 \leq i \leq 2.$$
Moreover, if the eigenvalues $\lambda_i(\eps), \mu_i(\eps)$ of $z_i(\eps)$ satisfy that $\lambda_i'(0)\neq \mu_i'(0)$ then one of the eigenspaces of $z_i(\eps)$ tends to $\C v$ as $\eps\to0$, and the condition $\lambda_i'(0)\neq \mu_i'(0)$ holds for at least one of the fixed points.
\end{propintro}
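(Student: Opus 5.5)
The plan is to work in adapted coordinates and reduce everything to the first-order term in $\eps$ of the fixed-point equation, after rescaling $z=\eps w$. After a linear change of coordinates take $v=(1,0)$. Choosing a polynomial truncation of the parametrization $\gamma$ of $\mathcal{C}$ and straightening it by a change of coordinates $(x,y)\mapsto(x,y-\tilde\gamma(x))$, I may assume that $\mathcal{C}$ agrees with $\{y=0\}$ to high order. Then $(H_2)$ says that the second component $g_\eps$ of $f_\eps$ satisfies
$$g_\eps(x,0)\in\langle x^{m+2},\eps x^{m+1},\dots,\eps^{m+2}\rangle .$$
Since $m+1\geq 3$, every monomial of total degree $\le 2$ in $(x,\eps)$ is absent from $g_\eps(x,0)$.

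\textbf{Step 1: the rescaled quadratic map $Q$.} Because the $z_i(\eps)$ are holomorphic, vanish at $0$, and have $z_i'(0)\ne0$, I expand
$$f_\eps(z)-z=\eps c_1+\eps^2 c+\eps L(z)+P_2(z)+(\text{higher order in }(z,\eps)),$$
with $c_1,c\in\C^2$ and $L$ linear. Substituting $z=\eps w$, the $\eps^1$ term is $c_1$, so $c_1=0$ is forced by the existence of fixed points of size $O(\eps)$. The fixed-point equation then becomes $\eps^2\,[Q(w)+O(\eps)]=0$, where
$$Q(w)=c+L(w)+P_2(w).$$
By $(H_3)$ and Bezout, the four vectors $z_i'(0)$ are exactly the four zeros of $Q$, and they are simple since they are pairwise distinct. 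The vanishing noted above gives $Q_2(w_1,0)\equiv0$. The first component is $Q_1(w_1,0)=\lambda w_1^2+\ell w_1+c^{(1)}$, where $P_2(v)=\lambda v$ and $\lambda\neq0$ by non-degeneracy. Hence the zeros of $Q$ on the axis $\C v$ are exactly the two roots $r_1,r_2$ of this quadratic. They are distinct by $(H_3)$ and nonzero because $z_i'(0)\ne0$. This proves that exactly two of the $z_i(\eps)$, namely $z_1,z_2$, satisfy $z_i'(0)\in\C^*v$.

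\textbf{Step 2: eigenvalues and eigenspaces.} At $z_i(\eps)$ we have
$$Df_\eps=\id+\eps M_i+O(\eps^2),\qquad M_i=DQ(r_i,0),$$
so $\lambda_i'(0),\mu_i'(0)$ are the eigenvalues of $M_i$. Since $Q_2(w_1,0)\equiv0$, we get $\partial_{w_1}Q_2(r_i,0)=0$, and $M_i$ is upper triangular:
$$M_i=\begin{pmatrix}\partial_1Q_1 & \partial_2Q_1\\ 0 & \partial_2Q_2\end{pmatrix}.$$
Its eigenvector for the eigenvalue $\partial_1Q_1$ is $v$. If $\lambda_i'(0)\ne\mu_i'(0)$, this eigenvalue of $M_i$ is simple. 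By continuity of eigenspaces of simple eigenvalues applied to $(Df_\eps-\id)/\eps$, the corresponding eigenspace of $Df_\eps(z_i(\eps))$ tends to $\C v$.

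\textbf{Step 3: the nondegeneracy condition, which is the main point.} I need some $i$ with $\partial_1Q_1(r_i,0)\ne\partial_2Q_2(r_i,0)$. First, $\partial_1Q_1(r_i,0)=2\lambda r_i+\ell$ equals $\lambda(r_1-r_2)$ for $i=1$ and $\lambda(r_2-r_1)$ for $i=2$, using $r_1+r_2=-\ell/\lambda$. Second, writing the second component of $P_2$ as $y(\beta x+\gamma y)$, we have $\partial_2Q_2(w_1,0)=\beta w_1+b_0$, which is affine in $w_1$. If equality held at both roots, subtracting the two equations would give $2\lambda(r_1-r_2)=\beta(r_1-r_2)$, hence $\beta=2\lambda$.

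On the other hand, in the chart $u=y/x$ the induced map $[P_2]$ on $\mathbb{P}^1$ has derivative $\beta/\lambda$ at $[v]$, so the director is $\alpha=\beta/\lambda-1$. Then $\beta=2\lambda$ gives $\alpha=1$, contradicting $\re\alpha>2$.

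The delicate parts are the preliminary coordinate change, which must make $(H_2)$ translate exactly into $Q_2(\cdot,0)\equiv0$ without disturbing the first-order data, and the correct identification of the director in these coordinates.
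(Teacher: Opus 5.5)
Your proof is correct and is essentially the paper's argument. Your $Q$ is, up to normalization, the limit $H$ of the rescaled maps $H_\eps$ in Lemma~\ref{lem:fptgtv}. Your upper-triangular $M_i$, with diagonal entries $\pm\lambda(r_1-r_2)$ and $b_0+\beta r_i$, is what the paper reads off as $\pm 2i$ and $q\pm i\eta$ after the normal-form reduction of Section~\ref{sec:diffvers}. Degeneracy at both points then forces $\eta=2$, i.e.\ $\alpha=1$, a step you make explicit and the paper leaves implicit.

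You bypass the Weierstrass preparation and the normal form, which the paper needs only for Theorem~\ref{th:v2}. The one detail to add is that Bezout requires $Q_1,Q_2$ to have no common component. This follows from $(H_3)$: multiplicity exactly $4$ of the fixed point of $f_0$ forces the two components of $P_2$ to share no linear factor, so the two affine conics have no common point at infinity and therefore no common component.
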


\begin{remarkintro}\label{rem:H_3'}
If $(H_1)$ and $(H_2)$ are satisfied, then the proof of Proposition \ref{prop:2in1intro} will show that 
$(H_3)$ can be replaced by the following slightly weaker condition: 
\begin{enumerate}
\item[$(H_3')$] The jet of order $m+1$ of $h_{\eps}$ has two fixed points $w_\pm(\eps)$, and 
$w_+'(0) \neq w_-'(0)$.
\end{enumerate}
Note that even though the fixed points $w_\pm(\eps)$ depend on the higher order terms of $h_\eps$, their derivatives at $0$ do not.
\end{remarkintro}

We can now state a second version of our results, which is more explicit on the requirements on the family of perturbations $(f_\eps)_{\eps \in \D}$. To do so, we will use the following convention: let $z_1(\eps)$ and $z_2(\eps)$ be the fixed points from Proposition~\ref{prop:2in1intro} and let $\lambda_1(\eps),\mu_1(\eps)$ and $\lambda_2(\eps),\mu_2(\eps)$ be their eigenvalues. If $\lambda_i'(0)\neq\mu_i'(0)$, 
we denote by $\rho_T^i(\eps)$ the eigenvalue whose eigenspace tends to $\C v$ as $\eps\to 0$ and by $\rho_N^i(\eps)$ the other one; if 
$\lambda_i'(0)=\mu_i'(0)$, we assign the names $\rho_T^i(\eps)$ and $\rho_N^i(\eps)$ indifferently to the two eigenvalues $\lambda_i(\eps)$ and $\mu_i(\eps)$.

\begin{theoremintro}[Coordinate-free version]\label{th:v2}
Let $(f_\eps: U \to \C^2)_{\eps \in \D}$ be a family of holomorphic maps satisfying $(H_1)-(H_3)$. Let $z_1(\eps), z_2(\eps)$ be the fixed points from Proposition~\ref{prop:2in1intro} and denote by $\rho_T^i(\eps)$ and $\rho_N^i(\eps)$ ($1\le i\le 2$) their eigenvalues, using the convention above. Let  
$$q:=\lim_{\eps\to0} \frac1{\eps}\frac{\rho_N^1(\eps)+\rho_N^2(\eps)-2}{\rho_T^1(\eps)+\rho_T^2(\eps)}.$$ 
There exists a constant $\sigma_0 \in \C$ such that for any $1\le i\le2$, for any $\sigma \in \C$, for any sequence $(\eps_n)_{n \in \N}$  such that 
\begin{equation*}
\frac{2i\pi}{\rho_T^i(\eps_n)-1} = n- \sigma + o(1),
\end{equation*}
and for any compact set $K \subset \bcal_{U,v}$, there exists $N \in \N$ such that
$$f_{\eps_n}^{n-N} \to (\Phi^o)^{-1} \circ A_{\sigma+\sigma_0-N,q} \circ \Phi^\iota$$ 
uniformly on $K$, where
$A_{\sigma+\sigma_0-N,q}(X,Y):=(X+\sigma+\sigma_0-N, e^{\pi q} Y)$.
\end{theoremintro}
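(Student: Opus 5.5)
The proof reduces Theorem~\ref{th:v2} to a normal-form statement in adapted coordinates. We then run a two-dimensional version of Lavaurs' argument: approximate Fatou coordinates for $f_\eps$ through the gate, compared with Hakim's incoming/outgoing Fatou coordinates $\Phi^\iota,\Phi^o$ (Propositions~\ref{prop:incomfatou} and~\ref{prop:outgofatou}).

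First, using $(H_1)$, $(H_2)$ and Proposition~\ref{prop:2in1intro}, we change coordinates holomorphically in $(x,y,\eps)$. We straighten $\gamma$ up to order $m+1$ so that the invariant curve becomes $\{y=0\}$ modulo $\langle x^{m+2},\eps x^{m+1},\dots,\eps^{m+2}\rangle$, and we put $f_\eps$ in the form
$$f_\eps(x,y)=\bigl(x+(x^2-a\eps^2)(1+\dots)+\ldots,\; y(1+\eps q' + \ldots - \alpha x+\dots)\bigr),$$
after rescaling so that the characteristic direction is $v=[1:0]$ and the director appears as the coefficient $-\alpha x$ in the second component. The two fixed points $z_{1,2}(\eps)$ sit near $x=\pm\sqrt{a}\,\eps$ on the (approximately) invariant curve. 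Their tangential multipliers are $\rho_T^i=1\pm 2\sqrt a\,\eps+O(\eps^2)$, so the condition $\frac{2i\pi}{\rho_T^i(\eps_n)-1}=n-\sigma+o(1)$ is the Lavaurs condition $n-\pi/(\sqrt{a}\,\eps_n)\to\sigma'$, with the $O(\eps^2)$ terms producing the shift $\sigma_0$. The normal multipliers are $\rho_N^i=1+\eps q'\mp\alpha\sqrt a\,\eps+O(\eps^2)$. In the quotient defining $q$ the $\alpha$-terms cancel, giving $q=q'$ (up to normalization). Matching $q$ with the $e^{\pi q}$ factor is exactly this computation.

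Second, we construct approximate Fatou coordinates $\Phi_\eps=(X_\eps,Y_\eps)$ on a neighbourhood of the gate. For the first coordinate we use the one-dimensional formula
$$X_\eps(x)=\frac{1}{2\sqrt a\,\eps}\log\frac{x-\sqrt a\eps}{x+\sqrt a\eps}$$
plus logarithmic corrections, exactly as in Lavaurs/Shishikura. For the second we set $Y_\eps=y\,u_\eps(x)$, where $u_\eps$ solves the multiplicative cohomological equation $u_\eps(x_1)\,(1+\eps q-\alpha x+\ldots)=u_\eps(x)$ to leading order. This gives $u_\eps(x)\approx (x^2-a\eps^2)^{-\alpha/2}\left(\frac{x-\sqrt a\eps}{x+\sqrt a\eps}\right)^{c\,q}$, whose monodromy across the gate yields the factor $e^{\pi q}$. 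We show that $\Phi_\eps\circ f_\eps=\Phi_\eps+(1,0)+$ error, with error summable along orbits, uniformly on the region $\{|x|<r\}$ minus small discs around $\pm\sqrt a\eps$ where orbits travel. As $\eps\to0$ along the relevant sectors, $\Phi_\eps\to\Phi^\iota$ on the incoming side and $\Phi_\eps\to\Phi^o$ (shifted by $\pi/(\sqrt a\eps)$ in $X$ and multiplied by $e^{\pi q}$ in $Y$) on the outgoing side.

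Third, for $K\subset\bcal_{U,v}$ compact we iterate $f$ a fixed number of times to land in $P_C^\iota$ and use $f_{\eps_n}\to f$ there. We then follow orbits through the gate in the coordinates $\Phi_{\eps_n}$, where the dynamics is a translation by $(1,0)$ plus controlled errors. We exit into $P_C^o$ after $n-N$ steps, and conclude via $\Phi^o$ and the relation $f\circ\lcal=\lcal_{\cdot+1}$ stated after Theorem~\ref{th:v1}.

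The main obstacle is controlling the $y$-direction through the gate. Over roughly $1/\eps$ iterations the factor $(1-\alpha x)$ accumulates, and the non-invariance of the curve at order $m+2$ produces errors $O(|x|^{m+2}+|\eps|^{m+2})$ in the $y$-component that are amplified by $u_\eps$, of size $|x|^{-\re\alpha}$. The error estimates for $Y_\eps$ only sum to $o(1)$ because of the hypothesis $\re\alpha>2$ together with the order $m+1=\lfloor\re\alpha\rfloor+2$ in $(H_2)$. The first attempt would be to estimate $\sum_k |x_k|^{m+2-\re\alpha-\ldots}$ along gate orbits using $|x_k|\gtrsim|\eps|$ and $\sum_k |x_k|^{2}\lesssim 1$-type bounds.
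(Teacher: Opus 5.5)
Your outline follows the paper's architecture:
\begin{itemize}
\item reduce to a normal form with the two tangential fixed points at $\pm i\eps$ on an approximately invariant curve $\{y=0\}$, and read $q$ off the normal multipliers;
\item push orbits through the gate with explicit coordinates $w_\eps=\frac{1}{2i\eps}\log\frac{i\eps-x}{i\eps+x}+\frac{\pi}{2\eps}+\frac{1-a}{2}\log(x^2+\eps^2)$ and $t_\eps=y(x^2+\eps^2)^{-\eta/2}$;
\item compare with Hakim's coordinates at both ends.
\end{itemize}
Putting the drift into $Y_\eps$ via $\bigl(\frac{x-\delta}{x+\delta}\bigr)^{cq}$, i.e.\ essentially $Y_\eps=t_\eps e^{-q\eps X_\eps}$, is only a repackaging of the paper's estimate $B_{\eps_n}=q\eps_n+o(1/n)$.

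The genuine gap is the hand-off between $\Phi_\eps$ and $\Phi^\iota,\Phi^o$. Your claim that $\Phi_\eps\to\Phi^\iota$ on the incoming side is false for an \emph{approximate} coordinate. It would be the right statement for genuine perturbed Fatou coordinates \`a la Shishikura, which is exactly what the paper avoids constructing in dimension 2. For fixed $(x,y)$, $\Phi_\eps$ tends to the model map $\bigl(-\frac1x+(1-a)\log(-x),\frac{y}{(-x)^\eta}\bigr)$. This differs from $\Phi^\iota$ by a term that is $o(1)$ only as $x\to0$. Equivalently, the per-step defect of $\Phi_\eps$ is $\ocal(|x|^2)$, and along an orbit entering at $|x|\sim r$ these defects add up to $\ocal(r)$, not $o(1)$. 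So following orbits in $\Phi_{\eps_n}$ from a fixed petal does not identify the limit as $(\Phi^o)^{-1}\circ A_{\cdot,q}\circ\Phi^\iota$.

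The missing idea is the intermediate scale $k_n=\lfloor n^\gamma\rfloor$:
\begin{itemize}
\item For the first $k_n$ iterates one works in the genuine $\Phi^\iota$ and compares $g_{\eps_n}$ with $g_0$. The per-step error is $\ocal(1/n+|X|^2/n^2)$ and the total is $\ocal(k_n/n+k_n^3/n^2)$, which forces $\gamma<2/3$ (Lemma~\ref{lem:firstkn}).
\item At $|x|\approx 1/k_n$, where $\eps_nk_n\to0$, one has $\Phi^\iota_{\eps_n}=\Phi^\iota+o(1)$ (Lemma~\ref{lem:apprfatou}).
\item On $\rcal_n(C)$, where $|x|\lesssim n^{-\gamma}$, the defects are $o(1/n)$ per step provided $\gamma>1/2$ and $\gamma\rho>2$ (Proposition~\ref{prop:A-estimate}).
\item The exit phase is symmetric.
\end{itemize}
The compatibility of $1/2<\gamma<2/3$ with $\gamma\rho>2$ is what forces $\rho=\re\alpha+1>3$. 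The hypothesis enters there, not in the summation of $Y$-errors you point to.

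Some further points need attention.
\begin{itemize}
\item Straightening $\mathcal C$ via $(H_2)$ still leaves a term $dx^{m+2}$ in the second component. The paper removes it with the $\eps$-dependent change $y\mapsto y-\frac{d}{m+1-\eta}x^{m-1}(x^2+\eps^2)$ (Lemma~\ref{lem:affinechange+d_m+3}). Without it the uniform bound of Lemma~\ref{lem:heps} can fail.
\item That bound divides by $y$. So the paper proves convergence on the annulus $M_C$ first, then extends it to $N_C$ by the maximum principle (which also keeps the iterates in $U$) and the identity principle. An additive estimate on $Y_\eps$ might avoid this, but you must still keep the iterates in $U$.
\item With $x\mapsto x+x^2$ the normal multiplier is $1+(\alpha+1)x$, so the weight is $(x^2+\eps^2)^{-(\alpha+1)/2}$. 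Your pairing of a $-\alpha x$ term with exponent $-\alpha/2$ does not solve your own cohomological equation.
\item Your ``up to normalization'' hides a real issue. With fixed points at $\pm\delta$, $\delta=\sqrt a\,\eps$, the condition reads $n-i\pi/\delta_n\to\sigma'$ and the drift is $(1+q\eps_n)^n\to e^{i\pi q/\sqrt a}$. Getting $e^{\pi q}$ with $q$ as defined requires normalizing the parameter so that $\rho_T^1=1+2i\eps+\ocal(\eps^2)$, which is what the paper's rescaling $\tilde\eps=\mu\eps$ does.
\item In that normalization, for the other fixed point the condition forces $\eps_n\approx-\pi/n$ and the drift is $e^{-\pi q}$. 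So reducing $i=2$ to $i=1$ by $\eps\mapsto-\eps$ flips the sign of $q$, and this must be tracked.
\end{itemize}
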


We finally state a third version of our main result, explicitly expressed in coordinates:

\begin{theoremintro}[Coordinate version]\label{th:explicit}
Let 
$$g_\eps(x,y) = (x + (x^2+\eps^2)a_\eps(x)+ y b_\eps(x,y),  y + y c_\eps(x,y) + d_\eps(x) )$$
be a family of holomorphic maps defined on a neighborhood $U$ of $(0,0)$, where $a_\eps, b_\eps, c_\eps$ and $d_\eps$ depend holomorphically on $\eps$, 
and assume that
\begin{enumerate}
\item $a_{0}(0)=1$, $b_0(0,0)=0$
\item $c_{\eps}(x,y)= \eta x + q  \eps +c y + \ocal_2(x,y,\eps)$, with $\re\eta>3$, $q, {c}\in\C$ and $d_{\eps}(x) =\ocal(x^{m+3})+ \eps\ocal_{m+1}(x,\eps)$ where $m =\lfloor\re \eta\rfloor$. 
\end{enumerate}
Let $(\eps_n)$ be a sequence such that $n-\pi/\eps_n = \sigma + o(1)$.
Then for any compact set $K \subset \bcal_{U,(1,0)}$ there exists $N \in \N$ such that 
$$g_{\eps_n}^{n-N} \to (\Phi^o)^{-1} \circ A_{\sigma-N,q} \circ \Phi^\iota$$
uniformly on $K$, where $A_{\sigma-N,q}(X,Y):=(X+\sigma-N, e^{\pi q} Y)$.
\end{theoremintro}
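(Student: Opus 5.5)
We will prove Theorem~\ref{th:explicit} by following Lavaurs' strategy, in the form adapted by Bianchi to the fully parabolic case: we build approximate Fatou coordinates for $g_\eps$ near the ``gate'' and compare them with the Fatou coordinates $\Phi^{\iota/o}$ of $g_0$.

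\textbf{Step 1: normal form.} First we use assumption (2), in particular $d_\eps=\ocal(x^{m+3})+\eps\ocal_{m+1}$, to perform finitely many polynomial changes of coordinates in $y$. The goal is to make the curve $\{y=0\}$ invariant up to order $m+1$, uniformly in $\eps$. In the new coordinates the first coordinate reads $x_1=x+(x^2+\eps^2)(1+\ldots)$. The factor $1+y(\eta x+q\eps+\ldots)$ then governs the second coordinate. We pass to the variables
$$X_\eps=\frac{1}{\eps}\arctan\!\Big(\frac{x}{\eps}\Big)\ (\text{corrected by log terms}),\qquad Y_\eps = y\,u_\eps(x),$$
where $u_\eps$ solves the linearized cohomological equation $u_\eps(x_1)(1+\eta x+q\eps)\approx u_\eps(x)$. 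In dimension one $X_\eps\circ g_\eps = X_\eps+1+\text{error}$, and the phase $\pi/\eps$ is exactly the increment of $X_\eps$ across the gate. For the $Y$-direction we choose $u_\eps(x)$ comparable to $\big(\tfrac{x-i\eps}{x+i\eps}\big)^{-q/2i}\,(x^2+\eps^2)^{-\eta/2}$, up to normalization. Across the gate this factor produces the multiplier $e^{\pi q}$ in the limit. This is where $A_{\sigma,q}$ comes from.

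\textbf{Step 2: approximate Fatou coordinates.} Next we show that on regions $\{|x\pm i\eps|\ge R|\eps|\}\cap\{|y|\le|x|^{\beta}\}$ we have $\Phi_\eps\circ g_\eps = \Phi_\eps+(1,0)+$ errors. These errors must be summable along orbits of length $\sim\pi/|\eps|$. The hypothesis $\re\eta>3$ is used here, playing the role of $\re\alpha>2$ in $(H_1)$. The orbits stay in a cone $|y|\lesssim|x|^{m}$ because the normal direction is contracted by the factor $|1+\eta x|$, which dominates the translation in $x$. The invariance of $y=0$ to order $m+1$ makes the error in $Y$ of size $|x|^{\re\eta-1+\delta}$, which is integrable along the orbit.

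\textbf{Step 3: convergence.} As $\eps\to0$, $\Phi_\eps\to\Phi^\iota$ locally uniformly on an incoming petal, and $\Phi_\eps-(\pi/\eps,0)$ composed with the transition map tends to $\Phi^o$ on an outgoing petal, with the $Y$-coordinate rescaled by $e^{\pi q}$. Take $z\in K$. We iterate $g_{\eps_n}$ a fixed number of times $N_0$ to land in $P^\iota_C$; this is uniform for $K$ compact since $g_{\eps}\to g_0$. We then iterate until the orbit reaches the outgoing petal, and use $n-\pi/\eps_n\to\sigma$. Finally we iterate $-N$ steps back via the inverse branch, or rather stop $N$ steps early, so that all iterates stay in $U$. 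This gives $g_{\eps_n}^{n-N}\to(\Phi^o)^{-1}\circ A_{\sigma-N,q}\circ\Phi^\iota$.

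\textbf{Main obstacle.} The hardest part will be Step 2 in the $Y$-coordinate near the gate, where $|x|\sim|\eps|$. There the contraction $1+\eta x$ no longer dominates, and the cone condition must be controlled using the $\eps$-dependent terms $q\eps$ and $\eps\ocal_{m+1}(x,\eps)$. I would first try to establish a Gronwall-type estimate for $Y_\eps$ along orbits, using $|u_\eps|$ as an integrating factor. I would then bound the accumulated error by $\sum|x_k|^{\re\eta-1}\cdot|\eps|^{-?}$, and expect the threshold $\re\eta>3$ to be exactly what makes this sum $o(1)$.
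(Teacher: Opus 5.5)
Your overall architecture matches the paper's: an $\arctan$-type coordinate with a logarithmic correction in $x$, the coordinate $y(x^2+\eps^2)^{-\eta/2}$ in $y$, and $e^{\pi q}$ produced by the term $q\eps$ accumulated over $\sim\pi/\eps$ steps. Putting the factor $\bigl(\frac{x-i\eps}{x+i\eps}\bigr)^{-q/2i}$ into the coordinate, as you do, is equivalent to the paper's summation of $\log(t_{\eps_n}\circ g_{\eps_n}/t_{\eps_n})=q\eps_n+o(1/n)$.

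\textbf{The gap is in Step 3.} An explicit approximate coordinate does \emph{not} converge to $\Phi^\iota$ on a fixed petal. After normalization it tends to $\bigl(-1/x+(1-a)\log(-x),\,y/(-x)^\eta\bigr)$. This differs from $\Phi^\iota$ by the tail sums of the $\ocal(x^2)$ one-step errors, which are $o(1)$ only as $x\to0$. So entering $P^\iota$ after $N_0$ steps and then following $\Phi_\eps$ across the gate does not identify the limit, unless you upgrade $\Phi_\eps$ to a genuine Fatou coordinate of $g_\eps$, which you do not do.

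The missing idea is an intermediate matching scale $k_n=\lfloor n^\gamma\rfloor$, used in three stages.
(i) For $k\le k_n$ the paper compares $g_{\eps_n}$ with $g_0$ through the \emph{true} $\Phi^\iota$. The one-step discrepancy in the coordinates $(-1/x,\,y/(-x)^\eta)$ is $\ocal(1/n+|X|^2/n^2)$, so the total is $\ocal(k_n/n+k_n^3/n^2)$. This is $o(1)$ only if $\gamma<2/3$.
(ii) At depth $|x|\sim 1/k_n$ one has $\Phi^\iota_{\eps_n}=\Phi^\iota+o(1)$, since $k_n\to\infty$ and $\eps_n^2k_n^3\to0$.
(iii) On $\rcal_n(C)$, where $|x|\lesssim n^{-\gamma}$, one proves uniformly that $w_{\eps_n}\circ g_{\eps_n}-w_{\eps_n}-1=o(1/n)$ and $\log(t_{\eps_n}\circ g_{\eps_n}/t_{\eps_n})=q\eps_n+o(1/n)$. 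This needs $\gamma>1/2$ and $\gamma\re\eta>2$; the latter makes the coupling term $yb_\eps(x,y)/(x\pm i\eps)$ in the $x$-equation equal to $bxy/(x\pm i\eps)$ up to $o(n^{-2})$.
Stages (i)--(ii) are then mirrored on the outgoing side. The compatibility of $\gamma<2/3$ with $\gamma\re\eta>2$ is exactly where $\re\eta>3$ is used.

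\textbf{Further problems.} Your ``main obstacle'' is misplaced, and the decisive estimate (your $|\eps|^{-?}$) is left open. The gate $|x|\sim|\eps|$ is harmless, because $y(x^2+\eps^2)^{-\eta/2}$ treats it uniformly; the real difficulty sits at the matching scale.

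The cone mechanism is also misdescribed. On the outgoing side $|1+\eta x|>1$, so nothing is contracted. Orbits stay in $\{|y|\lesssim(|x|^2+|\eps|^2)^{\re\eta/2}\}$ because $y(x^2+\eps^2)^{-\eta/2}$ is multiplied by $e^{q\eps+o(1/n)}$ at each step. The scale $|y|\le|x|^\beta$ is wrong near $x=0$.

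The paper's multiplicative estimate needs $|t_{\eps_n}|$ bounded \emph{below}, since $d_\eps$ is not divisible by $y$. Convergence is therefore first proved on the annular set $M_C$, then extended to the petal and to $K$ by the maximum principle, Montel and the identity principle. Your additive Gronwall estimate could avoid this, but you would then have to check that the $d_\eps$-contributions sum to $o(1)$.

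Finally, your claim that $X_\eps$ increases by exactly $\pi/\eps$ across the gate requires the $\eps$-linear coefficient $p$ of $a_\eps(0)$ to vanish. Otherwise each step adds $p\eps$ and the phase becomes $\sigma+\pi p$. For example, $x\mapsto x+(1+p\eps)(x^2+\eps^2)$ is conjugate by $u=(1+p\eps)x$ to $u\mapsto u+u^2+\delta^2$, where $\pi/\delta=\pi/\eps-\pi p+\ocal(\eps)$. The paper removes $p$ by reparametrizing $\eps$, but that changes $n-\pi/\eps_n$ by $\pi p+o(1)$. So the statement as written tacitly needs $p=0$.
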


Theorem \ref{th:explicit} can be interpreted as a generalization of Bianchi's main result in \cite{bianchi19parabolic}, Theorem 1.4. Let us comment here on the differences between Theorem~\ref{th:explicit} and \cite[Theorem 1.4]{bianchi19parabolic}. First, \cite[Theorem 1.4]{bianchi19parabolic} applies only to maps satisfying a strong assumption, namely that they leave invariant the 3 complex lines $x=\pm {i\eps}$ and $y=0$ (which amounts to taking $b_\eps(x,y)= d_\eps(x) = 0$ with our notations). If that is the case, then  the formal curve corresponding to our assumptions $(H_1)$ and $(H_2)$ is the curve $y=0$, and it is invariant for the whole family of perturbations. Secondly, \cite[Theorem 1.4]{bianchi19parabolic} only proves convergence near the line $y=0$ (instead of the whole parabolic basin associated to $v=(1,0)$), and only up to extraction. In particular, it does not rule out the possibility of the sequence $(g_{\eps_n}^n)_{n \in \N}$ having more than one limit value. Finally, in  \cite[Theorem 1.4]{bianchi19parabolic} the possible limits of $(g_{\eps_n}^n)_{n \in \N}$ are not described explicitly as maps of the form $(\Phi^o)^{-1} \circ A_{\sigma,q} \circ \Phi^\iota$, and only depend on the parameter $\sigma$ since in his case $q=0$.
On the other hand, we must note that \cite[Theorem 1.4]{bianchi19parabolic} only assumes $\re\eta>1$, compared to our assumption that $\re\eta>3$, so our results do not strictly imply his.

\medskip

Let us now give an application case of our main results. A holomorphic endomorphism of $\ptwo$ is a map which may be written in homogeneous coordinates as 
$$f([z_0:z_1:z_2]) = [P_0(z): P_1(z): P_2(z)],$$
where $P_0, P_1, P_2: \C^3 \to \C$ are homogeneous polynomials of degree $d$ with no common factors. The integer $d$ is called the algebraic degree of $f$.
Given such an endomorphism $f$, one can define two distinct notions of Julia sets: the set $J_1(f)$, which may be defined as the non-normality locus; and the set $J_2(f)$, also sometimes called the small Julia set, which may be defined as the support of the unique measure of maximal entropy. Alternatively, $J_m(f)$  ($1 \leq m \leq 2$) may be defined as the support of $T_f^{\wedge m}$, where $T_f$ is the so-called Green current of $f$ (this construction is not specific to the case of dimension 2).  In general, $J_2(f) \subsetneq J_1(f)$, and even for very simple maps 
(such as product maps) there is no equality.
We refer the reader to the survey \cite{dinh2010dynamics} for more details.

In the following, for any $(\sigma,q) \in \C^2$ we will let $\lcal_{\sigma, q}:=\Psi^o \circ A_{\sigma,q} \circ \Phi^\iota$, where $\Psi^o$ is the extension of $(\Phi^o)^{-1}$ to $\C^2$.
For any endomorphism $f$ of $\ptwo$ satisfying $(H_1)$, we will set 
$$J^1(f, \lcal_{\sigma, q}) :=   \overline{\{  z \in \ptwo: \, \exists p \in J_1(f) \ \exists n \in \N \ \, \lcal_{\sigma, q}^n(p) =z \}}$$

\begin{corollaryintro}[{Compare to \cite[Theorem~1.6]{bianchi19parabolic}}]\label{coro:discbigjulia}
Let $f: \ptwo \to \ptwo$ be an endomorphism satisfying $(H_1)$ and of algebraic degree $d > \re\alpha+1$. Then for any $q \in \C$ there exists a family $(f_\eps)_{\eps \in \D}$ of endomorphisms of $\ptwo$ of algebraic degree $d$ satisfying $(H_1)-(H_3)$, with $q$ as Theorem~\ref{th:v2}. Moreover,  for any $\sigma \in \C$,
$$\liminf_{n \to +\infty} J_1(f_{\eps_n}) \supset J_1(f, \lcal_{\sigma,q})$$
where $(\eps_n)_{n \in \N}$ is as in Theorem~\ref{th:v2}. 
\end{corollaryintro}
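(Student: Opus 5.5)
The argument has two parts. The first builds the family $(f_\eps)$ of degree-$d$ endomorphisms satisfying $(H_1)$–$(H_3)$ with prescribed $q$. The second is the classical Lavaurs-type semicontinuity argument for $J_1$, fed by Theorem~\ref{th:v2}.

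\emph{Constructing the family.} Work in an affine chart in which the parabolic point is the origin and $v=(1,0)$. Choose local coordinates (a polynomial change of coordinates of degree at most $m+2$) in which $f$ takes the normal form of Theorem~\ref{th:explicit} with $\eps=0$; the hypothesis $(H_1)$ permits this, by the discussion after $(H_1)$. We now seek a polynomial perturbation $f_\eps=f+\eps P+\eps^2 Q$, with $P,Q$ homogeneous maps of degree $d$ in homogeneous coordinates, chosen so that in the local coordinates the family has the shape $g_\eps$ of Theorem~\ref{th:explicit}. Concretely, we want:
\begin{itemize}
\item the $x^2+\eps^2$ term in the first coordinate, which gives $(H_3')$ with $w_\pm'(0)=\pm i$;
\item the linear term $q\eps y$ in the second coordinate;
\item $d_\eps=\eps\,\ocal_{m+1}(x,\eps)$, so that $(H_2)$ holds.
\end{itemize}
These are finitely many conditions on the jet of order $m+2$ at the origin. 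Since $d>\re\alpha+1$, we have $d\ge m+1$, and in the chosen chart polynomials of degree $\le d$ prescribe arbitrary jets of order $\le d$. With this degree count (suitably adjusted for the coordinate change) the required jets can be realised. For $\eps$ small, $f_\eps$ is still a holomorphic endomorphism, because having no common zero of $P_0,P_1,P_2$ is an open condition. We also check that $(H_3)$ holds (or $(H_3')$ together with Remark~\ref{rem:H_3'}). Computing the eigenvalues at $z_{1,2}(\eps)=(\pm i\eps,0)+\dots$ gives $\rho_N^i-1\approx q\eps\pm\eta i\eps$, so the limit defining $q$ in Theorem~\ref{th:v2} is the prescribed $q$. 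I expect this bookkeeping to be the most delicate step: matching the jet conditions with the degree bound, while keeping the perturbation globally of degree $d$.

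\emph{Lower semicontinuity.} Next we show $\liminf J_1(f_{\eps_n})\supset J_1(f)$. Repelling periodic points are dense in $J_1$ only up to subtleties in dimension 2, so we argue by non-normality instead. Suppose $z\in J_1(f)$ had a neighbourhood $W$ disjoint from $J_1(f_{\eps_n})$ along a subsequence. Then every $f_{\eps_n}^k$ would be normal on $W$, and a diagonal argument would give normality of $f^k$ on $W$, a contradiction. The cleaner route is through Green functions: $G_{f_\eps}\to G_f$ locally uniformly, so $T_{f_\eps}\to T_f$, and supports of currents are lower semicontinuous under weak convergence. Hence $\liminf J_1(f_{\eps_n})\supset J_1(f)$.

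\emph{Invariance under the Lavaurs map.} It remains to show that $J' := \liminf J_1(f_{\eps_n})$ satisfies: if $p\in J'$ lies in $\bcal_{U,v}$, then $\lcal_{\sigma,q}(p)\in J'$. Then induction and closedness give $J_1(f,\lcal_{\sigma,q})\subset J'$; points where $\lcal^n$ is undefined are excluded by the definition. Let $p\in J'\cap\bcal$ and take $p_n\in J_1(f_{\eps_n})$ with $p_n\to p$. Since $p$ has a compact neighbourhood $K\subset\bcal$, Theorem~\ref{th:v2} (with $N=0$, since the maps are global) gives $f_{\eps_n}^{n}\to\lcal_{\sigma,q}$ uniformly on $K$. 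Hence $f_{\eps_n}^n(p_n)\to\lcal_{\sigma,q}(p)$, and $f_{\eps_n}^n(p_n)\in J_1(f_{\eps_n})$ by total invariance of $J_1$. So $\lcal_{\sigma,q}(p)\in J'$. For the later iterates we reuse the sequence $(\eps_n)$: a point $\lcal^n(p)$ with $p\in J_1(f)$ is reached by iterating this step, each time with a new approximating sequence in $J_1(f_{\eps_n})$.
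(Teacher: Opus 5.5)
\textbf{Core argument: same as the paper.} For the inclusion itself you follow the paper's route. You use lower semicontinuity of $J_1$, then push approximating points $p_n\in J_1(f_{\eps_n})$ forward by $f_{\eps_n}^n$, using Theorem~\ref{th:v2} and invariance of $J_1(f_{\eps_n})$. Packaging this as $\lcal_{\sigma,q}$-invariance of the closed set $J'=\liminf J_1(f_{\eps_n})$ on $J'\cap\bcal$ is a tidy equivalent of the paper's direct use of $f_{\eps_n}^{nN}\to\lcal_{\sigma,q}^N$.

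Drop the ``diagonal argument''. Normality of $\{f_{\eps_n}^k\}_k$ on $W$ for each fixed $n$ gives no control uniform in $n$, so it proves nothing. Your Green-function argument for lower semicontinuity is correct; the paper simply quotes that fact.

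Note also that $f^k\to 0$ locally uniformly on $\bcal$, so $\bcal\cap J_1(f)=\emptyset$. With the definition of $J_1(f,\lcal_{\sigma,q})$ as written, your induction (like the paper's) therefore never gets past $n=0$. The genuinely Lavaurs-type set $\{z:\lcal_{\sigma,q}^n(z)\in J_1(f)\}$ would need a backward argument instead: discreteness of the fibres of $\lcal_{\sigma,q}$ (Proposition~\ref{prop:discrete}), a Hurwitz-type argument, and backward invariance of $J_1(f_{\eps_n})$, as in the proof of Corollary~\ref{coro:endo}.

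\textbf{The step that fails: constructing the family.} Your two-term ansatz $F_\eps=F+\eps P+\eps^2Q$ (in homogeneous coordinates, $F$ a lift of $f$) is incompatible with your own requirements whenever the formal curve $y=\zeta(x)$ has $\zeta_2\neq0$. This is the generic, non-inflectional case, and no projective change of chart removes it.

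To see this, normalize $F_0(1,0,0)=1$ and write $p_i=P_i(1,0,0)$, $q_i=Q_i(1,0,0)$. Setting $t=0$ in $(H_2)$ gives $f^2_\eps(0)=\zeta\bigl(f^1_\eps(0)\bigr)+\ocal(\eps^{m+2})$. Your $x^2+\eps^2$ requirement means $f^1_\eps(0)=c\eps^2+\ocal(\eps^3)$ with $c\neq0$, so the right-hand side is $\zeta_2c^2\eps^4+\ocal(\eps^5)$. On the other hand, $f^2_\eps(0)=(\eps p_2+\eps^2q_2)/(1+\eps p_0+\eps^2q_0)$ either vanishes identically or has order at most $2$ in $\eps$. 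Since $m\ge 3$, this is a contradiction.

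More generally, $(H_2)$ says $f^2_\eps\circ\gamma\equiv\zeta\circ f^1_\eps\circ\gamma$ modulo $\langle t,\eps\rangle^{m+2}$, and the right-hand side involves all powers $\eps^k$ with $k\le m+1$. You should therefore let the perturbation depend polynomially on $\eps$ to higher degree. This is what Lemma~\ref{lem:existenceperturbq} does: it writes down the local family $g_\eps$, conjugates back by the $\eps$-independent change $\Psi$, and keeps the $(m+1)$-jet of the resulting $\eps$-dependent part.

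Two further points on this step:
\begin{itemize}
\item The conditions involve only $(m+1)$-jets, not $(m+2)$-jets as you wrote. The $\ocal(x^{m+3})$ condition concerns $f$ alone and is handled by the coordinate change. This is exactly why $d\ge m+1$ suffices to realize the conditions by homogeneous degree-$d$ perturbations of $F$.
\item Your remark that having no common zero is an open condition is the right way to see that $f_\eps$ stays a degree-$d$ endomorphism for small $\eps$.
\end{itemize}
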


 \begin{corollaryintro}\label{coro:endo}
Let $f: \ptwo \to \ptwo$ be an endomorphism satisfying $(H_1)$ and of algebraic degree $d > \re\alpha+1$. Assume moreover that $\Psi^o(\C^2) \cap J_2(f) \neq \emptyset$.
Then the map $\mathcal{H}_d(\ptwo) \ni g \mapsto J_2(g)$ is discontinuous at $f$, where  $\mathcal{H}_d(\ptwo)$ 
denotes the space of degree $d$ endomorphisms of $\ptwo$.
\end{corollaryintro}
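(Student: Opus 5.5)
We argue by contradiction. Suppose $g \mapsto J_2(g)$ is continuous at $f$ in the Hausdorff topology. Since $\Psi^o(\C^2)\cap J_2(f)\neq\emptyset$, we can fix $w\in\C^2$ with $\Psi^o(w)\in J_2(f)$. The plan is to choose $(\sigma,q)$ and a family $(f_\eps)$ so that a point of $J_2(f_{\eps_n})$ converges to a point which cannot lie in $J_2(f)$.

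\textbf{The family.} By Corollary~\ref{coro:discbigjulia} (with $q=0$, say), there is a family $(f_\eps)$ of degree $d$ endomorphisms satisfying $(H_1)$--$(H_3)$. For every $\sigma$ there is then a sequence $\eps_n\to 0$ as in Theorem~\ref{th:v2} such that $f_{\eps_n}^n\to\lcal_{\sigma,q}$ locally uniformly on $\bcal_{U,v}$. Here $N=0$, because the maps are global.

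\textbf{Key points.} The key observation is that $J_2(f)$ does not meet the parabolic basin $\bcal_{U,v}$. Indeed, $\bcal_{U,v}$ is an open set on which the iterates of $f$ converge to the fixed point, so it lies in the Fatou set, while $J_2(f)\subset J_1(f)$. Next, pick a point $p$ in $\bcal_{U,v}$, for instance in an incoming petal, with $\Phi^\iota(p)=(X_0,Y_0)$. Choose $\sigma$ with $\Psi^o(X_0+\sigma,e^{\pi q}Y_0)=\Psi^o(w)$. This requires $w$ to lie in the image of $A_{\sigma,q}\circ\Phi^\iota$ near $\Phi^\iota(p)$, which is arranged by varying $p$ and $\sigma$, using that $\Phi^\iota$ is a submersion and that its image contains a translation-invariant half-space region in $X$ for suitable $Y$. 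This gives $\lcal_{\sigma,q}(p)=z_0:=\Psi^o(w)\in J_2(f)$.

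\textbf{The argument.} By the assumed continuity, $J_2(f_{\eps_n})\to J_2(f)$, so there are points $z_n\in J_2(f_{\eps_n})$ with $z_n\to z_0$. We want a point of $J_2(f_{\eps_n})$ near $p$, which would contradict $p\notin J_2(f)$. Since $\lcal_{\sigma,q}$ is a submersion at $p$ (generically; if not, perturb $p$), its image near $p$ covers a neighborhood $W$ of $z_0$. For $n$ large, $f_{\eps_n}^n(V)\supset W'$ for a small neighborhood $V$ of $p$ and some neighborhood $W'$ of $z_0$; this follows from a Rouch\'e/degree argument for locally uniform convergence of open maps. By total invariance of $J_2$, $J_2(f_{\eps_n})=f_{\eps_n}^{-n}(J_2(f_{\eps_n}))$, so $z_n\in W'$ has a preimage $p_n\in V\cap J_2(f_{\eps_n})$. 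Letting $V$ shrink gives $p\in\liminf J_2(f_{\eps_n})=J_2(f)$, which is a contradiction.

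\textbf{Main obstacle.} The delicate step is the surjectivity (openness) of $\lcal_{\sigma,q}$ near $p$ onto a neighborhood of $z_0$, together with the choice of $\sigma$ realizing $\lcal_{\sigma,q}(p)=z_0$. We need $\Psi^o$ to be a local biholomorphism at $w$, or at least open there; it is open since it is a nonconstant holomorphic map extending a biholomorphism via $\Psi^o(X+1,Y)=f(\Psi^o(X,Y))$. We first try to handle this by replacing $w$ by $w-(k,0)$ and using $f^k$, reducing to points where $\Psi^o$ is univalent (inside the outgoing petal image), and then applying Hurwitz-type openness for the convergent sequence $f_{\eps_n}^{n-k}$.
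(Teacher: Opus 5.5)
Your argument is correct in substance. It reaches the same contradiction as the paper: points of $J_2(f_{\eps_n})$ accumulate at a point of the parabolic basin, which lies in the Fatou set. The route is different, though.

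\textbf{How the paper argues.} It takes a repelling periodic point $r(0)\in J_2(f)\cap\Psi^o(\C^2)$ and invokes \cite[Lemma 4.9]{bianchi2019misiurewicz} to keep its continuation $r(\eps)$ in $J_2(f_\eps)$. It then tunes $(\sigma,q)$ so that $\lcal_{\sigma,q}(z_0)=r(0)$. Finally it runs a multidimensional Hurwitz argument on $f^n_{\eps_n}-r(\eps_n)$, which needs $z_0$ to be isolated in its fiber, i.e.\ Proposition~\ref{prop:discrete}.

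\textbf{How you argue, and what each route buys.} You get the approximating points $z_n\in J_2(f_{\eps_n})$ directly from lower semicontinuity of $J_2$. Your contradiction hypothesis supplies this, and it holds anyway because the equilibrium measure depends continuously on the map. So you need neither repelling cycles nor the persistence lemma. You also rightly note that tuning $q$ is unnecessary: $\Phi^\iota(P^\iota(r,C))$ contains a sector times $\D(0,C-1)$ for every $C$, so $q=0$ suffices. In exchange, the paper's route deals head-on with a possibly critical point of $\Psi^o$ via discrete fibers, while yours has to sidestep it.

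\textbf{What needs tightening.}
\begin{itemize}
\item ``Perturb $p$'' does not work. It moves $\lcal_{\sigma,q}(p)$ off $z_0$, and possibly out of $J_2(f)$.
\item Openness of $\Psi^o$ does not follow from its being a nonconstant extension of a biholomorphism; $(x,y)\mapsto(x,xy)$ is a counterexample. It follows from finiteness of $f^k$, or from Proposition~\ref{prop:discrete}.
\item Your ``main obstacle'' fix is the right one, but you should say why it is legitimate. By total invariance, $\Psi^o(w-(k,0))\in f^{-k}(J_2(f))=J_2(f)$. For $k$ large, $\Psi^o=(\Phi^o)^{-1}$ near $w-(k,0)$.
\end{itemize}

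With that in place, take $p$ in the incoming petal with $\Phi^\iota(p)=(X_0,w_2)$ and set $\sigma:=w_1-k-X_0$. Then the limit $\lcal_{\sigma,0}$ of $f^n_{\eps_n}$ is a local biholomorphism at $p$. The covering statement $f^n_{\eps_n}(V)\supset W'$ follows from the inverse function theorem with uniform bounds, since derivatives converge by Cauchy estimates. No degree theory is needed.
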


Even if we drop the assumption that $\Psi^o(\C^2) \cap J_2(f) \neq \emptyset$, our arguments still prove the discontinuity of the closure of the set of repelling periodic points; however, as mentioned above, the Julia set $J_2$ may be smaller than this closure.
This hypothesis  is not easy to check in practice;
let us however give a concrete example. Let $\eta \in \C$ with $\re\eta>3$, and let $d> \re\eta$.
Let 
$$f(x,y) = (x+x^2+axy+by^2 + x^d, y + \eta xy+cy^2+y^d).$$
The polynomial map $f:\C^2 \to \C^2$ extends to an endomorphism of $\ptwo$. If $(a,b)=(0,0)$, then the map $f$ is a polynomial skew-product. 
In that case, $\Psi^o$ is of the form $\Psi^o(x,y) = (\Psi_p^o(x), \Psi_2^o(x,y))$, where $\Psi_p^o$ is an outgoing Fatou parametrization of the base polynomial map $p(z):=z+z^2+z^d$. By \cite{jonsson1999dynamics}, $J_2(f):=\overline{\bigcup_{z \in J(p)}  J_z }$, where $J(p)$ denotes the Julia set of $p$ and 
$J_z$ is the non-normality locus of $\{f^n: n \in \N\}$ restricted to the vertical line $x=z$. Since $\Psi_p^o$ is non-constant and entire, it omits at most one value, so there exists $x_0 \in J(p)$ and $X_0 \in \C$ such that $\Psi_p^o(X_0) = x_0$. Similarly, the map $Y \mapsto {\Psi_2^o}(X_0, Y)$ is entire and non-constant and $J_{x_0}$ is uncountable; so $\Psi^o(\C^2) \cap (J_2(f)) \neq \emptyset$. 
Now, the set $J_2(f)$ varies lower semi-continuously with respect to the parameters $(a,b,c)$; and the map $\Psi^o$ depends holomorphically (hence continuously)
on $(a,b,c)$. (For a proof of this fact in dimension one, see the Appendix in \cite{ABDPR16}; the argument remains valid in higher dimension).
Therefore, there exists some open set $W \subset \C^3$ such that for all $(a,b,c) \in \C^3$, the map $f$ satisfies  $\Psi_f^o(\C^2) \cap J_2(f) \neq \emptyset$.

\subsection*{Acknowledgements} 
The first and third author are partially supported by the ANR PADAWAN /ANR-21-CE40-0012-01, ANR DynAtrois / ANR-24-CE40-1163, ANR TI\-GerS / ANR-24-CE40-3604 and the PHC Galileo program, under the project ``From rational to transcendental: complex dynamics and parameter spaces''.  The second author is partially supported by Ministerio de Ciencia e Innovaci\'on, Spain, PID2022-139631NB-I00 and by ANR TIGerS/ANR-24-CE40-3604. The third author is partially supported also by the Institut Universitaire de France (IUF).

\subsection*{Outline of the paper}
In Section~\ref{sec:diffvers}, we give a proof of Theorem~\ref{th:v1} and Theorem~\ref{th:v2} assuming Theorem~\ref{th:explicit}. Sections 3 to 5 are devoted to the proof of Theorem~\ref{th:explicit}. In Section~\ref{sec:fatoucoord}, we introduce the incoming and outgoing petals for $g_0$ and recall the construction of Fatou coordinates and compute their asymptotics. Section~\ref{sec:approximate} is devoted to the construction of so-called \emph{approximate Fatou coordinates} for $g_{\eps}$, which are in a sense close to the actual Fatou coordinates of $g_0$ and which nearly conjugate the dynamics of $g_{\eps_n}$ to a translation. In Section~\ref{sec:control} we provide precise estimates of the orbit under $g_{\eps_n}$ in the parabolic basin and complete the proof of Theorem~\ref{th:explicit}. Finally, Corollaries~\ref{coro:discbigjulia} and \ref{coro:endo} are proved in Section~\ref{sec:coro}.

\section{{Proof of Theorems~\ref{th:v1} and \ref{th:v2} from Theorem~\ref{th:explicit}}}\label{sec:diffvers}

Let us first show how Theorem~\ref{th:explicit} implies Theorem~\ref{th:v1}. Let $f:U\subset\C^2\to\C^2$ be a holomorphic map satisfying $(H_1)$. If we choose coordinates $(x,y)$ such that $v=(1,0)$ we have that
$$f(x,y)=\left(x+\lambda x^2+\ocal(x^3,xy,y^2), y+\eta xy+\ocal(x^2y,y^2,x^3)\right)$$
with $\lambda\neq0$ and $\eta/\lambda=\alpha+1$, where $\alpha$ is the director of $v$. Up to conjugating by the linear map $(x,y)\mapsto (\lambda x,y)$, we can assume that $\lambda=1$, so $\eta=\alpha+1$. In those coordinates, the formal invariant curve $\mathcal C$ has a parametrization $\gamma(t)=(t,\zeta(t))$, with $\zeta(t)\in t^2\C[[t]]$. If we take $m:=\lfloor \re\eta\rfloor$ and $\Psi(x,y):=(x,y-J_{m+2}\zeta(x))$, where $J_{m+2}\zeta$ is the jet of order $m+2$ of $\zeta$, we have that $g_0(x,y)=\Psi\circ f\circ \Psi^{-1}$ has the form
$$g_0(x,y)=\left(x+x^2a_0(x)+yb_0(x,y), y+yc_0(x,y)+\ocal(x^{m+3})\right),$$
with $a_0(x)=1+\ocal(x)$, $b_0(0,0)=0$ and $c_0(x,y)=\eta x+\ocal(x^2,y)$. Then Theorem~\ref{th:v1} follows immediately considering a family $(g_\eps)$ and a sequence $(\eps_n)$ as in Theorem~\ref{th:explicit} and taking $f_{\eps}:=\Psi^{-1}\circ g_{\eps}\circ \Psi$. 

\strut 

Let us now obtain Theorem~\ref{th:v2} from Theorem~\ref{th:explicit}. Consider a family $(f_\eps: U\subset\C^2 \to \C^2)_{\eps \in \D}$ of holomorphic maps satisfying $(H_1)-(H_3)$. We will start making several successive changes of coordinates until we obtain the form of Theorem~\ref{th:explicit}. 

As above, we first choose coordinates in which $v=(1,0)$ and the curve $\mathcal{C}$ has a parametrization $\gamma(t) := (t,\zeta(t))$, with $\zeta(t)\in t^2\C[[t]]$. 
Let $m:=\lfloor \re \alpha\rfloor +1$ and $\Psi(x,y):=(x,y-J_{m+1}\zeta(x))$, where $J_{m+1}\zeta$ is the jet of order $m+1$ of $\zeta$, and set $\widetilde  g_\eps:=\Psi \circ f_\eps \circ \Psi^{-1}$. Since the family $(f_{\eps})$ preserves $\mathcal{C}$ up to order $m+1$ by hypothesis $(H_2)$, we have
$$\widetilde  g_\eps(x,0)= (p_\eps(x),\ocal_{m+2}(x,\eps)),$$
where $p_\eps$ depends holomorphically on $\eps$, and 
$p_0(x)=x + \lambda x^2 + \ocal(x^3)$ for some constant $\lambda \neq 0$ (as above, the fact that $\lambda \neq 0$ follows from the assumptions of $f_0$ having order 2 and $v$ being non-degenerate). Up to conjugating by the linear map $(x,y)\mapsto(\lambda x,y)$, we may assume without loss of generality that $\lambda=1$.

In particular, there exist holomorphic functions $\widetilde  b_\eps$ and $\widetilde  c_\eps$ such that 
\begin{equation}\label{eq:tildeg}
\widetilde  g_\eps(x,y) = (p_\eps(x) + y \widetilde  b_\eps(x,y), y (1 + \widetilde  c_\eps(x,y)) + \ocal_{m+2}(x,\eps)).
\end{equation}
Since $\widetilde  g_0$ is tangent to the identity, we also have $\widetilde  b_0(0,0) = \widetilde  c_0(0,0) = 0$. Moreover, by hypotheses $(H_3)$, $\widetilde  g_{\eps}$ has 4 fixed points $\widetilde  z_i(\eps)=\Psi(z_i(\eps))$ ($1\le i\le 4$) near the origin, which depend holomorphically on $\eps$ and satisfy that $\widetilde  z_i'(0)$ are non-zero and pairwise different.

\begin{lem}\label{lem:fptgtv}
There are exactly two fixed points $\widetilde  z_1(\eps), \widetilde  z_2(\eps)$ of $\widetilde  g_\eps$ (counting multiplicity) which are asymptotically tangent to $v$ for $\eps$ small, i.e.,
$$\widetilde  z_i'(0) \in \C^* v,  \quad 1 \leq i \leq 2.$$
Moreover, if $w_\pm(\eps)$ denote the two fixed points of $p_\eps$, then $w_{\pm}(\eps)$ depend holomorphically on $\eps$ and $\widetilde  z_i(\eps) = (w_\pm(\eps),0) + \ocal(\eps^2)$. In particular, $w_+'(0) \neq w_-'(0)$. 
\end{lem}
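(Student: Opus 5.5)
We work in the coordinates of \eqref{eq:tildeg} and use the quasi-homogeneous rescaling $x=\eps X$, $y=\eps^2 Y$, $\eps\to 0$. Since the fixed points are holomorphic in $\eps$ and vanish at $\eps=0$, write $\widetilde z_i(\eps)=\eps(\xi_i,\nu_i)+\ocal(\eps^2)$ with $(\xi_i,\nu_i)=\widetilde z_i'(0)\neq(0,0)$. The goal is to show that at most two of these vectors can have $\nu_i=0$, that at least two do, and that those two are exactly $(w_\pm'(0),0)$.

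\textbf{Fixed point equations.} The fixed point equations of $\widetilde g_\eps$ read
\[
p_\eps(x)-x+y\widetilde b_\eps(x,y)=0,\qquad y\,\widetilde c_\eps(x,y)+\ocal_{m+2}(x,\eps)=0 .
\]
Set $p_\eps(x)-x=\beta_0\eps^2+\beta_1\eps x+x^2+\ocal_3(x,\eps)$ and $\widetilde c_\eps=\eta x+q\eps+\ocal(y)+\ocal_2(x,\eps)$. Here $\widetilde c_\eps(x,0)$ has linear part $\eta x+q\eps$ with $\eta=\alpha+1\neq 0$, and $\widetilde b_\eps(x,y)=\ocal_1(x,y,\eps)$ because $\widetilde b_0(0,0)=0$ and $\widetilde b_\eps(0,0)=\ocal(\eps)$.

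\textbf{First-order analysis.} Substitute $\widetilde z_i(\eps)$ and compare the $\eps^2$ coefficients. The second equation gives
\[
\nu_i(\eta\xi_i+q+c\,\nu_i)=0,
\]
since the remainder $\ocal_{m+2}$ has order at least $3$ because $m\ge 2$. The first equation gives
\[
\beta_0+\beta_1\xi_i+\xi_i^2+\nu_i\,\ell(\xi_i,\nu_i)=0,
\]
where $\ell$ is the linear part of $\widetilde b$.

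\textbf{Counting the tangent fixed points.} On the branch $\nu_i=0$, the quadratic $\beta_0+\beta_1\xi+\xi^2=0$ is exactly the equation for the derivatives at $0$ of the fixed points of $p_\eps$; it has at most two roots. On the other branch there are at most two solutions, obtained by intersecting a line with a conic. The total intersection count is $4$ by Bezout, matching $(H_3)$. Since the four $\widetilde z_i'(0)$ are distinct, the quadratic's two roots must be distinct and equal to $\xi_1,\xi_2$. Otherwise fewer than four distinct first-order solutions would exist, which contradicts the count of four distinct nonzero derivatives: each $\widetilde z_i'(0)$ must solve the homogeneous system, and that system has at most four solutions counted with multiplicity. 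I would make this last step precise by checking that the system of leading quasi-homogeneous parts has an isolated solution at the origin, so that its solution count equals the local multiplicity $4$.

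\textbf{Identification with $w_\pm$.} Since $w_+'(0)\neq w_-'(0)$ are simple roots, $p_\eps(x)=x$ has two holomorphic solutions $w_\pm(\eps)=\eps\,w_\pm'(0)+\ocal(\eps^2)$ by the implicit function theorem after rescaling. We have $\xi_i=w_\pm'(0)$ and $\nu_i=0$, so $\widetilde z_i(\eps)-(w_\pm(\eps),0)=\ocal(\eps^2)$.

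\textbf{Main obstacle.} The delicate step is ruling out that more than two of the $\widetilde z_i'(0)$ lie on $\{\nu=0\}$, and ensuring the branch $\nu\neq 0$ genuinely absorbs two of them. The plan is to rely on the distinctness in $(H_3)$ together with the exact multiplicity count $4$ of the rescaled system: the quadratic in $\xi$ gives at most two points with $\nu=0$, and since there are four distinct derivatives in total, exactly two remain with $\nu\neq0$.
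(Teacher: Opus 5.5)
Your route is sound and genuinely different from the paper's, though the step you flag as the main obstacle is not closed by the fix you propose.

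\textbf{Comparison with the paper.} The paper applies Weierstrass preparation to $p_\eps-\id$ to compute $w_\pm'(0)$. It then passes to the rescaled maps $H_\eps(X,Y)=\eps^{-2}\bigl(\widetilde g_\eps(\eps X,\eps Y)-(\eps X,\eps Y)\bigr)\to H$ and asserts that $H^{-1}(0,0)$ consists of four points counted with multiplicity. Persistence of proper intersections then shows that every fixed point is $\eps v_i+\ocal(\eps^2)$ with $H(v_i)=0$. Holomorphy of $w_\pm$ comes last, from the discriminant vanishing to order exactly $2$.

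You instead substitute the holomorphic branches supplied by $(H_3)$ directly, so $H(\widetilde z_i'(0))=0$ is immediate. You then need only a \emph{cardinality} bound on $H^{-1}(0,0)$ plus the distinctness of the four derivatives. This avoids both the persistence theorem and any multiplicity bookkeeping for $H$. Holomorphy of $w_\pm$ follows from the implicit function theorem after $x=\eps X$. That is a leaner argument, but two points need repair.

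\textbf{First repair.} You silently drop a possible term $\gamma\eps$ in $p_\eps(x)-x$. It does vanish: compare $\eps^1$-coefficients in the first fixed point equation (the paper instead differentiates $\widetilde g_\eps(\widetilde z_i(\eps))=\widetilde z_i(\eps)$ at $\eps=0$).

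\textbf{Second repair: the real gap.} The claim ``a line meets a conic in at most two points'' fails when the line $\eta\xi+q+c\nu=0$ is a component of the conic, and nothing you wrote excludes this. For example, take $\ell\equiv0$, $c=0$ and $q=-\eta\xi_+$ for a root $\xi_+$ of $\xi^2+\beta_1\xi+\beta_0$. Then $H^{-1}(0,0)$ contains the whole line $\{\xi=\xi_+\}$, and your count no longer forces two of the $v_i$ onto $\{\nu=0\}$.

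What excludes this is the phrase ``counted with multiplicity'' in $(H_3)$: the origin is a fixed point of $\widetilde g_0$ of multiplicity exactly $4$. Both components of $\widetilde g_0-\id$ have order $2$, so this holds iff their quadratic parts $x^2+b_2xy+b_3y^2$ and $y(\eta x+cy)$ have no common zero in $\mathbb{P}^1$. Here $\ell(\xi,\nu)=b_1+b_2\xi+b_3\nu$. The condition is $c^2-b_2c\eta+b_3\eta^2\neq0$.

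Now substitute $\xi=-(q+c\nu)/\eta$ into the conic. The coefficient of $\nu^2$ is exactly $\eta^{-2}(c^2-b_2c\eta+b_3\eta^2)\neq0$. So the restriction is a genuine quadratic in $\nu$ with at most two roots, and your count closes without Bezout. Your ``Bezout gives $4$'' presupposes this same condition anyway, since common zeros of the quadratic parts are precisely the intersections at infinity.

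\textbf{Why your proposed fix does not work as stated.} With weights $(1,2,1)$ on $(x,y,\eps)$, the leading quasi-homogeneous parts at $\eps=0$ are $(x^2,\eta xy)$, which vanish on the whole $y$-axis. What must have an isolated zero is the homogeneous quadratic part at $\eps=0$. Relatedly, your opening scaling $y=\eps^2Y$ is incompatible with branches $\eps(\xi_i,\nu_i)$ with $\nu_i\neq0$. What you actually use is the homogeneous scaling, as the paper does.

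The paper's own justification of the finiteness of $H^{-1}(0,0)$ (``since $c_2\neq0$'') tacitly relies on this same non-degeneracy.
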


\begin{proof}
Let us write $\widetilde  g_\eps(x,y) = \sum_{n \geq 0} P_n(x,y,\eps)$, where $P_n: \C^3 \to \C^2$ is homogeneous polynomial map of degree $n$. Since $\widetilde  g_0$ is tangent to the identity, we have $P_0=0$ and 
$P_1(x,y,\eps) = (x,y) + (\gamma \eps, \delta \eps)$ for some $\gamma, \delta \in \C$.
We claim that under our assumptions, we must have $\gamma= \delta = 0$. Indeed, since $\widetilde  g_\eps(\widetilde  z_i(\eps) ) = \sum_{n \geq 0} P_n(\widetilde  z_i(\eps),\eps)$ and $\widetilde  z_i(0) =0$, by differentiating  $\widetilde  g_\eps(\widetilde  z_i(\eps)$, we obtain 
$$
(\gamma,\delta) = \left.\partial_\eps \widetilde  g_\eps(\widetilde  z_i(\eps)\right|_{\eps=0} = (0,0).
$$
In particular, this means that 
$$p_\eps(x) = x+ x^2 + a_{1,1} \eps x + a_{0,2} \eps^2 + \ocal_3(x,\eps).$$
 By Weierstrass' Preparation Theorem applied to $p_\eps - \id$, there exists holomorphic maps $\eps \mapsto \alpha(\eps), \eps \mapsto  \beta(\eps)$ and $(\eps,x) \mapsto u(\eps,x)$ 
with $\alpha(0)=\beta(0)=0$ and $u(0,0)=1$ such that
$$p_\eps(x) = x + (x^2+ \alpha(\eps) x + \beta(\eps)) u(\eps,x).$$
Moreover, $\alpha(\eps) = a_{1,1} \eps + \ocal(\eps^2)$ and $\beta(\eps) = a_{0,2} \eps^2 + \ocal(\eps^3)$.
The fixed points $w_\pm(\eps)$ of $p_\eps$ are the zeros of $x \mapsto x^2+ \alpha(\eps) x + \beta(\eps)$.
It follows that $w_\pm(\eps) =  \frac{-\alpha(\eps)\pm \sqrt{\alpha(\eps)^2-4\beta(\eps)}}{2}$.
In particular, 
$$w_\pm(\eps) = \frac{-a_{1,1}\pm \sqrt{a_{1,1}^2-4a_{0,2}}}{2} \eps + \ocal(\eps^{3/2}),$$
and $w_\pm$ are complex differentiable at $\eps = 0$,
with $w_\pm'(0) = \frac{-a_{1,1}\pm \sqrt{a_{1,1}^2-4a_{0,2}}}{2}$.
	
It is not yet clear that $w_+$ and $w_-$ are holomorphic near $0$; note however that if 
$$w_+'(0) - w_-'(0) = \sqrt{a_{1,1}^2-4a_{0,2}} \neq 0$$ 
then $\eps \mapsto  {\alpha(\eps)^2-4\beta(\eps)}$ vanishes exactly at order 2 and in this case the two fixed points $w_\pm(\eps)$ depend holomorphically on $\eps$.
	
Next, we write $p_\eps(x) = x + (x-w_-(\eps))(x-w_+(\eps)) (1 + \ocal(x, \eps))$, $\widetilde  b_\eps(x,y) = b_1 \eps + b_2 x + b_3 y + \ocal_2(x,y,\eps)$ and 
$\widetilde  c_\eps(x,y) = c_1 \eps + c_2 x + c_3 y + \ocal_2(x,y,\eps).$ Note that $c_2 = \alpha + 1$, where $\alpha$ is the director of $v$, so $c_2 \neq 0$ by our assumptions. Let 
$$H_\eps(X,Y) := \frac{\widetilde  g_\eps(\eps X, \eps Y) - (\eps X, \eps Y)}{\eps^2}$$ 
so that $H_\eps(X,Y) = (0,0)$ if and only if 
$(\eps X, \eps Y)$ is a fixed point of $\widetilde  g_\eps$.	Then 
$$H_\eps(X,Y) = \left( (X-w_-'(0)) (X-w_+'(0)) + Y(b_1 + b_2 X + b_3 Y ),   Y(c_1 + c_2 X + c_3 Y )   \right) + \ocal(\eps),$$
so as $\eps \to 0$, the map $H_\eps$ converges locally uniformly to 
$$H(X,Y) = \left( (X-w_-'(0)) (X-w_+'(0)) + Y(b_1 + b_2 X + b_3 Y ),   Y(c_1 + c_2 X + c_3 Y )   \right).$$
Since $c_2\neq 0$, it is then straightforward to check that the set 
$H^{-1}(0,0)$ is finite and contains 4 elements counted with multiplicity. 	Moreover, $H(X,0)=(0,0)$ if and only if $X = w_\pm'(0)$.
	
Since proper intersections of analytic sets persist under perturbations, for all $\eps$ small enough the set $H_\eps^{-1}(0,0)$ is has the same 
cardinality as $H^{-1}(0,0)$ and its elements are close to those of 
$H^{-1}(0,0)$. Therefore, for small $\eps$ the fixed points of the  map $\widetilde  g_\eps$ are of the form 
$\eps v_i + \ocal(\eps^2)$, where  $H(v_i) = (0,0)$.
	
In particular, $\widetilde  g_\eps$ has exactly two fixed points $\widetilde  z_i(\eps)$ (with $1 \leq i \leq 2$)
which are asymptotically tangent to $v = (1,0)$ and moreover, $\widetilde  z_i(\eps) = (w_\pm(\eps),0) + \ocal(\eps^2)$. Finally, the assertion that $w_+'(0) \neq w_-'(0)$ follows from the fact that $\widetilde  z_1'(0) \neq \widetilde  z_2'(0)$.
In particular, $\eps \mapsto w_\pm(\eps)$ are indeed holomorphic. 
\end{proof}

\begin{rem}
As we mentioned in Remark~\ref{rem:H_3'}, in our results hypothesis $(H_3)$ can be replaced by the weaker assumption $(H_3')$. To show this, it suffices to note that if we impose hypothesis $(H_3')$ then the jet or order $m+1$ of $p_{\eps}(x)$ has two fixed points $\hat w_{\pm}(\eps)$ with $\hat w_+'(0)\neq\hat w_-'(0)$, so $p_\eps(x)$ has two fixed points $w_{\pm}(\eps)$ with $w_+'(0)\neq w_-'(0)$ and then Lemma~\ref{lem:fptgtv} also holds with the same proof.
\end{rem}

\begin{lem}\label{lem:affinechange+d_m+3}
Up to replacing $\eps$ by $\widetilde  \eps:=\mu \eps$ for some $\mu \neq 0$,
there is a family of polynomial automorphisms $\Theta_\eps : \C^2 \to \C^2$, depending holomorphically on $\eps$ near $0$ and such that  $\Theta_0=\id$, such that the maps $g_{\eps}:=\Theta_\eps\circ \widetilde  g_{\eps}\circ \Theta_\eps^{-1}$ have the form 
$$
g_\eps(x,y) = (x + (x^2+\eps^2)a_\eps(x)+ y b_\eps(x,y),  y + y c_\eps(x,y) + d_\eps(x) )
$$
with $a_\eps, b_\eps, c_\eps$ and $d_\eps$ depending holomorphically on $\eps$ and	
\begin{enumerate}
\item $a_{0}(0)=1$, $b_0(0,0)=0$
\item $c_{\eps}(x,y)= \eta x + q  \eps +c y + \ocal_2(x,y,\eps)$, with $\re\eta>3$, $q, c\in\C$ and $d_{\eps}(x) =\ocal(x^{m+3})+ \eps\ocal_{m+1}(x,\eps)$ where $m =\lfloor\re\eta\rfloor$.
\end{enumerate}
\end{lem}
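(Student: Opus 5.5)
The plan is to build $\Theta_\eps$ as a composition of two explicit polynomial changes of coordinates. The first is an $\eps$-dependent affine translation plus rescaling in $x$. The second is a shear in $y$ that moves the fixed points of $p_\eps$ to $x=\pm i\eps$.

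\emph{Centering the fixed points.} By Lemma~\ref{lem:fptgtv}, the fixed points $w_\pm(\eps)$ of $p_\eps$ are holomorphic, vanish at $0$, and satisfy $w_+'(0)\neq w_-'(0)$. Set $s(\eps):=(w_+(\eps)+w_-(\eps))/2$ and $\delta(\eps):=(w_+(\eps)-w_-(\eps))/2$. Then $\delta(\eps)=\delta_1\eps+\ocal(\eps^2)$ with $\delta_1\neq 0$. I would replace $\eps$ by $\widetilde\eps:=\delta(\eps)/i$. This is a local biholomorphic reparametrization, tangent to $\eps\mapsto \mu\eps$ with $\mu=\delta_1/i$. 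If the statement is read as demanding literally a linear reparametrization, one instead uses $\widetilde\eps=\mu\eps$ and absorbs the higher-order discrepancy into the next step.

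Next conjugate by the translation $x\mapsto x-s(\eps)$. The fixed points of the new $p_\eps$ are then $\pm i\widetilde\eps$, so the Weierstrass factorization used in the proof of Lemma~\ref{lem:fptgtv} gives
$$p_\eps(x)=x+(x^2+\widetilde\eps^2)a_\eps(x),$$
with $a_0(0)=1$ because $\lambda=1$. This translation is affine and equals $\id$ at $\eps=0$. It perturbs the second coordinate only by terms divisible by $\eps$. The first coordinate keeps the form $p_\eps(x)+y\,b_\eps(x,y)$ with $b_0(0,0)=0$, since $\widetilde b_0(0,0)=0$.

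\emph{The second coordinate.} Expand the second component as $y(1+\widetilde c_\eps)+r_\eps(x)$, where by \eqref{eq:tildeg} we have $r_\eps=\ocal_{m+2}(x,\eps)$ jointly in $(x,\eps)$, and $r_0=\ocal(x^{m+3})$ from the order-$(m+2)$ jet removal at $\eps=0$. (Here $m=\lfloor\re\alpha\rfloor+1=\lfloor\re\eta\rfloor$ with $\eta=\alpha+1$.) This gives $c_\eps(x,y)=c_2x+c_1\eps+c_3y+\ocal_2$, so $\eta=c_2=\alpha+1$ with $\re\eta>3$, $q=c_1$ after rescaling by $\mu$, and $c=c_3$. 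It remains to write $r_\eps(x)=d_\eps(x)$ in the required form $\ocal(x^{m+3})+\eps\ocal_{m+1}(x,\eps)$. Since $r_\eps-r_0$ is divisible by $\eps$ and has total order $\ge m+2$, it equals $\eps\,\ocal_{m+1}(x,\eps)$. The only problematic piece would be a pure $x^{m+2}$ term at $\eps=0$, but this is excluded because $r_0=\ocal(x^{m+3})$.

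\emph{Main obstacle.} The delicate point is checking that the translation $x\mapsto x-s(\eps)$, and the passage from $\eps$ to $\widetilde\eps$, do not destroy this order bookkeeping in $d_\eps$. Substituting $x\mapsto x+s(\eps)$ with $s=\ocal(\eps)$ into $r_\eps$ keeps joint homogeneity degree $\ge m+2$, and the $\eps=0$ part is unchanged, so the form survives. However, it also generates terms $y\cdot\ocal(\eps)$, which go into $c_\eps$ and only modify $q$. I would also verify that the leftover $y$-free terms coming from $y\,\widetilde b_\eps$ vanish: they do, since that term is multiplied by $y$. Finally, if a genuinely linear $\widetilde\eps=\mu\eps$ is required, I would add a $y$-independent affine rescaling in $x$, of the form $(1+\ocal(\eps))x$, to put the fixed points exactly at $\pm i\widetilde\eps$. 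This is again polynomial and equal to the identity at $\eps=0$, and it preserves all the orders above.
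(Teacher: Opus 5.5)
There is a gap at exactly the point where the paper does extra work. You dispose of a possible $x^{m+2}$ term in $d_\eps$ by asserting $r_0=\ocal(x^{m+3})$ ``from the order-$(m+2)$ jet removal at $\eps=0$''. No such removal has been made. In this lemma $\widetilde g_\eps$ comes from $\Psi(x,y)=(x,y-J_{m+1}\zeta(x))$; the $J_{m+2}\zeta$ normalization belongs to the proof of Theorem~\ref{th:v1}. Moreover, $(H_2)$ at $\eps=0$ only gives $r_0=\ocal(x^{m+2})$, which is all that \eqref{eq:tildeg} records. You also cannot add an $\eps$-independent correction at this stage, because $\Theta_0=\id$ is required.

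The paper does not assume this coefficient vanishes. After the affine change $N_\eps$ (which is exactly your fallback: $\widetilde\eps=\mu\eps$, translation, and a $(1+\ocal(\eps))$-rescaling), it writes $\widehat d_\eps(x)=dx^{m+2}+\ocal(x^{m+3})+\eps\ocal_{m+1}(x,\eps)$. It then removes $dx^{m+2}$ with the shear $\Psi_\eps(x,y)=\bigl(x,\,y-\frac{d}{m+1-\eta}x^{m-1}(x^2+\eps^2)\bigr)$, using $m+1\neq\eta$. The shear in $y$ announced in your overview never actually appears in your argument.

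Your claim is nevertheless true, but the justification must come from the exact formal invariance of $\mathcal C$ under $f_0$ in $(H_1)$, not from $(H_2)$. In the coordinates of $\widetilde g_0$, the curve is $\tilde\gamma(t)=(t,\tilde\zeta(t))$ with $\tilde\zeta\in t^{m+2}\C[[t]]$. The relation $\widetilde g_0\circ\tilde\gamma=\tilde\gamma\circ h$ forces $h(t)=t+\ocal(t^2)$. Comparing second components, and using $\widetilde c_0(0,0)=0$, gives
\[
r_0(t)=\tilde\zeta(h(t))-\tilde\zeta(t)-\tilde\zeta(t)\,\widetilde c_0\bigl(t,\tilde\zeta(t)\bigr)=\ocal(t^{m+3}).
\]
With this inserted, your route (an affine change in $x$ only, no shear) is complete. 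The rest of your bookkeeping, for $c_\eps$ and for the $\eps$-divisible part of $d_\eps$, is fine.

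Your route is even slightly cleaner than the paper's. The paper's shear satisfies $\Psi_0(x,y)=\bigl(x,\,y-\frac{d}{m+1-\eta}x^{m+1}\bigr)$, which equals $\id$ only when $d=0$. So the paper's normalization $\Theta_0=\id$ is consistent precisely because this same argument forces $d=0$.
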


\begin{proof}
By Lemma~\ref{lem:fptgtv}, the fixed points $w_{\pm}(\eps)$ of $p_{\eps}$ depend holomorphically on $\eps$ and satisfy $w_+'(0)\neq w_-'(0)$. Therefore, there exists $\mu \in \C^*$ such that if we replace $\eps$ by $\widetilde  \eps:=\mu \eps$ then $\frac{d}{d\widetilde  \eps}_{|\widetilde  \eps=0} (w_+(\widetilde  \eps)-w_-(\widetilde  \eps) )= 2i$. By an abuse of notation, we will still denote $\widetilde  \eps$ by $\eps$: in other words, we assume from now on that  $w_+'(\eps)-w_-'(\eps)) = 2i$.
	
For $\eps \neq 0$, let $N_\eps$ be the unique affine automorphism of $\C$ mapping $w_\pm(\eps)$ to $\pm i \eps$ (note that $N_\eps$ is well-defined for $\eps$ small since $w_+(\eps) \neq w_-(\eps)$). More explicitly, 
$$N_\eps(x) = \frac{2i\eps}{w_+(\eps)-w_-(\eps)} (x- w_+(\eps)) + i \eps$$ 
and $\eps \mapsto N_\eps$ extends holomorphically to a neighborhood of $0$  with 
$$N_0(x) := \frac{2i}{w_+'(0) - w_-'(0)} x = x.$$
Let $M_\eps(x,y):=(N_\eps(x),y)$ and $\widehat g_\eps:=M_\eps \circ \widetilde  g_\eps \circ M_\eps^{-1}$. A direct computation using expression \eqref{eq:tildeg} shows that $\widehat g_\eps$ is of the form
$$\widehat g_\eps(x,y) = \left(N_\eps \circ p_\eps \circ N_\eps^{-1}(x) + y \widehat b_\eps(x,y), y (1 + \widehat c_\eps(x,y)) + \widehat d_{\eps}(x) \right)$$
with $\widehat b_{\eps}, \widehat c_{\eps}, \widehat d_{\eps}$ depending holomorphically on $\eps$ and $\widehat d_{\eps}(x)=\ocal(x^{m+2})$. Moreover, by construction,  $N_\eps \circ p_\eps \circ N_\eps^{-1}$ has fixed points at $\pm i\eps$ so we can write $N_\eps \circ p_\eps \circ N_\eps^{-1}(x) = x + (x^2+\eps^2) \widehat a_\eps(x)$, with $\widehat a_{\eps}$ depending holomorphically on $\eps$.
Therefore
\begin{equation*}
\widehat g_\eps(x,y) = \left(x + (x^2+\eps^2)\widehat a_\eps(x)+ y \widehat b_\eps(x,y),  y + y \widehat c_\eps(x,y) +  \widehat d_{\eps}(x)\right).
\end{equation*}
Since $N_0=\id$ and $p_0(x) = x + x^2 +\ocal(x^3)$,  we have $\widehat a_0(0)=1$. And since $M_0=\id$, we also have 
$\widehat b_0(0,0) = \widehat c_0(0,0)=1$. Write 
\begin{align*}
\widehat c_{\eps}(x,y)=\eta x+q\eps+\ocal(x^2,y,x\eps,\eps^2);\quad \widehat d_{\eps}(x)=dx^{m+2}+\ocal(x^{m+3})+\eps\ocal_{m+1}(x,\eps)
\end{align*} 
for some $q, d\in\C$, where $\eta=\alpha+1$, so $\re\eta>3$ and $m=\lfloor \re\eta\rfloor$. Now, we consider the polynomial change of coordinates $\Psi_\eps$ given by
$$\Psi_\eps(x,y)=\left(x,y-\frac d{m+1-\eta}x^{m-1}(x^2+\eps^2)\right).$$
Let $g_{\eps}:=\Psi_\eps\circ \widehat g_{\eps}\circ \Psi_\eps^{-1}$. If we denote $(x_1,v_1):=g_{\eps}(x,v)$, we have that
$$x_1=x+(x^2+{\eps^2}) a_{\eps}(x) + v b_{\eps}(x,v)$$
for some $a_{\eps}$ and $b_{\eps}$ depending holomorphically on $\eps$ and such that $a_0(0)=\widehat a_0(0)=1$ and $b_0(0,0)=\widehat b_0(0,0)=0$.
Now, denote $\ell_{\eps}(x,y)=(x^2+\eps^2)\widehat a_{\eps}(x)+y\widehat b_{\eps}(x,y)$, so that $x_1=x+\ell_{\eps}(x,y)$. Given $k\in\N$, we have that
$$x_1^{k}=x^{k}+kx^{k-1}\ell_{\eps}(x,y)+\sum_{j=2}^{k}\binom{k}{j}x^{k-j}\ell_{\eps}(x,y)^j.$$
Since $\ell_{\eps}(x,y)=x^2+\eps^2+\ocal_3(x,\eps)+y\ocal(x,y,\eps)$, we have
$$x^{k-1}\ell_{\eps}(x,y)=x^{k+1}+x^{k-1}\eps^2+\ocal_{k+2}(x,\eps)+y\ocal_{k}(x,y,\eps)$$
and since $\ell_{\eps}(x,y)=\ocal_2(x,y,\eps)$ we have 
$$x^{k-j}\ell_{\eps}(x,y)^j=\ocal_{k+2}(x,y,\eps)$$
for all $2\le j\le k$. Therefore
\begin{align*}
x_1^{k}&=x^{k}+kx^{k+1}+kx^{k-1}\eps^2+\ocal_{k+2}(x,\eps)+y\ocal_{k}(x,y,\eps)+\ocal_{k+2}(x,y,\eps),
\end{align*}
so 
$$x_1^{m+1}+x_1^{m-1}\eps^2=x^{m+1}+(m+1)x^{m+2}+x^{m-1}\eps^2+\ocal(x^{m+3})+\eps\ocal_{m+1}(x,\eps)+y\ocal_{m+1}(x,y,\eps).$$
Then we have,
\begin{align*}
v_1&= y\left(1+\eta x+q{\eps}+\ocal\left(x^2,y,x\eps,\eps^2\right)\right)+d x^{m+2}+\ocal(x^{m+3})+\eps\ocal_{m+1}(x,\eps)+y\ocal_{m+1}(x,y,\eps)\\
&\qquad -\frac{d}{m+1-\eta}\left(x^{m+1}+(m+1)x^{m+2}+x^{m-1}\eps^2\right)\\
&=\left(v+\frac{d}{m+1-\eta}\left(x^{m+1}+x^{m-1}\eps^2\right)\right)\left(1+\eta x+q{\eps}+\ocal\left(x^2,v,x\eps,\eps^2\right)\right)+d x^{m+2}\\
&\qquad -\frac{d}{m+1-\eta}\left(x^{m+1}+(m+1)x^{m+2}+x^{m-1}\eps^2\right)+\ocal(x^{m+3})+\eps\ocal_{m+1}\left(x,\eps\right)\\
&=v\left(1+\eta x+q{\eps}+\ocal\left(x^2,v,x\eps,\eps^2\right)\right)+\frac{d}{m+1-\eta}\left(x^{m+1}+\eta x^{m+2}+x^{m-1}\eps^2\right)+d x^{m+2}\\
&\qquad -\frac{d}{m+1-\eta}\left(x^{m+1}+(m+1)x^{m+2}+x^{m-1}\eps^2\right)+\ocal(x^{m+3})+\eps\ocal_{m+1}\left(x,\eps\right)\\
&=v\left(1+\eta x+q{\eps}+\ocal\left(x^2,v,x\eps,\eps^2\right)\right)+\ocal(x^{m+3})+\eps\ocal_{m+1}\left(x,\eps\right)
\end{align*}
and the Lemma is proved taking $\Theta_\eps:=\Psi_\eps\circ M_\eps$.
\end{proof}

We will also need the following result.

\begin{lem}\label{lem:pertid}
Let $\eps\mapsto A(\eps)$ be a holomorphic map from $\D$ to $\mcal_2(\C)$. Assume that $A(0)=\id$, and $A'(0)=\begin{pmatrix}
u  & w \\
0 & v
\end{pmatrix}$.
Then, for $\eps \neq 0$ small enough, $A(\eps)$ has two  eigenvalues $\lambda_1(\eps) = 1+u \eps+\ocal(\eps^2)$, $\lambda_2(\eps) = 1+v \eps + \ocal(\eps^2)$. Moreover, if $u \neq v$ then the eigenspace associated to $\lambda_1(\eps)$ tends to $\C (1,0)$ as $\eps \to 0$.
\end{lem}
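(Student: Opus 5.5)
Write $A(\eps)=\id+\eps B(\eps)$, where $\eps\mapsto B(\eps)$ is holomorphic on $\D$ and
$$B(0)=A'(0)=\begin{pmatrix} u & w\\ 0 & v\end{pmatrix}.$$
For $\eps\neq 0$ the eigenvalues of $A(\eps)$ are exactly $1+\eps\beta$ with $\beta$ an eigenvalue of $B(\eps)$, and $A(\eps)$ and $B(\eps)$ have the same eigenspaces. So everything reduces to the spectral theory of the matrix family $B(\eps)$, which is a holomorphic perturbation of the triangular matrix $B(0)$.

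\textbf{Case $u\neq v$.} Here $B(0)$ has two simple eigenvalues, so the discriminant of the characteristic polynomial of $B(\eps)$ does not vanish at $\eps=0$. The implicit function theorem, applied to $\det(B(\eps)-\beta\,\id)=0$ near $(0,u)$ and near $(0,v)$, gives holomorphic eigenvalues $\beta_1(\eps)=u+\ocal(\eps)$ and $\beta_2(\eps)=v+\ocal(\eps)$. This yields $\lambda_1(\eps)=1+u\eps+\ocal(\eps^2)$ and $\lambda_2(\eps)=1+v\eps+\ocal(\eps^2)$.

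For the eigenspace, write $B(\eps)=\begin{pmatrix} b_{11} & b_{12}\\ b_{21} & b_{22}\end{pmatrix}$. The vector $(b_{12},\,\beta_1-b_{11})$ spans the $\beta_1$-eigenspace unless it vanishes; this is the kernel of the first row. It is better to use the kernel of the second row: the vector $(\beta_1-b_{22},\,b_{21})$ tends to $(u-v,0)\neq 0$ as $\eps\to 0$. Hence it spans the eigenspace for $\eps$ small, and the line it spans tends to $\C(1,0)$.

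\textbf{Case $u=v$.} Here $B(0)$ may be a nontrivial Jordan block, so the eigenvalues of $B(\eps)$ need not be holomorphic. They are still the roots of $\beta^2-\operatorname{tr}B(\eps)\,\beta+\det B(\eps)$. Since $B(\eps)\to B(0)$, continuity of roots shows that both roots tend to $u$, so each eigenvalue is $u+o(1)$. The statement, however, asks for $\ocal(\eps^2)$ in the eigenvalues of $A(\eps)$, that is $\beta=u+\ocal(\eps)$, and this is where the main obstacle lies.

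In the Jordan case a generic perturbation only gives $\beta=u+\ocal(\eps^{1/2})$, hence $\lambda=1+u\eps+\ocal(\eps^{3/2})$. This is exactly the exponent that appeared in the proof of Lemma~\ref{lem:fptgtv}. I would therefore prove the sharper bound whenever possible, and otherwise record the weaker error $o(\eps)$. The weaker bound suffices for all later uses, namely the derivatives $\lambda_i'(0)$ and the limit defining $q$ in Theorem~\ref{th:v2}. If time permits, I would check directly whether an extra hypothesis (for instance $w=0$, or $b_{21}(\eps)=\ocal(\eps)$) restores the $\ocal(\eps^2)$ bound.

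The holomorphic branches in the case $u\neq v$ are routine. The substantive points are choosing the right row, so that the eigenvector has a nonvanishing limit, and handling the error term honestly in the degenerate case $u=v$.
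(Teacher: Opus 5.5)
Your argument is correct wherever the statement is true. You are also right that the case $u=v$ is more than an obstacle: there the claimed $\ocal(\eps^2)$ is false. For instance, $A(\eps)=\begin{pmatrix}1&\eps\\ \eps^2&1\end{pmatrix}$ has $A(0)=\id$, $A'(0)=\begin{pmatrix}0&1\\0&0\end{pmatrix}$ and eigenvalues $1\pm\eps^{3/2}$. The paper's proof overlooks exactly this. It sets $p(\eps,\mu)=\chi_{A(\eps)}(1+\mu\eps)/\eps^2$, which is your $\det(B(\eps)-\mu\,\id)$. It then observes $p(\eps,\mu)=(u-\mu)(v-\mu)+\ocal(\eps)$ and concludes $\ocal(\eps^2)$ for both eigenvalues. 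That inference is valid only when the roots $u,v$ are simple; for a double root it gives only $\ocal(\eps^{3/2})$.

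So for the eigenvalues you follow the paper's route, only more carefully. For the eigenspace the routes differ:
\begin{itemize}
\item The paper normalizes eigenvectors $X(\eps_n)$ and notes that every limit point satisfies $A'(0)X=uX$, hence lies in $\C(1,0)$ because $u\neq v$.
\item You write down the eigenvector $(\beta_1-b_{22},\,b_{21})$, the kernel of the second row, which tends to $(u-v,0)\neq 0$.
\end{itemize}
Your version is explicit and shows that the eigenline is holomorphic in $\eps$ with slope $\ocal(\eps)$. The paper's is softer but needs no choice of row.

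Two of your side remarks should be corrected.

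First, the extra hypothesis $b_{21}(\eps)=\ocal(\eps)$ is vacuous, since $a_{21}=\eps b_{21}$ and $a_{21}'(0)=0$. Write $B(\eps)=B(0)+\eps C(\eps)$ and $\beta=u+\delta$. The characteristic equation then reads $(\delta-\eps c_{11})(\delta-\eps c_{22})=b_{21}(\eps)\,(w+\eps c_{12})$. So $\delta=\ocal(\eps)$ for both roots, i.e.\ the stated $\ocal(\eps^2)$, holds precisely when $w=0$ or $a_{21}(\eps)=\ocal(\eps^3)$.

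Second, $o(\eps)$ does not cover every later use. The proof of Theorem~\ref{th:v2} expands $\rho_T^1(\eps)=1+2i\eps+\beta\eps^2+\ocal(\eps^3)$ to turn the hypothesis on $\eps_n$ into $n-\pi/\eps_n\to\sigma+\sigma_0$. A term of exact order $\eps^{3/2}$ would add a divergent multiple of $\eps_n^{-1/2}$ to $\frac{2i\pi}{\rho_T^1(\eps_n)-1}-\frac{\pi}{\eps_n}$. That step needs $\rho_T^1(\eps)=1+2i\eps+\beta\eps^2+o(\eps^2)$.
\begin{itemize}
\item When $u\neq v$, your implicit-function argument gives this, because the eigenvalues are then holomorphic.
\item In the degenerate case it must come from finer information on the Jacobian, not from the lemma. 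There $q\pm i\eta=\pm 2i\neq 0$, which forces the fixed point to have $y$-coordinate $\ocal(\eps^{m+1})$. Hence the $(2,1)$ entry is $\ocal(\eps^{m+1})$ rather than merely $\ocal(\eps^2)$, and that suffices.
\end{itemize}
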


Observe that the condition $u \neq v$ is necessary: the matrices $A(\eps) = \begin{pmatrix}
1+u\eps & \eps^2 \\
\eps^2 & 1+u\eps
\end{pmatrix}$ have eigenvectors $(1,1)$ and $(1,-1)$ for all $\eps \neq 0$.

\begin{proof}
Write $A(\eps)=(a_{ij}(\eps))_{1 \leq i,j\leq 2}$, and let 
$$p(\eps,\mu):=\frac{1}{\eps^2}\begin{vmatrix}
a_{11}(\eps)-1-\mu \eps & a_{12}(\eps)\\
a_{21}(\eps) & a_{22}(\eps)-1-\mu\eps 
\end{vmatrix} = \frac{\chi_{A(\eps)}(1+\mu\eps)}{\eps^2}.$$
Then $p(\eps,\mu) = \begin{vmatrix}
u-\mu & w \\
0 & v-\mu
\end{vmatrix} + \ocal(\eps)$; this proves that for $\eps \neq 0$ small enough the eigenvalues of $A(\eps)$ satisfy $\lambda_1(\eps) = 1+u \eps+\ocal(\eps^2)$, $\lambda_2(\eps) = 1+v \eps + \ocal(\eps^2)$. In particular, if $u\neq v$ then for $\eps \neq 0$ small enough they are simple.

Assume now that $u\neq v$ and let us prove the assertion about the eigenspace associated to $\lambda_1(\eps)$. Let $X(\eps)$ be a family of eigenvectors associated to $\lambda_1(\eps)=1+u \eps + \ocal(\eps^2)$ for $\eps \neq 0$, and let $\eps_n \to 0$ be any sequence. Assume without loss of generality that $\| X(\eps_n)\|=1$ for all $n \in \N$.
For every $n$, we have 
$$A(\eps_n) X(\eps_n) = \lambda_1(\eps_n) X(\eps_n)$$
and then
$$\frac{A(\eps_n)-\id}{\eps_n} X( \eps_n) = u X(\eps_n) + \ocal(\eps_n);$$
therefore any adherence value $X \in \C^2$  of the sequence $(X(\eps_n))_{n \geq 0}$ satisfies $A'(0) X = u X$. In particular, $X$ is in $\C(1,0)$. 
Therefore, $\limn [X(\eps_n)] = [1:0]$.
\end{proof}

We can now conclude the proof of Proposition~\ref{prop:2in1intro} and Theorem~\ref{th:v2}. The first statement of Proposition~\ref{prop:2in1intro} follows from Lemma~\ref{lem:fptgtv}. Let us compute the differential of $g_\eps$ at the fixed points $\widehat z_1(\eps)=\Theta_{\eps}(\widetilde  z_1(\eps))=(i\eps,0) + \ocal(\eps^2)$ and $\widehat z_2(\eps)=\Theta_{\eps}(\widetilde  z_2(\eps))=(-i\eps,0)+\ocal(\eps^2)$. 
	
We have
$$\jac\, g_\eps(\widehat z_1(\eps))=\begin{pmatrix}
1+2i\eps+\ocal(\eps^2)  &    b_\eps(i\eps,0) + \ocal(\eps^2) \\
\ocal(\eps^2) & 1+q \eps + i \eta \eps + \ocal(\eps^2)
\end{pmatrix}.$$	
and 
$$\jac\, g_\eps(\widehat z_2(\eps))=\begin{pmatrix}
1-2i\eps+\ocal(\eps^2)  &    b_\eps(-i\eps,0) + \ocal(\eps^2) \\
\ocal(\eps^2) & 1+q \eps - i \eta \eps + \ocal(\eps^2)
\end{pmatrix}.$$	
Then, by Lemma~\ref{lem:pertid},  $\jac\, g_\eps(\widehat z_1(\eps))$ has eigenvalues 
$$\rho_T^1(\eps)=1+2i\eps+\ocal(\eps^2), \quad \rho_N^1(\eps)=1+(q+i\eta)\eps+\ocal(\eps^2)$$ 
and $\jac\, g_\eps(\widehat z_2(\eps))$ has eigenvalues 
$$\rho_T^2(\eps)=1-2i\eps+\ocal(\eps^2), \quad \rho_N^2(\eps)=1+(q-i\eta)\eps+\ocal(\eps^2).$$ 
If $(\rho_T^i)'(0)\neq (\rho_N^i)'(0)$ then again by Lemma~\ref{lem:pertid} the eigenspace associated to $\rho_T^i(\eps)$ tends to $\C (1,0)$ as $\eps\to0$. Moreover, this condition happens for at least one of the fixed points, since either $q+i\eta \neq 2i$ or $q-i\eta \neq -2i$ (or both). This finishes the proof of Proposition~\ref{prop:2in1intro}.

Let us now prove Theorem~\ref{th:v2}. Observe first that 
$$\lim_{\eps\to0} \frac1{\eps}\frac{\rho_N^1(\eps)+\rho_N^2(\eps)-2}{\rho_T^1(\eps)+\rho_T^2(\eps)}=\lim_{\eps\to0} \frac1{\eps}\frac{2q\eps+\ocal(\eps^2)}{2+\ocal(\eps^2)}=q.$$
Now, consider a sequence $\eps_n \to 0$ be such that $\frac{2i\pi}{\rho_T^i(\eps_n)-1} = n-\sigma+o(1)$ for some $1\le i\le 2$ and for some constant $\sigma \in \C$. Up to replacing $\eps$ by $-\eps$ in the family $(f_\eps)$, we can assume without loss of generality that $\rho_T^i=\rho_T^1$. Then, writing $\rho_T^1(\eps)=1+2i\eps_n+\beta\eps_n^2+\ocal(\eps_n^3)$ (where $\beta$ depends only on the family $(f_{\eps_n})$) we have
$$\frac{2i\pi}{2i\eps_n+\beta\eps_n^2+\ocal(\eps_n^3)} = n-\sigma + o(1),$$
and therefore $n-\pi/\eps_n = \sigma+\sigma_0 + o(1)$, with $\sigma_0=i\pi\beta /2$. By Theorem~\ref{th:explicit} we have that for any compact set $K$ contained in the parabolic basin of $g_0$ associated to $(1,0)$ there exists $N \in \N$ such that 
$$g_{\eps_n}^{n-N} \to \lcal_{\sigma+\sigma_0-N,q}^{(g_0)}$$
uniformly on $K$, where $\lcal_{\sigma+\sigma_0-N,q}^{(g_0)}:= (\Phi_{(g_0)}^o)^{-1} \circ A_{\sigma+\sigma_0-N,q} \circ \Phi_{(g_0)}^\iota$ and $\Phi_{(g_0)}^\iota$, $\Phi_{(g_0)}^o$ denote respectively the incoming and outgoing Fatou coordinates for $g_0$. Since 
$f_{\eps}=\Psi^{-1}\circ\Theta_{\eps}^{-1}\circ g_{\eps}\circ\Theta_{\eps}\circ\Psi$ and $\Theta_0=\id$, we deduce that 
$$\limn f_{\eps_n}^{n-N} = \Psi^{-1} \circ \lcal_{\sigma+\sigma_0-N,q}^{(g_0)} \circ \Psi.$$
It is straightforward to see that $\Phi_{(f_0)}^{\iota,o}:= \Phi_{(g_0)}^{\iota,o} \circ \Psi$ are respectively the incoming and outgoing Fatou coordinates for $f_0$. It follows that for any compact $K$ contained in the parabolic basin of $f_0$ associated to $v$ there exists $N$ such that $\limn f_{\eps_n}^{n-N} = (\Phi_{(f_0)}^o)^{-1} \circ A_{\sigma+\sigma_0-N,q} \circ \Phi_{(f_0)}^\iota$ and Theorem~\ref{th:v2} is proved.

\section{Asymptotics of Fatou coordinates}\label{sec:fatoucoord}

Consider a family $(g_{\eps})$ as in Theorem~\ref{th:explicit}, so $g:=g_0$ has the form
$$g(x,y) = (x + x^2a_0(x)+ y b_0(x,y),  y + y c_0(x,y) + d_0(x) )$$
with $a_{0}(0)=1$, $b_0(0,0)=0$, $c_0(x,y)= \eta x +c y + \ocal_2(x,y,\eps)$ and $d_0(x) =\ocal(x^{m+3})$
with $\rho=\re\eta>3$ and $m=\lfloor \rho\rfloor$. Then, we can write
$$g(x,y)=\left(x+x^2+ax^3+\ocal\left(x^4,xy,y^2\right), y+\eta xy + \ocal\left(x^2y,y^2,x^{m+3}\right)\right).$$
In this section we recall the construction and asymptotics of Fatou coordinates for $g$. Although these results are essentially contained in \cite{hakim1997transformations} (see also \cite{lopez2025flower}) we provide all the proofs for the sake of completeness.

In the following, $\log$ refers to the principal branch of the complex logarithm, defined on $\C \setminus (-\infty, 0]$. The expression  $\left(-\frac{1}{x} \right)^\eta$ 
(defined in particular when $\re x<0)$ means by definition $\exp\left(\eta \log \left(-\frac{1}{x}\right)\right)$.

Denote, for any $C>0$ and any $r>0$
$$P^\iota(r,C)=\left\{(x,y)\in\C^2:|x+r|<r,\left|\frac y{(-x)^\eta}\right|<C\right\}.$$
Although we will not mention it explicitly, in the following computations we always assume that $r$ is small enough such that $P^\iota(r,C)\subset U$, where $U$ is the domain of definition of $(g_{\eps})$.
\begin{lem}\label{lem:petalsforg}
For any $C>0$ there exists $r_0(C)>0$ such that if $0<r\le r_0(C)$ and $(x,y)\in P^\iota(r,C)$ then
$$g^n(x,y)\in P^\iota(r,C+1)$$
for every $n\ge0$. Moreover, if $(x,y)\in P^\iota(r,C)$ and we denote $(x_n,y_n):=g^n(x,y)$ we have that 
\begin{equation}\label{eq:-1/x1}
-\frac1{x_1}=-\frac1x+1+(a-1)x+\ocal\left(x^2\right), \quad \frac{y_1}{(-x_1)^\eta}=\frac{y}{(-x)^\eta}+\ocal\left(x^2\right)
\end{equation}
as $(x,y)\to0$ (the constants in $\ocal$ being allowed to depend on $C$),
$$\lim_{n\to\infty} \dfrac{-1}{nx_n}=1\quad\text{ and }\quad |x_n|\le\left(\re\left(-\frac1{x}\right)+\frac n2\right)^{-1} \text{ for any } n\ge1.$$ 
\end{lem}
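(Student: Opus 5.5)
The plan is to prove the two one-step expansions \eqref{eq:-1/x1} by direct computation on $P^\iota(r,C)$, and then obtain invariance and the orbit estimates by induction.

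\textbf{Size of $y$.} On $P^\iota(r,C)$ we have $|x+r|<r$, so $\re x<0$, $|\arg(-x)|<\pi/2$, and $|(-x)^\eta|\le |x|^{\rho}e^{\pi|\im\eta|/2}$. Hence $|y|\le C' |x|^\rho$ with $\rho>3$. In particular $y=\ocal(x^3)$, $xy=\ocal(x^4)$ and $y^2=\ocal(x^6)$, with constants depending on $C$.

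\textbf{Expansion of $-1/x_1$.} Since $x_1=x+x^2+ax^3+\ocal(x^4)$, we get
$$-\frac1{x_1}=-\frac1x\bigl(1+x+ax^2+\ocal(x^3)\bigr)^{-1}=-\frac1x+1+(a-1)x+\ocal(x^2).$$

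\textbf{Expansion of $y_1/(-x_1)^\eta$.} We have $y_1=y(1+\eta x+\ocal(x^2,y))+\ocal(x^{m+3})$, and
$$(-x_1)^{-\eta}=(-x)^{-\eta}(1+x+\ocal(x^2))^{-\eta}=(-x)^{-\eta}(1-\eta x+\ocal(x^2)),$$
using the principal branch. The branch is legitimate because $x_1/x$ is close to $1$. Multiplying,
$$\frac{y_1}{(-x_1)^\eta}=\frac{y}{(-x)^\eta}\bigl(1+\ocal(x^2)\bigr)+\frac{\ocal(x^{m+3})}{(-x)^\eta}.$$
The first error term is $\ocal(x^2)$ because $y/(-x)^\eta$ is bounded by $C$. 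The second is $\ocal(|x|^{m+3-\rho})$, and $m+3-\rho>2$ since $m=\lfloor\rho\rfloor$. This is exactly where the hypothesis on the order of $d_0$ is used, and it gives \eqref{eq:-1/x1}.

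\textbf{Invariance and orbit estimates.} Set $X_n=-1/x_n$. The condition $|x+r|<r$ is equivalent to $\re X>1/(2r)$.

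1. By the first expansion, $X_{n+1}=X_n+1+\ocal(1/|X_n|)$. For $r$ small this gives $\re X_{n+1}\ge\re X_n+1/2$, so the disc condition is preserved and $\re X_n\ge\re X_0+n/2$. Hence $|x_n|\le 1/\re X_n\le(\re(-1/x)+n/2)^{-1}$.

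2. By the second expansion, the quantity $|y_n/(-x_n)^\eta|$ increases by at most $K|x_n|^2\le K(\re X_0+n/2)^{-2}$ at each step.

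3. The sum of these increments is at most $\sum_{n\ge0} K(1/(2r)+n/2)^{-2}=\ocal(r)$, which is less than $1$ once $r\le r_0(C)$.

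The one delicate point is circular: the constant $K$ must be the one valid on $P^\iota(r,C+1)$. I would fix $K=K(C+1)$ first, then choose $r_0$, and run the induction under the hypothesis that $g^k(x,y)\in P^\iota(r,C+1)$ for all $k\le n$. This yields $g^n(x,y)\in P^\iota(r,C+1)$ for every $n$.

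\textbf{Limit.} Finally, $X_n=X_0+n+\ocal\bigl(\sum_k |x_k|\bigr)$, and $\sum_{k<n}|x_k|=\ocal(\log n)$. Therefore $X_n/n\to1$, which is the statement $-1/(nx_n)\to1$.
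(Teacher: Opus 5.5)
Your proposal is correct and follows essentially the same route as the paper. You derive the one-step expansions from $y=\ocal(x^3)$ (via $\rho>3$) and $m+3-\rho>2$, with constants taken on $P^\iota(r,C+1)$, and choose $r$ so that $\re(-1/x)$ grows by at least $1/2$ per step while the summed $\ocal(|x_j|^2)$ increments of $|y_j/(-x_j)^\eta|$ stay below $1$; you then conclude by induction and telescoping, which is exactly how the paper handles the constant $K=K(C+1)$ (its condition $|y(-x)^{-\eta}|<C+1$), the branch of $(-x_1)^\eta$, and the limit $-1/(nx_n)\to 1$.
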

\begin{proof}
Fix $C>0$ and set $(x_1,y_1)=g(x,y)$. Let $0<r_0<r_1\le 1$ and consider $(x,y)\in P^\iota(r,C)$. By the expression of $g$, if $|y(-x)^{-\eta}|<C+1$ (so in particular $y=\ocal(x^3)$ since $\rho>3$) we have that $x_1=x+x^2+ax^3+\ocal(x^4)$ and $y_1=y(1+\eta x+\ocal(x^2))+\ocal(x^{m+3})$ (where the constants in $\ocal(x^4)$ and $\ocal(x^2)$ depend on $C$), so
\begin{equation*}
-\frac1{x_1}=-\frac1{x\left(1+x+ax^2+\ocal\left(x^3\right)\right)}=-\frac1x+1+(a-1)x+\ocal\left(x^2\right)
\end{equation*}
and, as long as $\re x<0$ and $|x|$ is small enough so that $(-x_1)^\eta$ is defined,
\begin{align*}
\frac{y_1}{(-x_1)^\eta}
&=\frac{y\left(1+\eta x+\ocal(x^2)\right)+\ocal(x^{m+3})}{(-x)^\eta\left(1+x+\ocal(x^2)\right)^\eta}\\
&=\frac{y}{(-x)^\eta}\left(1+\ocal(x^2)\right)+\ocal(x^{m+3-\rho})=\frac{y}{(-x)^\eta}+\ocal\left(x^2\right),
\end{align*}
(using in the last equality that $m+3-\rho>2$ by definition of $m$) so there exists a constant $\widetilde  C>0$, depending on $C$, such that
$$\left|\frac{y_1}{(-x_1)^\eta}\right|\le \left|\frac{y}{(-x)^\eta}\right|+\widetilde  C|x|^2.$$
By the expression of $-1/x_1$, we can choose $r_0=r_0(C)>0$ small enough such that if $0<r\le r_0$ and $|x+r|<r$ (so $\re(-1/x)>(2r)^{-1}$) then 
$$\re\left(-\frac1{x_1}\right)\ge \re\left(-\frac1x\right)+\frac12,$$
so in particular $|x_1+r|<r$ and moreover 
$$|x_1|\le \left(\re\left(-\frac1{x}\right)+\frac12\right)^{-1}.$$ 
Up to decreasing $r_0$ if necessary, we can also assume that 
$$\widetilde C\sum_{n\ge0}\left(\frac1{2r_0}+\frac n2\right)^{-2}<1.$$

Take $(x,y)\in P^\iota(r,C)$ with $0<r\le r_0$. Since $P^\iota(r,C)\subset P^\iota(r,C+1)$, we have by the previous computation that $|x_1+r|<r$ (so in particular $(-x_1)^\eta$ is defined) and 
$$\left|y_1(-x_1)^{-\eta}\right|<C+\widetilde C|x|^2<C+\widetilde C(2r)^2<C+1,$$ 
so $(x_1,y_1)\in P^\iota(r,C+1)$. Arguing recursively, we obtain that $|x_n+r|<r$,
\begin{equation*}
|x_n|\le\left(\re\left(-\frac1{x}\right)+\frac n2\right)^{-1}
\end{equation*}
and  
\begin{align*}
\left|\frac{y_n}{(-x_n)^{\eta}}\right|
&\le\left|\frac{y}{(-x)^{\eta}}\right|+\widetilde C\sum_{j=0}^{n-1}|x_j|^2< C+\widetilde C\sum_{j=0}^{n-1}\left(\re\left(-\frac1{x}\right)+\frac j2\right)^{-2}\\
&\le C+\widetilde C\sum_{j=0}^{n-1}\left(\frac1{2r}+\frac j2\right)^{-2} <C+1,
\end{align*}
for every $n\ge1$, where $(x_n,y_n)=g^n(x,y)$, so $(x_n,y_n)\in P^\iota(r,C+1)$ for every $n\ge1$. Moreover, since
$$-\frac{1}{x_1}=-\frac{1}{x}+1+h(x,y)$$
where $h(x,y)=\ocal(x)$, we have that
$$-\frac{1}{x_n}=-\frac{1}{x}+n+\sum_{j=0}^{n-1}h(x_j,y_j)$$
and therefore $\frac{-1}{nx_n}\to 1$ as $n\to\infty$.
\end{proof}

Our next goal is to prove the following Propositions:

\begin{prop}\label{prop:incomfatou}
For any $C>0$ there exists $0<r^\iota(C)\le r_0(C)$ such that for any $0<r\le r^\iota(C)$ there exists a holomorphic univalent map $\Phi^\iota: P^\iota(r,C)\to\C^2$ which is an incoming Fatou coordinate for $g$ (i.e., $\Phi^\iota\circ g=\Phi^\iota+(1,0)$) and satisfies 
$$\Phi^\iota(x,y)=\left(-\frac{1}{x} + (1-a) \log (-x) + o\left(1\right), \frac{y}{(-x)^\eta} + o\left(1\right)\right),$$
as $\re(-1/x)\to+\infty$ inside $P^\iota(r,C)$.
Moreover, 
$$\{ X \in \H_{r^{-1}}:|\im X|<2|\re X|\} \times \D(0,C-1)\subset \Phi^\iota(P^\iota(r,C)),$$
where $\H_{t}:=\left\{X\in \C: \re X>t\right\}$.
\end{prop}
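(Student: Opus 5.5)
We construct the two coordinates of $\Phi^\iota$ separately, each as a limit of corrected approximate Fatou coordinates along orbits, using the invariance and the estimates of Lemma~\ref{lem:petalsforg}.

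\emph{First coordinate.} Put $u(x):=-\frac1x+(1-a)\log(-x)$, defined for $\re x<0$. Expanding $\log(-x_1)=\log(-x)+\log(1+x+\ocal(x^2))=\log(-x)+x+\ocal(x^2)$ and using \eqref{eq:-1/x1}, we get
\[
u(x_1)=u(x)+1+(a-1)x+(1-a)x+\ocal(x^2)=u(x)+1+\ocal(x^2)
\]
on $P^\iota(r,C+1)$, with a constant depending on $C$. Define
\[
\phi_1:=\lim_{n\to\infty}\bigl(u(x_n)-n\bigr).
\]
The increments $u(x_{n+1})-u(x_n)-1$ are $\ocal(|x_n|^2)$. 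By the bound $|x_n|\le(\re(-1/x)+n/2)^{-1}$ they are summable, uniformly on $P^\iota(r,C)$. Hence the limit exists and is holomorphic (locally uniform limit), satisfies $\phi_1\circ g=\phi_1+1$, and obeys
\[
|\phi_1-u(x)|\le \widetilde C\sum_{j\ge0}\bigl(\re(-1/x)+j/2\bigr)^{-2}=\ocal\bigl(1/\re(-1/x)\bigr)=o(1).
\]

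\emph{Second coordinate.} Let $w(x,y):=y(-x)^{-\eta}$. By \eqref{eq:-1/x1}, $w(x_1,y_1)=w(x,y)+\ocal(x^2)$, so $\phi_2:=\lim_n w(x_n,y_n)$ exists by the same summability argument. It is holomorphic, $g$-invariant, and satisfies $\phi_2=w+o(1)$. Thus $\Phi^\iota:=(\phi_1,\phi_2)$ conjugates $g$ to $T_{(1,0)}$ and has the stated asymptotics. One has to take $r^\iota(C)\le r_0(C)$ small enough that the $\ocal$-remainders are uniformly controlled on $P^\iota(r,C+1)$; note that the whole forward orbit stays in $P^\iota(r,C+1)$ by Lemma~\ref{lem:petalsforg}.

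\emph{Univalence.} Suppose $\Phi^\iota(p)=\Phi^\iota(p')$. Then $\Phi^\iota(g^n p)=\Phi^\iota(g^n p')$ for all $n$. Write $(x_n,y_n)=g^np$ and $(x_n',y_n')=g^np'$. The asymptotics give
\[
u(x_n)-u(x_n')=o(1),\qquad w(x_n,y_n)-w(x_n',y_n')=o(1).
\]
Since $u$ is injective near $0$ in the half plane, with $u'(x)\sim x^{-2}$, this yields $x_n-x_n'=o(|x_n|^2)$. Then $y_n-y_n'=o(|x_n|^{\re\eta})$ up to a smaller-order correction. Now use that $g^n$ is injective near $0$ and that, in the rescaled coordinates $(-1/x,\,y/(-x)^\eta)$, the map $g$ is $o(1)$-close to the translation $T_{(1,0)}$. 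This closeness makes the forward dynamics uniformly contracting-free, so the smallness of the difference can be pulled back to $n=0$. A cleaner alternative, which we would try first, is to compute the Jacobian of $\Phi^\iota$ in the coordinates $(X,Y)=(u(x),w)$. It equals $\id+o(1)$, so the map $(X,Y)\mapsto\Phi^\iota$ is $o(1)$-close to the identity on a convex domain of the form $\{|\im X|<2\re X\}$-sector times a disc, and is therefore injective by the standard argument: there $\re\langle D\Phi\,h,h\rangle>0$.

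\emph{Image.} In the $(X,Y)$ coordinates, $P^\iota(r,C)$ contains approximately $\{\re X\gtrsim(2r)^{-1}\}\times\D(0,C)$. To get the sector-times-disc inclusion, apply Rouché (degree theory) to $\Phi^\iota-\text{target}$ on slightly smaller polydiscs, using $\Phi^\iota=(X,Y)+o(1)$; the margin of $1$ in $C-1$ absorbs the $o(1)$ errors. The main obstacle is this last step together with univalence. The $\log$ term distorts the relation between $\re(-1/x)$ and $\re X$ by $\ocal(\log|X|)$, and we must check that the sector $|\im X|<2\re X$, $\re X>r^{-1}$, pulls back inside $|x+r|<r$. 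This amounts to comparing the disc $\{\re(-1/x)>1/(2r)\}$ with the shifted sector, and it may require shrinking $r^\iota(C)$ further.
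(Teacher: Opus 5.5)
Your construction of $\Phi^\iota$ is correct, and it is the paper's construction in compressed form. The paper works in the coordinates $\Phi_0^\iota$: it first builds $\psi=\lim Y_n$, which is your $\phi_2$. It then straightens the first coordinate by $\varphi=\lim\,[X_n-n-(1-a)\log n]$, which equals your $\lim(u(x_n)-n)$ because $\log(-x_n)=-\log n+o(1)$.

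Your image argument is also the paper's idea. The paper runs it in $\Phi_0^\iota$-coordinates, where the petal is exactly $\H_{(2r)^{-1}}\times\D(0,C)$, and uses Banach's fixed point theorem on $\overline{\D}(X_0,\re X_0/2)\times\overline{\D}(Y_0,1)$. The key bound is $|(1-a)\log X+o(1)|\le\frac13\re X$ on $\{|\im X|<5\re X\}$. Letting the radius grow with $\re X_0$ absorbs the logarithmic distortion you flagged, and this is precisely where the sector hypothesis enters. Your Rouch\'e version works once you carry out the pull-back check you mention.

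The real gap is univalence. Your preferred argument proves injectivity on a ``sector times disc''. The statement, however, requires univalence on all of $P^\iota(r,C)$, whose image under $\Phi_0^\iota$ is the whole half-plane $\H_{(2r)^{-1}}\times\D(0,C)$. Pairs of points with $|\im(-1/x)|\gg\re(-1/x)$ are therefore not covered.

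You cannot fix this by enlarging the domain in your coordinates $(u(x),w)$. There the petal becomes $\{\Xi-(1-a)\log\Xi:\re\Xi>(2r)^{-1}\}\times\D(0,C)$, which is in general not convex. For instance, when $\re a>1$ its boundary behaves like $\re X\approx(\re a-1)\log|\im X|$. So the argument via $\re\langle D\Phi\,h,h\rangle>0$ does not apply.

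The fix is to run the Jacobian argument in $\Phi_0^\iota$-coordinates. Build $\Phi^\iota$ on a slightly larger petal, say $P^\iota(2r,C+1)$; the defining limits are the same, so this is harmless for small $r$. Cauchy estimates then give $\|D(\Phi^\iota\circ(\Phi_0^\iota)^{-1})-\id\|\le|1-a|/|X|+o(1)<1$ uniformly on the convex set $\H_{(2r)^{-1}}\times\D(0,C)$. This yields injectivity on the whole petal.

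Your first, dynamical sketch can also be completed, but ``injectivity of $g^n$'' is not the relevant input. You need the quantitative bound $DG=\id+\ocal(|X|^{-2})$ for $G=\Phi_0^\iota\circ g\circ(\Phi_0^\iota)^{-1}$. This gives $|G^jP-G^jP'|\ge\prod_{i<j}\bigl(1-c(R+i/2)^{-2}\bigr)|P-P'|$, which you combine with $|G^nP-G^nP'|\to0$.

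By contrast, the paper avoids derivative bounds and convexity altogether. It factors $\Phi^\iota\circ(\Phi_0^\iota)^{-1}=\Phi_2^\iota\circ\Phi_1^\iota$ into the triangular maps $(X,\psi(X,Y))$ and $(\varphi(X,Y),Y)$. It proves each injective fiberwise by Hurwitz's theorem, using that after the first step the conjugated map preserves $Y$ exactly. Hence $X\mapsto X_n$ is an injective one-variable map for each fixed $Y$. That is what the two-step construction buys. Your one-step construction is more economical but shifts the burden onto a uniform Jacobian estimate on a genuinely convex domain.
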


\begin{prop}\label{prop:outgofatou}
For any $C>0$ there exists $r^o(C)>0$ such that for any $0<r\le r^o(C)$ there exists a holomorphic univalent map $\Phi^o: P^o(r,C)\to\C^2$, where
$$P^o(r,C)=\left\{(x,y)\in\C^2:|x-r|<r,\left|\frac y{x^\eta}\right|<C\right\},$$ 
which is an outgoing Fatou coordinate for $g$ (i.e., $\Phi^o\circ g=\Phi^o+(1,0)$) and satisfies 
$$\Phi^o(x,y)=\left(-\frac{1}{x} + (1-a) \log x + o\left(1\right), \frac{y}{x^\eta} + o\left(1\right)\right),$$
as $\re(-1/x)\to-\infty$ inside $P^o(r,C)$.
Moreover,
$$-\{ X \in \H_{r^{-1}}:|\im X|<2|\re X|\} \times \D(0,C-1)\subset \Phi^o(P^o(r,C)).$$
\end{prop}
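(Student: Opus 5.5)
We prove the statement (Proposition~\ref{prop:outgofatou}) by reduction to Proposition~\ref{prop:incomfatou}, applied to the inverse of $g$ after the reflection $x\mapsto -x$.

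\emph{Reduction.} Near the origin $g$ is a local biholomorphism, so $g^{-1}$ exists. Set $\tau(x,y)=(-x,y)$ and $\widetilde g:=\tau\circ g^{-1}\circ\tau$. Inverting the expansion of $g$ gives
$$g^{-1}(x,y)=\left(x-x^2+(2-a)x^3+\ocal(x^4,xy,y^2),\ y-\eta xy+\ocal(x^2y,y^2,x^{m+3})\right).$$
Hence
$$\widetilde g(x,y)=\left(x+x^2+\widetilde a x^3+\ocal(x^4,xy,y^2),\ y+\eta xy+\ocal(x^2y,y^2,x^{m+3})\right),\qquad \widetilde a=a-2.$$
This is the same normal form as $g$. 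In particular $d$ still vanishes to order $m+3$, the second component keeps the factor $y$, and the director $\eta$ is unchanged. Thus $\widetilde g$ satisfies all hypotheses used in Lemma~\ref{lem:petalsforg} and Proposition~\ref{prop:incomfatou}.

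I would check one point carefully: that the only information those proofs use is this expansion, in particular the bound $y=\ocal(x^3)$ on the petal and the $\ocal(x^{m+3})$ term. Also $\tau$ maps $P^o(r,C)$ onto $\{|x+r|<r,\ |y/(-x)^\eta|<C\}=P^\iota(r,C)$, since $x^\eta$ for $\re x>0$ becomes $(-x)^\eta$.

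\emph{Construction.} Let $\widetilde\Phi^\iota$ be the incoming Fatou coordinate of $\widetilde g$ on $P^\iota(r,C)$ given by Proposition~\ref{prop:incomfatou}. Define
$$\Phi^o:=-\widetilde\Phi^\iota\circ\tau \quad\text{(negating the first coordinate only)},$$
that is, $\Phi^o=\tau\circ\widetilde\Phi^\iota\circ\tau$ with $\tau(X,Y)=(-X,Y)$.

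Then $\widetilde\Phi^\iota\circ\widetilde g=\widetilde\Phi^\iota+(1,0)$ gives $\Phi^o\circ g^{-1}=\Phi^o-(1,0)$ on $P^o$. This is equivalent to $\Phi^o\circ g=\Phi^o+(1,0)$ wherever both sides are defined. The domain issue needs care: $g^{-1}$ maps $P^o(r,C)$ into $P^o(r,C+1)$. So I would apply the incoming proposition with constant $C+1$ (or shrink $r$) so that the functional equation makes sense. Univalence transfers directly from $\widetilde\Phi^\iota$.

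\emph{Asymptotics.} Put $\widetilde x=-x$. Then
$$-\frac1{\widetilde x}+(1-\widetilde a)\log(-\widetilde x)=\frac1x+(3-a)\log x.$$
Negating the first coordinate gives $-\tfrac1x+(a-3)\log x$, which is not $(1-a)\log x$ as claimed. The mismatch in the logarithmic coefficient is the main obstacle. I expect it to come from the exact value of the $x^3$ coefficient of $g^{-1}$ in this two-dimensional setting, together with the normalization constant of the Fatou coordinate.

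Two facts help. First, on the petal the $xy$ and $y^2$ terms are $\ocal(x^4)$ because $y=\ocal(x^\rho)$ with $\rho>3$, so only the $x$-component dynamics matters. Second, the logarithmic coefficient of a Fatou coordinate is a conjugacy invariant, namely the iterative residue. It is therefore more natural to derive this coefficient directly than to trust the formal substitution. I would recompute it from the relation
$$-\frac1{x_{-1}}=-\frac1x-1-(a-1)x+\ocal(x^2),$$
which follows from inverting \eqref{eq:-1/x1}. The outgoing Fatou coordinate must then satisfy $\Phi_1^o(x)=-\tfrac1x+\kappa\log x+o(1)$, with $\kappa$ fixed by the telescoping sum of the $(a-1)x_{-j}$ terms. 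This forces $\kappa=1-a$, since $\log x_{-j}$ changes by $-x_{-j}$ per backward step.

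For the second coordinate, $y/(-\widetilde x)^\eta=y/x^\eta$, which gives the claimed form $y/x^\eta+o(1)$.

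\emph{Image statement.} This transfers directly: $-\{X\in\H_{r^{-1}}:|\im X|<2|\re X|\}\times\D(0,C-1)$ is exactly the image of the incoming region under $(X,Y)\mapsto(-X,Y)$.

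If the reduction's bookkeeping proves fragile, the fallback is to redo the proof of Proposition~\ref{prop:incomfatou} verbatim with backward orbits. Define
$$\Phi^o=\lim_n\left(\Phi_{\mathrm{approx}}\circ g^{-n}+(n,0)\right),$$
using the backward analogue of Lemma~\ref{lem:petalsforg} to get summable corrections of order $\ocal(x_{-n}^2)$.
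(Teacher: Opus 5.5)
Your route is essentially the paper's. The paper also reduces to the incoming construction for $g^{-1}$, using the chart $(x,y)\mapsto(1/x,-y/x^\eta)$, which is $\Phi_0^\iota\circ\tau$ up to the sign of the second coordinate. It then sets $\Phi^o:=-\Phi^\iota_{g^{-1}}$, and this coincides with your $\tau\circ\widetilde\Phi^\iota\circ\tau$. Your packaging is if anything slightly cleaner: $\widetilde g$ is literally of the normal form of Section~\ref{sec:fatoucoord}, so Proposition~\ref{prop:incomfatou}, including the image statement, applies as a black box instead of ``repeating the construction''.

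However, the ``main obstacle'' you describe does not exist. It comes from an arithmetic slip, and you have misdiagnosed it. The first component of $\tau g^{-1}\tau$ is $-\pi_1 g^{-1}(-x,y)$. Pure powers $x^j$ therefore get multiplied by $(-1)^{j+1}$: the $x^2$ coefficient flips sign and the $x^3$ coefficient is kept. Hence
\[
\widetilde g(x,y)=\tau\bigl(g^{-1}(-x,y)\bigr)=\bigl(x+x^2+(2-a)x^3+\ocal(x^4,xy,y^2),\ y+\eta xy+\ocal(x^2y,y^2,x^{m+3})\bigr),
\]
so $\widetilde a=2-a$, not $a-2$. With the correct value the asymptotics come out directly:
\begin{itemize}
\item $1-\widetilde a=a-1$, so Proposition~\ref{prop:incomfatou} gives first coordinate $-1/\widetilde x+(a-1)\log(-\widetilde x)+o(1)$.
\item Substituting $\widetilde x=-x$ gives $1/x+(a-1)\log x+o(1)$.
\item Negating gives exactly $-1/x+(1-a)\log x+o(1)$.
\end{itemize}

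Neither of your suspected sources is responsible. The cubic coefficient $2-a$ you wrote for $g^{-1}$ is correct. An additive normalization constant can never change the coefficient of $\log x$.

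The invariance you invoke would itself have caught the slip. $\widetilde g$ is conjugate to $g^{-1}$, whose iterative residue is $-(1-a)=a-1$, whereas your $\widetilde a=a-2$ would give $3-a$.

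Your telescoping argument forcing $\kappa=1-a$ is correct; it is exactly the fact that the residue of $g^{-1}$ is minus that of $g$. But as written, the proposal asserts two incompatible values of the logarithmic coefficient and uses the second to overrule the first. Once $\widetilde a$ is corrected, the reduction alone proves the statement, and neither the telescoping check nor the backward-orbit fallback is needed.
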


Set
$$\Phi_0^\iota(x,y):= \left( -\frac{1}{x} , \frac{y}{(-x)^\eta} \right).$$
It is straightforward to check that $\Phi_0^\iota$ is well-defined and univalent on $\{(x,y) \in \C^2: \re x<0\}$ and that 
\begin{equation*}
\left(\Phi_0^\iota \right)^{-1}(X,Y) = \left(- \frac{1}{X}, \frac{Y}{X^\eta} \right). 
\end{equation*}
By Lemma~\ref{lem:petalsforg}, for any $C>0$ there exists $r_0(C)>0$ such that for any $0<r\le r_0(C)$ the map 
$$G:=\Phi_0^\iota \circ g \circ \left(\Phi_0^\iota \right)^{-1}$$
is well-defined on 
$$\Phi_0^\iota\left(P^\iota(r,C)\right)=\H_{(2r)^{-1}}\times\D(0,C),$$
where $\H_{(2r)^{-1}}=\left\{X\in\C:\re X>(2r)^{-1}\right\}$, and $G^n(X,Y)\in\Phi_0^\iota\left(P^\iota(r,C+1)\right)$ for any $n\ge 1$ and for any $(X,Y)\in \H_{(2r)^{-1}}\times\D(0,C)$. Moreover, using \eqref{eq:-1/x1} we have that
\begin{equation}\label{eq:expressionG}
G(X,Y) = \left(X+1+\frac{1-a}{X}  +  \ocal\left(\frac{1}{X^{2}} \right), Y  +  \ocal\left(\frac{1}{X^{2}}\right) \right)
\end{equation}
in $\H_{(2r)^{-1}}\times\D(0,C)$ as $\re X \to +\infty$ (the constants in $\ocal$ being allowed to depend on $C$).

\begin{lem}\label{lem:constructphi2}
For any $C>0$ and any $0<r\le r_0(C)$, where $r_0(C)$ is given by Lemma~\ref{lem:petalsforg}, the map
$$\psi(X,Y):=\lim_{n\to\infty} Y_n,$$ 
where $(X_n,Y_n)=G^n(X,Y)$, is well-defined in $\H_{(2r)^{-1}}\times\D(0,C)$, satisfies $\psi\circ G=\psi$ and has the form $\psi(X,Y)=Y+o(1)$ as $\re X\to+\infty$. Moreover, the map $\Phi_1^\iota:\H_{(2r)^{-1}}\times\D(0,C)\to\C^2$ defined by
$$\Phi_1^\iota(X,Y):=\left(X,\psi(X,Y)\right)$$
is injective. 
\end{lem}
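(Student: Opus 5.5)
The plan is to show that $(Y_n)$ is a Cauchy sequence by telescoping the increments $Y_{j+1}-Y_j$, whose size is controlled by the $\ocal(X^{-2})$ error term in \eqref{eq:expressionG}. The same telescoping bound will give both the invariance and the asymptotics of $\psi$; injectivity will come from a Rouché-type argument in the $Y$-variable.

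\emph{Convergence.} Fix $(X,Y)\in \H_{(2r)^{-1}}\times\D(0,C)$ and write $(X_n,Y_n)=G^n(X,Y)$. By Lemma~\ref{lem:petalsforg}, transported through $\Phi_0^\iota$, every iterate $(X_n,Y_n)$ lies in $\H_{(2r)^{-1}}\times\D(0,C+1)$. Moreover $\re X_n\ge \re X+n/2$, since $|x_n|\le(\re(-1/x)+n/2)^{-1}$ is obtained from $\re(-1/x_{j+1})\ge\re(-1/x_j)+1/2$.

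Applying \eqref{eq:expressionG} with $C+1$ in place of $C$, I get
$$|Y_{j+1}-Y_j|\le \widetilde C\,|X_j|^{-2}\le \widetilde C\,(\re X+j/2)^{-2}.$$
This bound is summable, so $(Y_n)$ is uniformly Cauchy on the domain. Hence $\psi$ is well defined and holomorphic, being a locally uniform limit of holomorphic maps.

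\emph{Invariance and asymptotics.} The identity $\psi\circ G=\psi$ holds because the orbit of $G(X,Y)$ is just the orbit of $(X,Y)$ shifted by one index, so both sequences have the same limit. Summing the increment bound gives
$$|\psi(X,Y)-Y|\le \widetilde C\sum_{j\ge0}(\re X+j/2)^{-2}=\ocal\left(\frac{1}{\re X}\right),$$
which is $o(1)$ as $\re X\to+\infty$, uniformly in $Y$.

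\emph{Injectivity.} This is the main obstacle. Suppose $\Phi_1^\iota(X,Y)=\Phi_1^\iota(X',Y')$; then $X=X'$, so it suffices to show that $Y\mapsto\psi(X,Y)$ is injective on $\D(0,C)$ for each fixed $X$. Smallness of $\psi-Y$ alone does not give injectivity directly, so I will use the invariance. If $\psi(X,Y)=\psi(X,Y')$, then $\psi(G^n(X,Y))=\psi(G^n(X,Y'))$ for every $n$. I will then argue in one of two ways.

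The first option is to show that $G^n$ is injective: it is a composition of injective maps, since $g$ is locally injective near $0$ (its differential is the identity). Then $\psi(X_n,\cdot)-\mathrm{id}$ has sup norm $\ocal(1/\re X_n)\to0$ on $\D(0,C+1)$, so by Cauchy estimates its derivative is also small on slightly smaller disks. Hence $\psi(X_n,\cdot)$ is injective on $\D(0,C+1/2)$ for $n$ large, by the standard fact that $\mathrm{id}+h$ is injective on a convex set when $|h'|<1$ there.

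There is a difficulty here: $Y_n$ and $Y_n'$ are at different points $X_n\neq X_n'$ only if $X_n\ne X_n'$. But $X_n$ depends on $Y$ too, through the first component of $G$ (the $\ocal(1/X^2)$ term involves $Y$). So I need to treat the first component as well.

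The cleaner option is to show injectivity of the full map $(X,Y)\mapsto (X_n,\psi(X_n,Y_n))$ via $G^n$. Since $\Phi_1^\iota=(\mathrm{pr}_1,\psi)$, one has $\Phi_1^\iota\circ G^n = (X_n,\psi)$. Now $\Phi_1^\iota$ is $C^1$-close to the identity on $\{\re X\ \text{large}\}\times\D(0,C+1/2)$, because $\psi-Y=\ocal(1/\re X)$ there and Cauchy estimates control its derivatives in both $X$ and $Y$. So $\Phi_1^\iota$ is injective on that convex region. Suppose $\Phi_1^\iota(X,Y)=\Phi_1^\iota(X,Y')$. Then $\Phi_1^\iota(G^n(X,Y))$ and $\Phi_1^\iota(G^n(X,Y'))$ differ only in their first coordinates, by $X_n-X_n'$. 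Write $D_n:=X_n-X'_n$ and $E_n:=Y_n-Y_n'$. From \eqref{eq:expressionG} one has $D_{n+1}=D_n+\ocal(|X_n|^{-2})(|D_n|+|E_n|)$, and $E_n\to\psi(X,Y)-\psi(X,Y')$ up to errors also summable in $|X_n|^{-2}$. A Gronwall-type estimate on these summable perturbations then forces $D_0=E_0=0$, starting from $D_0=0$ and $E_\infty=0$. This linearized-difference argument is the step I expect to require the most care.
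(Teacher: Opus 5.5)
Your convergence, invariance and $o(1)$-asymptotics arguments are the paper's own: telescope $Y_n=Y+\sum_{j<n}k(X_j,Y_j)$ with $k=\ocal(X^{-2})$, and use $|X_j|^{-1}\le(\re X+j/2)^{-1}$. For injectivity the paper takes a different, one-line route. It asserts that $Y\mapsto Y_n(X,Y)$ is injective for fixed $X$, and applies Hurwitz's theorem to the non-constant limit $\psi(X,\cdot)$. That asserted premise is exactly where the $Y$-dependence of $X_n$ you noticed must be controlled, so some quantitative estimate is needed on either route.

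The gap in your version is its last sentence. With mixed boundary data $D_0=0$, $E_\infty=0$, summability of the coefficients does not force the trivial solution. Forward Gronwall cannot help either, because $E_0$ is unknown. For example, take real sequences $D_{n+1}=D_n+c_nE_n$, $E_{n+1}=E_n-c_nD_n$ with $c_n>0$ and $\sum_n\arctan c_n=\pi/2$. They satisfy your inequalities with $D_0=0$, $E_0=1$, $E_\infty=0$.

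What does work is a contraction. Suppose $|D_{n+1}-D_n|,\ |E_{n+1}-E_n|\le c_n(|D_n|+|E_n|)$ and set $S:=\sup_n(|D_n|+|E_n|)<\infty$. Then $|D_n|\le S\sum_{j<n}c_j$ and $|E_n|\le S\sum_{j\ge n}c_j$. Hence $S\le\mu S$ with $\mu:=\sum_jc_j$, and you need $\mu<1$.

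Here $c_j\le K(\re X+j/2)^{-2}$, where $K$ depends on $C$ and on the Lipschitz constants of the $\ocal(X^{-2})$ remainders. Those come from Cauchy estimates: at step $0$ only $\partial_Y$ is needed since $D_0=0$; for $n\ge1$ one has $\re X_n\ge(2r)^{-1}+1/2$; and in $Y$ the orbit stays a fixed distance inside $\D(0,C+1)$. So $\mu<1$ is only guaranteed once $\re X$ exceeds a threshold depending on $C$. You must therefore decrease $r_0(C)$. This is harmless, since Lemma~\ref{lem:petalsforg} already allows shrinking $r_0$ and every later use only needs $r$ small, but it has to be stated.

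Once you shrink $r$, your first observation suffices and no orbit comparison is needed. Construct $\psi$ also on $\H_{(2r)^{-1}}\times\D(0,C+1)$, taking $r\le r_0(C+1)$ as well. There $|\psi(X,Y)-Y|=\ocal(1/\re X)$, so Cauchy estimates give $|\partial_Y\psi-1|<1$ on the convex disk $\D(0,C)$ for $\re X$ large. Hence $Y\mapsto\psi(X,Y)$ is injective for each fixed $X$.
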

\begin{proof}
By \eqref{eq:expressionG}, we have that
$Y_1=Y+k(X,Y)$ for some holomorphic map $k(X,Y)=\ocal(X^{-2})$ and for every $(X,Y)\in\H_{(2r)^{-1}}\times\D(0,C)$. Therefore, for every $n$ we have that
$$Y_n=Y+\sum_{j=0}^{n-1}k(X_j,Y_j).$$
Using the bound $|X_j|^{-1}\le \left(\re X+j/2\right)^{-1}\le 2/j$ from Lemma~\ref{lem:petalsforg} we have that the series
$\sum_{j=0}^\infty k(X_j,Y_j)$ is uniformly convergent in $\H_{(2r)^{-1}}\times\D(0,C)$, so $Y_n$ converges uniformly to a holomorphic function 
$$\psi(X,Y)=Y+v(X,Y)$$
where $v(X,Y)=\sum_{j=0}^\infty k(X_j,Y_j).$ The invariance of $\psi$ is immediate, by construction. Moreover, since
$$\sum_{j=0}^\infty \frac1{|X_j|^2}\le \frac1{(\re X)^2}+\int_0^\infty \frac1{(\re X+t/2)^2}dt=\frac1{(\re X)^2}+\frac2{\re X}$$
we have that $v(X,Y)=o(1)$ as $\re X\to+\infty$. Finally, since $Y_n$ is injective when $X$ is fixed and $\psi$ is not constant, we obtain that $\Phi_1^\iota$ is injective.
\end{proof}

By Lemma~\ref{lem:constructphi2}, for any $C>0$ and any $0<r\le r_0(C)$ the map 
$$\widetilde G:=\Phi_1^\iota \circ G \circ \left(\Phi_1^\iota\right)^{-1}$$
is well-defined on $\Phi_1^\iota\left(\H_{(2r)^{-1}}\times\D(0,C)\right)$ and $\widetilde G^n(X,Y)\in\Phi_1^\iota\left(\H_{(2r)^{-1}}\times\D(0,C+1)\right)$ for any $n\ge 1$ and for any $(X,Y)\in \Phi_1^\iota\left(\H_{(2r)^{-1}}\times\D(0,C)\right)$.
Moreover, using \eqref{eq:expressionG} and Lemma~\ref{lem:constructphi2} we have that
\begin{equation}\label{eq:G_2}
\widetilde G(X,Y) = \left(X+1+\frac{1-a}{X}  +  \ocal\left(\frac{1}{X^{2}} \right), Y \right)
\end{equation}
in $\Phi_1^\iota\left(\H_{(2r)^{-1}}\times\D(0,C)\right)$ as $\re X \to +\infty$. Observe also that since $\psi(X,Y)=Y+o(1)$ as $\re X\to+\infty$, there exists $0< r_1(C)\le r_0(C)$ such that for every $0<r\le r_1(C)$ 
$$\Phi_1^\iota\left(\H_{(2r)^{-1}}\times\D(0,C)\right)\subset \Phi_0^\iota\left(\H_{(2r)^{-1}}\times\D(0,C+1)\right).$$

\begin{lem}\label{lem:constructphi}
For any $C>0$ there exists $r_2(C)>0$ such that for any $0<r\le r_2(C)$, the map
$$\varphi(X,Y):=\lim_{n\to\infty} \left[X_n - n - (1-a) \log n\right],$$ 
where $(X_n, Y):= \widetilde{G} ^{n}(X,Y)$, is well-defined in $\Phi_1^\iota\left(\H_{(2r)^{-1}}\times\D(0,C)\right)$, satisfies $\varphi\circ \widetilde G=\varphi+1$ and has the form $\varphi(X,Y)=X-(1-a)\log X+o(1)$ as $\re X \to +\infty$. 
Moreover, the map $\Phi_2^\iota: \Phi_1^\iota\left(\H_{(2r)^{-1}}\times\D(0,C)\right)\to\C^2$ defined by
$$\Phi_2^\iota(X,Y):=(\varphi(X,Y),Y)$$ 
is injective.
\end{lem}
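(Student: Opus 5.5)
Because $\widetilde G$ preserves the second coordinate by \eqref{eq:G_2}, for fixed $Y$ we are really handling the one-dimensional map $X\mapsto \widetilde G_1(X,Y)=X+1+\frac{1-a}{X}+\ocal(X^{-2})$, uniformly in $Y$. The plan is to run the classical construction of the Fatou coordinate, including the logarithmic correction. I first fix $r_2(C)\le r_1(C)$ so that the orbit estimates of Lemma~\ref{lem:petalsforg} carry over to $\widetilde G$ on $\Phi_1^\iota(\H_{(2r)^{-1}}\times\D(0,C))$. This is legitimate because that domain lies in $\Phi_0^\iota(\H_{(2r)^{-1}}\times\D(0,C+1))$, and $r_0(C+1)$ can be used there. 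The orbit stays in $\re X>(2r)^{-1}$ and satisfies $\re X_n\ge \re X+n/2$. Since $X_{n+1}-X_n=1+\ocal(1/X_n)$, we also get $X_n=n+X+\ocal(\log n)$, and therefore $1/X_n=1/(X+n)+\ocal(\log n/n^2)$ uniformly.

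Next I introduce the quantity $\varphi_n:=X_n-n-(1-a)\log n$. Its increment is
$$\varphi_{n+1}-\varphi_n=\frac{1-a}{X_n}-(1-a)\log\Big(1+\frac1n\Big)+\ocal(X_n^{-2}) =(1-a)\Big(\frac1{X_n}-\frac1n\Big)+\ocal(n^{-2})+\ocal(|X_n|^{-2}).$$
From $X_n=n+\ocal(|X|+\log n)$ we get $\frac1{X_n}-\frac1n=\ocal\big((|X|+\log n)/n^2\big)$. This term is summable in $n$, locally uniformly, so $\varphi_n$ converges locally uniformly to a holomorphic function $\varphi$. Shifting the index gives $\varphi\circ\widetilde G=\varphi+1$: applying the definition at $\widetilde G(X,Y)$ produces $X_{n+1}-n-(1-a)\log n$, and $\log(n+1)-\log n\to0$.

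The main obstacle is the asymptotic $\varphi(X,Y)=X-(1-a)\log X+o(1)$, because the convergence estimate above is not uniform as $\re X\to\infty$. I would handle it by comparing instead with $\log(X+n)$. Set $\tilde\varphi_n:=X_n-n-(1-a)\log(X+n)$. Then $\tilde\varphi_0=X-(1-a)\log X$. Moreover $\varphi_n-\tilde\varphi_n=(1-a)\log\frac{X+n}{n}$, which tends to $0$ for fixed $X$, so $\tilde\varphi_n\to\varphi$ as well. The increments are
$$\tilde\varphi_{j+1}-\tilde\varphi_j=(1-a)\Big(\frac1{X_j}-\frac1{X+j}\Big)+\ocal\big(|X+j|^{-2}\big).$$
Since $X_j=X+j+\ocal(\log(1+j/|X|)+\ldots)$, where the logarithmic drift comes from $\sum_{k<j}1/|X_k|$, each term is $\ocal\big((1+\log(1+j))/|X+j|^2\big)$. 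Summing over $j$ gives $\ocal(\log|X|/\re X)$, and this is $o(1)$ inside the sector where $\re X$ is comparable to $|X|$. This sector is the one that matters later, by Proposition~\ref{prop:incomfatou}. Outside it, I would simply interpret the asymptotic as holding for $\re X\to+\infty$ with the bound $|X_j|\ge \re X+j/2$, which still gives $\sum_j |X_j|^{-2}\log(\ldots)\to0$. Checking the drift bound carefully is the delicate point.

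Finally, for injectivity of $\Phi_2^\iota$, suppose $\Phi_2^\iota(X,Y)=\Phi_2^\iota(X',Y')$. Then $Y=Y'$ and $\varphi(X,Y)=\varphi(X',Y)$. Applying the functional equation, $\varphi(X_n,Y)=\varphi(X'_n,Y)$ for all $n$. For large $n$, both $X_n$ and $X'_n$ lie in a region with $\re$ large where $X\mapsto\varphi(X,Y)=X+\ocal(\log X)$ is injective. This holds because its derivative is $1+o(1)$ there by Cauchy estimates on discs of radius $\sim\re X/2$, and a map whose derivative is close to $1$ is injective on convex sets. Hence $X_n=X'_n$. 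Since $\widetilde G(\cdot,Y)$ is injective, because $\Phi_1^\iota$ and $g$ near $0$ are, we conclude $X=X'$.
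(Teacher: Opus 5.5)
Your argument is correct, but it differs from the paper's in two places.

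\textbf{Convergence and asymptotics.} The paper uses directly that $X-(1-a)\log X$ is an approximate Fatou coordinate for $\widetilde G$, namely $X_1-(1-a)\log X_1=X+1-(1-a)\log X+\ocal(X^{-2})$. Hence $X_n-(1-a)\log X_n-n$ telescopes with increments bounded by a constant times $(\re X+j/2)^{-2}$. This gives the convergence and the expansion $X-(1-a)\log X+o(1)$ in one stroke, uniformly on the half-plane, and $\varphi_n$ differs from it only by $(1-a)\log(X_n/n)\to0$.

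Your two passes (first through $\frac1{X_n}-\frac1n$, then through $\log(X+n)$) reach the same result, at the cost of bounding the drift $X_j-X-j$. That bound is easy: $|X_j-X-j|\lesssim\sum_{k<j}(\re X+k/2)^{-1}=\ocal(1+\log(1+j/\re X))$. Moreover, $|X+j|\ge\re X+j$ and $|X_j|\ge\re X+j/2$ hold on the whole half-plane. So your error sum is $o(1)$ uniformly as $\re X\to+\infty$, and you can drop the hedge about sectors.

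\textbf{Injectivity.} The paper invokes Hurwitz: the $\varphi_n(\cdot,Y)$ are injective and the limit is non-constant. You instead push both points forward into $\re X\gg1$, where $\partial_X\varphi=1+o(1)$ by Cauchy estimates, and pull back using injectivity of $\widetilde G$. Your route does not depend on the $Y$-slices of the domain being connected, which a one-variable Hurwitz argument implicitly uses.

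Its price is one domain check. For the relevant $Y$ (whose modulus is at most slightly larger than $C$), $\varphi(\cdot,Y)$ must be defined by the same limit on a whole half-plane $\{\re X>R'\}$. This holds because $\psi(X,\cdot)$ is uniformly close to the identity for $\re X$ large (Lemma~\ref{lem:constructphi2}). Also, $\psi$ is available on $\H_{(2r)^{-1}}\times\D(0,C+1)$ since $r\le r_0(C+1)$.
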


\begin{proof} 
Fix $C>0$, set $r_2(C)=\min\{r_0(C+1), r_1(C)\}$ and take $0<r\le r_2(C)$. 
Thanks to \eqref{eq:G_2}, for any $(X,Y)\in\Phi_1^\iota\left(\H_{(2r)^{-1}}\times\D(0,C)\right)$ we have that
$$X_1-(1-a)\log X_1=X+1-(1-a)\log X+h(X,Y)$$
with $h(X,Y)=\ocal\left(1/X^2\right)$, so
$$X_n-(1-a)\log X_n=X+n-(1-a)\log X+\sum_{j=0}^{n-1}h(X_j,Y).$$
Since $\Phi_1^\iota\left(\H_{(2r)^{-1}}\times\D(0,C)\right)\subset \Phi_0^\iota\left(\H_{(2r)^{-1}}\times\D(0,C+1)\right)$ and $0<r\le r_0(C+1)$, by  Lemma~\ref{lem:petalsforg} we have that $|X_j|^{-1}\le (\re X+j/2)^{-1}\le 2/j$ for every $j$, so the series
$\sum_{j=0}^\infty h(X_j,Y)$ is normally convergent and hence $X_n-(1-a)\log X_n-n$ converges uniformly to a holomorphic map of the form
$$X-(1-a)\log X+H(X,Y).$$ 
Moreover, arguing as in the proof of Lemma~\ref{lem:constructphi2}, $H(X,Y)=o(1)$ as $\re X\to+\infty$.
Now define 
$$\varphi_n(X,Y):=X_n - n - (1-a) \log n.$$
Then, rewriting
$$\varphi_n(X,Y)=X_n-(1-a)\log X_n-n+(1-a)\log\left(\frac {X_n}n\right)$$
and using the fact that $X_n/n\to 1$ as $n\to\infty$ by Lemma~\ref{lem:petalsforg}, we have that the sequence $\varphi_n$ converges uniformly in $\Phi_1^\iota\left(\H_{(2r)^{-1}}\times\D(0,C)\right)$ to a map $\varphi$ of the form
$$\varphi(X,Y)=X-(1-a)\log X+o(1)$$
as $\re X\to+\infty$. Since $\varphi_n \circ \widetilde G=\varphi_{n+1}+1+(1-a)\log(1+1/n)$, we obtain that $\varphi\circ\widetilde G=\varphi+1$. Injectivity of $\Phi_2^\iota$ follows from the injectivity of $\varphi_n$ when $Y$ is fixed and the fact that $\varphi$ is not constant. 
\end{proof}

We are now ready to prove Proposition \ref{prop:incomfatou}.

\begin{proof}[Proof of Proposition \ref{prop:incomfatou}]
By Lemmas~\ref{lem:constructphi2} and \ref{lem:constructphi}, for any $C>0$ and any $0<r\leq r_2(C)$ the map $\Phi_G:=\Phi_2^\iota \circ \Phi_1^\iota$ is well-defined on $\H_{(2r)^{-1}}\times\D(0,C)$ and 
$$\Phi_G(X,Y) = \left(X - (1-a) \log X , Y  \right) + o(1),$$
where the convergence in the  $o(1)$ term is uniform as $\re X \to +\infty$ in $\H_{(2r_2(C))^{-1}}\times\D(0,C)$. Moreover, $\Phi_G \circ G = \Phi_G + (1,0)$.
	
We will prove that there exists $0<r^\iota(C)\leq r_2(C)$ such that for all $0<r \leq r^\iota(C)$
\begin{equation}\label{eq:sandwichpetal1}
\{ X \in \H_{r^{-1}}:|\im X|<2 \re X \} \times \D(0,C-1)  \subset \Phi_G\left( \H_{(2r)^{-1}}  \times \D(0,C) \right) 
\end{equation}
Let us set $u:=\Phi_G - \id$ and write $u=(u_1,u_2)$. There exists $0< r^\iota(C)\le r_2(C)$ such that for all $(X,Y)\in \{X\in\H_{(2r^\iota(C))^{-1}}:|\im X|<5\re X\} \times \D(0,C)$ 
$$|u_1(X,Y)|=|(1-a)\log X+o(1)|\le \frac13\re X\quad \text{and}\quad |u_2(X,Y)|\le 1.$$
Now, consider $(X_0,Y_0)\in\{ X \in \H_{r^{-1}}:|\im X|<2\re X\} \times \D(0,C-1)$ with $0< r\le r^\iota(C)$ and let us show that there exists $(X,Y)\in \{ X \in \H_{(2r)^{-1}}:|\im X|<5\re X\} \times \D(0,C)$ with $\Phi_G(X,Y)=(X_0,Y_0)$. Let $h_0(X,Y):=(X_0,Y_0) - u(X,Y)$, so $\Phi_G(X,Y)=(X_0,Y_0)$ if and only if $(X,Y)$ is a fixed point of $h_0$, and let $K:=\overline{\D}(X_0,\re X_0/2)\times\overline{\D}(Y_0,1)$. Observe that if $(X,Y)\in K$ we have that $|Y|\le C$, $\re X\ge \re X_0/2>(2r)^{-1}$ and
$$|\im X|\le |\im X_0|+\frac{\re X_0}2< \frac52\re X_0\le 5\re X$$
so $K \subset \{ X \in \H_{(2r)^{-1}}:|\im X|<5\re X\} \times \D(0,C)$. Then if $(X_1,Y_1)=h_0(X,Y)$ we have
$$|X_1-X_0|=|u_1(X,Y)|\le \frac13\re X\le \frac12\re X_0\quad \text{ and }\quad |Y_1-Y_0|=|u_2(X,Y)|\le 1$$
so $(X_1,Y_1)\in K$. Moreover, since $u(X,Y)={((a-1)\log X,0)+}o(1)$ as $\re X \to +\infty$ in $\H_{(2r_2(C))^{-1}}\times\D(0,C)$, using Cauchy's estimate and the fact that $\log'(X)=1/X$ and reducing $r^\iota(C)$ if necessary we have that for all $(X,Y) \in \H_{(2r^\iota(C))^{-1}} \times \D(0,C)$, 
\begin{equation*}
\| \partial_X u(X,Y)\| \leq \frac{1}{2} \text{ \quad and \quad }  \| \partial_Y u(X,Y)\| \leq \frac{1}{2}.
\end{equation*} 
Then, the map $h_0:K\to K$ is contracting on $K$, so by Banach fixed point theorem $h_0$ has a fixed point in $K$ and \eqref{eq:sandwichpetal1} is proved. 

Finally, Proposition~\ref{prop:incomfatou} follows by defining $\Phi^\iota:=\Phi_G \circ \Phi_0^\iota.$
\end{proof}

\begin{proof}[Proof of Proposition \ref{prop:outgofatou}]
This follows immediately from Proposition~\ref{prop:incomfatou}. Since $g^{-1}$ is of the form
\begin{equation*}
g^{-1}(x,y) = \left( x- x^2 +(2-a) x^3+ \ocal(x^4, xy,y^2),  y - \eta xy+ \ocal(x^2y,y^2, x^{m+3})  \right),
\end{equation*}
if we take $\Psi(x,y):=(1/x,-y/x^\eta)$ we have that 
$$\Psi\circ g^{-1} \circ\Psi^{-1}(X,Y) = \left(X+1-\frac{1-a}{X}  +  \ocal\left(\frac{1}{X^{2}} \right), Y  +  \ocal\left(\frac{1}{X^{2}}\right) \right)$$
so repeating the construction above we find an incoming Fatou coordinate for $g^{-1}$ in $P^o(r, C)$ of the form 
$$\Phi^\iota_{g^{-1}}(x,y)=\left(\frac{1}{x} - (1-a) \log (x) + o\left(1\right), -\frac{y}{x^\eta} + o\left(1\right)\right),$$
as $(x,y)\to (0,0)$ inside $P^o(r,C)$. Then, to obtain Proposition~\ref{prop:outgofatou} it suffices to define $\Phi^o:= - \Phi^\iota_{g^{-1}}$. 
\end{proof}

Finally, we will also need the following asymptotic expansion for $\left({\Phi^{\iota}}\right)^{-1}$: 

\begin{lem}\label{lem:asympphi-1}
Let $C>0$,  $0<r\leq r^\iota(C)$. If $(X,Y)\in\Phi^\iota\left(P^\iota(r,C)\right)$, then
$$(\Phi^\iota)^{-1}(X,Y)= \left(-\frac{1}{X} + \ocal\left(\frac{\log X}{X^2}\right), \frac{ Y}{ X^\eta} +o\left( \frac{1}{X^\eta}\right) \right),$$
as $\re X\to+\infty$ inside $\Phi^\iota\left(P^\iota(r,C)\right)$, where the convergence in the term $o(X^{-\eta})$ and the implicit constant in the term $\ocal(X^{-2}\log X)$  are uniform on $\Phi^\iota\left(P^\iota(r,C)\right)$.
\end{lem}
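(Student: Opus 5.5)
We invert the factorisation $\Phi^\iota=\Phi_G\circ\Phi_0^\iota$ from the proof of Proposition~\ref{prop:incomfatou}. Given $(X,Y)\in\Phi^\iota(P^\iota(r,C))$, set $(x,y):=(\Phi^\iota)^{-1}(X,Y)\in P^\iota(r,C)$. Also set $(X',Y'):=\Phi_0^\iota(x,y)=(-1/x,\,y/(-x)^\eta)$, so that $(X',Y')\in\H_{(2r)^{-1}}\times\D(0,C)$ and $\Phi_G(X',Y')=(X,Y)$. Then $(\Phi^\iota)^{-1}(X,Y)=(\Phi_0^\iota)^{-1}(X',Y')=(-1/X',\,Y'/X'^\eta)$. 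The task is therefore to express $X'$ and $Y'$ in terms of $X$ and $Y$, with errors that are uniform.

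First we compare $X$ and $X'$. By Lemmas~\ref{lem:constructphi2} and \ref{lem:constructphi},
\[
X=X'-(1-a)\log X'+o(1),\qquad Y=Y'+o(1),
\]
as $\re X'\to+\infty$, uniformly in $Y'\in\D(0,C)$. From the first relation, $|X-X'|=\ocal(\log|X'|)$, so $X/X'\to1$. Moreover $\re X\to+\infty$ if and only if $\re X'\to+\infty$: indeed $\re X'>(2r)^{-1}$, and $|\log X'|$ is small compared with $\re X'$ when $\re X'$ is large. The argument of $X'$ stays in $(-\pi/2,\pi/2)$, so $\log X'$ and $\log X$ differ by $\log(X/X')=o(1)$. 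Hence
\[
X'=X+(1-a)\log X+o(1)=X+\ocal(\log X).
\]

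Next we get the first coordinate:
\[
-\frac1{X'}=-\frac1X\cdot\frac{1}{1+\ocal(\log X/X)}=-\frac1X+\ocal\Big(\frac{\log X}{X^2}\Big).
\]
For the second coordinate we use $Y'=Y+o(1)$ and need the ratio $X'^{-\eta}/X^{-\eta}$. Since $X$ and $X'$ both lie in the right half-plane where $\log$ is holomorphic, we have $(X'/X)^{-\eta}=\exp(-\eta\log(X'/X))$, and $\log(X'/X)=\ocal(\log X/X)\to0$. So $(X'/X)^{-\eta}=1+o(1)$. Here one must check that the principal branches are compatible, i.e.\ that $\log X'-\log X=\log(X'/X)$. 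This holds because both arguments lie in $(-\pi/2,\pi/2)$ and their difference is small.

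Because $Y$ is bounded (by $C+1$, say), we obtain
\[
\frac{Y'}{X'^\eta}=\frac{Y+o(1)}{X^\eta}\,(1+o(1))=\frac{Y}{X^\eta}+o\Big(\frac{1}{X^{\eta}}\Big),
\]
where the constants in the $o(1)$ terms absorb the bounded factor. This is the stated expansion.

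The main obstacle is uniformity. The $o(1)$ terms in Lemmas~\ref{lem:constructphi2} and \ref{lem:constructphi} are uniform as $\re X'\to+\infty$ on $\H_{(2r)^{-1}}\times\D(0,C)$. We therefore have to transfer this uniformity to the regime $\re X\to+\infty$ on the image. This requires a quantitative inequality of the form $\re X'\ge\tfrac12\re X-c$ on the whole petal. That inequality follows from $|X-X'|\le|1-a|\,|\log X'|+1\le|1-a|\big(\log|X'|+\pi/2\big)+1$, together with the fact that $|X'|$ and $|X|$ are comparable.

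A second point needs care. To control $\log|X'|$ by $\re X'$, one needs a sector condition $|\im X'|\lesssim\re X'$, and the full half-plane does not give this. One can instead keep $|\log X'|/|X'|$ as the error size and note that $\ocal(\log X/X^2)$ is meant with $|X|$, which is what we get. Also, when $\re X$ stays bounded there is nothing to prove, since the statement is asymptotic.
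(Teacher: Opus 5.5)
Your argument is the paper's own: with $X'=-1/x$, the paper reads off $X=X'+\ocal(\log X)$ from Proposition~\ref{prop:incomfatou}. It then inverts this to $x=-1/X+\ocal(\log X/X^2)$, and combines $Y=y/(-x)^\eta+o(1)$ with $(1+\ocal(\log X/X))^\eta=1+o(1)$, exactly as you do.

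One caution about your uniformity paragraph. On the whole petal one can have $\re X\to+\infty$ with $\re X'$ bounded: take $\re(1-a)<0$ and $|\im X'|\to\infty$, so that $\re X\approx \re X'+|\re(1-a)|\log|X'|$. Hence neither your claimed equivalence nor $\re X'\ge\frac12\re X-c$ holds. The first coordinate is unaffected, because $X'-X=\ocal(\log|X|)$ holds uniformly on the whole petal. But $Y'=Y+o(1)$ then needs the sharper bound $\sum_j|X'_j|^{-2}=\ocal(1/|X'|)$ along the $G$-orbit. The paper's proof also passes over this point silently, and it is harmless in the bounded-$\im X$ regime where the lemma is actually used (Lemma~\ref{lem:apprfatou}).
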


\begin{proof}
Take $(x,y)\in P^\iota(r,C)$ and set $(X,Y):=\Phi^\iota(x,y)$. By Proposition~\ref{prop:incomfatou} we have
\begin{align*}
X=-\frac{1}{x} + (1-a) \log(-x) + o(1)=-\frac{1}{x} + o\left(\frac{1}{x}\right)
\end{align*}
as $\re X \to +\infty$, so $x = -1/X + o\left(1/X\right)$. Hence
\begin{align*}
X=  - \frac{1}{x} + (1-a) \log\left( \frac{1}{X} + o\left(\frac{1}{X}\right) \right) + o(1)=-\frac{1}{x} + \ocal(\log X),
\end{align*}
and so 
\begin{align*}
x &= \frac{-1}{X + \ocal\left( \log X \right)}  = -\frac{1}{X} + \ocal\left(\frac{\log X}{X^2}\right).
\end{align*}
	
Similarly:
$$Y=\frac{y}{(-x)^\eta}+o(1),$$
so 
\begin{align*}
y&= Y (-x)^\eta +  o((-x)^\eta) =  \frac{Y}{X^\eta}    \left(1+ \ocal\left(\frac{\log X}{{X}} \right) \right)^{{\eta}} +  o\left(\frac{1}{X^\eta}\right) = \frac{Y}{X^\eta} + o\left(\frac{1}{X^\eta}\right).
\end{align*}
\end{proof}

Similarly, we can compute the asymptotics of $\left(\Phi^o\right)^{-1}$: 

\begin{lem}\label{lem:asympphio-1}
Let $C>0$,  $0<r\leq r^0(C)$. If $(X,Y)\in\Phi^o\left(P^o(r,C)\right)$, then
$$(\Phi^o)^{-1}(X,Y) = \left(-\frac{1}{X} + \ocal\left(\frac{\log X}{X^2}\right), \frac{ Y}{ (-X)^\eta} +o\left( \frac{1}{X^\eta}\right) \right),$$
as $\re X\to-\infty$ inside $\Phi^o\left(P^o(r,C)\right)$, where the convergence in the term $o(X^{-\eta})$ and the implicit constant in the term $\ocal(X^{-2}\log X)$  are uniform on $\Phi^o\left(P^o(r,C)\right)$.
\end{lem}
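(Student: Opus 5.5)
The plan is to mirror the proof of Lemma~\ref{lem:asympphi-1}, using the expansion of $\Phi^o$ from Proposition~\ref{prop:outgofatou} in place of that of $\Phi^\iota$. I take $(x,y)\in P^o(r,C)$ and set $(X,Y):=\Phi^o(x,y)$. By Proposition~\ref{prop:outgofatou},
$$X=-\frac1x+(1-a)\log x+o(1),\qquad Y=\frac{y}{x^\eta}+o(1),$$
as $\re(-1/x)\to-\infty$. The first step is to check that the regime $\re X\to-\infty$ inside $\Phi^o(P^o(r,C))$ corresponds to $x\to0$ inside the disc $|x-r|<r$. Since $|\log x|=o(1/|x|)$ there, we get $X=-\frac1x(1+o(1))$, and hence $x=-1/X+o(1/X)$.

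The second step refines this expansion. Substituting the first-order expansion into the logarithm gives $\log x=\log(-1/X)+o(1)=\ocal(\log X)$, where a branch is fixed on the region $\re X<0$. Then $X=-1/x+\ocal(\log X)$, so
$$x=\frac{-1}{X+\ocal(\log X)}=-\frac1X+\ocal\left(\frac{\log X}{X^2}\right).$$

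The third step handles the second coordinate. From $y=Yx^\eta+o(x^\eta)$ and $x=-\frac1X\left(1+\ocal\left(\frac{\log X}{X}\right)\right)$ we obtain $x^\eta=(-1/X)^\eta(1+o(1))=\frac{1}{(-X)^\eta}(1+o(1))$. Because $|Y|$ is bounded (by $C$ plus the $o(1)$ term), it follows that
$$y=\frac{Y}{(-X)^\eta}+o\left(\frac{1}{X^\eta}\right),$$
where $|(-X)^{-\eta}|$ and $|X^{-\eta}|$ are comparable because $\arg X$ stays bounded.

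The only real care needed is with branches. On $P^o$ we have $\re x>0$, so $x^\eta$ uses the principal branch. Writing $x=-1/X\cdot(1+\delta)$ with $-1/X$ in the right half-plane gives $x^\eta=(-1/X)^\eta(1+\delta)^\eta$ without any monodromy issue, and $(-1/X)^\eta=(-X)^{-\eta}$ on that half-plane. Uniformity of all the error terms on $\Phi^o(P^o(r,C))$ follows, exactly as in Lemma~\ref{lem:asympphi-1}, from the uniformity of the $o(1)$ terms in Proposition~\ref{prop:outgofatou}. Alternatively, the whole statement follows by applying Lemma~\ref{lem:asympphi-1} to $g^{-1}$ and using $\Phi^o=-\Phi^\iota_{g^{-1}}$ from the proof of Proposition~\ref{prop:outgofatou}.
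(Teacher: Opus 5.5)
Your proposal is correct and is essentially the paper's own argument. The paper gives no separate proof of Lemma~\ref{lem:asympphio-1}; it only says it follows \emph{similarly} to Lemma~\ref{lem:asympphi-1}, whose proof is exactly your three-step inversion: first $x=-1/X+o(1/X)$, then reinserting $\log x=\ocal(\log X)$ to get the $\ocal(X^{-2}\log X)$ error, then $x^\eta=(-X)^{-\eta}(1+o(1))$. Your alternative via $g^{-1}$ and $\Phi^o=-\Phi^\iota_{g^{-1}}$ is consistent with how $\Phi^o$ is built in Proposition~\ref{prop:outgofatou}.

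One point you pass over as quickly as the paper does: the $o(1)$ terms of Proposition~\ref{prop:outgofatou} are stated only as $\re(-1/x)\to-\infty$. This does not literally follow from $\re X\to-\infty$; only $x\to0$ does. For example, when $\re a<1$ and $x\to0$ tangentially to the circle $|x-r|=r$, $\re X\to-\infty$ while $\re(-1/x)$ stays bounded. This is harmless in the sectors $|\im X|<2|\re X|$ where the lemma is actually used.
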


\section{Approximate Fatou coordinates and estimation of the error terms}\label{sec:approximate}

Consider a family $(g_\eps)$  as in Theorem~\ref{th:explicit}, so 
\begin{equation}\label{eq:gsect4}
g_\eps(x,y) = (x + (x^2+\eps^2)a_\eps(x)+ y b_\eps(x,y),  y + y c_\eps(x,y) + d_\eps(x)),
\end{equation}
where $a_\eps, b_\eps, c_\eps$ and $d_\eps$ depend holomorphically on $\eps$,  $a_{0}(0)=1$, $b_0(0,0)=0$, 
$$c_{\eps}(x,y)= \eta x + q  \eps +c y + \ocal_2(x,y,\eps) \quad\text{and}\quad d_{\eps}(x) =\ocal(x^{m+3})+ \eps\ocal_{m+1}(x,\eps),$$
with $\re\eta>3$ and $m =\lfloor\re \eta\rfloor$. We consider a sequence $(\eps_n)$ such that $n-\pi/\eps_n=\sigma+o(1)$ (so $\eps_n\sim\pi/n$) and write 
\begin{align*}
&a_{\eps_n}(x)=1+a x+p\eps+\ocal(x^2,x\eps_n,\eps_n^2);\quad b_{\eps_n}(x,y)=bx+\ocal(x^2,y,\eps_n);\\
&c_{\eps_n}(x,y)=\eta x+q\eps_n+\ocal(x^2,y,x\eps_n,\eps_n^2);\quad d_{\eps_n}(x)=\ocal(x^{m+3})+\eps_n\ocal_{m+1}(x,\eps_n)
\end{align*}
for some $a, b, p \in\C$. As in \cite{BSU17semi}, up to considering the change of variables $(x,y,\eps)\mapsto (\widetilde  x,y,\widetilde  \eps)$ with
$$x=\widetilde  x(1-p\widetilde \eps);\quad \eps=\widetilde {\eps}(1-p\widetilde \eps),$$
we can assume that $p=0$.

Define
\begin{align*}
w_{\eps_n}(x):&= \frac{1}{{\eps_n}} \arctan\left(\frac{x}{{\eps_n}}\right)+\frac\pi{2{\eps_n}} + \frac{1- a}{2} \log(x^2+{\eps_n^2})\\
&=\frac1{2i{\eps_n}}\log\left(\frac{i{\eps_n}-x}{i{\eps_n}+x}\right)+\frac{\pi}{2\eps_n}+\frac{1- a}{2} \log(x^2+{\eps_n^2}) 
\end{align*}
and 
$$t_{\eps_n}(x,y)=\frac{y}{(x^2+{\eps_n^2})^{\eta/2}}.$$

We set 
$$\Phi_{\eps_n}^\iota(x,y):=(w_{\eps_n}(x),t_{\eps_n}(x,y)),  \qquad \Phi_{\eps_n}^o(x,y):= \Phi_{\eps_n}^{\iota}(x,y)- \left(\frac{\pi}{\eps_n},0 \right)$$ 
defined on $(\C \setminus L_{\eps_n}) \times \C$, where $L_{\eps_n}:= \{ it \eps_n, t \in (-\infty, -1] \cup [1,+\infty)\}$, and we fix a constant $\gamma \in \left(1/2, 2/3\right)$ chosen so that
$$\gamma \rho>2.$$

\begin{defi}\label{defi:Rn}
Set $k_n:= \lfloor n^\gamma\rfloor$. We define, for any $C>1$, the set $\mathcal{R}_n(C)\subset (\C\setminus L_{\eps_n})\times \C$ as
$$
\mathcal{R}_n(C)
:=\left\{(x,y): \re\left(\eps_n w_{\eps_n}(x)\right) \in \left[\frac{\pi k_n}{10n}, \pi-\frac{\pi k_n}{10n} \right], \left|\im (\eps_n w_{\eps_n}(x))\right|\le C\frac\pi n \text{ and } \frac{1}{C} < |t_{\eps_n}(x,y) | < C  \right\}.
$$
\end{defi}

\begin{rem}
Although we will not explicitly mention it, all the terms $o$ and $O$ appearing in the results in this Section are uniform on $\rcal_n(C)$.
\end{rem}

\begin{prop}\label{prop:phiepsinv}
For all $C>0$ and for all $n$ large enough, $(\Phi_{\eps_n}^\iota)^{-1}$ is well-defined on $\Phi_{\eps_n}^\iota\left(\mathcal{R}_n(C)\right)$
and 
$$(\Phi_{\eps_n}^\iota)^{-1}(X,Y) = \left(-\eps_n\cot\left(\eps_nX+\ocal\left(n^{-1}\log n\right)\right),Y\frac{\eps_n^\eta}{\sin^\eta\left(\eps_n X+\ocal\left(n^{-1}\log n\right)\right)}\right)$$
\end{prop}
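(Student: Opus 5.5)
The plan is to invert the two components of $\Phi^\iota_{\eps_n}=(w_{\eps_n},t_{\eps_n})$ one after the other. The second component $t_{\eps_n}$ is linear in $y$ once $x$ is fixed, so the real work is inverting $w_{\eps_n}$ as a function of one variable. Write $\eps=\eps_n$. On $\C\setminus L_\eps$, put $\theta:=\arctan(x/\eps)+\pi/2$. Then $x=-\eps\cot\theta$ and $x^2+\eps^2=\eps^2/\sin^2\theta$. In this notation,
$$\eps w_\eps(x)=\theta+\frac{(1-a)\eps}{2}\log\frac{\eps^2}{\sin^2\theta}.$$
The map $x\mapsto\theta$ is a biholomorphism from $\C\setminus L_\eps$ onto the strip $\{0<\re\theta<\pi\}$. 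Its inverse is $\theta\mapsto-\eps\cot\theta$.

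First I will show that on $\rcal_n(C)$ the correction term is $\ocal(n^{-1}\log n)$. On $\rcal_n(C)$ we have $\re(\eps w_\eps)\in[\pi k_n/(10n),\pi-\pi k_n/(10n)]$ and $|\im(\eps w_\eps)|\le C\pi/n$. I will run a bootstrap. Suppose $\theta$ lies in the region $\re\theta\in[\pi k_n/(20n),\pi-\pi k_n/(20n)]$, $|\im\theta|\le 1$. There $|\sin\theta|\gtrsim k_n/n\ge n^{\gamma-1}/2$ and $|\sin\theta|\lesssim 1$, so $|\log(\eps^2/\sin^2\theta)|=\ocal(\log n)$ (recall $\eps\sim\pi/n$). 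Multiplied by $\eps$, this is $\ocal(n^{-1}\log n)$, which is much smaller than $k_n/n$. Hence $\theta=\eps w_\eps+\ocal(n^{-1}\log n)$, and such $\theta$ stay well inside the region, which closes the bootstrap. To make this rigorous I will use a continuity argument along the strip, or equivalently a Rouché or contraction argument. The cleanest version is contraction. Set $Z:=\eps X$ with $(X,Y)\in\Phi_\eps^\iota(\rcal_n(C))$, and consider the map
$$T(\theta):=Z-\frac{(1-a)\eps}{2}\log\frac{\eps^2}{\sin^2\theta}$$
on the disc $\overline{\D}(Z,n^{-1}\log^2 n)$. On this disc $T$ has derivative $\eps(1-a)\cot\theta=\ocal(n^{-1}\cdot n/k_n)=o(1)$ and maps the disc into itself. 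So $T$ has a unique fixed point $\theta(Z)$ there, and $|\theta(Z)-Z|=\ocal(n^{-1}\log n)$.

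Next comes well-definedness, i.e.\ injectivity of $\Phi^\iota_\eps$ on $\rcal_n(C)$. In the $\theta$-variable the map is $\theta\mapsto \theta+\eps h(\theta)$, where $h'=\ocal(1/|\sin\theta|)$. On the relevant convex region this gives $|\eps h'|\le \ocal(1/k_n)<1$, so the map is injective there; the convexity is available after slightly enlarging the region via the bootstrap. The second component is $y\mapsto y/(x^2+\eps^2)^{\eta/2}$, which is injective for fixed $x$. Together these make $\Phi^\iota_\eps$ injective, so its inverse is defined on the image.

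Finally I write out the formula. Given $\theta=\eps X+\ocal(n^{-1}\log n)$, we get $x=-\eps\cot\theta$, which is the first coordinate. For the second, $y=Y(x^2+\eps^2)^{\eta/2}=Y\eps^\eta/\sin^\eta\theta$. The one delicate point is the choice of branch of the power: since $\re\theta\in(0,\pi)$ and $|\im\theta|$ is small, I need to check that the principal branches agree, i.e.\ that $(\eps^2/\sin^2\theta)^{\eta/2}=\eps^\eta/\sin^\eta\theta$. I expect the main obstacle to be the uniformity of the logarithmic error near the ends of the strip, where $\sin\theta$ is as small as about $k_n/n$. This is exactly where the factor $k_n$ in the definition of $\rcal_n(C)$ is needed: it guarantees $\eps|\cot\theta|=\ocal(1/k_n)\to0$.
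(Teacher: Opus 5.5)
This is essentially the paper's proof. Your $\theta$ is $\eps_n$ times the first coordinate of the paper's explicit bijection $\widetilde\Phi_{\eps_n}$ onto $\{\re(\eps_n X)\in(0,\pi)\}$. The paper likewise inverts the near-identity map $X\mapsto X+u_n(X)$, with $u_n(X)=\frac{1-a}{2}\log\bigl(\eps_n^2/\sin^2(\eps_n X)\bigr)=\ocal(\log n)$. The only difference is the tool: the paper applies Rouch\'e's theorem on a region $Q_n$ kept at distance $\asymp k_n/n$ from the ends of the strip, whereas you use a contraction on a small disc plus a derivative bound on a convex region.

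One caveat, which your argument shares with the paper's. Both arguments only give uniqueness of the preimage with $\theta$ in that good region; neither gives injectivity on all of $\rcal_n(C)$ as literally defined. For example, take real $\eps_n>0$ and real $a<1$. Then $\eps_n w_{\eps_n}$ is real and continuous on $(-\infty,-1]$, equals $\ocal(\eps_n)$ at $x=-1$, and tends to $+\infty$ as $x\to-\infty$. So every value in $[\pi k_n/(10n),\pi-\pi k_n/(10n)]$ is also attained at some $x<-1$, and $\Phi^\iota_{\eps_n}$ is not injective on $\rcal_n(C)$. The inverse therefore has to be read as this branch. A direct estimate of $|\theta-\eps_n w_{\eps_n}(x)|$ using $|x^2+\eps_n^2|<2$ shows it is the correct branch for every point of $\rcal_n(C)$ with $|x|<1$, which is what the later orbit estimates need.
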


\begin{proof}
Let 
$$\widetilde  \Phi_{\eps_n}(x,y):=\left(\frac{1}{\eps_n} \arctan\left(\frac{x}{\eps_n}\right)+ \frac{\pi}{2\eps_n}, \frac{y}{(x^2+{\eps_n^2})^{\eta/2}} \right). $$
	
We claim that the map $\widetilde  \Phi_{\eps_n}:(\C\setminus L_{\eps_n})\times\C\to\{(X,Y)\in\C^2:\re (\eps_n X)\in (0,\pi)\}$ is well-defined and bijective, and its inverse is given by 
$$(\widetilde  \Phi_{\eps_n})^{-1}(X,Y) = \left(-\eps_n \cot(\eps_n X), Y \frac{\eps_n^\eta}{\sin^\eta(\eps_n X)} \right).$$	
	
Indeed, set $(X,Y):=\widetilde  \Phi_{\eps_n}(x,y)$. We have $\eps_n X=\arctan\left(x/\eps_n\right)+\pi/2$ and therefore
$$x=\eps_n\tan\left(\eps_nX-\frac\pi2\right)=-\eps_n \cot(\eps_n X).$$ 
On the other hand, since $Y=y\left(x^2+\eps_n^2\right)^{-\eta/2}$ we get
$$y= Y (x^2+\eps_n^2)^{\eta/2}= Y \eps_n^\eta(1+\cot^2(\eps_n X))^{\eta/2}= Y \frac{\eps_n^\eta}{\sin^\eta(\eps_n X)}.$$	
This proves the claim.	
	
Now denote $\mathcal{G}_{\eps_n}:=\Phi_{\eps_n}^\iota \circ (\widetilde  \Phi_{\eps_n})^{-1}$, defined in $\{(X,Y) \in \C^2 : \re (\eps_n X) \in (0,\pi)\}$. Since
$$\Phi_{\eps_n}^\iota(x,y)=\widetilde  \Phi_{\eps_n}(x,y)+ \left(\frac{1-a}2\log(x^2+\eps_n^2),0 \right)$$
we have by the claim above that
\begin{align*}
\mathcal{G}_{\eps_n}(X,Y) 
&= \left(X,Y\right)+\left(\frac{1-a}2\log(\eps_n^2+\eps_n^2\cot^2(\eps_n X) ),0\right) \\
&= \left(X,Y\right)+\left(\frac{1-a}2 \log\left(\frac{\eps_n^2}{\sin^2(\eps_n X)}\right),0 \right). 
\end{align*}
Let us prove that $\mathcal{G}_{\eps_n}$ is invertible in  
$$
\Phi_{\eps_n}^\iota(\mathcal{R}_n(C))
=\left\{(X,Y)\in\C^2: 
\re\left(\eps_n X\right) \in \left[\frac{\pi k_n}{10n}, \pi-\frac{\pi k_n}{10n} \right], \left|\im(\eps_n X)\right|\le C\frac \pi n \text{ and } \frac{1}{C} < |Y| < C  \right\}
$$ 
for $n$ big enough. Choose $(X_0,Y_0)\in\Phi_{\eps_n}^\iota(\mathcal{R}_n(C))$ and let us show that there is a unique $(X,Y)$ with $\re(\eps_n X)\in(0,\pi)$ such that $\mathcal{G}_{\eps_n}(X,Y)=(X_0,Y_0)$. Since $\mathcal{G}_{\eps_n}(X,Y)=\left(X+u_n(X),Y\right)$, with $u_n(X)=\frac{1-a}2  \log\left(\frac{\eps_n^2}{\sin^2(\eps_n X)}\right)$, we just need to show that, for $n$ large enough, there is a unique $X$ with $\re(\eps_n X)\in(0,\pi)$ such that 
$$X+u_n(X)=X_0.$$
By Rouch\'e's theorem, it is enough to prove that $|u_n(X)|<|X-X_0|$ for every $X\in\partial Q_n$ and for $n$ big enough, where
$$Q_n=\left\{X\in\C: \re(\eps_n X)\in\left(\frac{\pi k_n}{20n}, \pi-\frac{\pi k_n}{20n}\right), |\im (\eps_n X)|<C\frac\pi n+n \right\}$$
Observe that $Q_n\subset\{X:\re(\eps_n X)\in(0,\pi)\}$ and if $\re(\eps_nX)\in(0,\pi)$ then $X\in Q_n$ for $n$ big enough. Clearly $|X-X_0|\ge \frac{\pi k_n}{20n}$ for every $X\in \partial Q_n$. Moreover, since $|\sin(\eps_n X)|\ge |\sin(\re(\eps_n X))|$ we have that $\frac{\eps_n}{\sin (\eps_n X)} = \ocal(\frac{1}{k_n})$ for every $X\in\partial Q_n$ so 
$$|u_n(X)| =\frac{|1-a|}2\left|\log \left(\frac{\eps_n^2}{\sin^2(\eps_n X)} \right) \right|=\ocal(\log k_n).$$
Hence if $n$ is big enough then $|u_n(X)|<|X-X_0|$ for every $X\in\partial Q_n$, so $\mathcal{G}_{\eps_n}$ is invertible on $\Phi_{\eps_n}^\iota\left(\mathcal{R}_n(C)\right)$, which in turn implies that 
$(\Phi_{\eps_n}^\iota)^{-1} = (\widetilde  \Phi_{\eps_n})^{-1} \circ \mathcal{G}_{\eps_n}^{-1}$ is well-defined on $\Phi_{\eps_n}^\iota\left(\mathcal{R}_n(C)\right)$. 

Let us now prove the desired estimate. Since $\mathcal{G}_{\eps_n}(X,Y)=(X+u_n(X),Y)$, we know that
$$\mathcal{G}_{\eps_n}^{-1}(X,Y) = \left(X+\ocal(\|u_n\|_\infty),Y\right).$$
Since $k_n=\lfloor n^\gamma\rfloor$, by the computations above and the maximum principle we have that $|u_n(X)| = \ocal(\log n)$ for every $(X,Y)\in\Phi_{\eps_n}^\iota(\mathcal{R}_n(C))$, so 
\begin{align*}
\mathcal{G}_{\eps_n}^{-1}(X,Y)= \left(X+\ocal( \log n), Y \right)
\end{align*}
and therefore
\begin{align*}
(\Phi_{\eps_n}^\iota)^{-1}(X,Y) = (\widetilde  \Phi_{\eps_n})^{-1} \circ \mathcal{G}_{\eps_n}^{-1}(X,Y)= (\widetilde  \Phi_{\eps_n})^{-1}(X+\ocal(\log n),Y)
\end{align*}
so the Proposition follows.
\end{proof}

\subsection{Estimation of the error terms}\label{subsec:estimationerror}
In this subsection we will give estimates for 
$$A_{\eps_n}(x,y):=w_{\eps_n}(x_1)- w_{\eps_n}(x) -1 \quad \text{ and }\quad B_{\eps_n}(x,y):=\log\frac{t_{\eps_n}(x_1,y_1)}{t_{\eps_n}(x,y)},$$ 
where $(x_1,y_1):=g_{\eps_n}(x,y)$. Recall that $n-\pi/\eps_n \to \sigma$, so $\eps_n\sim\pi/n$, $\rho=\re\eta>3$, $m=\lfloor \rho\rfloor$ and $\gamma\in(1/2,2/3)$ satisfies $\gamma\rho>2$.

\begin{lem}\label{lem:x2e2}
Take $(x,y) \in \mathcal{R}_n(C)$ and denote $(X,Y):=\Phi_{\eps_n}^\iota(x,y)$. If $n$ is big enough, we have the following estimates:
\begin{equation}\label{eq:x}
x=-\eps_n \cot(\eps_n X) + \ocal\left(\frac{\log n}{n^{2\gamma}}\right)=\ocal\left(X^{-1}\right)
\end{equation}
\begin{equation}\label{eq:1/x2+eps2}
\frac{1}{x^2+{\eps_n^2}}\sim \frac{\sin^2(\eps_n X)}{\eps_n^2} = \ocal\left(X^2\right)
\end{equation}
\begin{equation}\label{eq:y}
y \asymp\frac{\eps_n^\rho}{\sin^\rho(\eps_n X)}=\ocal\left(X^{-\rho}\right).
\end{equation}
\end{lem}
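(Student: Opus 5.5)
The plan is to read everything off Proposition~\ref{prop:phiepsinv}, which gives $(x,y)=(\Phi_{\eps_n}^\iota)^{-1}(X,Y)$ explicitly as
$$x=-\eps_n\cot(\eps_n X+\delta),\qquad y=Y\frac{\eps_n^\eta}{\sin^\eta(\eps_nX+\delta)},\qquad \delta=\ocal\left(n^{-1}\log n\right).$$
The basic input is a lower bound for $|\sin(\eps_n X)|$ on $\Phi_{\eps_n}^\iota(\mathcal{R}_n(C))$. There $\re(\eps_nX)\in[\pi k_n/(10n),\pi-\pi k_n/(10n)]$ and $|\im(\eps_nX)|\le C\pi/n$. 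Since $|\sin z|\ge|\sin\re z|$ and $\sin$ is concave on $[0,\pi]$, we get $|\sin(\eps_nX)|\gtrsim k_n/n\asymp n^{\gamma-1}$, and the same holds at $\eps_nX+\delta$ because $\delta=o(k_n/n)$. We also have $|\sin(\eps_nX)|\gtrsim \min(|\eps_nX|,|\pi-\eps_nX|)$. This gives $\eps_n/|\sin(\eps_nX)|=\ocal(n^{-\gamma})$.

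For \eqref{eq:x}, use the mean value inequality for $\cot$ on a segment where $|\sin|\gtrsim n^{\gamma-1}$:
$$|\cot(\eps_nX+\delta)-\cot(\eps_nX)|\le|\delta|\sup|\sin|^{-2}=\ocal\left(\frac{\log n}{n}\cdot n^{2-2\gamma}\right).$$
Multiplying by $\eps_n\sim\pi/n$ gives $\ocal(\log n/n^{2\gamma})$. For the bound $\ocal(X^{-1})$, note $|\eps_n\cot(\eps_nX)|\le\eps_n/|\sin(\eps_nX)|$ since $|\cos|$ is bounded on the strip, so it is $\ocal(\eps_n/|\eps_nX|)=\ocal(1/|X|)$ when $\re(\eps_nX)\le\pi/2$. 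On the other half one has $|X|\le 2n$, and $\eps_n/|\sin|\lesssim 1/(n-|X|)$, which is not $\ocal(1/|X|)$ near the right end. So the honest bound is $\ocal(\min(|X|,|n-X|)^{-1})$. I would state it that way, or observe that the error term $\log n/n^{2\gamma}$ is itself $\ocal(1/|X|)$ because $|X|\le 2n$ and $2\gamma>1$. The statement's $\ocal(X^{-1})$ is to be read in this sense; this is the one place where I expect some care about what is actually meant.

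For \eqref{eq:1/x2+eps2}, write $x^2+\eps_n^2=\eps_n^2(1+\cot^2(\eps_nX+\delta))=\eps_n^2/\sin^2(\eps_nX+\delta)$. It then suffices to show $\sin(\eps_nX+\delta)/\sin(\eps_nX)\to1$ uniformly. This ratio equals $\cos\delta+\cot(\eps_nX)\sin\delta=1+\ocal(|\delta|\,n^{1-\gamma})=1+\ocal(n^{-\gamma}\log n)$. The bound $\sin^2(\eps_nX)/\eps_n^2=\ocal(X^2)$ follows from $|\sin z|\le C'|z|$ on the strip.

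For \eqref{eq:y}, the same ratio raised to the power $\eta$ tends to $1$. Together with $1/C<|Y|<C$, this gives $|y|\asymp\eps_n^\rho/|\sin(\eps_nX)|^\rho$. Here $|\sin^\eta|=|\sin|^\rho e^{-\im\eta\arg\sin}$, and the argument of $\sin(\eps_nX)$ is bounded because the principal branch is used and $\im$ is small; this gives the comparability with the $\rho$-th power. The bound $\ocal(X^{-\rho})$ comes from the lower bound on $|\sin|$, in the same sense as above. The main obstacle is just this uniformity bookkeeping near the two ends of the strip; the rest is direct substitution into Proposition~\ref{prop:phiepsinv}.
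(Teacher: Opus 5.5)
Your proof is correct and follows essentially the paper's route: substitute into Proposition~\ref{prop:phiepsinv}, bound $|\sin(\eps_nX)|\gtrsim k_n/n$ on $\mathcal{R}_n(C)$, and apply the mean value inequality for $\cot$. Your exact identity $x^2+\eps_n^2=\eps_n^2/\sin^2(\eps_nX+\delta)$ is a slightly cleaner substitute for the paper's squaring of \eqref{eq:x}.

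Your caveat is also justified. The paper asserts $\eps_n\cot(\eps_nX)=\ocal(X^{-1})$ without proof, and this fails uniformly near $\re(\eps_nX)\approx\pi$; compare Lemma~\ref{prop:afteregg}, where $x\sim 1/k_n$ while $X\approx n-k_n$. So the final bounds in \eqref{eq:x} and \eqref{eq:y} must be read with $\min(|X|,|n-X|)$ in place of $|X|$. Your alternative remark that $\log n/n^{2\gamma}=\ocal(1/|X|)$ only handles the error term, not the main term $\eps_n\cot(\eps_nX)$. With the corrected reading, all later uses still go through, e.g.\ $x=\ocal(n^{-\gamma})$, $y=\ocal(n^{-\gamma\rho})$, and the products $x^{m+3}/y$ in Lemma~\ref{lem:heps}.
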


\begin{proof}
Thanks to Proposition \ref{prop:phiepsinv}, we have that if $n$ is big enough then
$$x=-\eps_n \cot\left(\eps_n X + \ocal\left(\frac{\log n}{n}\right) \right).$$
Since $(x,y)\in \mathcal{R}_n(C)$, we have that $\re(\eps_nX)\in \left[\frac{\pi k_n}{10n},\pi-\frac{\pi k_n}{10n}\right]$ and $\left|\im(\eps_n X)\right|\le C\frac\pi n$.
Then using the mean value inequality we have that for $n$ big enough
\begin{align*}
x&=-\eps_n \cot\left(\eps_n X + \ocal\left(\frac{\log n}{n}\right) \right) = - \eps_n \cot(\eps_n X) + \ocal\left(\eps_n n^{2-2\gamma} \frac{\log n}{n} \right) \\
&=-\eps_n \cot(\eps_n X) + \ocal\left(\frac{\log n}{n^{2\gamma}}\right).
\end{align*}

We have that $\eps_n \cot(\eps_n X)=\ocal(X^{-1})$ 
and moreover, since $\gamma>1/2$, $n^{-2\gamma}\log n=o\left(1/n\right)=o(\eps_n)$ so we get $n^{-2\gamma}\log n=o(X^{-1})$ for $n$ big enough, which proves \eqref{eq:x}. For \eqref{eq:1/x2+eps2}, using again that $\eps_n \cot(\eps_n X)=\ocal(X^{-1})$ and $n^{-2\gamma}\log n=o(X^{-1})$ we have 
$$x^2+\eps_n^2 = {\eps_n^2}+{\eps_n^2} \cot^2(\eps_n X) + o\left(X^{-2}\right)
= \frac{\eps_n^2}{\sin^2(\eps_n X)}+ o\left(X^{-2}\right)$$
hence 
\begin{align*}
\frac{1}{x^2+{\eps_n^2}} \sim \frac{\sin^2(\eps_n X) }{{\eps_n^2}} = \ocal\left(X^2\right).
\end{align*}
Finally, using \eqref{eq:1/x2+eps2} and the fact that $1/C<|Y|<C$ by definition of $\mathcal{R}_n(C)$ we get
$$y=Y(x^2+\eps_n^2)^{\eta/2}\asymp\frac{\eps_n^\rho}{\sin^\rho(\eps_n X)}=\ocal\left(X^{-\rho}\right),$$
proving \eqref{eq:y}.
\end{proof}

\begin{rem}
Observe that $\rcal_n(C)\subset U$ for $n$ large enough, where $U$ is the domain of definition of the family $(g_{\eps})$. If we take $(x,y)\in \mathcal{R}_n(C)$ and denote $(X,Y):=\Phi_{\eps_n}^\iota(x,y)$, then $|\eps_n X|\ge \re(\eps_n X)\ge \frac{\pi k_n}{10 n}$ by definition of $\rcal_n(C)$, so using  \eqref{eq:x}, \eqref{eq:y} and the fact that $\eps_n\sim \pi/n$ we have that $(x,y)\in U$ for $n$ large enough (depending only on $\rcal_n(C)$ and not on $(x,y)$.)
\end{rem}

\begin{lem}\label{lem:x1eps}
Consider $(x,y) \in \mathcal{R}_n(C)$ and denote $(x_1,y_1) = g_{\eps_n}(x,y)$. Then if $n$ is big enough
$$\log \frac{x_1^2+{\eps_n^2}}{x^2+{\eps_n^2}}=2x + o\left( \frac{1}{n} \right) $$
\end{lem}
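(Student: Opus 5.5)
We compute $x_1^2+\eps_n^2$ directly from the expression \eqref{eq:gsect4} of $g_{\eps_n}$. Write $x_1 = x + \delta$ with
$$\delta := (x^2+\eps_n^2)a_{\eps_n}(x) + y\, b_{\eps_n}(x,y).$$
Then
$$x_1^2+\eps_n^2 = (x^2+\eps_n^2) + 2x\delta + \delta^2,$$
so that
$$\frac{x_1^2+\eps_n^2}{x^2+\eps_n^2} = 1 + \frac{2x\delta+\delta^2}{x^2+\eps_n^2}.$$
The plan is to show that the fraction on the right equals $2x + o(1/n)$. Since that quantity is $\ocal(X^{-1}) = o(1)$, taking the logarithm then gives $2x + \ocal(x^2) + o(1/n)$. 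We must therefore also check that $x^2 = o(1/n)$, which is the place where the lower bound $k_n = \lfloor n^\gamma\rfloor$ in the definition of $\rcal_n(C)$ enters.

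\textbf{Step 1 (size of $x$ and $y$).} By Lemma~\ref{lem:x2e2}, on $\rcal_n(C)$ we have $|\sin(\eps_n X)| \gtrsim k_n/n$. Hence $x = \ocal(\eps_n/|\sin(\eps_nX)|) = \ocal(n^{-\gamma})$, and $x^2 = \ocal(n^{-2\gamma}) = o(1/n)$ since $\gamma > 1/2$. Likewise $x^2+\eps_n^2 \asymp \eps_n^2/\sin^2(\eps_n X)$ is both $\ocal(n^{-2\gamma})$ and $\gtrsim n^{-2}$, and $y \asymp (x^2+\eps_n^2)^{\rho/2}$ by \eqref{eq:y}.

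\textbf{Step 2 (expand the quotient).} Using $a_{\eps_n}(x) = 1 + \ocal(x,\eps_n)$ (recall we normalized $p=0$, though only boundedness matters here), we get
$$\frac{2x\delta}{x^2+\eps_n^2} = 2x\, a_{\eps_n}(x) + \frac{2xy\, b_{\eps_n}(x,y)}{x^2+\eps_n^2} = 2x + \ocal(x^2 + x\eps_n) + \frac{2xy\, b_{\eps_n}}{x^2+\eps_n^2}.$$
The term $\ocal(x^2 + x\eps_n)$ is $\ocal(n^{-2\gamma} + n^{-1-\gamma}) = o(1/n)$. For the last term, $b_{\eps_n} = \ocal(1)$ and $|y|/(x^2+\eps_n^2) \asymp (x^2+\eps_n^2)^{\rho/2-1}$; since $\rho > 3$ this is $\ocal(n^{-\gamma(\rho-2)})$, so the whole term is $\ocal(n^{-\gamma(\rho-1)}) = o(1/n)$ because $\gamma\rho > 2$.

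For the quadratic term, $\delta^2 \lesssim (x^2+\eps_n^2)^2 + |y|^2$, hence
$$\frac{\delta^2}{x^2+\eps_n^2} \lesssim (x^2+\eps_n^2) + \frac{|y|^2}{x^2+\eps_n^2} = \ocal(n^{-2\gamma}) + \ocal\bigl((x^2+\eps_n^2)^{\rho-1}\bigr) = o(1/n).$$

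\textbf{Step 3 (take the logarithm).} Write the quotient as $1+u$ with $u = 2x + o(1/n) = o(1)$. Then
$$\log(1+u) = u + \ocal(u^2) = 2x + \ocal(x^2) + o(1/n) = 2x + o(1/n).$$
The branch of the logarithm is the principal one near $1$, which is consistent with the definition of $w_{\eps_n}$ up to the fact that $x_1$ stays in the same region. All constants above are uniform on $\rcal_n(C)$, as required by the standing remark of this section.

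The main obstacle is purely bookkeeping: one must make sure every error term is $o(1/n)$ rather than merely $o(x)$. This is exactly where the conditions $\gamma > 1/2$ and $\gamma\rho > 2$ are used, and I expect the $y\, b_{\eps_n}$ contribution to be the most delicate of these terms.
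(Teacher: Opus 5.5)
Your proof is correct and follows essentially the paper's argument: expand $x_1^2+\eps_n^2$ from the formula for $g_{\eps_n}$, show that the ratio is $1+2x+o(1/n)$ using $x=\ocal(n^{-\gamma})$ and the smallness of $y$, then take the logarithm. The only difference is cosmetic. You bound the $y\,b_{\eps_n}$ term using the sharper relation $|y|\asymp|x^2+\eps_n^2|^{\rho/2}$, which you need because you keep only one factor of $x$ there. The paper instead absorbs that term into $\frac{y}{x^2+\eps_n^2}\ocal_2(x,y,\eps_n)$ and uses the cruder bounds $y=\ocal(n^{-\gamma\rho})$ and $(x^2+\eps_n^2)^{-1}=\ocal(n^2)$.
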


\begin{proof}
Since $x_1=x+\left(x^2+\eps_n^2\right)\left(1+\ocal(x,\eps_n)\right)+y\ocal(x,y,\eps_n)$ we have
$$x_1^2=x^2+2x\left(x^2+\eps_n^2\right)+\left(x^2+\eps_n^2\right)\ocal_2(x,\eps_n)+y\ocal_2(x,y,\eps_n)$$
hence
$$\frac{x_1^2+\eps_n^2}{x^2+\eps_n^2}=1+2x+\ocal_2(x,\eps_n)+\frac{y}{x^2+\eps_n^2}\ocal_2(x,y,\eps_n).$$
If $(X,Y):=\Phi_{\eps_n}^\iota(x,y)$, we have by definition of $\mathcal{R}_n(C)$ that $X=\ocal(n)$ and $X^{-1}=\ocal\left(n^{-\gamma}\right)$. Then using \eqref{eq:x} and the fact that $\eps_n\sim \pi/n$ we get $\ocal_2(x,\eps_n)=\ocal\left(n^{-2\gamma}\right)=o\left(n^{-1}\right)$ since $2\gamma>1$. Moreover thanks to \eqref{eq:1/x2+eps2} and \eqref{eq:y} we have that 
$\left(x^2+\eps_n^2\right)^{-1}=\ocal\left(n^2\right)$ and $y=\ocal\left(n^{-\gamma\rho}\right)$, so
$$\frac{y}{x^2+\eps_n^2}\ocal_2(x,y,\eps_n)=\ocal\left(\frac1{n^{\gamma\rho+2\gamma-2}}\right)=o\left(\frac1n\right)$$
since $\gamma\rho>2$. Hence 
$$\frac{x_1^2+{\eps_n^2}}{x^2+{\eps_n^2}}=1+2x+o\left(\frac{1}{n}\right)$$
and therefore
$$\log \frac{x_1^2+{\eps_n^2}}{x^2+{\eps_n^2}} = \log \left(1+2x  + o\left(\frac{1}{n}\right)\right) = 2x + o\left(\frac{1}{n}\right).$$
\end{proof}

\begin{lem}\label{lem:y/x}
If $(x,y) \in \mathcal{R}_n(C)$ then for $n$ big enough
\begin{equation}\label{eq:1/x-ieps}
\frac{1}{x\pm i \eps_n}=\ocal(n)
\end{equation}
and 
\begin{equation}\label{eq:yb/x-ieps}
\frac{y b_{\eps_n}(x,y) }{x\pm i\eps_n}=\frac{bxy }{x\pm i\eps_n}+o\left(\frac{1}{n^2}\right)
\end{equation}
\end{lem}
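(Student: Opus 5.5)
Both estimates follow from the parametrization $x=-\eps_n\cot(\eps_n X)+\ocal(n^{-2\gamma}\log n)$ in \eqref{eq:x}, where $(X,Y)=\Phi_{\eps_n}^\iota(x,y)$.

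For \eqref{eq:1/x-ieps} we use the identity
$$-\eps_n\cot\theta\pm i\eps_n=-\eps_n\frac{\cos\theta\mp i\sin\theta}{\sin\theta}=-\eps_n\frac{e^{\mp i\theta}}{\sin\theta}, \qquad \theta=\eps_nX.$$
In $\rcal_n(C)$ we have $|\im\theta|\le C\pi/n$, so $|e^{\mp i\theta}|\asymp 1$, and $|\sin\theta|\le \cosh(C\pi/n)=\ocal(1)$. Hence $|{-\eps_n\cot\theta\pm i\eps_n}|\gtrsim \eps_n\gtrsim 1/n$. The error term in \eqref{eq:x} is $\ocal(n^{-2\gamma}\log n)=o(1/n)$ because $2\gamma>1$, so it cannot destroy this lower bound. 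Therefore $|x\pm i\eps_n|\gtrsim 1/n$ for $n$ large, which is \eqref{eq:1/x-ieps}.

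For \eqref{eq:yb/x-ieps}, write $b_{\eps_n}(x,y)=bx+\ocal(x^2,y,\eps_n)$, so that
$$\frac{y b_{\eps_n}(x,y)}{x\pm i\eps_n}-\frac{bxy}{x\pm i\eps_n}=\frac{y\,\ocal(x^2,y,\eps_n)}{x\pm i\eps_n}.$$
By \eqref{eq:1/x-ieps} this difference is $\ocal\bigl(n|y|(|x|^2+|y|+\eps_n)\bigr)$. The term $\eps_n|y|$ dominates $|x|^2|y|$ and $|y|^2$ (since $x,y\to0$), so the difference is $\ocal(n\eps_n|y|)=\ocal(|y|)$. It therefore suffices to show $|y|=o(n^{-2})$, and this is the only step that needs care.

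From \eqref{eq:y}, $|y|\asymp \eps_n^\rho/|\sin\theta|^\rho$ (using $|\eps_n^{\eta}|\asymp\eps_n^\rho$ and $|\sin^\eta\theta|\asymp|\sin\theta|^\rho$, which holds because the argument of $\sin\theta$ is bounded). Since $\re\theta\in[\pi k_n/(10n),\pi-\pi k_n/(10n)]$, we get $|\sin\theta|\ge|\sin\re\theta|\gtrsim k_n/n\asymp n^{\gamma-1}$. Hence $|y|=\ocal\bigl(n^{-\rho}n^{\rho(1-\gamma)}\bigr)=\ocal(n^{-\gamma\rho})$, which is $o(n^{-2})$ precisely because of the choice $\gamma\rho>2$. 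This proves \eqref{eq:yb/x-ieps}.
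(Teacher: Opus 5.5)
Your proof is correct and is essentially the paper's: the lower bound $|x\pm i\eps_n|\gtrsim\eps_n$ from \eqref{eq:x} (your identity $-\eps_n\cot\theta\pm i\eps_n=-\eps_n e^{\mp i\theta}/\sin\theta$ just makes explicit the paper's remark that $1/(\cot(\eps_nX)\pm i)=\ocal(1)$), followed by $y\,\ocal(x^2,y,\eps_n)=\ocal(n^{-1-\gamma\rho})$ and the choice $\gamma\rho>2$. One justification should be fixed: $\eps_n|y|$ dominates $|x|^2|y|$ not because $x\to0$, but because $|x|=\ocal(1/k_n)=\ocal(n^{-\gamma})$ on $\rcal_n(C)$ (from $|\sin\theta|\gtrsim k_n/n$) and $2\gamma>1$.
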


\begin{proof}
If $(X,Y):=\Phi_{\eps_n}^\iota(x,y)$, using \eqref{eq:x} and the fact that $n^{-2\gamma}\log n = o\left(n^{-1}\right) = o(\eps_n)$ we have
\begin{align*}
\frac{1}{x\pm i \eps_n} = \frac{1}{- \eps_n \cot(\eps_n X) + o(\eps_n)\pm i \eps_n}
\end{align*} 
Since $\frac 1{\eps_n}=\ocal(n)$ and $\frac{1}{\cot({\eps_n} X)\pm i}  = \ocal(1)$, because $\re (\eps_n X) \in (0,\pi)$ and $\im (\eps_nX) = \ocal(1)$ by definition of $\mathcal{R}_n(C)$, we get
\begin{equation*}
\frac{1}{x\pm i \eps_n} \sim -\frac{1}{{\eps_n}} \cdot \frac{1}{\cot({\eps_n} X)\pm i}=\ocal(n),
\end{equation*}
proving \eqref{eq:1/x-ieps}. Then, to get \eqref{eq:yb/x-ieps} we just need to prove that $y b_{\eps_n}(x,y)= bxy+ o\left(n^{-3}\right)$.
Since $b_{\eps_n}(x,y)=bx+\ocal\left(x^2,y,\eps_n\right)$ we have, using \eqref{eq:x}, \eqref{eq:y} and the fact that $X^{-1}=\ocal(n^{-\gamma})$,
$$y b_{\eps_n}(x,y) = bxy+\ocal\left(x^2y,y^2, \eps_n y\right)= bxy+\ocal\left(\frac{1}{n^{\gamma \rho +1}} \right),$$
so $y b_{\eps_n}(x,y)= bxy+ o\left(n^{-3}\right)$ because $\gamma\rho>2$.
\end{proof}

\begin{lem}\label{lem:heps}
Consider $(x,y) \in \mathcal{R}_n(C)$ and denote $(x_1,y_1) = g_{\eps_n}(x,y)$. Then for $n$ big enough
$$\frac{y_1}{y}=1+\eta x+q{\eps_n} + o\left(\frac{1}{n}\right).$$
\end{lem}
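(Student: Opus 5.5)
We divide the second component of $g_{\eps_n}$ by $y$, which is legitimate because $y\neq 0$ on $\rcal_n(C)$ (indeed $|t_{\eps_n}(x,y)|>1/C$). This gives
$$\frac{y_1}{y}=1+c_{\eps_n}(x,y)+\frac{d_{\eps_n}(x)}{y}.$$
We then show that everything beyond $1+\eta x+q\eps_n$ is $o(1/n)$, estimating term by term. Throughout, set $(X,Y):=\Phi^\iota_{\eps_n}(x,y)$. The definition of $\rcal_n(C)$ gives $X=\ocal(n)$ and $X^{-1}=\ocal(n^{-\gamma})$, and Lemma~\ref{lem:x2e2} gives $x=\ocal(n^{-\gamma})$ and $y=\ocal(n^{-\gamma\rho})$. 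We also use $\eps_n\sim\pi/n$.

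First, the $c_{\eps_n}$ term. We have
$$c_{\eps_n}(x,y)=\eta x+q\eps_n+\ocal(x^2,y,x\eps_n,\eps_n^2),$$
and each remainder is $o(1/n)$:
\begin{itemize}
\item $x^2=\ocal(n^{-2\gamma})=o(n^{-1})$ since $2\gamma>1$;
\item $y=\ocal(n^{-\gamma\rho})=o(n^{-2})$ since $\gamma\rho>2$;
\item $x\eps_n=\ocal(n^{-1-\gamma})$ and $\eps_n^2=\ocal(n^{-2})$.
\end{itemize}

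Second, the term $d_{\eps_n}(x)/y$, which is the main point. Here we cannot use the crude bound on $y$ alone. We need a lower bound on $|y|$ of the same order as $|x^2+\eps_n^2|^{\rho/2}$. This follows from \eqref{eq:y}, or directly from $|y|=|t_{\eps_n}|\,|(x^2+\eps_n^2)^{\eta/2}|$ with $|t_{\eps_n}|>1/C$. The factor $|\cdot^{\,i\,\im\eta}|$ is bounded above and below because the argument of $x^2+\eps_n^2$ stays bounded in $\rcal_n(C)$. We also note that $|x|\lesssim|x^2+\eps_n^2|^{1/2}$ and $\eps_n\lesssim|x^2+\eps_n^2|^{1/2}$. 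Indeed, $x^2+\eps_n^2\sim\eps_n^2/\sin^2(\eps_nX)$, while $x\sim-\eps_n\cos(\eps_nX)/\sin(\eps_nX)$ and $|\sin(\eps_n X)|$ is bounded, so both ratios are bounded. Writing $s:=|x^2+\eps_n^2|^{1/2}=\ocal(|X|^{-1})=\ocal(n^{-\gamma})$, we get
$$\frac{|d_{\eps_n}(x)|}{|y|}\lesssim \frac{|x|^{m+3}+\eps_n\,s^{m+1}}{s^{\rho}}\lesssim s^{m+3-\rho}+\eps_n s^{m+1-\rho}.$$
Since $m=\lfloor\rho\rfloor$, we have $m+3-\rho>2$ and $m+1-\rho>0$. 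Hence the first term is $\ocal(n^{-2\gamma})=o(1/n)$, and the second is $\ocal(\eps_n)\cdot o(1)=o(1/n)$.

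Combining the two steps gives the claimed formula. The only delicate step is the second, where we must justify the lower bound $|y|\gtrsim s^\rho$ and the comparisons $|x|,\eps_n\lesssim s$ uniformly on $\rcal_n(C)$. Both follow from Proposition~\ref{prop:phiepsinv} and the constraints $\re(\eps_nX)\in[\pi k_n/(10n),\pi-\pi k_n/(10n)]$ and $|\im(\eps_nX)|\le C\pi/n$.
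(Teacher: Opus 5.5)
Your proof is correct and follows the paper's argument. Like the paper, you divide the second component by $y$, check that the remainder of $c_{\eps_n}$ is $o(1/n)$, and control $d_{\eps_n}(x)/y$ through the lower bound on $|y|$ coming from $|t_{\eps_n}|>1/C$.

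The only difference is bookkeeping. You measure everything against $s=|x^2+\eps_n^2|^{1/2}$ and absorb the pure-$\eps_n$ monomials via $\eps_n\lesssim s$. The paper instead works with $|X|^{-1}$ and treats the monomials $x^{m-j}\eps_n^{2+j}$ separately using $X=\ocal(n)$. Your scale is in fact the more robust choice: $|x|\lesssim s$ holds on all of $\rcal_n(C)$, whereas the estimate $x=\ocal(X^{-1})$ used by the paper fails where $\re(\eps_n X)$ is close to $\pi$.
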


\begin{proof}
If $(X,Y):=\Phi_{\eps_n}^\iota(x,y)$, we have by definition of $\mathcal{R}_n(C)$ that $X^{-1}=\ocal\left(n^{-\gamma}\right)$. Then using \eqref{eq:x} and \eqref{eq:y} we have
\begin{align*}
y_1
&= 
y+y\left(\eta x+q{\eps_n}+\ocal(x^2, y, x\eps_n, \eps_n^2)\right)+\ocal\left(x^{m+3}\right)+\eps_n\ocal_{m+1}\left(x,\eps_n\right)\\
&=y\left[1+\eta x+q{\eps_n}+ o\left(\frac{1}{n}\right)+\frac1{y}\,\ocal\left(x^{m+3}\right)+\frac1y\,\eps_n\ocal_{m+1}\left(x,\eps_n\right)\right].
\end{align*}
Moreover, using \eqref{eq:y} 
\begin{equation*}
\dfrac1{y}\asymp \frac{\sin^\rho(\eps_n X)}{\eps_n^\rho}=\ocal\left(X^\rho\right).
\end{equation*}
Then, since $\rho-1<m\le \rho$ we obtain from \eqref{eq:x} that
$$\frac1{y}\ocal\left(x^{m+3}\right)=\ocal\left(\frac1{X^{m+3-\rho}}\right)
=\ocal\left(\frac1{n^{(m+3-\rho)\gamma}}\right)=o\left(\frac1n\right)$$ 
and similarly
$$\frac1{y}\ocal\left(x^{m+1}\eps_n\right)=\ocal\left(\frac1{X^{m+1-\rho}}\,\eps_n\right)=\ocal\left(\frac1{n^{(m+1-\rho)\gamma+1}}\right)=o\left(\frac1n\right).$$
Finally, using the fact that $X=\ocal(n)$ by definition of $\mathcal{R}_n(C)$,
$$\frac1{y}\ocal\left(x^{m-j}\eps_n^{2+j}\right)=\ocal\left(X^{\rho-m+j}\eps_n^{2+j}\right)=\ocal\left(\frac1{n^{2+m-\rho}}\right)=o\left(\frac1n\right)$$
for every $j$ with $0\le j\le m$. Therefore we have 
$$y_1=y\left[1+\eta x+q{\eps_n}+o\left(\frac1n\right)\right],$$
concluding the proof.
\end{proof}

\begin{prop}\label{prop:A-estimate}
We have, for $(x,y) \in \mathcal{R}_n(C)$ and $n$ big enough: 
$$A_{\eps_n}(x,y)=o\left(\frac{1}{n}\right); \quad B_{\eps_n}(x,y)= q\eps_n + o\left(\frac{1}{n}\right).$$
\end{prop}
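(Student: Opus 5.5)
The plan is to expand $w_{\eps_n}$ and $t_{\eps_n}$ along one step of $g_{\eps_n}$, using Lemmas~\ref{lem:x1eps}, \ref{lem:y/x} and \ref{lem:heps}. The $B$ estimate is the quicker of the two. By definition,
$$B_{\eps_n}(x,y)=\log\frac{y_1}{y}-\frac{\eta}{2}\log\frac{x_1^2+\eps_n^2}{x^2+\eps_n^2}.$$
Lemma~\ref{lem:heps} gives $\log(y_1/y)=\eta x+q\eps_n+o(1/n)$, since $x=\ocal(n^{-\gamma})$ and $\eps_n=\ocal(1/n)$ make the quadratic terms of the logarithm $\ocal(n^{-2\gamma})=o(1/n)$. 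Lemma~\ref{lem:x1eps} gives the second term as $\eta x+o(1/n)$. Subtracting yields $B_{\eps_n}=q\eps_n+o(1/n)$. Some care is needed with branches: for $n$ large both ratios are $1+o(1)$, so the principal logarithm is consistent with the definition of $t_{\eps_n}$.

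For $A_{\eps_n}$ we write $w_{\eps_n}(x_1)-w_{\eps_n}(x)$ in the logarithmic form:
$$\frac{1}{2i\eps_n}\left[\log\frac{i\eps_n-x_1}{i\eps_n-x}-\log\frac{i\eps_n+x_1}{i\eps_n+x}\right]+\frac{1-a}{2}\log\frac{x_1^2+\eps_n^2}{x^2+\eps_n^2}.$$
By Lemma~\ref{lem:x1eps}, the last term equals $(1-a)x+o(1/n)$. For the first term we set $\delta:=x_1-x=(x^2+\eps_n^2)a_{\eps_n}(x)+yb_{\eps_n}(x,y)$, so that
$$\frac{i\eps_n\mp x_1}{i\eps_n\mp x}=1\mp\frac{\delta}{i\eps_n\mp x}.$$
Using $(x^2+\eps_n^2)=(x-i\eps_n)(x+i\eps_n)$ together with $a_{\eps_n}(x)=1+ax+\ocal(x^2,x\eps_n,\eps_n^2)$ (recall $p=0$), we get
$$\frac{\delta}{i\eps_n\mp x}=\mp(x\pm i\eps_n)\bigl(1+ax+\ocal(n^{-2\gamma})\bigr)+\frac{yb_{\eps_n}}{i\eps_n\mp x}.$$
By Lemma~\ref{lem:y/x}, the last piece is $\mp bxy/(x\pm i\eps_n)+o(n^{-2})$, and this is itself $o(n^{-2})$ because $xy=\ocal(n^{-\gamma(\rho+1)})$ and $1/(x\pm i\eps_n)=\ocal(n)$. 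We then expand $\log(1+u)=u-u^2/2+u^3/3+\ldots$, where $u=\ocal(|x|+\eps_n)=\ocal(n^{-\gamma})$.

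Subtracting the two logarithms and dividing by $2i\eps_n$ is the main obstacle, because all errors must come out $o(\eps_n^2)=o(n^{-2})$ before the division. In the linear terms, the difference of $\mp(x\pm i\eps_n)(1+ax)$ for the two signs is $-2i\eps_n(1+ax)$. Dividing gives $1+ax$ up to the error from $\ocal(n^{-2\gamma})(x\pm i\eps_n)$, and I expect these to cancel to leading order too, giving a contribution $\ocal(\eps_n\cdot n^{-2\gamma})/\eps_n$. The quadratic terms are $-\tfrac12[(x+i\eps_n)^2-(x-i\eps_n)^2]=-2i\eps_n x$, which contributes $-x$. The cubic and higher terms are odd or even combinations in $\pm i\eps_n$, so their differences are divisible by $\eps_n$ and of size $\eps_n\,\ocal(n^{-2\gamma})$. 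I would handle this step most carefully, by writing the sum as an explicit function of $(x,\eps_n)$ that is odd in $\eps_n$, so that division by $\eps_n$ leaves $\ocal_2(x,\eps_n)=\ocal(n^{-2\gamma})=o(1/n)$.

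Adding everything, $w_{\eps_n}(x_1)-w_{\eps_n}(x)=1+ax-x+(1-a)x+o(1/n)=1+o(1/n)$, which is exactly $A_{\eps_n}=o(1/n)$. The $(1-a)\log$ correction in $w_{\eps_n}$ was designed to produce this cancellation of the $x$ terms.
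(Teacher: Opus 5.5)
Your proof follows the paper's route. You use the same logarithmic form of $w_{\eps_n}$, Lemma~\ref{lem:x1eps} for the $\frac{1-a}{2}\log$ term, and the expansion of $\log(1+u_\pm)$ with $u_\pm=(x\pm i\eps_n)a_{\eps_n}(x)+\frac{yb_{\eps_n}(x,y)}{x\mp i\eps_n}$, followed by subtraction and division by $2i\eps_n$. Your treatment of $B_{\eps_n}$ is exactly the paper's.

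One estimate, however, does not follow from what you write. You discard $\frac{bxy}{x\pm i\eps_n}$ as $o(n^{-2})$ ``because $xy=\ocal(n^{-\gamma(\rho+1)})$ and $\frac{1}{x\pm i\eps_n}=\ocal(n)$''. These bounds only give $\ocal(n^{1-\gamma(\rho+1)})$, which is $o(n^{-2})$ only if $\gamma(\rho+1)>3$. The standing constraints $\gamma\in(1/2,2/3)$ and $\gamma\rho>2$ do not guarantee this. For $\rho=3.2$, every admissible $\gamma$ lies in $(0.625,2/3)$, so $\gamma(\rho+1)<2.8$. Keeping these terms, as the paper does, does not help if they are estimated crudely. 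After division they contribute $\frac{bxy}{x^2+\eps_n^2}$, and the bound $\frac{1}{x^2+\eps_n^2}=\ocal(n^2)$ only yields $\ocal(n^{2-\gamma(\rho+1)})$, which fails to be $o(1/n)$ for the same reason.

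What is needed is that $x$, $y$ and $x\pm i\eps_n$ have correlated sizes on $\rcal_n(C)$. Put $X=w_{\eps_n}(x)$ and $s:=|\eps_n/\sin(\eps_nX)|$, so that $s=\ocal(n^{-\gamma})$ and $s\gtrsim|\eps_n|$. By Lemma~\ref{lem:x2e2}, $x\pm i\eps_n=-\eps_n e^{\mp i\eps_nX}/\sin(\eps_nX)+o(\eps_n)$. Since $|\im(\eps_nX)|=\ocal(1/n)$, this gives $|x\pm i\eps_n|\asymp s$, while $|x|\lesssim s$ and $|y|\asymp s^\rho$. Hence $\frac{xy}{x\pm i\eps_n}=\ocal(s^\rho)=\ocal(n^{-\gamma\rho})=o(n^{-2})$, so your claim is true. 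Equivalently, $\frac{bxy}{x^2+\eps_n^2}=\ocal(s^{\rho-1})=o(1/n)$ because $\gamma(\rho-1)>4/3$; this is what the paper extracts from the $X$-dependent bounds of Lemma~\ref{lem:x2e2}. With this repair your argument is complete.

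There are also some small corrections:
\begin{itemize}
\item The linear terms give $u_+-u_-=+2i\eps_n a_{\eps_n}(x)$, not $-2i\eps_n(1+ax)$. Your conclusion $1+ax$ is the right one.
\item The remainder $a_{\eps_n}(x)-1-ax$ cancels exactly, not just ``to leading order'', since it multiplies both $x+i\eps_n$ and $x-i\eps_n$.
\item For the terms of order $k\ge 3$, use $u_+^k-u_-^k=(u_+-u_-)\sum_{j=0}^{k-1}u_+^ju_-^{k-1-j}=\ocal(\eps_n s^{k-1})$. Parity in $\eps_n$ is not literally available, because $a_{\eps_n}$ and $b_{\eps_n}$ need not be even in $\eps_n$.
\end{itemize}
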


\begin{proof}
Denoting $(x_1,y_1)=g_{\eps_n}(x,y)$, we have:
\begin{align*}
 A_{\eps_n}(x,y)&=w_{\eps_n}(x_1,y_1)-w_{\eps_n}(x,y)-1\\
 &=\frac{1}{2i{\eps_n}} \log\left( \frac{x_1-i{\eps_n}}{x-i{\eps_n}} \cdot \frac{x+i{\eps_n}}{x_1+i{\eps_n}}\right) -1 + \frac{1-a}{2} \log \frac{x_1^2+\eps_n^2}{x^2+\eps_n^2}
\end{align*}
By Lemma \ref{lem:x1eps}, we have 
$$ \frac{1-a}{2} \log \frac{x_1^2+{\eps_n^2}}{x^2+{\eps_n^2}} = (1-a) x+o\Bigl(\frac{1}{n}\Bigr).$$
On the other hand, we compute:
\begin{align*}
\log \frac{x_1-i\eps_n}{x-i\eps_n} &= \log \frac{x-i\eps_n + (x^2+{\eps_n^2}) a_{\eps_n} + y b_{\eps_n}}{x-i\eps_n}
=\log \left(1+ (x+i\eps_n) a_{\eps_n} + \frac{y b_{\eps_n}}{x-i\eps_n} \right) \\
&=(x+i\eps_n) a_{\eps_n}+\frac{bxy}{x- i\eps_n}   -\frac{1}{2} (x+i\eps_n)^2 a_{\eps_n}^2 +\frac{1}{3} x^3+o\left(\frac{1}{n^2}\right)
\end{align*}
using Lemmas~\ref{lem:x2e2} and \ref{lem:y/x} in the last line (and, as in the previous lemmas, the fact that $X^{-1}=\ocal\left(n^{-\gamma}\right)$ by definition of $\mathcal{R}_n(C)$, where $(X,Y):=\Phi_{\eps_n}^\iota(x,y)$). Similarly, we also have:
\begin{align*}
\log \frac{x_1+i\eps_n}{x+i\eps_n} &= \log \frac{x+i\eps_n + (x^2+{\eps_n^2}) a_{\eps_n} + y b_{\eps_n}}{x+i\eps_n} 
=\log \left(1+ (x-i\eps_n) a_{\eps_n} + \frac{y b_{\eps_n}}{x+i\eps_n} \right) \\
&=(x-i\eps_n) a_{\eps_n} +\frac{bxy}{x+ i\eps_n} -\frac{1}{2} (x-i\eps_n)^2 a_{\eps_n}^2 +\frac{1}{3} x^3+o\left(\frac{1}{n^2}\right)
\end{align*}
so that
\begin{align*}
\frac{1}{2i\eps_n}\log\left( \frac{x_1-i\eps_n}{x-i\eps_n} \cdot \frac{x+i\eps_n}{x_1+i\eps_n}\right) 
&=\frac{1}{2i\eps_n} \left[ 2i\eps_n a_{\eps_n} 
+2i\eps_n\frac{bxy}{x^2+\eps_n^2}-2i\eps_n x a_{\eps_n}^2+ o\left(\frac{1}{n^2}\right) \right] \\
&=a_{\eps_n} +\frac{bxy}{x^2+\eps_n^2} - x a_{\eps_n}^2 + o\left(\frac{1}{n}\right) \\
&=1+ (a-1) x+o\left(\frac{1}{n}\right)
\end{align*} 
using again Lemma~\ref{lem:x2e2} in the last line.

Finally, note that we have 
$$B_{\eps_n}(x,y)=\log\frac{t_{\eps_n}(x_1,y_1)}{t_{\eps_n}(x,y)}=\log \frac{y_1}{y}-\frac{\eta}{2}\log \frac{x_1^2+\eps_n^2}{x^2+\eps_n^2}$$
Using Lemma~\ref{lem:heps} we have
$$\log \frac{y_1}{y}=\log \left(1+\eta x+q {\eps_n} +o\left(\frac{1}{n}\right)
\right)= \eta x+q{\eps_n} +o\left(\frac{1}{n}\right)
$$
and using Lemma~\ref{lem:x1eps} we conclude that $B_{\eps_n}(x,y)= q\eps_n + o\left(n^{-1}\right)$.
\end{proof}

\section{Controlling the orbit}\label{sec:control}

The goal of this section is to provide accurate estimates for the position of the orbit 
$\{g_{\eps_n}^k(x,y): 0 \leq k \leq n-N\}$ of a point $(x,y)$ in the parabolic basin of $g:=g_0$.

The strategy, similar to that of \cite{ABDPR16}, is to split this orbit in three regions: 
\begin{enumerate}
\item A first one where $g_{\eps_n}^k(x,y)$ approaches $(0,0)$ shadowing closely the orbit $g^k(x,y)$, which we call "approaching the eggbeater"; this will occur for $0 \leq k \leq k_n$, where $k_n=\lfloor n^\gamma \rfloor$ as in Section~\ref{sec:approximate}.
\item A second one which we call "in the eggbeater", in which $g_{\eps_n}^k(x,y)$ is close to $(0,0)$ and where the effect of the perturbation is relevant. This will be the case for $k_n \leq k \leq n-k_n$.
\item A third one where $g_{\eps_n}^k(x,y)$ gets away from $(0,0)$ again shadowing the dynamics of $g$, which we call "leaving the eggbeater". This will happen for the last $k_n-N$ iterates.
\end{enumerate}

Let us fix some constant $C>0$. Recall that by Lemma~\ref{lem:petalsforg} and Proposition~\ref{prop:incomfatou}, there exists $r^\iota(C)>0$ such that for all $0<r\le r^\iota(C)$ small enough the incoming petal 
$$P^\iota(r,C):=\left\{(x,y) \in \C^2: |x+r|<r, \left|\frac{y}{(-x)^\eta}\right|<C\right\}$$
has the property that if $(x,y) \in P^\iota(r,C)$, then $g^k(x,y) \in P^\iota(r,C+1)$ for all $k \in \N$ and moreover the incoming Fatou coordinate $\Phi^\iota$ is well-defined on $P^\iota(r,C)$.
Recall that we have the following asymptotic expansion for $\Phi^\iota$ as $\re(-1/x)\to+\infty$ in $P^\iota(r,C)$:
$$\Phi^\iota(x,y) =: (w^\iota(x,y), t^\iota(x,y)) =  \left(-\frac{1}{x} + (1-a) \log(-x) + o(1), \frac{y}{(-x)^\eta} + o(1) \right).$$

The map 
$$\Phi_0^\iota(x,y):=\left(-\frac{1}{x}, \frac{y}{(-x)^\eta}\right)$$ 
maps $P^\iota(r,C)$ biholomorphically to $\H_R \times \D(0,C)$, where $\H_R:=\{X \in \C: \re X >R\}$ and 
$R:=(2r)^{-1}$. 

In a similar way, by Proposition~\ref{prop:outgofatou} there exists $r^o(C)>0$ such that for all $0< r\le r^o(C)$ the outgoing Fatou coordinate $\Phi^o$ is well-defined on 
$$P^o(r,C):=\left\{(x,y)\in\C^2:|x-r|<r,\left|\frac{y}{x^\eta}\right|<C\right\}$$
and satisfies 
$$\Phi^o(x,y)=\left(-\frac{1}{x} + (1-a) \log x + o\left(1\right), \frac{y}{x^\eta} + o\left(1\right)\right),$$
as $\re (-1/x)\to-\infty$ inside $P^o(r,C)$. We set 
$$\Phi_0^o(x,y):=\left(-\frac{1}{x}, \frac{y}{x^\eta}\right),$$  
which maps $P^o(r,C)$ biholomorphically to $-\H_R \times \D(0,C)$, where $-\H_R:=\{X \in \C: \re X <-R\}$.

Throughout this section, we will use the following notations: given $(x_0,y_0)$, we will denote (when defined)
$$(x_j,y_j):=g^j(x_0,y_0), \quad (X_j,Y_j):=\Phi_0^\iota(x_j,y_j), \quad (X_j^o,Y_j^o):=\Phi_0^o(x_j,y_j)$$
$$(x_j^{\eps_n},y_j^{\eps_n}):=g_{\eps_n}^j(x_0,y_0), \quad (X_j^{\eps_n},Y_j^{\eps_n}):=\Phi_0^\iota(x_j^{\eps_n},y_j^{\eps_n}), \quad (X_j^{\eps_n,o},Y_j^{\eps_n,o}):=\Phi_0^o(x_j^{\eps_n},y_j^{\eps_n}).$$ 
We will also denote, for any $t,s\in\C$,
$$A_{t,s}(X,Y):=\left(X+t,e^{\pi s}Y\right).$$

\strut

Given $C>0$, we choose $0<r<2/5$ small enough  (or equivalently $R=(2r)^{-1}>5/4$ large enough) such that
\begin{enumerate}
\item[$(R_1)$]\label{item:choiceRinit} $P^\iota(r,C),P^o(r,Ce^{1+\pi|\re q|}) \subset U$, where $U$ is the domain of definition of $(g_{\eps})$, and there exists a constant $K_1>0$ such that 
$$|X_1-X-1|<1/4,\qquad |Y_1-Y|<K_1\frac1{|X|^2}$$
for all $(X,Y) \in \Phi_0^\iota(P^\iota(r,C))$, where $(X_1,Y_1):=\Phi_0^\iota \circ g \circ \left(\Phi_0^\iota \right)^{-1}(X,Y)$, and
$$|X_1^o-X-1|<1/4, \qquad |Y_1^o-Y|<K_1\frac1{|X|^2}$$
for all $(X,Y)\in \Phi_0^o(P^o(r,Ce^{1+\pi|\re q|}))$, where $(X_1^o,Y_1^o):=\Phi_0^o \circ g \circ \left(\Phi_0^o \right)^{-1}(X,Y)$. 
\end{enumerate}
The existence of such $R$ is guaranteed by the expression \eqref{eq:expressionG} for $\Phi_0^\iota \circ g \circ \left(\Phi_0^\iota \right)^{-1}$ and the fact that $\Phi_0^o \circ g \circ \left(\Phi_0^o \right)^{-1}$ has the same expansion.

We will also later in this section need to possibly increase $R$ further to meet some extra conditions (see Definition \ref{defi:choiceofR}). To ensure the lack of circular definitions, we will explicitly mention that several constants appearing in the following computations do not depend on $R$ (but are allowed to depend on $C$).

\begin{defi}
We let 
$$U_n(R,C):=\{(X, Y)\in \H_R\times \D(0,C): |X|<10 k_n \}  \subsetneq \Phi_0^\iota\left(P^\iota\left((2R)^{-1},C\right)\right).$$ 
\end{defi}

\begin{lem}\label{lem:estimateGeps}
For any $C>0$ and any $R$ satisfying hypothesis $(R_1)$, the map $G_{\eps_n}:=\Phi_0^\iota \circ g_{\eps_n} \circ \left(\Phi_0^\iota\right)^{-1}$ is well-defined on $U_n(R,C)$ for all $n$ large enough. Moreover, there exists a constant $C^\iota>0$ depending on $C$ but not on $R$ (in the sense that it does not increase when we increase $R$) such that if $(X,Y) \in U_n(R,C)$ and $n$ is large enough then 
$$|X_1^{\eps_n} -X_1| \leq C^\iota \left(\frac{1}{n} + \frac{|X|^2}{n^2}  \right) \quad\text{and}\quad
|Y_1^{\eps_n} -Y_1| \leq C^\iota \frac{1}{n}$$
where $(X_1^{\eps_n},Y_1^{\eps_n}):=G_{\eps_n}(X,Y)$ and $(X_1,Y_1):=\Phi_0^\iota \circ g \circ \left( \Phi_0^\iota\right)^{-1}(X,Y)$. 
\end{lem}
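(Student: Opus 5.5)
Write $(x,y):=(\Phi_0^\iota)^{-1}(X,Y)=(-1/X, Y/X^\eta)$. On $U_n(R,C)$ we have $\re X>R$ and $|X|<10k_n$, so $|x|\in(1/(10k_n), 1/R)$ and $|y|\le C|x|^\rho$. The idea is to compare $g_{\eps_n}(x,y)$ with $g(x,y)$ directly in the $(x,y)$ coordinates, and then push the difference through $\Phi_0^\iota$ using the mean value inequality.

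\textbf{Step 1: the difference of the maps.} From \eqref{eq:gsect4}, using that $a_\eps,b_\eps,c_\eps,d_\eps$ depend holomorphically on $\eps$, $c_\eps-c_0=q\eps+\ocal(\eps x,\eps y,\eps^2)$ and $d_\eps-d_0=\eps\,\ocal_{m+1}(x,\eps)$, I would write
\begin{align*}
g_{\eps_n}(x,y)-g(x,y)=\Bigl(&\eps_n^2 a_{\eps_n}(x)+x^2\bigl(a_{\eps_n}(x)-a_0(x)\bigr)+y\bigl(b_{\eps_n}-b_0\bigr),\\
&y\bigl(c_{\eps_n}-c_0\bigr)+d_{\eps_n}(x)-d_0(x)\Bigr).
\end{align*}
Since $p=0$, we have $a_\eps(x)-a_0(x)=\ocal(\eps x,\eps^2)$. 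The first component is therefore $\ocal(\eps_n^2+\eps_n|x|^3+|y|\eps_n)=\ocal(n^{-2}+n^{-1}|x|^3)$. In the second component, $y(c_{\eps_n}-c_0)=\ocal(\eps_n|x|^\rho)$, and $\eps_n\ocal_{m+1}(x,\eps_n)=\ocal(\eps_n(|x|+\eps_n)^{m+1})$. All the implied constants depend only on $C$ and on the family, not on $R$, because $R$ only shrinks the domain.

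\textbf{Step 2: well-definedness.} Both images lie in a fixed neighbourhood of the origin with $\re x_1<0$ and $x_1\neq0$. Indeed, $|x_1-x-x^2|\lesssim|x|^3+n^{-2}$ and $|x|\gtrsim 1/k_n\gg n^{-1}$, so $|x_1^{\eps_n}-x_1|=o(|x|^2)$. Since $g$ maps into $\Phi_0^\iota(P^\iota(r,C+1))$ by Lemma~\ref{lem:petalsforg}, the point $g_{\eps_n}(x,y)$ also lies in a slightly larger sector where $\Phi_0^\iota$ is holomorphic. This makes $G_{\eps_n}$ well-defined for $n$ large.

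\textbf{Step 3: transfer to Fatou-type coordinates.} For the $X$-component, on a segment joining $x_1$ to $x_1^{\eps_n}$ (both of size $\asymp|x|$), we have $|\partial_x(-1/x)|\asymp|x|^{-2}\asymp|X|^2$. This gives
$$|X_1^{\eps_n}-X_1|\lesssim |X|^2\bigl(n^{-2}+n^{-1}|X|^{-3}\bigr)\le C^\iota\Bigl(\frac{|X|^2}{n^2}+\frac1n\Bigr).$$
For $Y=y(-x)^{-\eta}$, we have $\partial_yY=(-x)^{-\eta}=\ocal(|X|^\rho)$ and $\partial_xY=\eta y(-x)^{-\eta-1}=\ocal(|X|)$ along the segment. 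So
$$|Y_1^{\eps_n}-Y_1|\lesssim |X|^\rho\cdot|\Delta y|+|X|\cdot|\Delta x|.$$

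\textbf{Main obstacle: the $y$-component.} The dangerous term is $|X|^\rho\cdot|\Delta y|$. The contribution of $y(c_{\eps_n}-c_0)$ gives $\ocal(\eps_n)$, which is fine. For $d_{\eps_n}-d_0$ I would split into the monomials $\eps_n x^{m+1-j}\eps_n^{j}$ with $0\le j\le m$. Each contributes
$$|X|^{\rho-m-1+j}\,n^{-1-j}\le n^{-1}(|X|/n)^{j}\,|X|^{\rho-m-1}\lesssim n^{-1},$$
using $|X|\le10k_n\le n$ and $m+1>\rho$. The remaining term $|X|\cdot|\Delta x|=\ocal(|X|n^{-2}+n^{-1}|X|^{-2})=\ocal(n^{-1})$. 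This gives $|Y_1^{\eps_n}-Y_1|\le C^\iota/n$. The required care is checking that each $\eps$-dependent term in $(H_2)$-type form $d_\eps$ has enough vanishing order, which is exactly what $m=\lfloor\rho\rfloor$ provides.
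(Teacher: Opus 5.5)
Your proof is correct and follows essentially the same route as the paper: compare $g_{\eps_n}$ with $g$ in the $(x,y)$ coordinates, then push the difference through $\Phi_0^\iota$ with the mean value inequality, controlling $d_{\eps_n}-d_0$ via $m+1>\rho$ and $|X|\le 10k_n$. The paper differs only cosmetically, expanding $-1/x_1^{\eps_n}$ directly for the $X$-component and using $y=\ocal(x^2)$ rather than $p=0$. Two small points to tighten: your expansion of $d_{\eps_n}-d_0$ should also include the term $j=m+1$ (i.e.\ $\eps_n^{m+2}$), which your same bound handles. For well-definedness, state explicitly that $|x_1+r|<r$ forces $\re x_1<-R|x_1|^2$; this horodisk estimate, rather than a sector, is why an $o(|x|^2)$ perturbation keeps $\re x_1^{\eps_n}<0$.
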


\begin{proof}
Let us first prove that $G_{\eps_n}$ is well-defined on $U_n(R,C)$ for $n$ large enough. It suffices to show that if $(X,Y)  \in U_n(R,C)$ and $n$ is large enough then $\re (x_1^{\eps_n})<0$, where $(x_1^{\eps_n},y_1^{\eps_n}):=g_{\eps_n} \circ \left(\Phi_0^\iota\right)^{-1}(X,Y)$. Set $(x,y):=\left(\Phi_0^\iota\right)^{-1}(X,Y)$ and $(x_1,y_1):=g(x,y)$. By the expression \eqref{eq:gsect4} of $g_{\eps_n}$, we have
\begin{equation}\label{eq:x1epsn}
x_1^{\eps_n}=x_1+\ocal\left(x^2\eps_n,y\eps_n,\eps_n^2\right); \qquad y_1^{\eps_n}=y_1+\ocal(y\eps_n)+\eps_n\ocal_{m+1}(x,\eps_n)
\end{equation}
so using the fact that $y=\ocal(x^2)$ because $(x,y)\in P^\iota\left((2R)^{-1},C\right)$ we get
\begin{align*}
-\frac{1}{x_1^{\eps_n}} &= -\frac{1}{x_1 + \ocal\left(x^2 \eps_n, y \eps_n, \eps_n^2\right)}   
=-\frac{1}{x_1 + \ocal\left(\frac{1}{n^2}, \frac{x^2}{n}\right)}
=-\frac{1}{-\frac{1}{X_1} + \ocal\left(\frac{1}{n^2}, \frac{1}{nX^2}\right) }\\
&=\frac{X_1}{1-\ocal\left(\frac{X_1}{n^2}, \frac{X_1}{nX^2}\right)}  =   X_1 + \ocal\left(\frac{X_1^2}{n^2}, \frac{X_1^2}{nX^2}\right).
\end{align*}
Since $|X_1-X-1|<1/4$ by our choice of $R$, we have that $X_1=\ocal(X)$ so we get
$$-\frac{1}{x_1^{\eps_n}}=X_1+\ocal\left(\frac1n,\frac{X^2}{n^2}\right).$$
Note that in all the previous computations, the implicit constants in $\ocal$ depend only on $C$, not on $R$, in the sense that the do not increase if we increase $R$.
By definition of $U_n(R,C)$, we have 
$$\frac{|X|^2}{n^2} \leq  \frac{100 k_n^2}{n^2}.$$
Moreover, since $\re X_1>\re X + 3/4$, there exists a constant $C_1>0$ independent from $R$ such that 
$$\re \left(-\frac{1}{x_1^{\eps_n}} \right) > \re X+ \frac{3}{4} - C_1 \left(\frac{1}{n} + \frac{100}{n^{2-2\gamma}}  \right)$$
so $\re\left(x_1^{\eps_n}\right)<0$ for $n$ large enough, as desired. This computation also gives the first estimate.
	
Let us prove the second estimate. By the mean value inequality,
\begin{align*}
|Y_1^{\eps_n} - Y_1| 
&= \left|\frac{y_1^{\eps_n}}{(-x_1^{\eps_n})^{\eta}} - \frac{y_1}{(-x_1)^\eta}\right| \\
&\leq \sup_{x \in [x_1,x_1^{\eps_n}]} \left|\eta\frac{y_1}{(-x)^{\eta+1}}\right| \cdot |x_1^{\eps_n}-x_1| 
+  \left|\frac1{(-x_1^{\eps_n})^\eta}\right| \cdot |y_1^{\eps_n}-y_1| \\
&\le C_2 \frac{|y_1|}{|x_*|^{\rho+1}} |x_1^{\eps_n}-x_1| 
+  C_2 \frac1{|x_*|^{\rho}}|y_1^{\eps_n}-y_1|
\end{align*}
for some $C_2>0$, where $|x_*|:=\min\{|z|, z \in [x_1, x_1^{\eps_n}]\}$.
By the expression of $g$,
$$x_1=x\left(1+\ocal(x,y)\right)+\ocal(y^2)=x\left(1+\ocal(x)\right),$$
using again the fact that $y=\ocal(x)$ because $(x,y)\in P^\iota\left((2R)^{-1},C\right)$.
Since $R$ was taken large enough so that $|X_1|<|X|+2$, we have that 
we have that 
$$\frac{|x_1|}{|x|}>\frac1{1+4r},$$
where $r=(2R)^{-1}$.  Moreover, using \eqref{eq:x1epsn} we have that $x_1^{\eps_n}=x_1+\ocal\left(x^2\eps_n,\eps_n^2\right)$, so
$$\frac{x_*}{x}=\frac{x_1+\ocal\left(x^2\eps_n,\eps_n^2\right)}{x}=\frac{x_1}{x}+\ocal\left(\frac1n,\frac{X}{n^2}\right).$$
Since $(X,Y)\in U_n(R,C)$, we have that $|X|<10k_n$, so we can take $n$ big enough so that 
\begin{equation}\label{eq:x*/x}
\frac{|x_*|}{|x|}>\frac12\frac 1{1+4r}.
\end{equation}
Hence using the expression of $g$ we have that for $n$ large enough
$$\frac{|y_1|}{|x_*|^{\rho+1}}\le C_3 \frac{|y_1|}{|x|^{\rho+1}}\le C_3\frac1{|x|^{\rho+1}}\left|y(1+\ocal(x,y))+\ocal(x^{m+3})\right|\le C_4 \frac1{|x|}$$ 
for some constants $C_3,C_4>0$, so
$$\frac{|y_1|}{|x_*|^{\rho+1}} |x_1^{\eps_n}-x_1| \le C_4\frac{1}{|x|} |x_1^{\eps_n}-x_1|\le  C_5\left(|x|\frac1n+\frac{|X|}{n^2}\right)\le C_6\frac1n$$
for some constants $C_5,C_6>0$, where we used in the last inequality the fact that $|X|<10k_n$. 
Finally, using \eqref{eq:x1epsn} and \eqref{eq:x*/x} we have
\begin{equation*}\label{eq:y1eps-y1}
\frac{|y_1^{\eps_n}-y_1|}{\left|x_*\right|^\rho} = \frac{\ocal(y \eps_n)+\eps_n\ocal_{m+1}(x,\eps_n)}{|x|^\rho} =\ocal\left(\frac1n\right)
\end{equation*}
using again in the last identity the fact that $|X|\le 10k_n$.
Putting everything together, we obtain
$$Y_1^{\eps_n} - Y_1 = \ocal\left( \frac{1}{n}\right),$$
and the Lemma is proved. \end{proof}

For the outgoing petal, we have the following analogous Lemma:

\begin{defi}
We let 
$$U_n^o(R,Ce^{1+\pi |\re q|}):=\{(X, Y) \in -\H_R\times \D(0, Ce^{1+\pi |\re q|}): |X|<10 k_n \}.$$
Note that $U_n^o(R,Ce^{1+\pi |\re q|}) \subsetneq \Phi_0^o\left(P^o\left((2R)^{-1},Ce^{1+\pi |\re q|}\right)\right).$
\end{defi}

\begin{lem}\label{lem:estimateGepso}
For any $C>0$ and any $R$ satisfying hypothesis $(R_1)$, the map $G_{\eps_n}^o:=\Phi_0^o\circ g_{\eps_n} \circ \left(\Phi_0^o\right)^{-1}$ is well-defined on $U_n^o(R,Ce^{1+\pi |\re q|})$ for all $n$ large enough. Moreover, there exists a constant $C^o>0$ depending on $C$ but not on $R$ (in the sense that it does not increase when we increase $R$) such that if $(X,Y) \in U_n^o(R,Ce^{1+\pi |\re q|})$ and $n$ is large enough then 
$$|X_1^{\eps_n,o}-X_1^o| \leq C^o \left(\frac{1}{n} + \frac{|X|^2}{n^2}  \right) \quad\text{and}\quad
|Y_1^{\eps_n,o} -Y_1^o| \leq C^o \frac{1}{n}$$
where $(X_1^{\eps_n,o},Y_1^{\eps_n,o}):=G_{\eps_n}^o(X,Y)$ and $(X_1^o,Y_1^o):=\Phi_0^o \circ g \circ \left( \Phi_0^o\right)^{-1}(X,Y)$. 
\end{lem}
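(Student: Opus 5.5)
The plan is to mirror the proof of Lemma~\ref{lem:estimateGeps}, replacing $\Phi_0^\iota$ by $\Phi_0^o(x,y)=(-1/x, y/x^\eta)$ and the incoming petal by $P^o\left((2R)^{-1},Ce^{1+\pi|\re q|}\right)$. Set $C':=Ce^{1+\pi|\re q|}$, fix $(X,Y)\in U_n^o(R,C')$, and let $(x,y):=(\Phi_0^o)^{-1}(X,Y)=(-1/X, Y(-1/X)^\eta)$, so that $\re x>0$, $|x|$ is small, and $|y|\le C'|x|^\rho$; since $\rho>3$, this gives $y=\ocal(x^3)$. Put $(x_1,y_1):=g(x,y)$. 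From the expression \eqref{eq:gsect4} we get, exactly as in \eqref{eq:x1epsn},
$$x_1^{\eps_n}=x_1+\ocal\left(x^2\eps_n,y\eps_n,\eps_n^2\right),\qquad y_1^{\eps_n}=y_1+\ocal(y\eps_n)+\eps_n\ocal_{m+1}(x,\eps_n),$$
with implicit constants depending only on $C$ and not on $R$, since they come from Taylor coefficients of $g_\eps$ on a fixed neighbourhood together with $|y|\le C'|x|^\rho$.

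\textbf{Well-definedness and the $X$-estimate.} Using $x_1=-1/X_1^o$ and expanding $-1/x_1^{\eps_n}=X_1^o\bigl(1-\ocal(X_1^o/n^2,\,X_1^o/(nX^2))\bigr)^{-1}$, together with $X_1^o=\ocal(X)$ (which follows from $|X_1^o-X-1|<1/4$ in $(R_1)$), we obtain
$$-1/x_1^{\eps_n}=X_1^o+\ocal\left(1/n+|X|^2/n^2\right).$$
Because $|X|<10k_n$ and $\gamma<1$, the error is $o(1)$. It follows that $x_1^{\eps_n}\neq0$, that $\re x_1^{\eps_n}>0$ for $n$ large (since $\re X_1^o<-R+5/4<0$), and hence that $\Phi_0^o(x_1^{\eps_n},y_1^{\eps_n})$ is defined with the principal branch of $x^\eta$. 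This gives both the well-definedness of $G^o_{\eps_n}$ and the first estimate. Note that the domain in the statement only asks for $g_{\eps_n}$ to land where $\Phi_0^o$ is defined, not in the petal itself.

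\textbf{The $Y$-estimate.} Apply the mean value inequality to $(x,y)\mapsto y/x^\eta$ on the segment $[x_1,x_1^{\eps_n}]$:
$$|Y_1^{\eps_n,o}-Y_1^o|\le C_2\frac{|y_1|}{|x_*|^{\rho+1}}|x_1^{\eps_n}-x_1|+C_2\frac{|y_1^{\eps_n}-y_1|}{|x_*|^\rho},$$
where $x_*$ is the point of minimal modulus on the segment; the segment lies in $\{\re x>0\}$, so the argument of $x$ stays bounded. The main obstacle is the lower bound $|x_*|\gtrsim|x|$, uniform in $R$. To get it, use $x_1=x(1+\ocal(x))$ and $|X_1^o|<|X|+2$ to obtain $|x_1|/|x|>1/(1+4r)$. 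Then use $x_1^{\eps_n}-x_1=\ocal(x^2/n+1/n^2)$ and $|X|<10k_n$ to get $(x_1^{\eps_n}-x_1)/x=\ocal(1/n+X/n^2)=o(1)$. With this bound in hand:
\begin{enumerate}
\item the first term is $\ocal(|x|^{-1}(x^2/n+1/n^2))=\ocal(1/n)$, since $|y_1|=\ocal(|x|^\rho)$ because $d_0=\ocal(x^{m+3})$ with $m+3>\rho$;
\item the second term is $\ocal(\eps_n)+\ocal\bigl(\eps_n\,|x|^{-\rho}\sum_{j}|x|^{m+1-j}\eps_n^{j}\bigr)$. The contributions with $j\le m+1-\rho$, i.e. with $|x|^{m+1-j-\rho}$ bounded, are directly $\ocal(1/n)$. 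For the others, use $|x|^{-1}\le 10k_n\le 10n^\gamma$, so each such summand is $\ocal(n^{-1-j+\gamma(\rho-m-1+j)})$, which is $\ocal(1/n)$ since $\gamma<1$ and $\rho-m-1<0$.
\end{enumerate}
Combining these gives $|Y_1^{\eps_n,o}-Y_1^o|\le C^o/n$. All constants depend only on $C$ (through $C'$) and do not increase with $R$, which proves the lemma.
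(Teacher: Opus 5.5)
Your proposal is correct and follows the paper's own proof. The paper also reruns the argument of Lemma~\ref{lem:estimateGeps} with $\Phi_0^o$ in place of $\Phi_0^\iota$, and its only change is the sign check $\re(-1/x_1^{\eps_n}) < \re X + 5/4 + o(1) < 0$, which uses $R>5/4$ exactly as you do. Your term-by-term bound on $\eps_n\ocal_{m+1}(x,\eps_n)/|x|^\rho$ via $|x|^{-1}<10k_n$ makes explicit a step the paper leaves terse. The factor $e^{\pi|\im\eta|/2}$ missing from your bound $|y|\le C'|x|^\rho$ only changes constants.
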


\begin{proof}
The proof follows essentially the same steps as the previous one. The only 
modification concerns the argument showing that $\re(x_1^{\eps_n})>0$. In this case, we have 
$$\re \left(-\frac{1}{x_1^{\eps_n}} \right) < \re X+ \frac{5}{4} + C_1 \left(\frac{1}{n} + \frac{100}{n^{2-2\gamma}}  \right)$$
for some constant $C_1>0$. Hence, for $n$ sufficiently large, we have $\re(-1/x_1^{\eps_n})<0$, since by assumption $R>5/4$. 
\end{proof}

We will now state explicitly in which sense $R$ must be taken large enough, or equivalently $r=(2R)^{-1}$ small enough. By Lemma~\ref{lem:estimateGeps}, there exists $C^\iota>0$, depending on $C$ but not on $R$, such that for all $(X,Y) \in U_n(R,C)$ and for $n$ large enough
$$|X_1^{\eps_n} -X_1| \leq C^\iota \left(\frac{1}{n} + \frac{|X|^2}{n^2}  \right) \quad\text{and}\quad
|Y_1^{\eps_n} -Y_1| \leq C^\iota \frac{1}{n}$$
where $(X_1^{\eps_n},Y_1^{\eps_n}):=\Phi_0^\iota\circ g_{\eps_n}\circ \left( \Phi_0^\iota\right)^{-1}(X,Y)$ and $(X_1,Y_1):=\Phi_0^\iota \circ g \circ \left( \Phi_0^\iota\right)^{-1}(X,Y)$. 
Since $|X|<10k_n$, we have that for $n$ large enough $|X_1^{\eps_n} -X_1|<1/4.$
Then by our standing assumption $(R_1)$ there exists $M^\iota>0$ (depending on $C$ but not on $R$) such that for all $(X,Y) \in U_n(R,C)$ and for $n$ large enough we have
\begin{equation}\label{eq:diffX1epsX}
|X_1^{\eps_n}-X-1| < \frac{1}{2}; \qquad |Y_1^{\eps_n}-Y|<M^\iota\left(\frac{1}{n} + \frac{1}{|X|^2} \right).
\end{equation}

Moreover, by Lemma \ref{lem:estimateGepso}, similar estimates also hold on the outgoing petal; more precisely, there exists a constant $M^o>0$ (also depending on $C$ but not on $R$) such that for all $(X,Y) \in U_n^o(R,Ce^{1+\pi |\re q|})$ and for $n$ large enough we have 
\begin{equation}\label{eq:diffX1epsXo}
|X_1^{\eps_n,o}-X-1| < \frac{1}{2}; \qquad |Y_1^{\eps_n,o}-Y|< M^o \left(\frac{1}{n} + \frac{1}{|X|^2} \right)
\end{equation}
where $(X_1^{\eps_n,o},Y_1^{\eps_n,o}):=\Phi_0^o\circ g_{\eps_n}\circ \left( \Phi_0^o\right)^{-1}(X,Y)$.
We let $M:=\max(M^\iota, M^o)$.

\medskip

From now on, we fix a compact $K \subset \bcal_{U,v}$, where $\bcal_{U,v}$ is the parabolic basin associated to $v=(1,0)$. Recall that  $\bcal_{U,v}:=\bigcup_{C>0}\bigcup_{n \geq 0} f^{-n}(P^\iota(r, C))$ for any $r>0$. Then there exists a constant $C>2$ (depending only on $K$) such that for any $r>0$ small enough there exists $n_0=n_0(r)\in \N$ such that 
$L:=g^{n_0}(K) \Subset P^\iota(r,C-1)$.

\begin{defi}[Choice of $R$]\label{defi:choiceofR}
From now on, we fix $R>5/2$ (equivalently, $0<r=(2R)^{-1}<1/5$ small enough) satisfying hypothesis $(R_1)$ and large enough such that \\[-7pt]
\begin{enumerate}
\item[$(R_2)$] $r\le r^\iota(C)$ and $r\le r^o(Ce^{1+\pi|\re q|})$. \\[-8pt]
\item[$(R_3)$] $ M \sum_{j=0}^{\infty} (R+ j/2)^{-2} \leq 1/10.$\\[-8pt]
\item[$(R_4)$]  For all $(X,Y) \in \H_{R-1} \times \D(0,C+1)$ 
$$\|\Phi^\iota\circ(\Phi_0^\iota)^{-1}(X,Y)-(X-(1-a)\log X,Y)\|\le \frac1{C^2}$$
and for all $(X,Y) \in -\H_{R-5/2} \times \D(0,Ce^{1+\pi|\re q|}+2)$
$$\|\Phi^o\circ(\Phi_0^o)^{-1}(X,Y)-(X-(1-a)\log(-X),Y)\|\le \frac1{C^2}.$$
\end{enumerate}
\end{defi}

Observe that conditions $(R_2)$ and $(R_4)$ are satisfied for $R$ large enough by Propositions~\ref{prop:incomfatou} and \ref{prop:outgofatou}.

\begin{defi}
Let $\pi_1: \C^2 \to \C$ denote the projection on the first coordinate. 
We define
$$M_C:=\displaystyle \left\{(x,y) \in \C^2:  x \in \pi_1(L) \text{ and }  \frac{1}{C-1} \leq \left|\frac{y}{(-x)^\eta}\right| \leq C-1 \right \}$$
and 
$$N_C:=\displaystyle \left\{(x,y) \in \C^2:  x \in \pi_1(L) \text{ and } \left|\frac{y}{(-x)^\eta}\right| \leq C-1 \right \}.$$
\end{defi}
Observe that, by definition, $M_C\subset N_C$ and $L\subset N_C\subset P^\iota(r,C-1)$.

\subsection{Approaching the eggbeater}\label{sec:incoming}

\begin{lem}\label{lem:takeRbig}
For all $n$ large enough and for all $0 \leq j \leq k_n$ we have that
$$\Phi_0^\iota \circ g_{\eps_n}^j\left(M_C\right) \subset U_n(R,C).$$
In particular, $g_{\eps_n}^j\left(M_C\right) \subset  P^\iota\left(r,C\right)$ for all $n$ large enough and all $0 \leq j \leq k_n$. 
\end{lem}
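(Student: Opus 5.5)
We argue by induction on $j$, tracking the orbit in the coordinates $(X,Y)=\Phi_0^\iota(x,y)$. For $(x_0,y_0)\in M_C$, write $(X_j^{\eps_n},Y_j^{\eps_n})=\Phi_0^\iota(g_{\eps_n}^j(x_0,y_0))$. We will show, for all $0\le j\le k_n$ and $n$ large (uniformly in $(x_0,y_0)\in M_C$), the following three properties:
\[
\re X_j^{\eps_n}\ge \re X_0+\tfrac{j}{2}>R,\qquad |X_j^{\eps_n}|\le |X_0|+\tfrac{3j}{2},\qquad |Y_j^{\eps_n}|<C .
\]
Since $L\Subset P^\iota(r,C-1)$ is compact, $\pi_1(L)$ is a compact subset of $\{|x+r|<r\}$. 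Hence $X_0=-1/x_0$ ranges in a compact subset of $\H_R$, so $\re X_0>R$ and $|X_0|\le K_0$ for a constant $K_0$ independent of $n$. Also $|Y_0|\le C-1$ by definition of $M_C$. At $j=0$ the point therefore lies in $U_n(R,C)$ once $10k_n>K_0$.

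For the inductive step, suppose the three properties hold at $j<k_n$, so that $(X_j^{\eps_n},Y_j^{\eps_n})\in U_n(R,C)$; this uses $|X_j^{\eps_n}|\le K_0+\tfrac32 k_n<10k_n$. Then $G_{\eps_n}$ is defined at this point by Lemma~\ref{lem:estimateGeps}, and \eqref{eq:diffX1epsX} gives $|X_{j+1}^{\eps_n}-X_j^{\eps_n}-1|<1/2$. This yields $\re X_{j+1}^{\eps_n}\ge \re X_j^{\eps_n}+1/2$ and $|X_{j+1}^{\eps_n}|\le |X_j^{\eps_n}|+3/2$, which propagates the first two properties.

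For the $Y$-coordinate, telescoping \eqref{eq:diffX1epsX} gives
\[
|Y_{j+1}^{\eps_n}-Y_0|\le M\sum_{i=0}^{j}\Bigl(\frac1n+\frac{1}{|X_i^{\eps_n}|^2}\Bigr)\le M\frac{k_n+1}{n}+M\sum_{i\ge0}\Bigl(R+\frac i2\Bigr)^{-2}.
\]
Here we used $|X_i^{\eps_n}|\ge\re X_i^{\eps_n}\ge R+i/2$. By $(R_3)$ the last sum is at most $1/10$, and $M(k_n+1)/n\to0$ since $\gamma<1$. Therefore $|Y_{j+1}^{\eps_n}|\le C-1+1/10+o(1)<C$ for $n$ large.

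This closes the induction and shows that $\Phi_0^\iota\circ g_{\eps_n}^j(M_C)\subset U_n(R,C)$ for $0\le j\le k_n$. The second claim follows because $U_n(R,C)\subset\Phi_0^\iota(P^\iota(r,C))$ and $\Phi_0^\iota$ is a bijection of $P^\iota(r,C)$ onto $\H_R\times\D(0,C)$.

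The only delicate point is uniformity. The threshold "$n$ large enough" from Lemma~\ref{lem:estimateGeps} and \eqref{eq:diffX1epsX} must not depend on $j$ or on the starting point. This holds because those estimates are uniform on all of $U_n(R,C)$, and the induction keeps the orbit inside $U_n(R,C)$. Likewise $M$ is independent of $R$, which is what makes $(R_3)$ a legitimate choice. The strict inequalities defining $\H_R$ and $\D(0,C)$ are preserved by the margins above.
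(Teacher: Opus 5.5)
Your proof is correct and is essentially the paper's argument. The paper phrases it as a contradiction on the first exit time $K_n$ from $U_n(R,C)$ rather than as a forward induction. It uses the same two ingredients: the step estimates \eqref{eq:diffX1epsX}, giving $\re X_j^{\eps_n}\ge \re X_0+j/2$ and $|X_j^{\eps_n}|\le |X_0|+3j/2$, and the telescoped $Y$-bound controlled by $(R_3)$ together with $Mk_n/n\to 0$.
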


\begin{proof}
Since $M_C\subset P^\iota(r,C-1)$ is compact, we can assume that $n$ is large enough such that $\Phi_0^\iota(M_C)\subset U_n(R,C)$. Take $(x_0,y_0)\in M_C$, set $(X_0,Y_0):=\Phi_0^\iota(x_0,y_0)$ and denote, for all $j\in\N$ such that it is well-defined, 
$(X_j^{\eps_n},Y_j^{\eps_n}):=\Phi_0^\iota \circ g_{\eps_n} \circ \left(\Phi_0^\iota\right)^{-1}(X_0,Y_0)$. Set 
$$K_n:=\min \{j \in \N : (X_j^{\eps_n},Y_j^{\eps_n}) \notin U_n(R,C)\}>0$$ 
and let us prove that $K_n > k_n$. 
	
First, recall that by \eqref{eq:diffX1epsX} there exists a constant $M>0$ such that for all $j \leq K_n$ and for $n$ large enough we have 
$$|X_{j}^{\eps_n}-X_{j-1}^{\eps_n}-1| < \frac{1}{2}$$
$$|Y_{j}^{\eps_n}-Y_{j-1}^{\eps_n}|< M \left(\frac{1}{n}  + \frac{1}{|X_{j-1}^{\eps_n}|^2} \right).$$
From those inequalities, we get:
\begin{equation}\label{eq:contrX}
|X_{j}^{\eps_n}-X_0-j| \leq \frac{j}{2}  
\end{equation}
and 
\begin{equation}\label{eq:contrY}
|Y_{j}^{\eps_n} - Y_0|\leq \frac{M j}{n} + M\sum_{k=0}^{j-1} \frac{1}{|X_k^{\eps_n}|^{2}}
\end{equation} 
for all $0 \leq j \leq K_n$. 
Using \eqref{eq:contrX} in \eqref{eq:contrY} we get, for all $0 \leq j \leq K_n$, 
\begin{align*}
|Y_{j}^{\eps_n} - Y_0|\leq  \frac{M j}{n} + M\sum_{k=0}^{j-1}\frac{1}{(\re X_0+ k/2)^2}\leq \frac{Mj}{n}+ M \sum_{k=0}^{j-1} \frac{1}{(R+ k/2)^2}. 
\end{align*}
Recall that by condition $(R_3)$ we have that $ M \sum_{k=0}^{\infty} (R+ k/2)^{-2} \leq 1/10$: from now on, we also assume that $n$ is large enough so that $ \frac{M k_n}{n}\leq 1/10.$
Suppose by contradiction that $K_n \leq k_n$. By definition $(X_{K_n}^{\eps_n}, Y_{K_n}^{\eps_n}) \notin U_n(R,C)$; but on the other hand, we have 
$$|X_{K_n}^{\eps_n}- X_0 - K_n| \leq \frac{K_n}{2} \quad \text{ and } \quad |Y_{K_n}^{\eps_n} - Y_{0} | \leq \frac{2}{10}. $$
Therefore: $|Y_{K_n}^{\eps_n}| < C-1+2/10< C$, $R + K_n/2 \leq \re \left(X_{K_n}^{\eps_n}\right)$, and
$$|X_{K_n}^{\eps_n}| \leq |X_0| + \frac{3}{2} K_n \le \max \left\{ \left| - \frac{1}{x} \right| : (x,y) \in M_C  \right\}+ \frac{3}{2} k_n   < 10 k_n $$
(again, up to taking $n$ large enough such that $ \max \left\{ \left| - 1/x \right| : (x,y) \in M_C  \right\}+ 3 k_n/2   < 10 k_n $).
Therefore $(X_{K_n}^{\eps_n}, Y_{K_n}^{\eps_n}) \in U_n(R,C)$, a contradiction. This proves that $K_n>k_n$, hence that 
$(X_k^{\eps_n}, Y_k^{\eps_n}) \in U_n(R,C)$ for all $0 \leq k \leq k_n$.
\end{proof}

\begin{lem}\label{lem:firstkn}
For $n$ large enough 
$$\Phi^\iota\circ g_{\eps_n}^{k_n}\bigr|_{M_C}=A_{k_n,0}\circ\Phi^\iota\bigr|_{M_C}+o(1),$$
where the convergence of the term $o(1)$ is uniform on $M_C$ (recall that $A_{t,s}(X,Y)=(X+t,e^{\pi s}Y)$ for any $t,s\in\C$).
\end{lem}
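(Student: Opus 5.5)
The plan is to compare the perturbed orbit with the unperturbed one and use the Fatou coordinate equation $\Phi^\iota\circ g^{k}=A_{k,0}\circ\Phi^\iota$ on $P^\iota(r,C)$. Since $\Phi^\iota\circ g^{k_n}=A_{k_n,0}\circ\Phi^\iota$ on $M_C$, it suffices to show
$$\Phi^\iota\circ g_{\eps_n}^{k_n}(x_0,y_0)-\Phi^\iota\circ g^{k_n}(x_0,y_0)=o(1)$$
uniformly in $(x_0,y_0)\in M_C$. I will work entirely in the coordinates $\Phi_0^\iota$. By Lemma~\ref{lem:takeRbig}, all points $(X_j^{\eps_n},Y_j^{\eps_n})$, $0\le j\le k_n$, lie in $U_n(R,C)$. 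By Lemma~\ref{lem:petalsforg}, the unperturbed points $(X_j,Y_j)$ lie in $\H_R\times\D(0,C)$, with $\re X_j\ge R+j/2$. The map $\Phi^\iota\circ(\Phi_0^\iota)^{-1}=\Phi_G$ is Lipschitz there with uniform constant: by the Cauchy estimates used in the proof of Proposition~\ref{prop:incomfatou}, its derivative is within $1/2$ of the identity. So it is enough to prove $|X_{k_n}^{\eps_n}-X_{k_n}|+|Y_{k_n}^{\eps_n}-Y_{k_n}|=o(1)$.

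Set $\delta_j:=|X_j^{\eps_n}-X_j|+|Y_j^{\eps_n}-Y_j|$. I would write
$$G_{\eps_n}(Z_j^{\eps_n})-G(Z_j)=\bigl[G_{\eps_n}(Z_j^{\eps_n})-G(Z_j^{\eps_n})\bigr]+\bigl[G(Z_j^{\eps_n})-G(Z_j)\bigr],$$
where $Z=(X,Y)$ and $G=\Phi_0^\iota\circ g\circ(\Phi_0^\iota)^{-1}$. Lemma~\ref{lem:estimateGeps} bounds the first bracket by $C^\iota(1/n+|X_j^{\eps_n}|^2/n^2)$, which is $\ocal(1/n+k_n^2/n^2)$. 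For the second bracket I use expression \eqref{eq:expressionG}: $G-\id-(1,0)$ has the form $\bigl(\frac{1-a}{X}+\ocal(X^{-2}),\ocal(X^{-2})\bigr)$. Hence, via Cauchy estimates on a slightly larger domain (enlarging $C$ by one and using $(R_1)$), its differential is $\ocal(|X|^{-2})$. This gives
$$\delta_{j+1}\le \Bigl(1+\frac{K}{|X_j|^2}\Bigr)\delta_j+K\Bigl(\frac1n+\frac{k_n^2}{n^2}\Bigr),$$
with $|X_j|\ge R+j/2$ along both orbits (using \eqref{eq:contrX}). A discrete Gronwall argument then gives
$$\delta_{k_n}\le \prod_j\Bigl(1+\frac{K}{(R+j/2)^2}\Bigr)\cdot K\, k_n\Bigl(\frac1n+\frac{k_n^2}{n^2}\Bigr)\le K'\Bigl(\frac{k_n}{n}+\frac{k_n^3}{n^2}\Bigr).$$
Since $k_n\sim n^\gamma$ with $\gamma<2/3$, this is $o(1)$, because $3\gamma<2$ and $\gamma<1$. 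Here $\delta_0=0$.

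The main obstacle is justifying the Lipschitz bound for $G$ along the segment joining $Z_j$ and $Z_j^{\eps_n}$. That segment must stay in a region where \eqref{eq:expressionG} holds uniformly, i.e. in $\H_R\times\D(0,C+1)$. This follows from Lemma~\ref{lem:takeRbig}, together with the inductive smallness of $\delta_j$ and the convexity of $\H_R\times\D(0,C+1)$. The $X$-component's $\frac{1-a}{X}$ term has derivative $\ocal(X^{-2})$, so it causes no trouble. Finally, since $M_C\subset P^\iota(r,C-1)$ and $r\le r^\iota(C)$, $\Phi^\iota$ is defined at both endpoints. Applying the Lipschitz estimate for $\Phi_G$ then yields the claimed uniform $o(1)$.
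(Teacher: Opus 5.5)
Your proposal is correct, but it takes a genuinely different route from the paper.

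The paper compares $g_{\eps_n}$ and $g$ over a single step only. It estimates the one-step defect directly in the true Fatou coordinate: $\|\Phi^\iota\circ g_{\eps_n}(x,y)-\Phi^\iota(x,y)-(1,0)\|=\|\Phi_G(X_1^{\eps_n},Y_1^{\eps_n})-\Phi_G(X_1,Y_1)\|\le K_2\bigl(\frac1n+\frac{1}{|x|^2n^2}\bigr)$. This uses only the Lipschitz bound for $\Phi_G$ and Lemma~\ref{lem:estimateGeps} at the current point of the perturbed orbit. The paper then telescopes along the perturbed orbit. Since $\Phi^\iota$ conjugates $g$ \emph{exactly} to a translation, the one-step errors simply add without amplification. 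So no derivative bound on $G$ and no Gronwall product are needed, and $\frac{k_n}{n}+\frac{100k_n^3}{n^2}$ appears directly.

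You instead shadow the whole unperturbed orbit $g^{j}(x_0,y_0)$ in the approximate coordinate $\Phi_0^\iota$, where $G$ is only nearly a translation. You therefore have to control how errors propagate through $DG$. This works precisely because $DG-\id=\ocal(|X|^{-2})$ is summable along the orbit. As you observe, the $\frac{1-a}{X}$ term is harmless only because its derivative is $\ocal(X^{-2})$; a deviation of order $|X|^{-1}$ would give a Gronwall product growing polynomially in $k_n$.

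The cost of your route is this extra $C^1$ control on $G$, which needs \eqref{eq:expressionG} on a slightly larger domain. That enlargement does not follow from $(R_1)$ itself, which only concerns $P^\iota(r,C)$. It follows from $r\le r^\iota(C)\le r_2(C)\le r_0(C+1)$, which gives \eqref{eq:expressionG} on $\H_R\times\D(0,C+1)$.

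The gain is a slightly stronger by-product: $g_{\eps_n}^j(x_0,y_0)$ stays $o(1)$-close to $g^j(x_0,y_0)$ in $\Phi_0^\iota$-coordinates for every $j\le k_n$, uniformly on $M_C$. Both arguments end with the same bound $\ocal(k_n/n+k_n^3/n^2)=o(1)$.
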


\begin{proof} First note that, by Lemma~\ref{lem:takeRbig}, we have $\Phi_0^\iota\circ g_{\eps_n}^j\left(M_C\right) \subset U_n(R,C)$ for $n$ large enough and for all $0 \leq j \leq k_n$. In particular, for $n$ large enough and for all $0\le j\le k_n$ we have that $g_{\eps_n}^j\left(M_C\right) \subset  P^\iota\left((2R)^{-1},C\right)$, so $\Phi^\iota \circ g_{\eps_n}^{j}(x_0,y_0)$ is well-defined for any $(x_0,y_0)\in M_C$.
	
Take $(x_0,y_0)\in M_C$ and denote $(X_0,Y_0):=\Phi_0^\iota(x_0,y_0)$, $(X_1,Y_1):=\Phi_0^\iota\circ g(x,y)\in\H_\R\times\D(0,C)$ and $(X_1^{\eps_n},Y_1^{\eps_n}):=\Phi_0^\iota\circ g_{\eps_n}(x_0,y_0)\in\H_R\times\D(0,C)$. Set $\Phi_G:=\Phi^\iota\circ(\Phi_0^\iota)^{-1}: \mathbb{H}_{R-1} \times \D(0,C+1) \to \C^2$, so 
$$\Phi_G(X_1,Y_1)=\Phi_G(X_0,Y_0)+(1,0)$$
and hence
$$\Phi^\iota\circ g_{\eps_n}(x_0,y_0)-\Phi^\iota(x_0,y_0)-(1,0)=\Phi_G(X_1^{\eps_n},Y_1^{\eps_n})-\Phi_G(X_1,Y_1).$$
By Proposition~\ref{prop:incomfatou} and condition $(R_4)$, there exists a holomorphic map $u:  \H_{R-1} \times \D(0,C+1) \to \C^2$ such that 
$$\Phi_G(X,Y)= (X-(1-a)\log X , Y) + u(X,Y)$$
with $\|u(X,Y)\| \leq 1$ for all $(X,Y) \in \H_{R-1} \times \D(0,C+1)$. By Cauchy estimates, we have  
$$\| \partial_X u(X,Y)\|\leq 1  \quad \text{and} \quad \| \partial_Y u(X,Y)\|\leq 1 $$
for all $(X,Y) \in \H_R\times \D(0,C)$ and hence, by the mean value inequality, 
$$\|u(X_1^{\eps_n},Y_1^{\eps_n}) - u(X_1,Y_1)\| \leq |X_1^{\eps_n}-X_1|+|Y_1^{\eps_n}-Y_1|.$$
On the other hand, also by the mean value inequality, we have 
$$|\log X_1^{\eps_n}-\log X_1| < |X_1^{\eps_n}- X_1|. $$
Therefore
\begin{align*}
\|\Phi_G(X_1^{\eps_n},Y_1^{\eps_n})-\Phi_G(X_1,Y_1)\| &\leq |X_1^{\eps_n}-X_1|+|1-a|| \log X_1^{\eps_n} - \log X_1|+|Y_1^{\eps_n}-Y_1| \\
&\qquad\quad + \| u(X_1^{\eps_n},Y_1^{\eps_n}) - u(X_1,Y_1)\| \\
 &\leq (2+|1-a|)\left(|X_1^{\eps_n}-X_1|+|Y_1^{\eps_n}-Y_1|\right).
\end{align*}
Then using Lemma~\ref{lem:estimateGeps} we obtain that for $n$ large enough
$$\|\Phi^\iota\circ g_{\eps_n}(x_0,y_0)-\Phi^\iota(x_0,y_0)-(1,0)\|\le K_2\left(\frac1n+\frac{1}{|x_0|^2n^2}\right),$$
where $K_2= 2(2+|1-a|) C^\iota$. By an immediate induction, we have 
$$	\| \Phi^\iota \circ g_{\eps_n}^{k_n}(x_0,y_0) - \Phi^\iota(x_0,y_0) - (k_n,0) \|
\leq \frac{K_2 k_n}{n}+\frac{K_2}{n^2} \sum_{j=0}^{k_n-1} \frac{1}{|x_j^{\eps_n}|^2},   $$
where $(x_j^{\eps_n},y_j^{\eps_n}):=g_{\eps_n}^j(x_0,y_0)$.
By definition of $U_n(R,C)$ we have $|x_j^{\eps_n}|^{-1} < 10 k_n$ for all $0 \leq j \leq k_n$; therefore
$$	\| \Phi^\iota \circ g_{\eps_n}^{k_n}(x_0,y_0) - \Phi^\iota(x_0,y_0) - (k_n,0) \|
\leq \frac{K_2 k_n}{n}+\frac{100 K_2 k_n^3}{n^2}$$
and using the fact that $k_n/n=o(1)$ and $k_n^3/n^2=o(1)$ the Lemma follows. 
\end{proof}

Recall from Section~\ref{sec:approximate} that 
$$\Phi_{\eps_n}^\iota(x,y) = (w_{\eps_n}(x), t_{\eps_n}(x,y))$$
where
$$w_{\eps_n}(x)=\frac{1}{{\eps_n}} \arctan\left(\frac{x}{{\eps_n}}\right)+\frac\pi{2{\eps_n}} + \frac{1- a}{2} \log(x^2+{\eps_n^2}); \quad t_{\eps_n}(x,y)=\frac{y}{(x^2+\eps_n^2)^{\eta/2}}.$$

\begin{lem}\label{lem:apprfatou}
For $n$ large enough we have 
$$\Phi_{\eps_n}^\iota\circ g_{\eps_n}^{k_n}\bigr|_{M_C}=\Phi^\iota\circ g_{\eps_n}^{k_n}\bigr|_{M_C}+o(1),$$
where the convergence of the term $o(1)$ is uniform on $M_C$.
\end{lem}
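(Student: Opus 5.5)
The plan is to compare both coordinates at the point $(x,y):=g_{\eps_n}^{k_n}(x_0,y_0)$, $(x_0,y_0)\in M_C$. By Lemma~\ref{lem:takeRbig} this point lies in $P^\iota(r,C)$ and $X:=-1/x$ satisfies $\re X\ge R+k_n/2$ and $|X|<10k_n$. Hence $|X|\asymp k_n\asymp n^\gamma$, and so $|x|\asymp n^{-\gamma}$, which is much larger than $\eps_n\sim\pi/n$. On the other hand, Proposition~\ref{prop:incomfatou} gives
$$\Phi^\iota(x,y)=\left(-\tfrac1x+(1-a)\log(-x)+o(1),\ \tfrac{y}{(-x)^\eta}+o(1)\right),$$
with the $o(1)$ uniform as $\re(-1/x)\to+\infty$, and this holds uniformly here since $\re X\to\infty$ uniformly on $M_C$. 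It therefore suffices to show that
$$w_{\eps_n}(x)=-\tfrac1x+(1-a)\log(-x)+o(1), \qquad t_{\eps_n}(x,y)=\tfrac{y}{(-x)^\eta}+o(1),$$
uniformly.

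For the first coordinate, I use that $x$ lies in the left half plane with $|x|/\eps_n\to\infty$. I write $\arctan(x/\eps_n)+\pi/2=\arctan(-\eps_n/x)$, taking care with the branch: $x/\eps_n$ has large modulus and argument near $\pi$, and the identity $\arctan u+\arctan(1/u)=-\pi/2$ holds for $\re u<0$, and is checked directly from the logarithmic formula for $w_{\eps_n}$. A Taylor expansion then gives
$$\frac1{\eps_n}\arctan(-\eps_n/x)=-\frac1x+\ocal\!\left(\frac{\eps_n^2}{|x|^3}\right)=-\frac1x+\ocal\!\left(n^{3\gamma-2}\right).$$
This error is $o(1)$ because $\gamma<2/3$, and that is exactly the role of the upper bound $\gamma<2/3$. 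For the logarithmic term,
$$\tfrac{1-a}{2}\log(x^2+\eps_n^2)=(1-a)\log(-x)+\tfrac{1-a}2\log\!\left(1+\eps_n^2/x^2\right)$$
up to branches. The branches agree because $-x$ lies in the right half plane and $x^2+\eps_n^2$ is close to $x^2$. The last term is $\ocal(n^{2\gamma-2})=o(1)$.

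For the second coordinate, $(x^2+\eps_n^2)^{\eta/2}=(-x)^\eta(1+\eps_n^2/x^2)^{\eta/2}$, with the same branch discussion. Hence
$$t_{\eps_n}(x,y)=\frac{y}{(-x)^\eta}\left(1+\ocal(n^{2\gamma-2})\right),$$
and since $|y/(-x)^\eta|<C$ on $P^\iota(r,C)$, the difference is $o(1)$ uniformly.

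The main obstacle is the branch bookkeeping: we need the principal-branch formulas defining $w_{\eps_n}$ and $t_{\eps_n}$ to match those of $\Phi^\iota$ on the region where $\re x<0$ and $|x|\gg\eps_n$. I would handle this by continuity in $\eps_n\to0$ on this region, which stays away from the cut $L_{\eps_n}$. The quantitative estimates themselves only use $\eps_n\sim\pi/n$, $|X|\asymp n^\gamma$, and $\gamma<2/3$.
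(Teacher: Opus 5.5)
Your proposal is correct and follows essentially the same route as the paper. You place $g_{\eps_n}^{k_n}(x_0,y_0)$ at $|x|\asymp 1/k_n\gg\eps_n$ with $\re x<0$, use $\arctan u+\arctan(1/u)=-\pi/2$ for $\re u<0$, bound the errors by $\ocal(\eps_n^2k_n^3)$ (where $\gamma<2/3$ enters) and $\ocal(\eps_n^2k_n^2)$, and compare with the asymptotic expansion of $\Phi^\iota$ at that same point. The only difference is minor: you take $\re(-1/x)\ge R+k_n/2$ and $|1/x|<10k_n$ from the proof of Lemma~\ref{lem:takeRbig}, while the paper obtains $x\sim -1/k_n$ from Lemmas~\ref{lem:firstkn} and~\ref{lem:asympphi-1}.
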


\begin{proof}
Take $(x_0,y_0)\in M_C$ and let $(x_{k_n}^{\eps_n}, y_{k_n}^{\eps_n}):=g_{\eps_n}^{k_n}(x_0,y_0)$. By Lemmas~\ref{lem:firstkn}, \ref{lem:takeRbig} and \ref{lem:asympphi-1},
$$x_{k_n}^{\eps_n} = - \frac{1}{k_n}+  \ocal\left(\frac{\log n}{k_n^2}\right)\quad \text{ and }\quad y_{k_n}^{\eps_n} = t^\iota(x_0,y_0) \frac{1}{k_n^\eta} + o\left(\frac{1}{k_n^\rho}\right),$$
where the terms $\ocal(k_n^{-2}\log n)$ and $o(k_n^{\rho})$ are uniform on $M_C$. In particular $x_{k_n}^{\eps_n}\sim -1/k_n$ uniformly on $M_C$ and
$$\frac{\eps_n}{x_{k_n}^{{\eps_n}}} =  \frac{\eps_n}{- \frac{1}{k_n}+  \ocal\left(\frac{\log n}{{k_n^2}}\right) } = -{\eps_n k_n + \eps_n\ocal\left(\log n\right)} = o(1).$$
Since $x_{k_n}^{\eps_n}\sim -1/k_n$ uniformly on $M_C$, for $n$ large enough depending only on $M_C$ we have that $\re(x_{k_n}^{\eps_n}/\eps_n)<0$. Then, using the relation $\arctan z + \arctan\left(1/z\right) = -\pi/2$ whenever $\re z <0$ we have that
\begin{align*}
w_{\eps_n}(x_{k_n}^{\eps_n})&=\frac1{\eps_n}\arctan\left(\frac{x_{k_n}^{\eps_n}}{\eps_n}\right)+\frac{\pi}{2\eps_n}+\frac{1-a}2\log((x_{k_n}^{\eps_n})^2+\eps_n^2)\\
&=- \frac{1}{\eps_n} \arctan \frac{\eps_n}{x_{k_n}^{\eps_n}} + \frac{1 - a}{2} \log \left((x_{k_n}^{\eps_n})^2 + {\eps_n^2}\right)
\end{align*}
and therefore
\begin{align*}
w_{\eps_n}(x_{k_n}^{\eps_n}) &= - \frac{1}{\eps_n} \arctan \frac{\eps_n}{x_{k_n}^{\eps_n}} + \frac{1 - a}{2} \log \left((x_{k_n}^{\eps_n})^2 + {\eps_n^2}\right)  \\
&= - \frac{1}{\eps_n} \left(\frac{\eps_n}{x_{k_n}^{\eps_n}} +{\ocal(\eps_n^3 k_n^3)} \right)+ (1 - a) \log \left(-x_{k_n}^{\eps_n}\right)  + \frac{1-a}{2} \log \left( 1 + \frac{{\eps_n^2}}{k_n^2} \right)+ o(1) \\
&= - \frac{1}{x_{k_n}^{\eps_n}} +  (1 - a) \log \left(-x_{k_n}^{\eps_n}\right)  + o(1)  = w^\iota (x_{k_n}^{\eps_n}, y_{k_n}^{\eps_n}) + o(1).
\end{align*}
Similarly:
\begin{align*}
t_{\eps_n}(x_{k_n}^{\eps_n},y_{k_n}^{\eps_n}) &= \frac{y_{k_n}^{\eps_n}}{\left((x_{k_n}^{\eps_n})^2+ {\eps_n^2}\right)^{{\eta}/2}} = \frac{y_{k_n}^{\eps_n}}{(-x_{k_n}^{\eps_n})^\eta} \left(1+ \frac{\eps_n^2}{(x_{k_n}^{\eps_n})^2}\right)^{-{\eta}/2} \\
&= \left(t^\iota(x_{k_n}^{\eps_n},y_{k_n}^{\eps_n})+o(1)\right) (1 + o(1)) =  t^\iota(x_{k_n}^{\eps_n},y_{k_n}^{\eps_n}) + o(1)
\end{align*}
(in the last line, we used the fact that $t^\iota(x_{k_n}^{\eps_n}, y_{k_n}^{\eps_n}) = \ocal(1)$ by Lemma~\ref{lem:firstkn}).
\end{proof}

\subsection{In the eggbeater}

\begin{defi}
Let $\hat C:=C e^{1/2+\pi |\re q|}>C$ and $\tilde C:=C e^{1+\pi |\re q|}>C$ (recall that $C$ was fixed when we fixed the compact $K\subset\mathcal{B}_{U,v}$).
\end{defi}

\begin{lem}\label{lem:entering}
For all $n$ large enough,  we have $g_{\eps_n}^{k_n}(M_C) \subset \rcal_n(C)$ (recall that $\rcal_n(C)$ is the set from Definition~\ref{defi:Rn} and that for $n$ large enough $\rcal_n(C)\subset U$, where $U$ is the domain of definition of $(g_{\eps_n})$).
\end{lem}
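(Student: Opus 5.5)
We need to check the three conditions defining $\rcal_n(C)$ for $(x,y)=g_{\eps_n}^{k_n}(x_0,y_0)$ with $(x_0,y_0)\in M_C$, uniformly on $M_C$. The plan is to combine Lemma~\ref{lem:firstkn} and Lemma~\ref{lem:apprfatou}:
$$\Phi^\iota_{\eps_n}\circ g_{\eps_n}^{k_n}(x_0,y_0)=\Phi^\iota(x_0,y_0)+(k_n,0)+o(1)$$
uniformly on $M_C$. Then we read off the required bounds on $X:=w_{\eps_n}(x)$ and $Y:=t_{\eps_n}(x,y)$.

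\emph{First coordinate.} Write $\Phi^\iota(x_0,y_0)=(w^\iota(x_0,y_0),t^\iota(x_0,y_0))$. Since $M_C$ is compact and contained in $P^\iota(r,C-1)$, where $\Phi^\iota$ is holomorphic, the value $w^\iota$ is bounded on $M_C$ by a constant $K_3$ independent of $n$. Hence $X=k_n+\ocal(1)$, and so
$$\eps_nX=\eps_nk_n+\ocal(1/n).$$
Since $\eps_n\sim\pi/n$, we get $\re(\eps_nX)=\pi k_n/n\,(1+o(1))$. This lies in $[\pi k_n/(10n),\pi-\pi k_n/(10n)]$ for $n$ large, because $k_n/n\to0$. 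Also $|\im(\eps_nX)|\le|\eps_n|(|\im w^\iota|+o(1))+|\im\eps_n|\,k_n$. The first term is $\ocal(1/n)$. For the second, note that $n-\pi/\eps_n\to\sigma$ gives $\eps_n=\pi/(n-\sigma+o(1))$, so $\im\eps_n=\ocal(1/n^2)$ and $|\im\eps_n|\,k_n=o(1/n)$. The bound $|\im(\eps_nX)|\le C\pi/n$ therefore holds provided $|\im w^\iota|<C-$ (margin) on $M_C$.

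\emph{Second coordinate.} We have $Y=t^\iota(x_0,y_0)+o(1)$. By $(R_4)$ and the definition of $M_C$, $t^\iota=y_0/(-x_0)^\eta+\ocal(1/C^2)$ with $1/(C-1)\le|y_0/(-x_0)^\eta|\le C-1$. Hence $|t^\iota|\in[1/(C-1)-1/C^2,\,C-1+1/C^2]$, which is strictly inside $(1/C,C)$ since $C>2$. The $o(1)$ error therefore keeps $1/C<|Y|<C$ for $n$ large.

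\emph{Expected obstacle.} The step I expect to be the real difficulty is the bound on $|\im(\eps_nX)|$. It requires $\sup_{M_C}|\im w^\iota|<C$, but the fixed constant $C$ was chosen only from the $y$-direction. I would try to close this gap in one of two ways. The first is to argue via $(R_4)$: $w^\iota=-1/x_0-(1-a)\log(-x_0)+\ocal(1/C^2)$ on $L$. The second, if that is insufficient, is to enlarge $C$ at the outset (it depends only on $K$) so that it also dominates $\sup_{M_C}|\im w^\iota|$, noting that $M_C$ has the same $x$-projection as $L$. Failing both, I would replace $n_0$ by $n_0+j$, pushing $L$ deeper into the petal; this does not change $\im w^\iota$, which is invariant, so the enlargement of $C$ is the route I would rely on.
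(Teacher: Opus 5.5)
Your argument is the paper's own: combine Lemmas~\ref{lem:firstkn} and~\ref{lem:apprfatou} to get $\Phi^\iota_{\eps_n}\circ g^{k_n}_{\eps_n}=\Phi^\iota+(k_n,0)+o(1)$ uniformly on $M_C$, then read off the three conditions, with the $t$-bound obtained from $(R_4)$ exactly as you do. Your use of $n-\pi/\eps_n\to\sigma$ to get $\im\eps_n=\ocal(n^{-2})$ is in fact more careful than the paper's appeal to $\eps_n=\pi/n+o(1/n)$.

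The obstacle you flag is genuine. The paper only writes $\eps_n w_{\eps_n}(x^{\eps_n}_{k_n})=\pi k_n/n+\ocal(\eps_n)$ and then asserts $|\im(\eps_n w_{\eps_n}(x^{\eps_n}_{k_n}))|\le C\pi/n$, although the $\ocal$-constant depends on $M_C$. Enlarging $C$ at the outset fixes this without circularity, for two reasons. First, applying $(R_4)$ at two points of the same vertical fiber (one in $M_C$, one in $L$) gives $\sup_{M_C}|\im w^\iota|\le\sup_L|\im w^\iota|+2/C^2$. Second, $g$-invariance of $\im w^\iota$ gives $\sup_L|\im w^\iota|=\sup_K|\im w^\iota|$, which is independent of $C$, $r$ and $n_0$.
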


\begin{proof}
Let $(x_0,y_0) \in M_C$ and  $(x_{k_n}^{\eps_n}, y_{k_n}^{\eps_n}):=g_{\eps_n}^{k_n}(x_0,y_0)$. By Lemmas~\ref{lem:firstkn} and \ref{lem:apprfatou}, we have
$$\Phi_{\eps_n}^\iota\circ g_{\eps_n}^{k_n}(x_0,y_0)   = \Phi^\iota(x_0,y_0) +(k_n,0)+ o(1),$$
where the convergence of the term $o(1)$ is uniform on $M_C$. Therefore
$$t_{\eps_n}(x_{k_n}^{\eps_n},y_{k_n}^{\eps_n}) = t^\iota(x_0,y_0) + o(1).$$
Since $(C-1)^{-1}\le |y_0(-x_0)^{-\eta}|\le C-1$ by definition of $M_C$ and $|t^\iota(x_0,y_0)-y_0(-x_0)^{-\eta}|<1/C^{2}<1/4$ by condition $(R_4)$, we have that $(C-1)^{-1}-C^{-2}\le |t^\iota(x_0,y_0)|\le C-1/4$ and then $C^{-1}<|t_{\eps_n}(x_{k_n}^{\eps_n}, y_{k_n}^{\eps_n})| < C$ for all $n$ large enough (depending only on $M_C$ and not on the choice of $(x_0,y_0)$). Similarly
\begin{align*}
w_{\eps_n}(x_{k_n}^{\eps_n}) &= w^\iota(x_0,y_0) + k_n + o(1) 
\end{align*}
and therefore, since $\eps_n=\pi/n+o(1/n)$,
$$\eps_n w_{\eps_n}(x_{k_n}^{\eps_n}) =\eps_n k_n + \ocal(\eps_n) = \frac{\pi k_n}{n} + \ocal\left(\eps_n \right). $$
where the implicit constants in the $\ocal$ terms only depend on $M_C$ and not on the choice of $(x_0,y_0)$.
In particular, for all $n$ large enough (depending only on $M_C$)
$$\re(\eps_n w_{\eps_n}(x_{k_n}^{\eps_n})) \in \left[\frac{\pi k_n}{10 n}, \pi - \frac{\pi k_n}{10n}\right] \quad  \text{and} \quad |\im (\eps_n w_{\eps_n}(x_{k_n}^{\eps_n})) | \leq C \frac{\pi}n,$$
hence $(x_{k_n}^{\eps_n}, y_{k_n}^{\eps_n}) \in \rcal_n(C)$ for $n$ large enough.
\end{proof}

Recall that $\Phi_{\eps_n}^o:=\Phi_{\eps_n}^\iota - \left(\pi/\eps_n,0 \right)$.

\begin{prop}\label{prop:passing}
For $n$ large enough we have that $g_{\eps_n}^{n-k_n}(M_C)\subset \mathcal{R}_n(\hat C)$  and
$$\Phi_{\eps_n}^o\circ g_{\eps_n}^{n-k_n}\bigr|_{M_C}=A_{\sigma-2k_n,q}\circ\Phi_{\eps_n}^\iota\circ g_{\eps_n}^{k_n}\bigr|_{M_C}+o(1),$$
where the convergence of the term $o(1)$ is uniform on $M_C$.
\end{prop}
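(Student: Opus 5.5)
The plan is to iterate the one-step estimate of Proposition~\ref{prop:A-estimate} along the orbit, for $j$ from $k_n$ to $n-k_n-1$, while keeping the orbit inside the region where that estimate holds, namely $\rcal_n(\hat C)$. (Proposition~\ref{prop:A-estimate} is stated for $\rcal_n(C)$ with arbitrary $C$, so it applies with $\hat C$.) Fix $(x_0,y_0)\in M_C$, set $(x_j,y_j):=g_{\eps_n}^j(x_0,y_0)$ and $(W_j,T_j):=\Phi_{\eps_n}^\iota(x_j,y_j)$. By Lemma~\ref{lem:entering}, $(x_{k_n},y_{k_n})\in\rcal_n(C)\subset\rcal_n(\hat C)$.

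Let $J_n$ be the first index $j\ge k_n$ at which $(x_j,y_j)\notin\rcal_n(\hat C)$; I will show $J_n>n-k_n$ by a continuity argument in the spirit of Lemma~\ref{lem:takeRbig}. For $k_n\le j<J_n$, Proposition~\ref{prop:A-estimate} gives
\[
W_{j+1}=W_j+1+\delta_j,\qquad \log T_{j+1}=\log T_j+q\eps_n+\delta'_j,
\]
where $|\delta_j|,|\delta_j'|\le \theta_n/n$ and $\theta_n\to0$ uniformly. Summing over at most $n$ steps gives
\[
W_j=W_{k_n}+(j-k_n)+o(1),\qquad T_j=T_{k_n}e^{(j-k_n)q\eps_n+o(1)}.
\]
Two consequences follow.

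\emph{The real part stays in range.} We have $\eps_nW_{k_n}=\pi k_n/n+\ocal(1/n)$, hence $\re(\eps_nW_j)\in[\pi k_n/n-\ocal(1/n),\ \pi(n-k_n)/n+\ocal(1/n)]$. This lies well inside $[\pi k_n/(10n),\ \pi-\pi k_n/(10n)]$.

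\emph{The imaginary part is bounded by $\pi/n$ times a constant.} Indeed $\im(\eps_n W_j)=\im(\eps_nW_{k_n})+(j-k_n)\im\eps_n+o(1/n)$. From $n-\pi/\eps_n\to\sigma$ we get $\eps_n=\pi/(n-\sigma+o(1))$, so $\im\eps_n=\ocal(1/n^2)$ and the drift over $n$ steps is $\ocal(1/n)$. Both terms are therefore $\ocal(1/n)$ with constants controlled by $C$ and $\sigma$. I need this within $\hat C\pi/n$. This step is delicate: one may need $\hat C$ large relative to $|\im\sigma|$, or note that the membership in Lemma~\ref{lem:entering} already gives $|\im(\eps_nW_{k_n})|\le C\pi/n$ with room to spare. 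I expect to use $\im(\eps_n W_{k_n})=\eps_n\im(w^\iota(x_0,y_0))+o(1/n)$ and to adjust constants accordingly.

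\emph{The modulus of $T_j$.} We have $|e^{(j-k_n)q\eps_n}|\le e^{\pi|\re q|+o(1)}$, so $|T_j|\in(\hat C^{-1},\hat C)$, using $C^{-1}<|T_{k_n}|<C$ and the extra factor $e^{1/2}$ built into $\hat C$.

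Hence $J_n>n-k_n$. This also relies on the fact that when $(x_j,y_j)\in\rcal_n(\hat C)$, the next point $g_{\eps_n}(x_j,y_j)$ still lies in the domain $(\C\setminus L_{\eps_n})\times\C$ where $\Phi^\iota_{\eps_n}$ is defined. This holds since the step estimate moves $\eps_nW$ by only $\ocal(1/n)$.

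Finally, take $j=n-k_n$. Then $W_{n-k_n}=W_{k_n}+n-2k_n+o(1)$, so
\[
W_{n-k_n}-\frac{\pi}{\eps_n}=W_{k_n}+\Bigl(n-\frac{\pi}{\eps_n}\Bigr)-2k_n+o(1)=W_{k_n}+\sigma-2k_n+o(1).
\]
For the second coordinate, $(n-2k_n)q\eps_n=q\pi\, n/(n-\sigma)-2k_nq\eps_n+o(1)=\pi q+o(1)$, since $k_n\eps_n\to0$. Thus $T_{n-k_n}=e^{\pi q}T_{k_n}+o(1)$, which uses that $T_{k_n}$ is bounded. Together these give exactly
\[
\Phi^o_{\eps_n}\circ g^{n-k_n}_{\eps_n}=A_{\sigma-2k_n,q}\circ\Phi^\iota_{\eps_n}\circ g^{k_n}_{\eps_n}+o(1),
\]
uniformly on $M_C$. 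Uniformity comes from the uniformity of the $o(1/n)$ terms in Proposition~\ref{prop:A-estimate} over $\rcal_n(\hat C)$.

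The main obstacle is the bootstrap keeping the orbit in $\rcal_n(\hat C)$. The step requiring the most care is the imaginary-part bound, where the accumulated drift from $\im\eps_n$ and the initial $\ocal(1/n)$ offset must fit within $\hat C\pi/n$.
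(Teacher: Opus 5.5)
Your proposal is correct and follows essentially the paper's own argument: an induction (your first-exit bootstrap) keeping the orbit in $\rcal_n(\hat C)$ via Proposition~\ref{prop:A-estimate}, telescoping the $A_{\eps_n}$ and $B_{\eps_n}$ errors over at most $n$ steps, and concluding with $n-\pi/\eps_n\to\sigma$, $n\eps_n\to\pi$ and $k_n\eps_n\to 0$. Your caution about the imaginary part is justified and sharper than the paper's one-line appeal to $\eps_n=\pi/n+o(1)$: since $\im\eps_n=\pi(\im\sigma+o(1))/n^2$, the accumulated drift is about $\pi|\im\sigma|/n$, so the right fix is your first option (take $C$, hence $\hat C$, large in terms of $K$ and $\sigma$, which is allowed because $C$ is only constrained from below), not the ``room to spare'' remark, which is not automatic.
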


\begin{proof}
Take $(x_0,y_0)\in M_C$ and let $(x_j^{\eps_n},y_j^{\eps_n}):=g_{\eps_n}^j(x_0,y_0)$. Let us start by proving that for $n$ large enough (depending only on $M_C$) and for $0\le j\le n- 2k_n$ we have 
$(x_{k_n+j}^{\eps_n},y_{k_n+j}^{\eps_n}) \in \rcal_n(\hat C)$  and 
\begin{equation*}
w_{\eps_n} (x_{k_n+j}^{\eps_n}) =  w_{\eps_n} (x_{k_n}^{\eps_n})+ j +\sum_{k=0}^{j-1} A_{\eps_n}(x_{k_n+k}^{\eps_n},y_{k_n+k}^{\eps_n})
\end{equation*}
and 
\begin{equation*}
t_{\eps_n}(x_{k_n+j}^{\eps_n}, y_{k_n+j}^{\eps_n}) =  t_{\eps_n}(x_{k_n}^{\eps_n}, y_{k_n}^{\eps_n})
\exp\left( \sum_{k=0}^{j-1} B_{\eps_n}(x_{k_n+k}^{\eps_n},y_{k_n+k}^{\eps_n}) \right)
\end{equation*}
where, as in subsection~\ref{subsec:estimationerror},
$$A_{\eps_n}(x,y):=w_{\eps_n}(x_1)- w_{\eps_n}(x) -1 \quad \text{ and }\quad B_{\eps_n}(x,y):=\log\frac{t_{\eps_n}(x_1,y_1)}{t_{\eps_n}(x,y)},$$ 
with $(x_1,y_1):=g_{\eps_n}(x,y)$. 

We argue by induction on $j$. Indeed, the assertions hold for $j=0$ by Lemma \ref{lem:entering} and the fact that $\rcal_n(C) \subset \rcal_n(\hat C)$. Moreover, if they hold for some $0\le j\le n-2k_n-1$, then thanks to Proposition~\ref{prop:A-estimate} (applied on $\rcal_n(\hat C)$), we have
\begin{align*}
w_{\eps_n} (x_{k_n+j+1}^{\eps_n}) 
&= w_{\eps_n}(x_{k_n+j}^{\eps_n})+1+ A_{\eps_n}(x_{k_n+j}^{\eps_n},y_{k_n+j}^{\eps_n})\\
&=w_{\eps_n} (x_{k_n}^{\eps_n})+ j+1 + \sum_{k=0}^{j} A_{\eps_n}(x_{k_n+k}^{\eps_n},y_{k_n+k}^{\eps_n})\\
&=w_{\eps_n} (x_{k_n}^{\eps_n})+ j+1 + o(1) \\
&=k_n + j +1 + \ocal(1),
\end{align*}
where the terms $o(1)$ and $\ocal(1)$ are uniform, and using in the last line that $w_{\eps_n}(x_{k_n}^{\eps_n})= w^\iota(x_0,y_0) + k_n + o(1)$ by Lemmas \ref{lem:firstkn} and \ref{lem:apprfatou}. Moreover,
\begin{align*}
t_{\eps_n} (x_{k_n+j+1}^{\eps_n}, y_{k_n+j+1}^{\eps_n}) 
&= t_{\eps_n} (x_{k_n+j}^{\eps_n}, y_{k_n+j}^{\eps_n}) e^{B_{\eps_n}(x_{k_n+j}^{\eps_n},y_{k_n+j}^{\eps_n})}\\
&=t_{\eps_n} (x_{k_n}^{\eps_n}, y_{k_n}^{\eps_n})\exp\left(\sum_{k=0}^{j}  B_{\eps_n}(x_{k_n+k}^{\eps_n},y_{k_n+k}^{\eps_n})\right)\\		
&=t_{\eps_n} (x_{k_n}^{\eps_n}, y_{k_n}^{\eps_n})e^{qj\eps_n + o(1)},
\end{align*}
where the term $o(1)$ is uniform on $M_C$. Therefore, using the fact that $n\left(\eps_n-\pi/n\right)=\ocal(\eps_n)$ we have, for $n$ big enough depending only on $M_C$, $\eps_n w_{\eps_n}(x_{k_n+j+1}^{\eps_n}) = \frac{\pi}{n} (k_n+j+1+\ocal(1))$, 
hence
\[ \frac{\pi k_n}{10n}\le\re(\eps_n w_{\eps_n}(x_{k_n+j+1}^{\eps_n})) \leq \pi-\frac{\pi k_n}{10n}. \]
And since $\eps_n w_{\eps_n}(x_{k_n+j+1}^{\eps_n})=\eps_n\left[w_{\eps_n} (x_{k_n}^{\eps_n})+ j+1 + o(1)\right]$ and $|\im(\eps_n w_{\eps_n} (x_{k_n}^{\eps_n}))|\le C\pi/n$ by Lemma~\ref{lem:entering}, for $n$ large enough depending only on $M_C$ we get, using the fact that $\eps_n=\pi/n+o(1)$,
\[ |\im(\eps_n w_{\eps_n} (x_{k_n+j+1}^{\eps_n}))|< \hat C\frac{\pi}{n}. \]
Moreover, since
\[ |t_{\eps_n} (x_{k_n+j+1}^{\eps_n}, y_{k_n+j+1}^{\eps_n})| = |t_{\eps_n} (x_{k_n}^{\eps_n}, y_{k_n}^{\eps_n})e^{qj\eps_n + o(1)}|, \]
and 
\[\frac{1}{C} < | t_{\eps_n}(x_{k_n}^{\eps_n}, y_{k_n}^{\eps_n}) | <  C
\]
for $n$ large enough, by Lemma~\ref{lem:entering}, we get that
\[ \frac{1}{\hat C} < | t_{\eps_n}(x_{k_n+j+1}^{\eps_n}, y_{k_n+j+1}^{\eps_n}) | < \hat C \]
for $n$ large enough depending only on $M_C$, and the statement is proved. 

Taking $j= n-2k_n$ we obtain that $(x_{n-k_n}^{\eps_n},y_{n-k_n}^{\eps_n})\in\mathcal{R}_n(\hat C)$ for $n$ large enough and
\begin{equation*}
w_{\eps_n} (x_{n-k_n}^{\eps_n}) =  w_{\eps_n} (x_{k_n}^{\eps_n})+ n-2k_n +\sum_{k=0}^{n-2k_n-1} A_{\eps_n}(x_{k_n+k}^{\eps_n},y_{k_n+k}^{\eps_n})
\end{equation*}
and 
\begin{equation*}
t_{\eps_n}(x_{n-k_n}^{\eps_n}, y_{n-k_n}^{\eps_n}) =  t_{\eps_n}(x_{k_n}^{\eps_n}, y_{k_n}^{\eps_n})
\exp\left( \sum_{k=0}^{n-2k_n-1} B_{\eps_n}(x_{k_n+k}^{\eps_n},y_{k_n+k}^{\eps_n}) \right)
\end{equation*}
so using again Proposition~\ref{prop:A-estimate} and the fact that $\eps_n=\pi/n+o(1/n)$ we get
\[\left(w_{\eps_n} (x_{n-k_n}^{\eps_n}), t_{\eps_n} (x_{n-k_n}^{\eps_n}, y_{n-k_n}^{\eps_n})\right) = \left(w_{\eps_n} (x_{k_n}^{\eps_n})+ n -2k_n+ o(1) , e^{\pi q + o(1)} t_{\eps_n} (x_{k_n}^{\eps_n}, y_{k_n}^{\eps_n})\right),	\]	
where the terms $o(1)$ are uniform on $M_C$. Finally, since $\Phi_{\eps_n}^o(x,y) = \Phi_{\eps_n}^\iota(x,y) - (\pi/\eps_n, 0)$ we obtain, using the fact that $n -\pi/\eps_n  = \sigma+o(1)$, 
$$\Phi_{\eps_n}^o \circ g_{\eps_n}^{n-k_n}(x_0,y_0) = \left(w_{\eps_n} (x_{k_n}^{\eps_n})+ \sigma -2k_n, e^{\pi q} t_{\eps_n} (x_{k_n}^{\eps_n}, y_{k_n}^{\eps_n})\right)+ o(1),$$
where the term $o(1)$ is uniform on $M_C$.
\end{proof}

\subsection{After the eggbeater}

We will now estimate the orbit of $g_{\eps_n}$ as it gets away from the origin in the outgoing petal. Recall that $\tilde C:=Ce^{1+\pi|\re q|}$ and
$$U_n^o(R,\tilde C):=\left\{(X,Y) \in -\H_R\times \D(0, \tilde C): |X|<10 k_n \right\} .$$

\begin{lem}[Compare to Lemma~\ref{lem:apprfatou}]\label{prop:afteregg}
For $n$ large enough $\Phi_0^o\circ g_{\eps_n}^{n-k_n}(M_C)\subset U_n^o(R,\tilde C)$ (so in particular $g_{\eps_n}^{n-k_n}(M_C)\subset  P^o(r,\tilde C)$) and
$$\Phi_{\eps_n}^o\circ g_{\eps_n}^{n-k_n}\bigr|_{M_C}=\Phi^o\circ g_{\eps_n}^{n-k_n}\bigr|_{M_C}+o(1),$$
where the convergence of the term $o(1)$ is uniform on $M_C$.
\end{lem}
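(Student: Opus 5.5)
We mirror the proof of Lemma~\ref{lem:apprfatou}, now with $\Phi_{\eps_n}^o$ and $\Phi^o$, working with the point $(x,y):=(x_{n-k_n}^{\eps_n},y_{n-k_n}^{\eps_n})$. The plan has three steps: locate $(x,y)$ near the origin, check that it lies in $U_n^o(R,\tilde C)$, and compare $\Phi_{\eps_n}^o$ with $\Phi^o$ at this point.

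\textbf{Step 1: locating the point.} By Proposition~\ref{prop:passing}, $(x,y)\in\rcal_n(\hat C)$. Write $(X',Y'):=\Phi_{\eps_n}^\iota(x,y)$. The same proposition together with Lemma~\ref{lem:entering} gives
\[
X'=\frac{\pi}{\eps_n}-k_n+\ocal(1), \qquad \hat C^{-1}<|Y'|<\hat C .
\]
Set $X^o:=X'-\pi/\eps_n=-k_n+\ocal(1)$. By Proposition~\ref{prop:phiepsinv}, $x=-\eps_n\cot(\eps_nX')+\ocal(\log n/n^{2\gamma})$. Since $\eps_nX'=\pi+\eps_nX^o$ and $\cot$ is $\pi$-periodic,
\[
x=-\eps_n\cot(\eps_nX^o)+\ldots=-\frac{1}{X^o}\bigl(1+\ocal(k_n^2/n^2)\bigr)+\ocal\!\left(\frac{\log n}{n^{2\gamma}}\right).
\]
Hence $x=\frac{1}{k_n}\bigl(1+\ocal(\log n/k_n)\bigr)$. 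In particular $\re x>0$ and $\eps_n/x=\ocal(k_n/n)=o(1)$ uniformly on $M_C$.

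\textbf{Step 2: membership in $U_n^o(R,\tilde C)$.} We get $-1/x=-k_n+\ocal(\log n)$, so $|{-1/x}|<10k_n$ and $\re(-1/x)<-R$ for $n$ large. For the second coordinate,
\[
\frac{y}{x^\eta}=Y'\,\frac{(x^2+\eps_n^2)^{\eta/2}}{x^\eta}=Y'\bigl(1+\eps_n^2/x^2\bigr)^{\eta/2}=Y'(1+o(1)).
\]
Its modulus is therefore less than $\hat C(1+o(1))<\tilde C$, because $\tilde C/\hat C=e^{1/2}>1$. This proves the first claim.

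\textbf{Step 3: comparing the coordinates.} First, $\re(x/\eps_n)>0$, so the identity $\arctan z+\arctan(1/z)=\pi/2$ holds. This gives
\[
w_{\eps_n}(x)-\frac{\pi}{\eps_n}=-\frac{1}{\eps_n}\arctan\frac{\eps_n}{x}+\frac{1-a}{2}\log(x^2+\eps_n^2)
=-\frac1x+\ocal\bigl(\eps_n^2/x^3\bigr)+(1-a)\log x+\frac{1-a}{2}\log\bigl(1+\eps_n^2/x^2\bigr).
\]
The error terms satisfy $\eps_n^2k_n^3=\ocal(n^{3\gamma-2})=o(1)$, since $\gamma<2/3$, and $\eps_n^2k_n^2=o(1)$. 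Proposition~\ref{prop:outgofatou} gives $\Phi^o=(-1/x+(1-a)\log x+o(1),\,y/x^\eta+o(1))$, with the $o(1)$ uniform as $\re(-1/x)\to-\infty$ in $P^o(r,\tilde C)$; this applies here because $r\le r^o(\tilde C)$ by $(R_2)$ and $\re(-1/x)\sim-k_n\to-\infty$. Therefore the first coordinates of $\Phi_{\eps_n}^o(x,y)$ and $\Phi^o(x,y)$ differ by $o(1)$. For the second coordinates,
\[
t_{\eps_n}(x,y)=\frac{y}{x^\eta}(1+o(1))=\frac{y}{x^\eta}+o(1),
\]
using that $y/x^\eta$ is bounded by Step~2. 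Combined with the second coordinate of $\Phi^o$, this proves the lemma.

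\textbf{Main obstacle.} The delicate step is Step~1: converting the $\rcal_n(\hat C)$ information into the asymptotic $x\sim 1/k_n$ with errors small enough. The correction $\ocal(\log n)$ in $X'$ produces a relative error $\ocal(\log n/k_n)$ in $x$, which must be harmless in $\log x$ and in $\eps_n^2/x^3$. It is, because these contributions are $o(1)$ for $\gamma\in(1/2,2/3)$. One must also track the branch of $\log$: it changes from $\log(-x)$ on the incoming side to $\log x$ on the outgoing side, and the periodicity shift by $\pi/\eps_n$ has to be handled correctly.
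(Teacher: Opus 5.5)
Your proposal is correct and follows essentially the same route as the paper. You locate $x_{n-k_n}^{\eps_n}\sim 1/k_n$ via Proposition~\ref{prop:passing} and the inversion of $\Phi_{\eps_n}^\iota$. You then get membership in $U_n^o(R,\tilde C)$ from $|t_{\eps_n}|<\hat C$ and $\tilde C=\hat C e^{1/2}$. Finally, you compare the coordinates using $\arctan z+\arctan(1/z)=\pi/2$, the bound $k_n^3/n^2=o(1)$, and the asymptotics of $\Phi^o$ from Proposition~\ref{prop:outgofatou}. The only nit is a citation: the additive form $x=-\eps_n\cot(\eps_n X')+\ocal(\log n/n^{2\gamma})$ is Lemma~\ref{lem:x2e2} applied on $\rcal_n(\hat C)$, not Proposition~\ref{prop:phiepsinv} itself.
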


\begin{proof}
Take $(x_0,y_0)\in M_C$ and let $(x_{n-k_n}^{\eps_n},y_{n-k_n}^{\eps_n}):=g_{\eps_n}^{n-k_n}(x_0,y_0)$. By Proposition~\ref{prop:passing}, we have
that $(x_{n-k_n}^{\eps_n},y_{n-k_n}^{\eps_n})\in\mathcal{R}_n(\hat C)$ and
\begin{align*}
w_{\eps_n}(x_{n-k_n}^{\eps_n})-  \frac{\pi}{\eps_n} &= w_{\eps_n}(x_{k_n}^{\eps_n})+\sigma-2k_n+o(1)
\end{align*}
so using Lemmas \ref{lem:firstkn} and \ref{lem:apprfatou} we get
\begin{align*}
w_{\eps_n}(x_{n-k_n}^{\eps_n})-  \frac{\pi}{\eps_n} &=w^\iota(x_0,y_0)+\sigma- k_n + o(1)=-k_n+\ocal(1)
\end{align*}
for $n$ large enough, where the term $\ocal(1)$ is uniform on $M_C$. By Lemma~\ref{lem:x2e2} (applied on $\mathcal{R}_n(\hat C)$) we have
$$x_{n-k_n}^{\eps_n} = -\eps_n \cot(\eps_n 	w_{\eps_n}(x_{n-k_n}^{\eps_n}) ) + \ocal\left(\frac{\log n}{n^{2\gamma}}\right).$$
Therefore for $n$ large enough
$$x_{n-k_n}^{\eps_n}= -\eps_n \cot\left(\pi-\eps_n k_n +\ocal(\eps_n)\right)+ \ocal\left(\frac{\log n}{n^{2\gamma}}\right)\sim \frac{1}{k_n}$$
uniformly on $M_C$. Therefore, for $n$ large enough depending only on $M_C$ we have that 
$$\re\left(\frac{-1}{x_{n-k_n}^{\eps_n}}\right)<-R, \quad\text{and}\quad \left|\frac1{x_{n-k_n}^{\eps_n}}\right|<10k_n.$$ 
Moreover, since 
\begin{align*}
t_{\eps_n}(x_{n-k_n}^{\eps_n},y_{n-k_n}^{\eps_n}) = \frac{y_{n-k_n}^{\eps_n}}{((x_{n-k_n}^{\eps_n})^2+{\eps_n^2})^{\eta/2}} =\frac{y_{n-k_n}^{\eps_n}}{(x_{n-k_n}^{\eps_n})^\eta} \left(1+\frac{\eps_n^2}{(x_{n-k_n}^{\eps_n})^2}\right)^{-\frac{\eta}2} 
\end{align*}
we obtain, using the fact that $x_{n-k_n}^{\eps_n}\sim 1/k_n$ and the definition of $\mathcal{R}_n(\hat C)$, that 
$$\left|\frac{y_{n-k_n}^{\eps_n}}{(x_{n-k_n}^{\eps_n})^{\eta}}\right|<\hat C e^{1/2}=\tilde C$$ 
for $n$ large enough depending only on $M_C$, so $\Phi_0^o(x_{n-k_n}^{\eps_n},y_{n-k_n}^{\eps_n})\in U_n^o(R,\tilde C)$ and in particular $(x_{n-k_n}^{\eps_n},y_{n-k_n}^{\eps_n})\in P^o(r,\tilde C)$ for $n$ large enough.

Now, if we denote $(w^o(x,y), t^o(x,y)):=\Phi^o(x,y)$, we want to prove that
\begin{equation*}
w_{\eps_n}(x_{n-k_n}^{\eps_n})- \frac{\pi}{\eps_n} = w^o(x_{n-k_n}^{\eps_n}, y_{n-k_n}^{\eps_n}) + o(1)
\end{equation*}
and
\begin{equation*}
t_{\eps_n}(x_{n-k_n}^{\eps_n}, y_{n-k_n}^{\eps_n}) = t^o(x_{n-k_n}^{\eps_n}, y_{n-k_n}^{\eps_n}) + o(1).
\end{equation*} 
Since $x_{n-k_n}^{\eps_n}\sim 1/k_n$ uniformly on $M_C$, for $n$ large enough depending only on $M_C$ we have that $\re(x_{n-k_n}^{\eps_n}/\eps_n)>0$. Then, using the relation $\arctan z + \arctan\left(1/z\right) = \pi/2$ whenever $\re z >0$ and the definition of $w_{\eps_n}$ we have that for $n$ large enough
\begin{align*}
w_{\eps_n}(x_{n-k_n}^{\eps_n})- \frac{\pi}{\eps_n}&=\frac{1}{\eps_n} \arctan\left(\frac{x_{n-k_n}^{\eps_n}}{\eps_n}\right) - \frac{\pi}{2\eps_n} + \frac{1-a}{2} \log((x_{n-k_n}^{\eps_n})^2+{\eps_n^2}) \\
&=- \frac{1}{\eps_n} \arctan \left(\frac{\eps_n}{x_{n-k_n}^{\eps_n}}\right) + \frac{1-a}{2} \log((x_{n-k_n}^{\eps_n})^2+{\eps_n^2}).
\end{align*}
Since $\frac{\eps_n}{x_{n-k_n}^{\eps_n}} = o(1)$ because $x_{n-k_n}^{\eps_n}\sim 1/k_n$, we get
\begin{align*}
w_{\eps_n}(x_{n-k_n}^{\eps_n}) - \frac{\pi}{\eps_n} &= - \frac{1}{\eps_n} \arctan \left(\frac{\eps_n}{x_{n-k_n}^{\eps_n}}\right) + \frac{1-a}{2} \log((x_{n-k_n}^{\eps_n})^2+{\eps_n^2})\\
&=-\frac{1}{x_{n-k_n}^{\eps_n}} + (1-a) \log(x_{n-k_n}^{\eps_n})  + o(1) \\
&=w^o(x_{n-k_n}^{\eps_n},y_{n-k_n}^{\eps_n}) + o(1).
\end{align*}
using in the second line that $k_n^3/n^2=o(1)$	and in the last line the asymptotic expansion of $w^o$. Similarly, by the computation above,
\begin{align*}
t_{\eps_n}(x_{n-k_n}^{\eps_n},y_{n-k_n}^{\eps_n}) =\frac{y_{n-k_n}^{\eps_n}}{(x_{n-k_n}^{\eps_n})^\eta} \left(1+\frac{\eps_n^2}{(x_{n-k_n}^{\eps_n})^2}\right)^{-\frac{\eta}2} =t^o(x_{n-k_n}^{\eps_n}, y_{n-k_n}^{\eps_n}) + o(1)
\end{align*}
using in the last line the asymptotic expansion of $t^o$ and the fact that $t^o(x_{n-k_n}^{\eps_n}, y_{n-k_n}^{\eps_n})=\ocal(1)$ because $(x_{n-k_n}^{\eps_n}, y_{n-k_n}^{\eps_n})\in\rcal_n(\hat C)$.
\end{proof}

\begin{lem}[Compare to Lemma~\ref{lem:firstkn}]\label{lem:takeRbigo}
There exists $N_1>0$ such that for $n$ large enough $\Phi_0^o\circ g_{\eps_n}^{n-N_1}(M_C)\subset U_n^o(r, C)$ (so in particular $g_{\eps_n}^{n-N_1}(M_C)\subset P^o(r,\widetilde  C)$) and
$$\Phi^o\circ g_{\eps_n}^{n-N_1}\bigr|_{M_C}=A_{k_n-N_1,0}\circ\Phi^o\circ g_{\eps_n}^{n-k_n}\bigr|_{M_C}+o(1),$$
where the convergence of the term $o(1)$ is uniform on $M_C$. 
\end{lem}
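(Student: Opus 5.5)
The plan mirrors the proofs of Lemmas~\ref{lem:takeRbig} and \ref{lem:firstkn}, now in the outgoing petal with the coordinates $\Phi_0^o$ and the estimates \eqref{eq:diffX1epsXo}. The argument starts at the point $(x_{n-k_n}^{\eps_n},y_{n-k_n}^{\eps_n})$. By Lemma~\ref{prop:afteregg} its image $(X^o_{n-k_n},Y^o_{n-k_n})$ under $\Phi_0^o$ lies in $U_n^o(R,\tilde C)$. More precisely, its proof gives $x_{n-k_n}^{\eps_n}\sim 1/k_n$, so $X^o_{n-k_n}=-k_n+\ocal(\log n)$, and the second coordinate satisfies $|Y^o_{n-k_n}|<\hat C e^{1/2}(1+o(1))$. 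To keep some margin in the $Y$-direction, I would sharpen this slightly: by Proposition~\ref{prop:passing}, $|t_{\eps_n}|\le C e^{\pi|\re q|}(1+o(1))$, hence $|Y^o_{n-k_n}|\le Ce^{\pi|\re q|}(1+o(1))<\tilde C-1$.

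I then iterate $j$ further times, for $0\le j\le k_n-N_1$, and define $K_n$ as the first exit time from $U_n^o(R,\tilde C)$. Before exit, \eqref{eq:diffX1epsXo} gives $|X^o_{n-k_n+j}-X^o_{n-k_n}-j|\le j/2$. This bound alone is too weak near the end of the orbit, where $\re X$ should approach $-N_1$. So I would use the finer estimate $|X_1^{\eps_n,o}-X_1^o|\le C^o(1/n+|X|^2/n^2)$ together with $X_1^o=X+1+\ocal(1/|X|)$. This yields the sharper bound
\[
X^o_{n-k_n+j}=X^o_{n-k_n}+j+\ocal\Bigl(\log k_n+\tfrac{k_n^3}{n^2}\Bigr).
\]
The better route is to do this inside $\Phi^o$ itself: exactly as in Lemma~\ref{lem:firstkn}, via $(R_4)$, Cauchy estimates and Lemma~\ref{lem:estimateGepso}, one gets
\[
\|\Phi^o\circ g_{\eps_n}(p)-\Phi^o(p)-(1,0)\|\le K_2\Bigl(\tfrac1n+\tfrac{|X|^2}{n^2}\Bigr).
\]
Summing over at most $k_n$ steps gives a total error $\ocal(k_n/n+k_n^3/n^2)=o(1)$. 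Hence $\Phi^o(g^{n-k_n+j}_{\eps_n}(p))=\Phi^o(g^{n-k_n}_{\eps_n}(p))+(j,0)+o(1)$ for as long as the orbit stays in the region.

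For the $Y$-control I use the second estimate of \eqref{eq:diffX1epsXo} and sum it, with $|X^o|\ge R+(\text{remaining steps})/2$ along the orbit read backwards. By $(R_3)$ and $Mk_n/n\to0$, the total variation of $Y$ is at most $2/10$, so $Y$ stays in $\D(0,\tilde C)$. The statement writes $U_n^o(r,C)$, which I read as $U_n^o(R,\tilde C)$. For the $X$-part, the approximate conjugacy above shows that $\re X^o$ equals $\re(-k_n+j)+\ocal(\log k_n)$, and this remains $<-R$ provided $k_n-j\ge N_1$. The constant $N_1$ is chosen so that, after subtracting the uniform bound on $w^o$ of the starting points, the final real part is still below $-R$. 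The starting points are controlled via $w^o(x_{n-k_n}^{\eps_n},\cdot)=w^\iota(x_0,y_0)+\sigma-k_n+o(1)$, with $w^\iota$ bounded on the compact $M_C$. The same argument as in Lemma~\ref{lem:takeRbig} then shows $K_n>k_n-N_1$, and also $|X|<10k_n$ throughout.

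\textbf{Main obstacle.} The delicate step is the interplay between the $X$-drift and staying to the left of $-R$. One cannot use the crude bound $|\Delta X-1|<1/2$ all the way, because it allows the orbit to drift by $k_n/2$. The resolution is to bootstrap: the $o(1)$ approximate-conjugacy estimate for $\Phi^o$ is valid only while the orbit stays inside $U_n^o$, and that estimate in turn keeps the orbit inside, with margin $N_1$ fixed uniformly on $M_C$. Since $\Phi^o\circ(\Phi^o_0)^{-1}$ is close to $(X-(1-a)\log(-X),Y)$ by $(R_4)$, the $\log$ shift is bounded once $|X|$ is bounded below. This is what makes a fixed $N_1$ suffice. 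Taking $j=k_n-N_1$ then yields $\Phi^o\circ g_{\eps_n}^{n-N_1}=A_{k_n-N_1,0}\circ\Phi^o\circ g_{\eps_n}^{n-k_n}+o(1)$ uniformly on $M_C$.
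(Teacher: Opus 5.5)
Your overall architecture is the paper's. You run a first-exit/induction argument in which the one-step estimate $\|\Phi^o\circ g_{\eps_n}(p)-\Phi^o(p)-(1,0)\|\le K(\frac1n+\frac{|X|^2}{n^2})$ is summed over at most $k_n$ steps to give an $o(1)$ error. You combine this with the starting value $\Phi^o(g^{n-k_n}_{\eps_n}(x_0,y_0))=(w^\iota(x_0,y_0)+\sigma-k_n,e^{\pi q}t^\iota(x_0,y_0))+o(1)$. Your control of the second coordinate differs from the paper's but is fine. You sum the $Y$-increments from \eqref{eq:diffX1epsXo} backwards along the orbit and use $(R_3)$, as in Lemma~\ref{lem:takeRbig}. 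The paper instead reads $t^o\approx e^{\pi q}t^\iota$ off the Fatou coordinate.

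The gap is in the step you single out as the main obstacle: keeping $\re X<-R$ up to time $n-N_1$ with $N_1$ \emph{fixed}. You derive $\re X^o=\re(-k_n+j)+\ocal(\log k_n)$ and conclude that this is $<-R$ once $k_n-j\ge N_1$. But an error of size $\ocal(\log k_n)$ only gives this for $k_n-j\gtrsim\log k_n$, so $N_1$ would have to grow with $n$. Your justification, that ``the log shift is bounded once $|X|$ is bounded below'', is backwards: $(1-a)\log(-X)$ is unbounded as $|X|\to\infty$.

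What is true, and what you need, is that the shift is evaluated at the current point and is $\ocal(\log|X|)=o(|X|)$. Concretely, at the exit time the crude step bound puts $\Phi_0^o(p)$ in $-\H_{R-3/2}\times\D(0,\tilde C)$, so $(R_4)$ applies. Let $m$ be the number of remaining steps. Then $X-(1-a)\log(-X)=w^o+\ocal(1)$ with $w^o=w^\iota(x_0,y_0)+\sigma-m+o(1)$. This forces $|X|\le 2m+\ocal(1)$, and then $\re X\le -m+|1-a|\log(2m)+\ocal(1)$. That is $<-R$ for all $m\ge N_1$ provided $N_1$ is large in terms of $|1-a|$, $R$, $\widetilde K$ and $\sigma$.

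The paper packages exactly this sublinearity into the inclusion \eqref{eq:inclusion}. Lemma~\ref{lem:asympphio-1} gives $-1/x=W+\ocal(\log W)$, and on the cone $|\im W|<2|\re W|$ this error is absorbed by a fixed $R_0$. Choosing $N_1>\widetilde K+\re\sigma+2+R_0$ then places $\Phi^o(p)$ in $-\mathcal{H}_{R_0,2}$ with $|W|<5k_n$. With this correction your argument goes through.
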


\begin{proof}
From the asymptotic expansion of $(\Phi^o)^{-1}$ (see Lemma \ref{lem:asympphio-1}), Proposition~\ref{prop:outgofatou} and the definitions of $\Phi_0^o$ and $U_n^o(R,\tilde C)$, there exists a constant $R_0>0$ such that 
\begin{equation}\label{eq:inclusion}
(\Phi^o)^{-1}\{(X,Y) \in -\mathcal{H}_{R_0,2}\times \D(0,Ce^{1/2+\pi |\re q| }): |X|<5 k_n\}\subset(\Phi_0^o)^{-1}(U_n^o(R,\tilde C)), 
\end{equation}
where $\mathcal{H}_{R_0,2}:=\left\{X\in\H_{R_0}:|\im X|<2\re X\right\}.$ Up to increasing $R_0$ if necessary, we assume that $2R_0\ge \widetilde K+1$, where $\widetilde K:=\max \{   |w^\iota(x,y)|  :  (x,y) \in M_C  \}$.
We fix some integer $N_1>\widetilde K+\re\sigma+2+R_0$.

Let us first prove that there exists a constant $K_3>0$ such that for $n$ large enough and for all $(x_0,y_0)\in(\Phi_0^o)^{-1}(U_n^o(R,\tilde C))$ such that $(x_1^{\eps_n},y_1^{\eps_n}):=g_{\eps_n}(x_0,y_0)\in P^o((2(R-3/2))^{-1},\widetilde  C+1)$ we have
\begin{equation}\label{eq:K_3o}
\|\Phi^o (x_1^{\eps_n},y_1^{\eps_n})-\Phi^o(x_0,y_0)-(1,0)\|\le K_3\left(\frac1n+\frac1{|x_0|^2n^2}\right).
\end{equation}
The computation is analogous to the one in the proof of Lemma~\ref{lem:firstkn}. Fix $(x_0,y_0)$ such that $(X_0,Y_0):=\Phi_0^o(x_0,y_0)\in U_n^o(r,\tilde C)$ and $(x_1^{\eps_n},y_1^{\eps_n})\in P^o((2(R-3/2))^{-1},\widetilde  C+1)$. Set $\Phi_G^o:=\Phi^o\circ(\Phi_0^o)^{-1}: -\mathbb{H}_{R-5/2} \times \D(0,\widetilde  C+2) \to \C^2$, so 
$$\Phi_G^o(X_1^o,Y_1^o)=\Phi_G^o(X_0,Y_0)+(1,0),$$
where $(X_1^o,Y_1^o):=\Phi_0^o\circ g\circ(\Phi_0^o)^{-1}(X_0,Y_0)$. Then
$$\Phi^o(x_1^{\eps_n},y_1^{\eps_n})-\Phi^o(x,y)-(1,0)=\Phi_G^o(X_1^{\eps_n,o},Y_1^{\eps_n,o})-\Phi_G^o(X_1^o,Y_1^o)$$
where $(X_1^{\eps_n,o},Y_1^{\eps_n,o}):=\Phi_0^o(x_1^{\eps_n},y_1^{\eps_n})$.
By Proposition~\ref{prop:outgofatou} and condition $(R_4)$, there exists a holomorphic map $v:  -\mathbb{H}_{R-5/2} \times \D(0,\widetilde  C+2) \to \C^2$ such that 
$$\Phi_G^o(X,Y)= (X-(1-a)\log (-X), Y) + v(X,Y),$$
with $\|v(X,Y)\| \leq 1$ for all $(X,Y) \in -\H_{R-5/2} \times \D(0,\widetilde  C+2)$. By Cauchy estimates, we have  
$$\| \partial_X v(X,Y)\|\leq 1  \quad \text{and} \quad \| \partial_Y v(X,Y)\|\leq 1  $$
for all $(X,Y) \in -\H_{R-3/2}\times \D(0,\widetilde  C+1)$ and hence, by the mean value inequality, 
$$\|v(X_1^{\eps_n,o},Y_1^{\eps_n,o}) - v(X_1^o,Y_1^o)\| \leq |X_1^{\eps_n,o}-X_1^o|+|Y_1^{\eps_n,o}-Y_1^o|.$$
On the other hand, also by the mean value inequality, we have 
$$|\log (-X_1^{\eps_n,o})-\log (-X_1^o)| < |X_1^{\eps_n,o}- X_1^o|. $$
Therefore
\begin{align*}
\|\Phi_G^o(X_1^{\eps_n,o},Y_1^{\eps_n,o})-\Phi_G^o(X_1^o,Y_1^o)\| &\leq |X_1^{\eps_n,o}-X_1^o|+|1-a|| \log (-X_1^{\eps_n,o}) - \log (-X_1^o)|\\
&+|Y_1^{\eps_n,o}-Y_1^o| + \| v(X_1^{\eps_n,o},Y_1^{\eps_n,o}) - v(X_1^o,Y_1^o)\| \\
 &\leq (2+|1-a|)\left(|X_1^{\eps_n,o}-X_1^o|+|Y_1^{\eps_n,o}-Y_1^o|\right).
\end{align*} 
Then using Lemma~\ref{lem:estimateGepso} we obtain \eqref{eq:K_3o} for $n$ large enough, with $K_3=2 (2+|1-a|)C^o$.

Now take $(x_0,y_0)\in M_C$. We will prove by induction on $j$ that for all $n$ large enough and for all $n-k_n \leq j \leq n-N_1$ we have\\[-8pt]
\begin{enumerate}
\item $\Phi_0^o  (x_j^{\eps_n},y_j^{\eps_n})\in U_n^o(R,\tilde C)$\\[-12pt]
\item $\displaystyle\| \Phi^o  (x_j^{\eps_n},y_j^{\eps_n})  - \Phi^o (x_{n-k_n}^{\eps_n},y_{n-k_n}^{\eps_n}) - (j-(n-k_n),0) \| \leq K_3 \sum_{k=n-k_n}^{j-1} \left(\frac{1}{n} + \frac{1}{|x_{k}^{\eps_n}|^2 n^2}\right)$.  
\end{enumerate} 
For $j=n-k_n$, the first statement follows from Lemma~\ref{prop:afteregg}, and there is nothing to prove for the second one. Let $n-k_n \leq j <n-N_1$ such that (1) and (2) hold for all $n-k_n \leq k \leq j$ and denote $(X_j^{\eps_n,o},Y_j^{\eps_n,o}):=\Phi_0^o(x_j^{\eps_n},y_j^{\eps_n})$. 
By equation~\eqref{eq:diffX1epsXo} we have
$$|X_{j+1}^{\eps_n,o}-X_j^{\eps_n,o}-1| < \frac{1}{2} \quad \text{and}\quad
|Y_{j+1}^{\eps_n,o}-Y_j^{\eps_n,o}|<M \left(\frac{1}{n}+ \frac{1}{|X_j^{\eps_n,o}|^2} \right).$$
Therefore, by definition of $U_n^o(R,\tilde C)$, we have for $n$ large enough
$$
\re(X_{j+1}^{\eps_n,o}) < -R + \frac{3}{2}, \quad |Y_{j+1}^{\eps_n,o}|< \widetilde  C + 1.
$$
It follows that $(x_{j+1}^{\eps_n}, y_{j+1}^{\eps_n}) \in P^o((2(R-3/2))^{-1}, \widetilde  C +1)$ so using~\eqref{eq:K_3o} and the induction hypothesis we get
$$\|\Phi^o  (x_{j+1}^{\eps_n},y_{j+1}^{\eps_n})-\Phi^o (x_{n-k_n}^{\eps_n},y_{n-k_n}^{\eps_n}) - (j+1-(n-k_n),0) \| \leq K_3 \sum_{k=n-k_n}^{j} \left(\frac{1}{n} + \frac{1}{|x_k^{\eps_n}|^2 n^2}\right)$$
and condition (2) is proved.
	
By definition of $U_n^o(R,\tilde C)$, we have $|x_k^{\eps_n}|^{-1} < 10 k_n$ for all $n-k_n \leq k \leq j$, therefore
\begin{align*}
\sum_{k=n-k_n}^{j} \left(\frac{1}{n}+\frac{1}{|x_k^{\eps_n}|^2 n^2}\right) \leq \frac{k_n}{n}+ \frac{100 k_n^3}{n^2} = o(1).
\end{align*}
Therefore, by Lemma~\ref{prop:afteregg} and Proposition~\ref{prop:passing} we have for $n$ large enough
$$\Phi^o(x_{j+1}^{\eps_n},y_{j+1}^{\eps_n})=\Phi_{\eps_n}^\iota(x_{k_n}^{\eps_n},y_{k_n}^{\eps_n})+(\sigma+j+1-n-k_n,0)+o(1)$$
so by Lemmas~\ref{lem:apprfatou} and \ref{lem:firstkn}
$$\Phi^o(x_{j+1}^{\eps_n},y_{j+1}^{\eps_n}) = (w^\iota(x_0,y_0)+\sigma +j+1-n, e^{\pi q} t^\iota(x_0,y_0))+ o(1).$$
Let $(w^o(x,y), t^o(x,y)):=\Phi^o(x,y)$. We then have, for $n$ large enough,
\begin{align*}
\re(w^o(x_{j+1}^{\eps_n},y_{j+1}^{\eps_n})) &\le \re(w^\iota(x_0,y_0))+\re\sigma+j+2-n \leq \widetilde K + \re\sigma+j+2-n \\
&\leq \widetilde K+\re\sigma+2 - N_1 < -R_0
\end{align*} 
and 
\begin{align*}
|\im(w^o(x_{j+1}^{\eps_n},y_{j+1}^{\eps_n}))|\le |\im(w^\iota(x_0,y_0))|+1\le \widetilde K+1\le 2R_0\le 2\re(-w^o(x_{j+1}^{\eps_n},y_{j+1}^{\eps_n}))
\end{align*}
so $\Phi^o(x_{j+1}^{\eps_n},y_{j+1}^{\eps_n})\in -\mathcal{H}_{R_0,2}$. Similarly, for $n$ large enough,
$$|w^o(x_{j+1}^{\eps_n},y_{j+1}^{\eps_n})| \leq|w^\iota(x_0,y_0)|+|\sigma|+ n-j\leq \widetilde K+|\sigma|+ n-j<5 k_n.$$
Finally, using the fact that $|y_0(-x_0)^{-\eta}|\le C-1$ by definition of $M_C$ and $|t^\iota(x_0,y_0)-y_0(-x_0)^{-\eta}|<1$ by condition $(R_4)$, we have that $|t^\iota(x_0,y_0)|\le C$ and then for $n$ large enough
$$|t^o(x_{j+1}^{\eps_n},y_{j+1}^{\eps_n})|\leq Ce^{1/2+\pi|\re q|}.$$
Therefore, by \eqref{eq:inclusion}, $(x_{j+1}^{\eps_n}, y_{j+1}^{\eps_n}) \in U_n^o(R,\tilde C)$ and (1) is proved. Taking $j=n-N_1$, the Lemma~\ref{lem:takeRbigo} is proved.
\end{proof}

\subsection{Conclusion}

\begin{proof}[Proof of Theorem~\ref{th:explicit}]
	We have 
	\begin{align*}
		\Phi^o\circ g_{\eps_n}^{n-N_1}\bigr|_{M_C} &= A_{k_n-N_1,0}\circ \Phi^o\circ g_{\eps_n}^{n-n_k}\bigr|_{M_C}+ o(1) 
		\text{ \quad by Lemma~\ref{lem:takeRbigo}}   \\
		&=A_{k_n-N_1,0}\circ \Phi_{\eps_n}^o\circ g_{\eps_n}^{n-k_n}\bigr|_{M_C}+o(1)
		\text{ \quad by Lemma~\ref{prop:afteregg} } \\
		&=A_{k_n-N_1,0}\circ A_{\sigma-2k_n,q}\circ \Phi_{\eps_n}^\iota\circ g_{\eps_n}^{k_n}\bigr|_{M_C}+o(1)   
		\text{ \quad by Proposition~\ref{prop:passing} }\\
		&=A_{k_n-N_1,0}\circ A_{\sigma-2k_n,q}\circ\Phi^\iota \circ g_{\eps_n}^{k_n}\bigr|_{M_C}+o(1) \text{\quad by Lemma \ref{lem:apprfatou}}\\
		&=A_{k_n-N_1,0}\circ A_{\sigma-2k_n,q}\circ A_{k_n,0}\circ \Phi^\iota\bigr|_{M_C}+o(1) \text{\quad by Lemma \ref{lem:firstkn}}\\
		&=A_{\sigma-N_1,q}\circ \Phi^\iota\bigr|_{M_C}+o(1)
	\end{align*}
	where the convergence of the terms $o(1)$ is uniform in $M_C$. Therefore
	\begin{equation*}\label{eq:cv}
		g_{\eps_n}^{n-N_1}\bigr|_{M_C}= (\Phi^o)^{-1} \circ A_{\sigma-N_1, q} \circ \Phi^\iota\bigr|_{M_C} + o(1)
	\end{equation*}
	where the convergence of the term $o(1)$ is uniform on $M_C$. This proves that $g_{\eps_n}^{n-N_1} \to \lcal_{\sigma-N_1,q}$ uniformly on $M_C$, where $\lcal_{\sigma-N_1, q}:=(\Phi^o)^{-1} \circ {A}_{\sigma-N_1, q} \circ \Phi^\iota$.

	Let us now explain how we can deduce the same convergence statement first for all $(x,y) \in L$, then for all $(x,y) \in K$.
	First, assume without loss of generality that the domain $U$ on which the maps $g_{\eps_n}$ are defined 
	is a bi-disk $\D(0,\delta)^2$. 
	
Let $\pi_i: \C^2 \to \C$ ($1 \leq i \leq 2$) denote the projections on each coordinate of $\C^2$. 
Let $x \in \pi_1(L)$. Recall that     $r$ and $C$ are the constants introduced in Definition \ref{defi:choiceofR}. Let
$$M_x(C):=\{y \in \C: (x,y) \in M_C  \} =  \overline{\D}\left(0, (C-1) |(-x)^\eta| \right)  \setminus \D(0, (C-1)^{-1} |(-x)^\eta|)   $$
and 
$$N_x(C):=\{y \in \C: (x,y) \in N_C  \}  =  \overline{\D}\left(0, (C-1) |(-x)^\eta| \right).   $$
	
	Let us prove inductively on $0 \leq j \leq n-N_1$ that $g_{\eps_n}^j(N_C) \subset U$.
	For $j=0$, there is nothing to prove, since $N_C \subset P^\iota(r,C) \subset U$.
	Let $0 \leq j \leq n-N_1-1$ be such that  $g_{\eps_n}^j(N_C) \subset U$, so that $g_{\eps_n}^{j+1}$ is well-defined on $N_C$.
	
	By the maximum principle, for all $x \in \pi_1(L) = \pi_1(N_C)$, we have 
	$$\sup_{y \in N_x(C)}  |\pi_1 \circ g_{\eps_n}^{j+1}(x,y)| \leq  \max_{y \in \partial N_x(C) }   |\pi_1 \circ g_{\eps_n}^{j+1}(x,y)| \leq \max_{y \in M_x(C) }   |\pi_1 \circ g_{\eps_n}^{j+1}(x,y)| \leq   \delta.$$
	Similarly, we also have 
	$$\sup_{y \in N_x(C)}  |\pi_2 \circ g_{\eps_n}^{j+1}(x,y)| \leq  \max_{y \in \partial N_x(C) }   |\pi_2 \circ g_{\eps_n}^{j+1}(x,y)| \leq \max_{y \in M_x(C) }   |\pi_2 \circ g_{\eps_n}^{j+1}(x,y)| \leq   \delta.$$
	
Therefore, $g_{\eps_n}^j$ is well-defined on $N_C$ for every $0 \leq j \leq n-N_1$.
Moreover, $(g_{\eps_n}^{n-N_1}: N_C \to \C^2)_{n \geq 0}$ is a normal family in the sense of Montel. 
Let $(n_k)_{k \geq 0 }$ be any extracted sequence such that $g_{\eps_{{n_k}}}^{{n_k}-N_1}$ converges on $N_C$ to some holomorphic function $G$. We have proved that $G=\lcal_{\sigma-N_1,q}$ on $M_C$, therefore, by the identity principle, $G=\lcal_{\sigma-N_1,q}$ on all of $N_C$. Since this is true for any converging subsequence, we conclude that $g_{\eps_n}^{n{-N_1}} \to \lcal_{\sigma-N_1,q}$ on
all of $N_C$, hence on $L$ (since $L \subset N_C$).
	
Let us now prove that $g_{\eps_n}^{n-N} \to \lcal_{\sigma-N,q}$ on $K$, where $N:=N_1-n_0$. Indeed, by the definition of $L$, we have that for all $n$ large enough $g_{\eps_n}^{n_0}(K) \subset L$; therefore, if $(x,y) \in K$ then 
$$\limn g_{\eps_n}^{n-N_1+n_0}(x,y)= \limn g_{\eps_n}^{n-N_1} \circ g_{\eps_n}^{n_0}(x,y) = \lcal_{\sigma-N_1,q} \circ {g}^{n_0}(x,y) = \lcal_{\sigma-N_1+n_0,q}(x,y),$$
using in the last two equalities the fact that $g_{\eps_n}^{n_0}$ converges uniformly to ${g}^{n_0}$ on $K$  and that 
$\lcal_{\sigma+1,q} = g \circ \lcal_{\sigma,q} = \lcal_{\sigma, q} \circ g$. 
\end{proof}

\section{Proof of Corollaries \ref{coro:discbigjulia} and  \ref{coro:endo}}\label{sec:coro}

The goal of this section is to prove Corollaries \ref{coro:discbigjulia} and  \ref{coro:endo}.
We start with a general observation about Fatou coordinates for globally defined maps.

\begin{prop}\label{prop:discrete}
Assume that the germ $g:=g_0$ from Theorem~\ref{th:explicit} extends to a holomorphic self-map of a complex manifold.  Then the incoming Fatou coordinate $\Phi^\iota$ extends to a holomorphic map on the parabolic basin $\bcal_{v}$ associated to $v=(1,0)$, and the map
$\left(\Phi^o \right)^{-1}$ extends to a holomorphic map $\Psi^o$ on $\C^2$. Moreover, if $f$ has discrete fibers, then so do 
$\Phi^\iota$ and $\Psi^o$. 
\end{prop}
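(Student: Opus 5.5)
The plan follows the one-dimensional construction, using the functional equations $\Phi^\iota\circ g=\Phi^\iota+(1,0)$ and $\Phi^o\circ g=\Phi^o+(1,0)$ to propagate the Fatou coordinates by iteration.

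\emph{Incoming coordinate.} Let $\bcal_v=\bigcup_{C>0}\bigcup_{k\ge0} g^{-k}(P^\iota(r,C))$ (with $r\le r^\iota(C)$). For $z\in\bcal_v$ choose $C$ and $k$ with $g^k(z)\in P^\iota(r,C)$, and set
\[
\Phi^\iota(z):=\Phi^\iota(g^k(z))-(k,0).
\]
This is independent of $k$: increasing $k$ keeps us in a petal by Lemma~\ref{lem:petalsforg}, and on the petal $\Phi^\iota\circ g=\Phi^\iota+(1,0)$. It is also independent of $C$. For $C<C'$ the Fatou coordinates on $P^\iota(r,C)$ and $P^\iota(r',C')$ are both obtained from the same limit formulas of Lemmas~\ref{lem:constructphi2} and \ref{lem:constructphi}. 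Hence they agree on the intersection, which is connected: after applying $\Phi_0^\iota$ it is a product of a half-plane and a disc. Since $g^k$ is holomorphic, $\Phi^\iota$ is holomorphic on the open set $\bcal_v$, and it still satisfies the functional equation.

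\emph{Outgoing parametrization.} For $(X,Y)\in\C^2$ choose $k$ so large that $(X-k,Y)$ lies in the region $-\{X:\re X>r^{-1},|\im X|<2|\re X|\}\times\D(0,C-1)$ covered by Proposition~\ref{prop:outgofatou}; taking $C>|Y|+1$ makes this possible. Then set
\[
\Psi^o(X,Y):=g^k\bigl((\Phi^o)^{-1}(X-k,Y)\bigr).
\]
This needs $g$ to be globally defined, which is exactly our hypothesis. The value does not depend on $k$, because $(\Phi^o)^{-1}(W+(1,0))=g\circ(\Phi^o)^{-1}(W)$ wherever both sides are defined. It does not depend on $C$ either, by the same compatibility argument as above. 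Locally in $(X,Y)$ a single $k$ and a single $C$ work, so $\Psi^o$ is holomorphic on $\C^2$.

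\emph{Discrete fibers.} Suppose $g$ has discrete fibers.
\begin{itemize}
\item For $\Phi^\iota$: suppose a fiber $\{\Phi^\iota=w\}$ accumulates at some $z\in\bcal_v$. Near $z$ we have $\Phi^\iota=\Phi^\iota\circ g^k-(k,0)$, and $\Phi^\iota$ is univalent on the petal. So the accumulating points lie in $(g^k)^{-1}(\{p\})$ for the single point $p=(\Phi^\iota)^{-1}(w+(k,0))$ in the petal. That fiber is discrete because fibers of $g^k$ are discrete, which is a contradiction.
\item For $\Psi^o$: near a point $W_0$, $\Psi^o=g^k\circ(\Phi^o)^{-1}\circ T_{-k}$ with $(\Phi^o)^{-1}$ injective. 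Its fibers are therefore images of fibers of $g^k$ under an injective map, hence discrete.
\end{itemize}

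\emph{Main obstacle.} The delicate point is independence of the choices of $C$ and $r$, i.e.\ that the Fatou coordinates built on different petals $P^\iota(r,C)$ coincide on their overlaps. I would argue it via the explicit limit definitions. The map $\psi$ is a limit of $Y_n$, and $\varphi$ is a limit of $X_n-n-(1-a)\log n$; neither depends on the domain, as long as the orbit stays in the region where the formulas hold. Alternatively, on the overlap the difference of the two coordinates is a $(1,0)$-translation-invariant holomorphic map that is $o(1)$ at infinity, hence zero.
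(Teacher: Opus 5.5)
Your proposal is correct and follows essentially the same route as the paper. You extend $\Phi^\iota$ by $\Phi^\iota(z)=\Phi^\iota(g^k(z))-(k,0)$ and $(\Phi^o)^{-1}$ by $\Psi^o(X,Y)=g^k\circ(\Phi^o)^{-1}(X-k,Y)$, then obtain discreteness of fibers from univalence of the Fatou coordinates on the petals together with discreteness of the fibers of $g^k$; your check that the extensions do not depend on $k$ and $C$ (via the intrinsic limit formulas) is sound and fills in a point the paper leaves implicit.
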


\begin{proof}
Recall that $\bcal_{v}=\bigcup_{C>0}\bigcup_{n \geq 0} g^{-n}(P^\iota(r,C))$ for any $0<r\le r^\iota(C)$. Then, using the functional equation $\Phi^\iota \circ {g}= \Phi^\iota + (1,0)$ we can extend 
$\Phi^\iota$ to $\bcal$ by 	
$$\Phi^\iota(z) := \Phi^\iota \circ g^n(z) - (n,0)$$ 
for any $n$ such that $g^n(z)\in P^\iota(r,C)$. Moreover, by Proposition~\ref{prop:outgofatou} we have that for any $C>0$ there exists $r=r^o(C)>0$ such that 
$$\{ X \in -\H_{r^{-1}} :  |\im(X)| < 2 |\re(X)|   \} \times \D(0,C-1)  \subset \Phi^o(P^o(r,C)) $$
so we can extend $(\Phi^o)^{-1}$ to a map $\Psi^o:\C\times\D(0,C-1)\to\C$ by
$$\Psi^o(X,Y):=g^n \circ (\Phi^o)^{-1}(X-n,Y)$$
for any $n\in\N$ such that $(X-n,Y)\in -\H_{r^{-1}}\times \D(0,C-1)$. Since we can do this for any $C>0$, the map $\Psi^o$ extends to a holomorphic map on $\C^2$.
	
Let us now prove the claim about the discreteness of fibers. Take $p \in \bcal$ and let $F_p:=(\Phi^\iota)^{-1}(\{\Phi^\iota(p)\})$ be the corresponding fiber. Let $n$ be large enough such that $g^n(p) \in P^\iota(r,C)$ for some $C>0$ and $r\le r^\iota(C)$. Then 
	$$\Phi^\iota \circ g^n(F_p) = \Phi^\iota(F_p) + (n,0) = \{\Phi^\iota(p)\} + (n,0).$$
	Since $\Phi^\iota$ is injective on $P^\iota(r,C)$, the point $f^n(p)$ is isolated in $f^n(F_p)$. Since $g$ (hence $g^n$) has discrete fibers, $p$ must also be isolated in $F_p$. 
	
	Similarly, if $(X_0,Y_0) \in \C^2$ then there exists $n \in \N$ such that $(X_0-n,Y_0) \in \Phi^o(P^o(r,C))$ for some $C>0$ and $r\le r^o(C)$. In order to prove that $(X_0,Y_0)$ is isolated in its fiber $F_{(X_0,Y_0)}:=(\Psi^o)^{-1}(\{\Psi^o(X_0,Y_0)\})$, it suffices to prove that $(X_0-n,Y_0)$ is isolated in $F_{(X_0,Y_0)}-(n,0)$. By the relation $g^n \circ \Psi^o(X-n,Y) = \Psi^o(X,Y)$, we have that $F_{(X_0,Y_0)}-(n,0) = (\Psi^o)^{-1}(E_n)$, where $E_n:=g^{-n}( \{ \Psi^o(X_0,Y_0)\} )$. Since $g$ (hence $g^n$) has discrete fibers, $E_n$ is discrete. Since $(X_0-n,Y_0) \in \Phi^o(P^o(r,C))$ and $\Phi^o$ is injective, $(X_0-n,Y)$ is therefore isolated in $(\Psi^o)^{-1}(E_n)=F_{(X_0,Y_0)}- (n,0)$. Thus $F_{(X_0,Y_0)}$ is indeed discrete. 
\end{proof}

Next, we prove the following Lemma: 

\begin{lem}\label{lem:existenceperturbq}
	Let $f$ be an endomorphism of $\ptwo$ of algebraic degree $d$, satisfying $(H_1)$ and such that $d > \re \alpha+1$.
	Then for any $q \in \C$ there exists a family $(f_\eps)_{\eps \in \D}$ of endomorphisms of degree $d$ satisfying $(H_1)-(H_3)$, with $f_0=f$ and $q$ as in Theorem \ref{th:v2}.
\end{lem}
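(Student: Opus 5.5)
Our plan is to work in an affine chart $\C^2\subset\ptwo$ containing the parabolic point, chosen as in Section~\ref{sec:diffvers}: $f(0,0)=(0,0)$ with $f=\id+P_2+\dots$, $v=(1,0)$, and the formal curve $\mathcal C$ parametrized by $(t,\zeta(t))$. We look for a perturbation of the form $f_\eps=f+\eps h_1+\eps^2h_2$, where $h_1,h_2$ are polynomial maps of degree at most $d$ on the chart, extended to homogeneous maps on $\C^3$ by adding zero in the coordinate $z_0$. Since $\mathcal{H}_d(\ptwo)$ is open in the space of degree $d$ homogeneous maps (the resultant does not vanish), $f_\eps$ is an endomorphism of degree $d$ for all small $\eps$, and after rescaling $\eps$ we may take $\eps\in\D$. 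Condition $(H_1)$ holds for $f_0=f$ by assumption.

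\textbf{Step 1: meeting $(H_2)$.} We choose $h_1,h_2$ so that they preserve the jet of $\mathcal C$ up to order $m+1$, with $m=\lfloor\re\alpha\rfloor+1$. Set $\Psi(x,y)=(x,y-J_{m+1}\zeta(x))$. This is a polynomial automorphism of degree $m+1\le d$, and the inequality $m+1\le d$ is exactly where the hypothesis $d>\re\alpha+1$ enters. We then define the perturbation in these coordinates by
$$\Psi\circ f_\eps\circ\Psi^{-1}=\Psi\circ f\circ\Psi^{-1}+\bigl(\eps^2,\ \eps q' y\bigr),$$
with $q'$ to be chosen. Pulled back by $\Psi$, this reads
$$f_\eps=f+\bigl(\eps^2,\ \eps q'(y-J_{m+1}\zeta(x))+\eps^2 J_{m+1}\zeta'(x)\bigr),$$
up to the obvious correction coming from the first coordinate. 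Every term has degree at most $m+1\le d$, so $f_\eps$ is an endomorphism. By construction the added term vanishes on $y=J_{m+1}\zeta(x)$ modulo the first-coordinate shift, which moves along the curve. Hence $f_\eps\circ\gamma-\gamma\circ h_\eps$ stays in the ideal required by $(H_2)$, with $h_\eps(t)=p_0(t)+\eps^2$ up to the given order.

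\textbf{Step 2: meeting $(H_3)$.} In the conjugated coordinates the restriction to the curve is $p_\eps(x)=x+x^2+\dots+\eps^2$. Its two fixed points satisfy $w_\pm'(0)=\pm i$, which are distinct, so $(H_3')$ holds. Remark~\ref{rem:H_3'} then lets us use $(H_3')$ in place of $(H_3)$. If we want the literal $(H_3)$, we can compute the limit map $H$ from the proof of Lemma~\ref{lem:fptgtv}. Here $H(X,Y)=(X^2+1+Y(\dots),\,Y(q'+c_2X+c_3Y))$, where $c_2=\alpha+1\neq0$. For a generic choice (perturbing $q'$ or adding a small term $\eps\beta y$ in the first coordinate if needed), its four zeros are simple and nonzero. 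The fixed points then depend holomorphically on $\eps$ by the implicit function theorem, and their derivatives are pairwise distinct.

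\textbf{Step 3: computing $q$.} The multipliers at $(\pm i\eps,0)+\ocal(\eps^2)$ were computed in Section~\ref{sec:diffvers}. They give $\rho_N^1+\rho_N^2-2=2q'\eps+\ocal(\eps^2)$, so the invariant $q$ of Theorem~\ref{th:v2} equals $q'$ times the normalization factor coming from rescaling $\eps$ so that $w_+'-w_-'=2i$. Here $w_+'-w_-'=2i$ already, so that factor is $1$. Choosing $q'=q$ finishes the construction.

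\textbf{Main obstacle.} We expect the hardest step to be checking $(H_2)$ precisely with the ideal $\langle t^{m+2},\eps t^{m+1},\dots,\eps^{m+2}\rangle$ while keeping the degree at most $d$. Truncating $\zeta$ at order $m+1$ leaves an error $\ocal(t^{m+2})$, which is allowed. The $\eps$-dependent terms must vanish on the truncated curve exactly, and not merely to some order. Step 1 achieves this by building the perturbation from the functions $y-J_{m+1}\zeta(x)$ and $\eps^2$ (for the latter, the tangent correction $\eps^2J_{m+1}\zeta'(x)$ can be absorbed into the allowed error, since $\eps^2$ times a polynomial in $t$ starting at order $1$ lies in the ideal only after reparametrization). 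If this bookkeeping fails, we would instead conjugate the whole family by the fixed automorphism $\Psi$ and define the perturbation directly in normal form, as in Theorem~\ref{th:explicit}.
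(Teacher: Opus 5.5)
There is a genuine gap in Step 1: the perturbation you write down cannot both have degree $\le d$ and satisfy $(H_2)$. The exact pull-back is
\[
\Psi^{-1}\circ\bigl(g+(\eps^2,\eps q'y)\bigr)\circ\Psi=f+\bigl(\eps^2,\ \eps q'(y-J_{m+1}\zeta(x))+J_{m+1}\zeta(f_1(x,y)+\eps^2)-J_{m+1}\zeta(f_1(x,y))\bigr),
\]
where $f_1$ is the first component of $f$. This family does satisfy $(H_2)$. However, it contains $J_{m+1}\zeta\circ f_1$, which has degree up to $dm$. In general it is not even polynomial, since in an affine chart $f=(P_1/P_0,P_2/P_0)$ is rational.

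If instead you keep only $\eps^2J_{m+1}\zeta'(x)$, then $(H_2)$ fails. Write $f\circ\gamma=\gamma\circ h$ with $h(t)=t+t^2+\ocal(t^3)$, and $\zeta(t)=\zeta_2t^2+\ocal(t^3)$. The first component forces $h_\eps\equiv h+\eps^2$ modulo $\langle t,\eps\rangle^{m+2}$. The second component of $f_\eps\circ\gamma-\gamma\circ h_\eps$ is then, modulo that ideal,
\[
\eps^2J_{m+1}\zeta'(t)-\bigl[\zeta(h(t)+\eps^2)-\zeta(h(t))\bigr]=-\zeta_2\bigl(2\eps^2t^2+\eps^4\bigr)+\ocal_5(t,\eps).
\]
This is not in $\langle t,\eps\rangle^{m+2}$, because $m\ge 3$ and $\zeta_2\neq 0$ in general. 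No reparametrization can absorb it, since $h_\eps$ is already pinned down by the first component.

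Your fallback does not help either. $\Psi$ is a polynomial automorphism of $\C^2$ but not of $\ptwo$, so conjugating by it leaves $\mathcal{H}_d(\ptwo)$. In particular, the degree of $\Psi$ is not where $d>\re\alpha+1$ really enters.

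The missing idea is the one the paper's proof uses. Since $\langle t^{m+2},\eps t^{m+1},\dots,\eps^{m+2}\rangle=\langle t,\eps\rangle^{m+2}$ and $\gamma(t)\in t\,\C[[t]]^2$, conditions $(H_1)$--$(H_3)$ and the value of $q$ depend only on the jet of order $m+1$ of the family at $(\eps,x,y)=(0,0,0)$. So take the exact family $\widetilde f_\eps=\Psi^{-1}\circ g_\eps\circ\Psi=f+\eps\,h(\eps,x,y)$. You can use yours, or the paper's $g_\eps$ with $(x^2+\eps^2)a(x)$ in the first component and $y(1+c+q\eps)$ in the second. Then replace $h$ by its jet of order $m+1$. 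This is a polynomial of degree $\le m+1\le d$ in $(x,y)$, and \emph{this} is where $d>\re\alpha+1$ is used.

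To stay in $\mathcal{H}_d(\ptwo)$, add $\eps$ times the degree-$d$ homogenization to the lift $(P_1,P_2)$. That perturbs the affine germ by $\eps\hat h/P_0$ rather than $\eps\hat h$, so you should truncate $P_0h$ rather than $h$. Again, only the $(m+1)$-jet matters.

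With this replacement, your Steps 2 and 3 go through: the truncation changes neither $w_\pm'(0)=\pm i$ nor the multipliers to first order in $\eps$, so $q=q'$. One caveat for Step 2: you may not perturb $q'$ to reach genericity, since it is prescribed. Rely instead on $(H_3')$ and the remark allowing it in place of $(H_3)$.
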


\begin{proof}		
	We work in some affine coordinates in which the fixed point of $f$ is the origin and in which the non-degenerate characteristic direction is $[1:0]$ so the non-singular formal invariant curve $\mathcal{C}$ admits a parametrization of the form $\gamma(t)=(t, \zeta(t))$. As in Section~\ref{sec:diffvers}, we let $\Psi(x,y):=(x, y-J_{m+1}\zeta(x))$, where $J_{m+1}\zeta$ is the jet of order $m+1$ of $\zeta$, where $m:=\lfloor \re\alpha \rfloor+1$ of $\zeta$, and we let $g:=\Psi \circ f \circ \Psi^{-1}$. Then $g$ is of the form 
	$$g(x,y) = ( x+ x^2 a(x) + y b(x,y), y (1+ c(x,y) ) + \ocal(x^{m+2}) ),$$
	where $a(0)=1$ and $b(0,0) = c(0,0)=0$. We now let 
	$$g_\eps(x,y):= ( x+ (x^2+\eps^2) a(x) + y b(x,y), y (1+ c(x,y) + q \eps ) + \ocal(x^{m+2}) ),$$
	and $\widetilde  f_\eps := \Phi^{-1} \circ g_\eps \circ \Phi$. By construction, $(\widetilde  f_\eps)_{\eps \in \D}$ 
	satisfies $(H_1)-(H_3)$ and $f_0 = f$. However, the maps $\widetilde  f_\eps$ need not be endomorphisms of $\ptwo$, 
	since $\Psi$ is not an automorphism of $\ptwo$; we will therefore need to make one last modification.
	Let us write $\widetilde  f_\eps(x,y)= f(x,y) + \eps h(\eps,x,y)$, where $h(\eps,x,y) = \sum_{i,j,k \in \N} a_{i,j,k} \eps^i x^j y^k$ is a holomorphic map defined on some neighborhood of $0$ in $\C^3$. Let $h_{m+1}(\eps,x,y):= \sum_{i,j,k \leq m+1}  a_{i,j,k} \eps^i x^j y^k$ be the jet of order $m+1$ of $h$ and $f_\eps(x,y):= f(x,y) + \eps h_{m+1}(\eps,x,y)$. It is not difficult to see that conditions $(H_1)-(H_3)$ only depend on the jet of order $m+1$ of $f_\eps$ at $(0,0,0)$; therefore, the family of maps $(f_\eps)_{\eps \in \D}$ still satisfies $(H_1)-(H_3)$, and since $m<d$ by assumption, the maps $f_\eps$ are endomorphisms of $\ptwo$ for all $\eps$ in a neighborhood of $0$. 
	
Finally, for the fixed points $z_1(\eps)=1+2i\eps+\ocal(\eps^2)$ and $z_2(\eps)=1-2i\eps+\ocal(\eps^2)$ we have that
$$\jac\, f_\eps(z_1(\eps))=\begin{pmatrix}
1+2i\eps+\ocal(\eps^2)  &    \ocal(\eps) \\
\ocal(\eps^2) & 1+q \eps + i \eta \eps + \ocal(\eps^2)
\end{pmatrix}$$	
and 
$$\jac\, f_\eps(z_2(\eps))=\begin{pmatrix}
1-2i\eps+\ocal(\eps^2)  &    \ocal(\eps) \\
\ocal(\eps^2) & 1+q \eps - i \eta \eps + \ocal(\eps^2)
\end{pmatrix}.$$
so clearly $q$ is as in Theorem \ref{th:v2} by Lemma~\ref{lem:pertid}.
\end{proof}

We now complete the proof of Corollary~\ref{coro:discbigjulia}, which is essentially the same done by Bianchi in \cite{bianchi19parabolic} or the one due to Lavaurs (\cite{lavaurs1989systemes}) in dimension 1:

\begin{proof}[Proof of Corollary \ref{coro:discbigjulia}]
	If we take $q\in\C$, the existence of a family $(f_\eps)_{\eps\in\D}$ of endomorphisms of degree $d$ satisfying $(H_1)-(H_3)$ and such that $q:=\lim_{\eps\to0}\frac1\eps\,\frac{\rho_N^1(\eps)+\rho_N^2(\eps)-2}{\rho_T^1(\eps)+\rho_T^2(\eps)}$ is guaranteed by Lemma~\ref{lem:existenceperturbq}. Now, take $\sigma\in\C$ and $z_0 \in J_1(f, \lcal_{\sigma,q})$. Without loss of generality, assume that $z_0 = \lcal_{\sigma, q}^N(p_0)$, for some $N \in \N$ and some $p_0 \in J_1(f)$. Consider a sequence $(\eps_n)$ as in Theorem~\ref{th:v2}. We will find a sequence $z_n \in J_1(f_{\eps_n})$ such that $\limn z_n = z_0$. 
	
	By lower semicontinuity of $\eps \mapsto J_1(f_\eps)$, there exists a sequence $p_n \in J_1(f_{\eps_n})$ such that $\limn p_n = p_0$.
	Let $z_n:=f_{\eps_n}^{n N}(p_n) \in J_1(f_{\eps_n})$: by Theorem~\ref{th:v2}, we have $\limn z_n = \lcal_{\sigma, q}^N(p_0) = z_0$, and we are done.
\end{proof}

We can now prove Corollary \ref{coro:endo}. The idea is that thanks to the Main Theorem, we can find a suitable sequence of perturbations $f_{\eps_n}$ such that $f_{\eps_n}^n$ maps a point in $\bcal$ to a repelling periodic point in $J_2(f_{\eps_n})$. Since $J_2(f_{\eps_n})$ is completely invariant, this will mean that 
$\bcal$ contains some points in $J_2(f_{\eps_n})$ for all $n$ large enough, thus proving the lack of upper semicontinuity of $\eps \mapsto J_2(f_\eps)$. There is however a technical issue: contrary to the case of rational maps on $\rs$,
not all repelling periodic points belong to $J_2(f_\eps)$. 

\begin{proof}[Proof of Corollary \ref{coro:endo}]
	Let $r(0)$ denote a repelling periodic point for $f$ contained in $\Psi^o(\C^2)$. Let $(X_1,Y_1) \in \C^2$ be such that $\Psi^o(X_1,Y_1) = r(0)$.	
	Let $z_0 \in \bcal$ be such that $\Phi^\iota(z_0) \in \C \times \{0\}$ if $Y_1=0$, and $\Phi^\iota(z_0) \in \C \times \C^*$ {if $Y_1\neq 0$}. Then there exists a unique $(\sigma,q) \in \C^2$ such that $A_{\sigma,q} \circ \Phi^\iota(z_0) = (X_1, Y_1)$, or	in other words,
	$\lcal_{\sigma,q}(z_0)  = r(0)$. Let $(f_\eps)_{\eps \in \D}$ be the corresponding family of perturbations of $f$ constructed in Lemma \ref{lem:existenceperturbq} for that value of $q$.
	Let $r(\eps)$ be the repelling periodic point of $f_{\eps}$, which moves holomorphically with $\eps$ for $|\eps|$ small. For $|\eps|$ small,  we have $r(\eps) \in J_2(f_{\eps})$ by \cite[Lemma 4.9]{bianchi2019misiurewicz}. 
	
Let $\sigma_0 \in \C$ be given by Theorem \ref{th:v2}, and let 
$\eps_n \to 0$ be such that 
$$\frac{2i\pi}{\rho_T^1(\eps_n)-1} = n - (\sigma-\sigma_0) + o(1).$$
Let $U$ be a neighborhood of $z_0$ in $\bcal$, small enough such that $\lcal_{\sigma,q}(z)=r(0)$ and $z \in U$ implies that $z=z_0$. Then the maps $G_n(z)=f_{\eps_n}^n(z)-r(\eps_n)$ converge locally uniformly to $G(z)=\lcal_{\sigma,q}(z)-r(0)$ by Theorem~\ref{th:v2}. Since $(0,0)$ is an isolated point in $G^{-1}(\{(0,0)\})$ by Proposition \ref{prop:discrete}, there is a sequence $z_n \to (0,0)$ such that $G_n(z_n) = (0,0)$. By the backward invariance of the small Julia set, $z_n \in J_2(f_{\eps_n})$,  but $U \cap J_2(f)=\emptyset$ since $U \subset \bcal$.
\end{proof}

\bibliographystyle{alpha} 
\bibliography{biblio}

\end{document}